%% file: main.tex
\documentclass[10pt,a4paper]{amsart}
\usepackage[margin=30mm]{geometry}
\usepackage[T1]{fontenc}
\usepackage{lmodern}
\usepackage{amsmath,amssymb,amsthm,mathtools,mathrsfs}
\usepackage{enumitem}
\usepackage[dvipsnames]{xcolor}
\usepackage{float}

\numberwithin{equation}{section}

\newtheorem{theorem}{Theorem}[section]
\newtheorem{proposition}[theorem]{Proposition}
\newtheorem{lemma}[theorem]{Lemma}
\newtheorem{corollary}[theorem]{Corollary}
\newtheorem{maintheorem}{Theorem}

\theoremstyle{definition}
\newtheorem{definition}[theorem]{Definition}
\theoremstyle{remark}
\newtheorem{remark}[theorem]{Remark}
\newtheorem{warning}[theorem]{Warning}
\usepackage[all]{xy}

\usepackage{mycom,myarrows}

\makeatletter
\def\l@subsection{\@tocline{2}{0pt}{2pc}{5pc}{}}
\makeatother

\usepackage[style=alphabetic,sorting=nyt, backend=biber, backref=true]{biblatex}%<- specify style
\usepackage[unicode, colorlinks, linktocpage=true]{hyperref}
\hypersetup{
	linkcolor = blue,
	citecolor = Red,
	urlcolor = PineGreen,
	bookmarksnumbered = true
}

\title[Zeta elements for $p$-ordinary modular forms]{Zeta elements for $p$-ordinary modular forms over imaginary quadratic fields and applications}
\usepackage{orcidlink}
\author{Ruichen Xu \,\orcidlink{0009-0003-4555-2116}}
\address[Ruichen Xu]{Faculty of Mathematics, University of Vienna. Oskar-Morgenstern-Platz 1, 1090 Vienna, Austria.}
\email{ruichen.xu@univie.ac.at}
\date{\today}

\begin{document}

\begin{abstract}
Let $p\geq 5$ be a prime, and let $f$ be a $p$-ordinary cuspidal newform of even weight $k = 2r \geq 2$ and level $N$, with $p \nmid N$ and trivial nebentype. In this article, we prove Kato's Iwasawa main conjecture for $f$, formulated in terms of the $p$-adic $L$-function of $f$, under the sole assumptions that the associated Galois representation of $f$ satisfies the big-image hypothesis and that its residual representation is absolutely irreducible. This generalises the main result of \cite{BCS}, which treats elliptic curves over $\QQ$ with good ordinary reduction at $p$. Our proof follows their general strategy. A key new ingredient is the construction, over an imaginary quadratic field in which $p$ splits, of a zeta element attached to $f$, together with a proof of its explicit reciprocity laws, generalising \cite{BSTW}. These results provide the higher-weight analogue of a crucial input in the argument of \cite{BCS} and allow their method to be carried out for higher-weight modular forms.
\end{abstract}

\maketitle

%\begin{abstract}
%Let $p\geq 5$ be a prime, and let $f$ be a $p$-ordinary cuspidal newform of even weight $k = 2r \geq 2$ and level $N$, with $p \nmid N$ and trivial nebentype. In this article, we prove Kato's Iwasawa main conjecture for $f$, formulated in terms of the $p$-adic $L$-function of $f$, under the sole assumptions that the associated Galois representation of $f$ satisfies the big-image hypothesis and that its residual representation is absolutely irreducible. This generalises the main result of Burungale--Castella--Skinner, which treats elliptic curves over $\QQ$ with good ordinary reduction at $p$. Our proof follows their general strategy. A key new ingredient is the construction, over an imaginary quadratic field in which $p$ splits, of a zeta element attached to $f$, together with a proof of its explicit reciprocity laws, generalising Burungale--Skinner--Tian--Wan. These results provide the higher-weight analogue of a crucial input in the argument of Burungale--Castella--Skinner and allow their method to be carried out for higher-weight modular forms.
%\end{abstract}

\tableofcontents

\input{01_introduction}
\input{02_setup}
\input{03_beilinson_flach}

\input{04_zeta_morphism}
\input{05_Iwasawa_main_conjectures}

%\bibliographystyle{alpha}
%\bibliography{references}

\printbibliography

\end{document}

%% file: 01_introduction.tex
\section{Introduction}\label{sec:introduction}

Let $f \in S_{k}(\Gamma_{0}(N))$ be a normalized newform of even weight $k = 2r \geq 2$, level $N$, and trivial nebentype. Let $p \geq 5$ be a prime with $p \nmid N$, and suppose that $f$ is $p$-ordinary, meaning that its $p$-th Fourier coefficient is a $p$-adic unit (under the fixed embedding $\iota_{p}: \barQQ \hookrightarrow \barQQ_p$).

Let $\QQ_{\infty}/\QQ$ be the cyclotomic $\ZZ_p$-extension of $\QQ$, and put $\Gamma := \Gal(\QQ_{\infty}/\QQ)$ with cyclotomic Iwasawa algebra $\Lambda := \calO\lrbracket{\Gamma}$, where $\calO$ is the ring of integers of a sufficiently large finite extension $L/\QQ_p$ (containing all Fourier coefficients of $f$ under $\iota_p$) with uniformizer $\varpi$.

On the algebraic side, we consider the classical ordinary Selmer group $\Sel_{\ord}(T/\QQ_{\infty})$ defined using the $p$-ordinary filtration arising from the $p$-ordinarity of $f$ on $T := T_{f}(k-1)$, where $T_{f}$ is the canonical lattice in \cite[Section 8.3]{Kato} of the Galois representation $V_{f}$ attached to $f$ by Deligne. Let $X_{\ord}(f/\QQ_{\infty})$ be the Pontryagin dual of $\Sel_{\ord}(f/\QQ_{\infty})$; it is a finitely generated $\Lambda$-module.

On the analytic side, let $\calL_p(f/\QQ) \in \Lambda[1/p]$ be the $p$-adic $L$-function attached to $f$. We use the normalization defined through the explicit reciprocity law for the Beilinson--Kato zeta element in \cite[Theorem 16.6]{Kato} (see also \cite[Theorem 2.15]{CLW}).

In this article, our main result is Kato's Iwasawa main conjecture for $f$ under mild assumptions. It generalises \cite{SkinnerUrban} (see also \cite[Theorem 1.6]{FW} and especially \cite[Section 4.8]{FW} for the case of higher-weight modular forms) by removing the extra assumptions that
\begin{equation} \tag{mult} \label{eq:mult}
    \text{there exists $\ell \nmid N$ such that $\dim_{\FF} \barT_{f}^{I_{\ell}} = 1$ and $\dim_{\FF} \barT_{f}^{G_{\ell}} = 0$.}
\end{equation}
Our result is as follows.

\begin{maintheorem}[Kato's Iwasawa main conjecture, {Theorem \ref{thm:higher-weight-cyclotomic-MC}}] \label{main:A}
Let $f \in S_{k}(\Gamma_{0}(N))$ be a normalized newform of even weight $k \geq 2$ with trivial nebentype. Suppose that $p \geq 5$, that $p \nmid N$, and that $f$ is $p$-ordinary. If, moreover, the residual Galois representation $\barV_{f}$ is absolutely irreducible, then:
\begin{enumerate}[label = \rm (\arabic*)]
    \item The Selmer group $X_{\ord}(f/\QQ_{\infty})$ is $\Lambda$-torsion.
    \item The equality $(\calL_{p}(f/\QQ)) = \chari_\Lambda X_\ordi(f/\QQ_{\infty})$ holds as ideals in $\Lambda[1/\varpi]$.
    \item If, in addition, there exists an element $\sigma \in G_{\QQ(\mu_{p^{\infty}})}$ such that 
    \begin{equation} \label{eq:big-image-intro} \tag{im}
    T/(\sigma - 1)T \simeq \calO,
    \end{equation}
    then the equality holds in $\Lambda$.
\end{enumerate}
\end{maintheorem}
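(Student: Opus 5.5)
The plan follows the strategy of \cite{BCS} for elliptic curves. The divisibility $\chari_\Lambda X_\ordi(f/\QQ_\infty) \mid (\calL_p(f/\QQ))$ in $\Lambda[1/\varpi]$, and in $\Lambda$ under \eqref{eq:big-image-intro}, comes from Kato's Euler system. Torsionness in (1) also comes from Kato: the Beilinson--Kato zeta element $z_{\mathrm{Kato}} \in H^1_{\mathrm{Iw}}(\QQ, T)$ is non-torsion, and the Euler system argument, together with Poitou--Tate duality and Kato's explicit reciprocity law \cite[Theorem 16.6]{Kato}, converts this into
\[
\chari_\Lambda X_\ordi(f/\QQ_\infty) \mid (\calL_p(f/\QQ)).
\]
Absolute irreducibility of $\barV_f$ suffices after inverting $\varpi$, while \eqref{eq:big-image-intro} gives the integral statement. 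So the real content is the opposite divisibility, which in \cite{SkinnerUrban} used \eqref{eq:mult} through Eisenstein congruences on $\mathrm{GU}(2,2)$. We avoid that route and descend from an imaginary quadratic field.

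First choose an auxiliary imaginary quadratic field $K$ in which $p$ splits, $p=\mathfrak p\bar{\mathfrak p}$. Choose it so that the quadratic twist $f\otimes\chi_K$ has suitable root number and nonvanishing central $L$-value or derivative, so that Friedberg--Hoffstein or Waldspurger-type nonvanishing results apply, and so that the generalized Heegner hypothesis holds up to the needed local conditions.

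Over $K_\infty$, the $\ZZ_p^2$-extension, construct the zeta element for $f$. It is a class in $H^1_{\mathrm{Iw}}(K_\infty, T)$, obtained by specializing Beilinson--Flach classes for $f \times g_\psi$ along CM Hida families $g_\psi$ attached to $K$, as in Section~\ref{sec:03_beilinson_flach} (presumably there). Then prove two explicit reciprocity laws. At $\mathfrak p$ the class should map to the two-variable $p$-adic $L$-function $\calL_p(f/K)$ interpolating $L(f/K,\chi,r)$. At $\bar{\mathfrak p}$, in the Greenberg-unramified direction, it should map to the BDP-type $p$-adic $L$-function $\calL_{\mathfrak p}^{\mathrm{BDP}}(f/K)$.

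Using these laws, compare three main conjectures over $K_\infty$, packaged via the zeta morphism of Section~\ref{sec:04_zeta_morphism}: the two-variable Greenberg-type conjecture, the BDP/Heegner-point conjecture, and a $\Lambda_K$-adic Kato conjecture. Poitou--Tate duality shows they are equivalent. The divisibility in the direction we need for the BDP conjecture is known in higher weight via Eisenstein congruences on $U(2,1)$, as in Wan and Hsieh-type arguments. Transferring it through the equivalence gives
\[
(\calL_p(f/K)) \subseteq \chari X_\ordi(f/K_\infty),
\]
possibly only after inverting $\varpi$. Then restrict to the cyclotomic line. Since $\calL_p(f/K)|_{\mathrm{cyc}} = \calL_p(f/\QQ)\,\calL_p(f^{K}/\QQ)$ and the Selmer groups factor similarly under restriction, combining with Kato's divisibility for $f$ and for the twist $f^K$ yields equality in (2). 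For (3), integrality follows once $\mu$-invariants and Kato's divisibility are controlled integrally via \eqref{eq:big-image-intro}. The choice of $K$ may be needed to ensure the BDP-side input holds integrally.

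The main obstacle is the construction of the zeta element in weight $k=2r$ and its explicit reciprocity law at $\bar{\mathfrak p}$. The Beilinson--Flach elements must be specialized at a point outside the interpolation range, where the weight of $f$ exceeds that of the CM form. Relating the resulting Bloch--Kato logarithm to $\calL^{\mathrm{BDP}}$ needs a higher-weight analogue of the Bertolini--Darmon--Prasanna $p$-adic Gross--Zagier formula inside families. To get this, I would first interpolate by varying $f$ in its Hida family too, then specialize, using the comparison from \cite{BSTW} adapted to Coleman-map computations with nontrivial Hodge–Tate twist $r-1$.
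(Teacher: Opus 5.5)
Your outer structure matches the paper: Kato's Euler system for one divisibility and for torsionness, a zeta element over $K$ whose reciprocity laws at $\frp$ and $\barfrp$ make the Perrin-Riou and Greenberg main conjectures equivalent, then descent to the cyclotomic line. But the step that carries all the content is unsupported. You take as known a divisibility on the Greenberg/BDP side for higher-weight ordinary $f$ over $K$, ``via Eisenstein congruences on $U(2,1)$'', and give no argument for it. This is exactly where conditions like \eqref{eq:mult} enter. Eisenstein-congruence arguments over $K$ come with auxiliary hypotheses on the level and the residual representation: Skinner--Urban's on $\mathrm{GU}(2,2)$ needed \eqref{eq:mult}, and the relevant group for the Greenberg side is $\mathrm{GU}(3,1)$, not $U(2,1)$. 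Theorem~\ref{main:A}, by contrast, assumes only residual irreducibility.

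The idea you are missing is the base change of \cite{BCS}. One chooses a real quadratic $F$ together with $K$ satisfying \ref{BC:K}--\ref{BC:p5}. One then applies Wan's main conjecture \cite{Wan2015} to $f_F$ over the CM extension $M=FK$ of $F$ (Proposition~\ref{prop:Wan-specialized}). Its hypotheses can be checked without \eqref{eq:mult}; since $[F:\QQ]$ is even, no Ihara-lemma input is needed. Descending to $K$ gives a divisibility only for the product $\calL_p^{\PR}(f/K)\calL_p^{\PR}(f^F/K)$, and only in $\Lambda_K[1/\varpi]$. The zeta element is then used three times:
\begin{itemize}
\item Theorem~\ref{thm:equiv_IMC}(2) transfers this to the Greenberg side.
\item Hsieh's $\mu=0$ (Theorem~\ref{thm:mu-vanishing}) removes $\varpi$.
\item Theorem~\ref{thm:equiv_IMC}(1) brings the integral statement back (Theorem~\ref{thm:BCS-521-higher}).
\end{itemize}
Projecting to the cyclotomic line gives a four-fold bound for $f,f^K,f^F,f^{FK}$ (Proposition~\ref{prop:cyclotomic-product-bound}). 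This is played against Kato's divisibility for all four twists. Your two-twist comparison is the same final mechanism, but it presupposes a single-form divisibility over $K$ that is not available. Note also that integrality on the non-Kato side comes from $\mu=0$, not from \eqref{eq:big-image-intro}.

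There are two further corrections.
\begin{itemize}
\item The inclusion you display, $(\calL_p(f/K))\subseteq\chari X_\ordi(f/K_\infty)$, is Kato's direction, and combined with Kato it proves nothing. What you need is $\chari_{\Lambda_K}X_\ordi(f/K_\infty)\subseteq(\calL_p^{\PR}(f/K))$.
\item The obstacle you single out is misplaced. The law at $\barfrp$ comes from the reciprocity law of \cite{KLZ} with $f$ and the CM family exchanged, at points where the CM form has weight $m+1>k$. These points lie inside the interpolation range of the Hida--Panchishkin function. The link with $\calL_p^{\BDP}(f/K)$ is then a comparison of interpolation formulas on the anticyclotomic line (Proposition~\ref{prop:anticyclotomic_descent}); no $p$-adic Gross--Zagier formula is involved.
\end{itemize}
Likewise, the BDP conjecture is one-variable. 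It enters only through $\mu=0$, not as a third conjecture equivalent over $\Lambda_K$.
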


We now introduce some notation in order to describe the proof. For any imaginary quadratic field $K$, denote by
\begin{itemize}
    \item $K_{\infty}/K$ (resp. $K_{\infty}^{\cyc}/K$, $K_{\infty}^{\ac}/K$) the compositum of all $\ZZ_p$-extensions (resp. cyclotomic $\ZZ_p$-extension, anticyclotomic $\ZZ_p$-extension) of $K$, and
    \item $\Lambda_{K}$ (resp. $\Lambda_{\cyc}$, $\Lambda_{\ac}$) the completed group ring $\calO \lrbracket{\Gal(K_{\infty}/K)}$ (resp. $\calO \lrbracket{\Gal(K_{\infty}^{\cyc}/K)}$, $\calO \lrbracket{\Gal(K_{\infty}^{\ac}/K)}$),
    \item $\calO^{\ur}$ the ring of integers in the completion of the maximal unramified extension of $L$, with $R^{\ur} := R \hotimes_{\calO} \calO^{\ur}$ when $R$ is one of the Iwasawa algebras $\Lambda_{K}$ and $\Lambda_{\ac}$;
    \item for an Iwasawa algebra $R$, we write $R^{\iota}$ for $R$ endowed with the $G_{K}$-action given by the inverse of the tautological character (see Section \ref{sec:Iwasawa_algebras}).
\end{itemize}

This article generalizes \cite{BCS}: the work \cite{BCS} treats the case where $f$ is the normalized newform attached, via modularity, to an elliptic curve $E/\QQ$ with good ordinary reduction at $p \geq 5$. We follow the same methodology as in \cite{BCS}:
\begin{enumerate}
    \item The first key idea is a \emph{base-change} method: one constructs an appropriate real quadratic field $F/\QQ$ and an imaginary quadratic field $K/\QQ$ (with $p$ split in $K$ as $p\calO_{K} = \frp \barfrp$) such that \emph{Wan's three-variable Iwasawa main conjecture} \cite{Wan2015} applies to the base change of $f$ to $F$ over the CM extension $M := FK/F$. It then descends to a product of two ordinary two-variable Iwasawa main conjectures (see \eqref{eq:PRMC}) for $f$ and its quadratic twist $f^{F}$ over $K$, involving Perrin--Riou's $p$-adic $L$-function $\calL_{p}^{\PR}(f/K)$. The key drawback is that, after descending to $K$, these Iwasawa main conjectures are valid only in the Iwasawa algebra $\Lambda_{K}[1/\varpi]$, i.e. with $\varpi$ inverted.
    \item The second key idea is an \emph{equivalence of Iwasawa main conjectures}: the ordinary Iwasawa main conjecture for $f$ over $K$ is equivalent to Greenberg's Iwasawa main conjecture (see \eqref{eq:GIMC}) for $f$ over $K$. After passing to Greenberg's Iwasawa main conjecture, one can use the vanishing of the $\mu$-invariant of Greenberg's $p$-adic $L$-function $\calL_{p}^{\Gr}(f/K)$ proved by Hsieh \cite{Hsieh2014}. This yields an integral result over $\Lambda_{K}^{\ur}$ without inverting $\varpi$. Transferring back to the ordinary Iwasawa main conjecture then proves the main theorem.
\end{enumerate}

Although (1) carries over to our setting from \cite{BCS}, the equivalence in (2) between the ordinary and Greenberg Iwasawa main conjectures is not available in the literature. For elliptic curves $E/\QQ$ with good ordinary reduction, it is proved in \cite[Proposition 9.18]{BSTW} using the zeta element $\calZ(E/K)$ attached to $E$ over $K$, which encodes both $\calL_{p}^{\PR}(E/K)$ and $\calL_{p}^{\Gr}(E/K)$ through its explicit reciprocity laws at $\frp$ and $\barfrp$, respectively. To carry out the arguments of \cite{BCS}, we generalize the corresponding results of \cite{BSTW} to higher-weight, good $p$-ordinary normalized cuspidal newforms $f$, including the construction of the zeta element $\calZ(f/K)$ and its two explicit reciprocity laws. We summarize the result as follows.

\begin{maintheorem} \label{main:C}
    With the notation above, let $K$ be an imaginary quadratic field over $\QQ$ such that $(D_{K}, N) = 1$, such that $p$ splits in $K$, and such that $\barV_{f}|_{G_{K}}$ is irreducible. Then there exists a zeta element
    \[
    \calZ(f/K) \in \rmH^{1}_{\rel, \ord}(\calO_{K}[1/p], T \otimes \Lambda_{K}^{\iota})
    \]
    such that 
    \[
    \Cole(f/K)(\loc_{\frp}(\calZ(f/K))) = \calL_{p}^{\PR}(f/K), \quad \Log(f/K)(\loc_{\barfrp}(\calZ(f/K))) = \calL_{p}^{\Gr}(f/K),
    \]
    where $\loc_{\frp}$ and $\loc_{\barfrp}$ are localisation maps, and $\Cole(f/K): \rmH^{1}(K_{\frp}, T \otimes \Lambda_{K}^{\iota}) \rightarrow \Lambda_{K}$ and $\Log(f/K): \rmH^{1}_{\ord}(K_{\barfrp}, T \otimes \Lambda_{K}^{\iota}) \otimes_{\calO} \calO^{\ur} \rightarrow \Lambda_{K}^{\ur}$ are Perrin--Riou regulator maps interpolating Bloch--Kato dual exponential and logarithm maps, respectively, as described in Section \ref{sec:zeta_definition}. The resulting functions $\calL_{p}^{\PR}(f/K)$ and $\calL_{p}^{\Gr}(f/K)$ satisfy
    \begin{enumerate}[label = \rm (\arabic*)]
        \item $(\calL^{\PR}_p(f/K)^{\eta} \pmod{I_{\ac}}) = (\calL_{p}(f/\QQ)^{\eta} \cdot \calL_{p}(f^{K}/\QQ)^{\eta})$, where $I_{\ac} := \ker(\Lambda_{K} \twoheadrightarrow \Lambda_{\cyc})$, and
        \item under the generalized Heegner hypothesis required for $(f,K)$ (see \eqref{eq:gen-Heegner}), we have 
        \[
        (\calL^{\Gr}_p(f/K)^{\omega^{1-r}} \pmod{I_{\cyc}^{\ur}}) = (\calL_{p}^{\BDP}(f/K)),
        \]
        where $I_{\cyc}^{\ur} := \ker(\Lambda_{K}^{\ur} \twoheadrightarrow \Lambda_{\ac}^{\ur})$.
        \end{enumerate}
    Here $\calL_p(f/\QQ) \in \Lambda$ is the cyclotomic $p$-adic $L$-function of $f$, and $\calL_{p}^{\BDP}(f/K) \in \Lambda_{\ac}^{\ur}$ is the Bertolini--Darmon--Prasanna $p$-adic $L$-function for $f$ over $K$.
\end{maintheorem}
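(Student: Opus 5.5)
The plan is to follow \cite{BSTW} and build $\calZ(f/K)$ from the $\Lambda$-adic Beilinson--Flach elements of Kings--Loeffler--Zerbes attached to the Rankin--Selberg pair $(f,\mathbf{g})$. Here $\mathbf{g}$ is the Hida family of CM forms attached to a Hecke character of $K$ of infinity type $(1,0)$ and conductor prime to $p$, which varies over the anticyclotomic direction.

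\textbf{Step 1: the two-variable class.} Take the three-variable Beilinson--Flach class for $(f,\mathbf{g})$ over $\ZZ_p\lrbracket{\Gal(\QQ_\infty/\QQ)}\hotimes\Lambda_{\mathbf{g}}$. The $\mathbf{g}$-part of the Galois representation is $\mathrm{Ind}_K^{\QQ}$ of the universal anticyclotomic character. By Shapiro's lemma the class then becomes an element of $\rmH^1(\calO_K[1/p],T\otimes\Lambda_K^\iota)$.

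\textbf{Step 2: integrality.} The hypothesis $\barV_f|_{G_K}$ irreducible kills the congruence (Eisenstein) denominators of the CM Hida family. It also kills the denominators coming from the $\ell$-Euler factor corrections. I would also use a tame-level choice prime to $D_K N$, so that the class is integral after dividing by the congruence ideal of the CM family. Since $\mathbf{g}$ is CM, this ideal is the anticyclotomic Katz $p$-adic $L$-function, and it is a unit under the irreducibility assumption.

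\textbf{Step 3: the local conditions.} Over $\QQ_p$, the Beilinson--Flach classes lie in the Panchishkin-type submodule $\mathscr F^{+}$. Under $p=\frp\barfrp$, the local module splits as two $G_{\QQ_p}$-summands, and on these the class satisfies the relaxed condition at $\frp$ and the ordinary condition at $\barfrp$. This yields $\calZ(f/K)\in\rmH^1_{\rel,\ord}$.

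\textbf{Step 4: the two reciprocity laws.} These are the heart of the argument. Define $\Cole(f/K)$ and $\Log(f/K)$ as the Perrin--Riou/Coleman maps on the two summands; over $\barfrp$ this requires $\calO^{\ur}$, because of Katz's CM periods. By the explicit reciprocity law of Kings--Loeffler--Zerbes, the two-variable map pairs the class with the eta/omega differentials. At $\frp$ this gives Hida's Rankin--Selberg $p$-adic $L$-function in the region where $f$ dominates. Restricted to the CM family, that function is Perrin--Riou's $\calL_p^{\PR}(f/K)$. At $\barfrp$ it gives the $p$-adic $L$-function interpolating the region where $\mathbf{g}$ dominates, which is $\calL_p^{\Gr}(f/K)$. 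The needed interpolation comparisons are Hida's and Hsieh's formulas, with the periods matched. Property (1) follows because restricting to the cyclotomic line factorizes the $L$-value as $L(f,s)L(f^K,s)$, and the Euler factors and periods match Kato's normalization up to units. Property (2) follows from the comparison, via Hsieh's construction, of the Greenberg function restricted to the anticyclotomic line with the BDP $p$-adic $L$-function.

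\textbf{Main obstacle.} I expect the bottleneck to be Step 4 at $\barfrp$ in higher weight. There the interpolation range lies outside the classical Beilinson--Flach region, so the law must be obtained by $p$-adic continuation in the $\mathbf{g}$-variable. The twist by $\omega^{1-r}$ must also be tracked, as must the precise periods $\Omega_p/\Omega_\infty$, so that the resulting equality of ideals is exact over $\Lambda_K^{\ur}$ rather than holding only up to $\varpi$-powers.
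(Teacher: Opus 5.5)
Your outline matches the paper's architecture: KLZ classes for $f$ paired with a CM family, Shapiro's lemma, and the two reciprocity laws read in the $f$-dominant and CM-dominant regions. There is a genuine gap in Steps~1--2, however, and Step~2 as written is false.

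\textbf{The lattice problem in Steps~1 and~3.} The specialized Beilinson--Flach class is built from $M(\bfh)^*$, i.e.\ from the cohomology of the \emph{open} modular curve. It therefore lives in $\rmH^1(\ZZ[1/p],T\hotimes\tilTT^{\square}_{\bfh_\frp}\hotimes\Lambda_{\cyc}^{\iota})$. The identification with $\mathrm{Ind}_K^{\QQ}\Lambda_\frp^\iota$ that Shapiro's lemma needs is available only for the cuspidal lattice $\TT^{\square}_{\bfh_\frp}$ (Theorem~\ref{thm:CM-lattices}). The two lattices differ because the CM family meets the Eisenstein series $E_1(1,\epsilon_K)$ at $T_\frp=0$. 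This cannot be engineered away: any family whose Galois module is $\mathrm{Ind}_K^{\QQ}$ of the universal character passes through $\mathrm{Ind}_K^{\QQ}\mathbf 1=\mathbf 1\oplus\epsilon_K$, and that point is exactly what produces the factorization $L(f)L(f^K)$ in property (1).

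So neither the Shapiro isomorphism in your Step~1 nor the splitting of the local module at $p$ in Step~3 is available until you prove that the class descends to $T\hotimes\TT^\square_{\bfh_\frp}\hotimes\Lambda_\cyc^\iota$. This is Theorem~\ref{thm:BF-descent}, and it is the missing idea. Write $b$ for the class with $\tilTT^\star_{\bfh_\frp}$-coefficients; the paper's argument then runs as follows.
\begin{enumerate}
\item Since $T_\frp$ kills $\tilTT^\star_{\bfh_\frp}/\TT^\star_{\bfh_\frp}$, the class $T_\frp b$ lies in the cuspidal $(\rel,\ord)$-Selmer group.
\item Its reduction modulo $T_\frp$ injects into $\rmH^1_{\rel,\ord}(K,T\otimes\Lambda_\cyc^\iota)$, which is free of rank one by Kato (Proposition~\ref{prop:rank}).
\item This reduction is killed by the cyclotomic Coleman map at $\frp$, because $\loc_p(b)$ already has $\Fil^-T\otimes\TT_{\bfh_\frp,\frp}$-coefficients.
\item That Coleman composite is injective, since $\calL_p(f/\QQ)\calL_p(f^K/\QQ)\neq0$ (Corollary~\ref{coro:cyclo_nonvanishing}). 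Hence $T_\frp b=T_\frp c$ with $c$ cuspidal, and $T_\frp$-torsion-freeness gives $b=c$.
\end{enumerate}
This is where residual irreducibility over $K$ is actually used, through Lemma~\ref{lem:vanishing_H0} and Proposition~\ref{prop:rank}.

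\textbf{The congruence ideal in Step~2.} The congruence ideal of the CM family is not a unit. Its reflexive closure is $H_\frp^{-1}\Lambda_\frp$ with $H_\frp=\frac{h_K}{w_K}T_\frp\,\calL^{\Katz}_{\barfrp}(K)$ \cite[Theorem~4.22]{BSTW}. This depends only on $K$, so no hypothesis on $f$ can affect it, and it contains the Eisenstein factor $T_\frp$.

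The paper instead renormalizes $\Log_{\barfrp}$ by $H_\frp$. The Katz factor then cancels the Petersson norm $\langle\bfh^\circ_{\frp,\chi_1},\bfh^\circ_{\frp,\chi_1}\rangle$ in the anticyclotomic comparison, via Hida's adjoint $L$-value formula and Katz's interpolation formula and functional equation. The factor $T_\frp$ is divided out, which is legitimate only because of the descent above (see \eqref{eq:integral-log-divisibility}). Without that division you get only $((1+T_\ac)^2-1)\cdot\calL_p^{\BDP}(f/K)^{\jmath_\ac}$ on the anticyclotomic line (Proposition~\ref{prop:anticyclotomic_descent_0}), not the exact equality (2).

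\textbf{Your stated main obstacle.} This is not an obstacle here. KLZ's three-variable identity $\calL_p^{\mathrm{mot}}=\calL_p^{\mathrm{HP}}$ holds for either ordering of the pair. The second law therefore follows by swapping the roles of $\bff$ and $\bfh_\frp$, with no separate $p$-adic continuation needed.
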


The proof of Theorem \ref{main:C} follows the same strategy as \cite{BSTW}, particularly in \cite[Section 5]{BSTW} where elliptic curves $E/\QQ$ with good ordinary reduction at $p$ are treated. The key is to construct $\calZ(f/K)$ and obtain its explicit reciprocity laws from the Beilinson--Flach element $\cBF^{\bff, \bfg}$ of \cite{KLZ} applied with $\bff$ passing through the ordinary $p$-stabilization of $f$ and with $\bfg$ the canonical CM family constructed in \cite[Section 4]{BSTW}. The inputs from \cite{KLZ} and \cite{Kato} apply when $f$ is a $p$-ordinary normalized cuspidal newform of weight $k \geq 2$ with $p \nmid N$, so the same method carries over.

To conclude the introduction, we make the following three remarks.

(1) In this article, we treat the $p$-ordinary case in higher weight. The higher-weight $p$-supersingular counterpart of \cite{BSTW} has been addressed in the recent work \cite{CLW}, with applications to Kato's Iwasawa main conjectures and Tamagawa number conjectures in cases of analytic rank zero.

(2) In \cite[Theorem 1.2.2]{BCS}, the authors prove the \emph{Heegner point main conjecture} for elliptic curves $E/\QQ$ with good ordinary reduction at $p$. More precisely, they establish the equality of ideals in $\Lambda_{\ac}$
\[
\chari_{\Lambda_{\ac}}\bigl(X_{\ord}(E/K_{\infty}^{\ac})\bigr)
=
\chari_{\Lambda_{\ac}}\bigl(
\rmH^{1}_{\ord}(K,\bfT)/
(\boldsymbol{\kappa}_{1}^{\mathrm{Heeg}})
\bigr)^{2}.
\]
Here $\bfT:=\varprojlim_{n}\Ind_{K_n^{\ac}/K}(T)$ and $\rmH^{1}_{\ord}(K,\bfT)\subset \rmH^{1}(K,\bfT)$ denotes the compact ordinary Selmer group interpolating, as $n$ varies, the classical Selmer groups $\varprojlim_m \Sel_{p^m}(E/K_n^{\ac})$. The class $\boldsymbol{\kappa}_{1}^{\mathrm{Heeg}}
\in
\rmH^{1}_{\ord}(K,\bfT)$ is the corresponding $\Lambda_{K}^{\ac}$-adic Heegner point.

It is natural to expect that an analogous result should hold for higher-weight modular forms in the setting of the present paper. Indeed, combined with Theorem \ref{main:C}, the same base-change argument as in the weight-two case should reduce such a generalization to the corresponding Kolyvagin-system-side divisibility in the Heegner cycle main conjecture. \footnote{In the setting of Kato's Iwasawa main conjecture, the required divisibility is provided by the Euler system of Beilinson--Kato elements, as recalled in Theorem \ref{thm:kato_17_4}.} However, the analogous divisibility arising from Kolyvagin systems of Heegner cycles is not yet available in the generality required for this application.

(3) In Theorem \ref{main:A}, to obtain the equality in the Iwasawa main conjecture over $\Lambda$ without inverting $\varpi$, we assume the classical "big image hypothesis" \eqref{eq:big-image-intro}. This excludes some modular forms with good ordinary reduction at $p$. In forthcoming joint work \cite{FGX} with Olivier Fouquet and Arthur Gérard, the author replaces this condition by the vanishing of certain fine $\mu$-invariants, together with other technical assumptions; this vanishing of fine $\mu$-invariant, unlike the big-image assumption \eqref{eq:big-image-intro}, is expected for all modular forms. That method requires arguments parallel to those in Section \ref{sec:application_cmc}, which depend on Theorem \ref{thm:BCS-521-higher} established here. The latter, in turn, depends on Theorem \ref{main:C} in this article. This was the original motivation for the present work.

\subsection*{Acknowledgements}

We thank Professor Xin Wan for suggesting this problem to us, for bringing his recent work \cite{CLW} to our attention, and for many helpful discussions and valuable guidance throughout the course of this work. 

This work was carried out during the author's postdoctoral appointment at the University of Vienna under the supervision of Professor Harald Grobner and the co-supervision of Professor Adel Betina, to whom we would like to express our sincere gratitude for their support and guidance. This work (together with the author's postdoctoral position) is supported by the \emph{FWF Principal Investigator Project (PAT-2584625): Eigenvarieties and $p$-adic $L$-functions}.

%% file: 02_setup.tex
\section{Setup}
\label{sec:setup}

In this section, we introduce the basic setup, including Iwasawa algebras, Selmer groups, and the canonical CM family of \cite[Section 4]{BSTW}.

Throughout, we fix an algebraic closure $\barQQ$ of $\QQ$ and embeddings $\iota: \barQQ \hookrightarrow \CC$ and $\iota_{p}: \barQQ \hookrightarrow \CC_p$.

\subsection{Iwasawa algebras} \label{sec:Iwasawa_algebras}
Fix an odd prime $p$, and put
\[
\tilGamma := \Gal(\QQ(\mu_{p^\infty})/\QQ) \simeq \Delta \times \Gamma
\]
where $\Gamma$ is the Galois group of the cyclotomic $\ZZ_p$-extension $\QQ_{\infty}/\QQ$, and $\Delta$ is a cyclic group of order $p-1$. We let
\[
\Lambda := \calO\lrbracket{\Gamma}, \quad \tilLambda := \calO\lrbracket{\tilGamma}
\]
be the cyclotomic Iwasawa algebras. We often fix a character $\eta: \Delta \rightarrow \calO^{\times}$ of $\Delta$.

Let $K$ be any imaginary quadratic field over $\QQ$. Let $K_\infty/K$ be the compositum of all $\ZZ_p$-extensions of $K$, and denote
\[
\Gamma_K := \Gal(K_\infty/K) \simeq\ZZ_p^2.
\]
The Galois groups of its cyclotomic $\ZZ_p$-extension $K_{\infty}^{\cyc}/K$ and its anticyclotomic $\ZZ_p$-extension $K_{\infty}^{\ac}/K$ are denoted by $\Gamma_\cyc$ and $\Gamma_\ac$, respectively. Let $L/\QQ_p$ be a finite extension with ring of integers $\calO$. Put
\[
\Lambda_K=\calO\lrbracket{\Gamma_{K}},\qquad \Lambda_\cyc=\calO\lrbracket{\Gamma_{\cyc}},\qquad \Lambda_\ac=\calO\lrbracket{\Gamma_{\ac}}.
\]
We often identify $\Lambda$ and $\Lambda_{\cyc}$ without further comment.

For any of these rings $R$, the notation $R^\iota$ denotes $R$ with the $G_K$-action given by the inverse of the tautological character $\Psi_{\dagger}: G_{K} \twoheadrightarrow \Gamma_{\dagger} \hookrightarrow R^{\times}$ for $\dagger \in \{ K, \cyc, \ac\}$. Every such $R$ is a complete regular local domain. 

Let $\kappa_0$ be the residue field of the maximal unramified subextension of $L/\QQ_p$, fix an embedding $\kappa_0\hookrightarrow\overline{\FF_p}$, and put
\[
\calO^{\ur}=\calO\hotimes_{W(\kappa_0)} W(\overline{\FF_p}), \qquad R^{\ur}=R\hotimes_{\calO}\calO^{\ur}.
\]
Thus $\calO^{\ur}$ is the ring of integers in the completion of the maximal unramified extension of $L$ selected by the embedding.  In particular, $R^{\ur}$ is again a complete regular local domain and is faithfully flat over $R$. \footnote{Since $\calO^{\ur}$ is torsion-free over the discrete valuation ring $\calO$, it is a flat $\calO$-algebra. Therefore $R \rightarrow R^{\ur}$ is flat as well. Moreover, the map $R \rightarrow R^{\ur}$ is a local homomorphism. Any flat local homomorphism is faithfully flat, see \cite[\href{https://stacks.math.columbia.edu/tag/00HR}{Tag 00HR}]{stacks-project}. Hence $R \rightarrow R^{\ur}$ is faithfully flat.}  If $M$ is an $R$-module, write $M^{\ur}=M\hotimes_RR^{\ur}$.

\subsection{Galois representations and Selmer conditions}

Let $L/\QQ_p$ be a finite extension, and let $V$ be a finite-dimensional $L$-representation of $G_{K}$. Choose a $G_K$-stable lattice $T\subset V$ and put $W=V/T$. We assume that $S$ contains the archimedean places, the places above $p$, and every place at which $T$ is ramified. Suppose that $T$ is \emph{$p$-ordinary}, in the sense that it is equipped with a unique $G_{\QQ_p}$-stable short exact sequence
\[
0 \rightarrow \Fil^{+} T \rightarrow T \rightarrow \Fil^{-} T \rightarrow 0,
\]
in which both $\Fil^{+} T$ and $\Fil^{-} T$ are free $\calO$-modules and $\Fil^{-} T$ is unramified. We use the same notation after restriction to $G_{K_{\frp}}$ or $G_{K_{\bar{\frp}}}$ and after scalar extension.

For each ring $R$ among $\Lambda_{K}$, $\Lambda_{\cyc}$, and $\Lambda_{\ac}$, we set
\[
\bfT_{R} := T \hotimes_{\calO} R^{\iota}, \quad \bfA_{R} := \Hom_{\cts}(\bfT_{R}, \QQ_p/\ZZ_p)(1)
\]
together with $\Fil^{\pm} \bfT_{R} := \Fil^{\pm} T \hotimes_{\calO} R^{\iota}$. With a character $\eta: \Delta \rightarrow \calO^{\times}$ fixed, we shall always understand $T$ as a twist of it by $\eta$. The local Tate duality relates $\bfT_{R}$ and $\bfA_{R}$ by 
\[
\rmH^{1}(K_w, \bfT_{R}) \times \rmH^{1}(K_w, \bfA_{R}) \longrightarrow \QQ_p/\ZZ_p.
\]

Fix a finite set $S$ containing the places above $p$, the archimedean places, and all places at which $T$ is ramified. Write $\rmH^1(K,\bfT_{R}):=\rmH^1(G_{K,S},\bfT_{R})$. A \emph{compact Selmer structure} $\calF$ on $\bfT_{R}$ consists of submodules
\[
\rmH^1_{\calF}(K_w,\bfT_{R}) \subseteq H^1(K_w,\bfT_{R})
\]
for every finite place $w$ of $K$. Its \emph{compact Selmer group} is defined as
\[
\rmH^1_{\calF}(K,\bfT_{R}) := \ker\left( \rmH^1(G_{K,S},\bfT_{R}) \rightarrow \bigoplus_{w \in S} \dfrac{\rmH^1(K_w, \bfT_{R})}{\rmH^1_{\calF}(K_w, \bfT_{R})}\right),
\]
and the \emph{dual discrete Selmer structure} is $\rmH^1_{\calF^*}(K_w, \bfA_{R}) := \rmH^1_{\calF}(K_w, \bfT_{R})^\perp$ under local Tate duality. The corresponding \emph{dual discrete Selmer group} is
\[
\Sel_{\calF}(K,\bfA_{R}) := \ker\left(\rmH^1(G_{K,S},\bfA_{R}) \rightarrow \bigoplus_{w \in S} \dfrac{\rmH^1(K_w, \bfA_{R})}{\rmH^1_{\calF^*}(K_w, \bfA_{R})}\right),
\]
and its Pontryagin dual is denoted by $X_{\calF}(K, \bfA_{R})$.

We define the following local Galois cohomology groups as compact Selmer structures:
\[
\rmH^{1}_{\bullet}(K_w, \bfT_{R}) := \begin{cases}
\ker\bigl(\rmH^1(K_w,\bfT_R)\rightarrow \rmH^1(K_w,\Fil^{-}\bfT_R)\bigr) , &\quad w \mid p, \quad \bullet = \ord ; \\
\rmH^{1}(K_w, \bfT_{R}), &\quad w \mid p, \quad \bullet = \rel ; \\
0, &\quad w \mid p, \quad \bullet = \str ; \\
\ker(\rmH^{1}(K_w, \bfT_{R}) \rightarrow \rmH^{1}(I_w, \bfT_{R})), &\quad w \nmid p, \quad \bullet = \ur,
\end{cases}
\]
called the \emph{ordinary}, \emph{relaxed}, \emph{strict}, and \emph{unramified} local conditions, respectively.

We need the following three compact Selmer structures:
\begin{center}
\begin{tabular}{c|ccc}
& at $\frp$ & at $\barfrp$ & at $v \nmid p$, $v \in S$ \\ \hline
$\calF_{\ordi,\ordi}$ & ordinary & ordinary & unramified \\
$\calF_{\rel,\ordi}$ & relaxed & ordinary & unramified \\
$\calF_{\rel,\str}$ & relaxed & strict & unramified.
\end{tabular}
\end{center}

\subsection{Galois representation of modular forms and $p$-ordinary filtration} \label{sec:modforms}
We follow the convention of \cite[Section~2.1]{CLW}. Let
\[
f=\sum_{n\geq 1}a_n(f)q^n\in S_k(\Gamma_0(N)),\qquad k=2r\geq 2,
\]
be a normalized newform of trivial nebentype. Fix a prime $p \geq 3$ with $p\nmid N$.

Let $F = \QQ(\{a_n\}_n)$ be the Hecke field of $f$, viewed inside $\barQQ$ via $\iota_{\infty}$, and fix $L/\QQ_p$ containing the image of the Hecke field and both roots of the Hecke polynomial at $p$. We assume that $f$ is \emph{$p$-ordinary}, meaning that $\iota_p(a_p)$ is a $p$-adic unit. Hence its Hecke polynomial
\[
X^2-a_p(f)X+p^{k-1}=(X-\alpha)(X-\beta)
\]
has a unique root $\alpha\in\calO^\times$, which is a $p$-adic unit, while the other root is $\beta=p^{k-1}/\alpha$.

Let $K/\QQ$ be an imaginary quadratic field with discriminant $-D_K$ such that
\begin{equation} \label{eq:spl} \tag{spl}
(D_K,N)=1,\qquad p\calO_K=\frp\barfrp.
\end{equation}
The prime $\frp$ is the one selected by the fixed $p$-adic embedding $\iota_p$.

Let $V_f$ be Deligne's $E$-linear representation with Hodge--Tate weights $0$ and $1-k$, where the cyclotomic character has Hodge--Tate weight $+1$.  Let $T_f\subset V_f$ be Kato's canonical lattice \cite[Section~8.3]{Kato}, and set
\[
V=V_f(k-1),\qquad T=T_f(k-1),\qquad \overline T=T/\varpi T.
\]
Thus the Hodge--Tate weights of $V$ are $0$ and $k-1$. Ordinarity of $f$ supplies a unique $G_{\QQ_p}$-stable exact sequence
\begin{equation}\label{eq:ordinary-filtration}
 0\rightarrow \Fil^+T\rightarrow T\rightarrow \Fil^-T \rightarrow 0
\end{equation}
in which $\Fil^+T$ and $\Fil^-T$ are free of rank one over $\calO$, and $\Fil^-T$ is unramified.

Within Sections \ref{sec:setup}--\ref{sec:zeta_main_conjecture}, we always assume that
\begin{equation} \tag{irred$_{K}$} \label{eq:irred}
    \text{The residual $\Gal_{K}$-representation $\barV$ is absolutely irreducible}.
\end{equation}
This assumption is required to construct the zeta element $\calZ(f/K)$ for $f$ over $K$, as well as its reciprocity laws and further properties.

\subsection{Property of global Iwasawa cohomology groups}

Kato's Euler system and its explicit reciprocity law give the following higher-weight counterpart of \cite[Theorems~3.7--3.11]{BSTW}.

\begin{proposition}\label{prop:rank}
Suppose that \eqref{eq:irred} holds. Then the following statements hold.
\begin{enumerate}[label = \rm (\arabic*)]
    \item $\rmH^1(K,\bfT_{\cyc})$ is a free $\Lambda_{\cyc}$-module of rank $2$. 
    \item $\rmH^1_{\rel,\ordi}(K,\bfT_{\cyc})$ is a free $\Lambda_{\cyc}$-module of rank one.
    \item If \eqref{eq:irred} holds, then Kato's zeta element $\bfz_{\alpha, \omega, \gamma, \gamma^{\prime}}(f/K) \in \rmH^{1}_{\rel,\ordi}(\calO_{K}[1/p], T \otimes_{\calO} \Lambda_{\cyc})$ (see \cite[Section 3.2.3]{BSTW}) is not $\Lambda_{\cyc}$-torsion.
\end{enumerate}
\end{proposition}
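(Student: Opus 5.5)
We would prove the three parts by reducing everything over $K$ to statements over $\QQ$ via Shapiro's lemma, and then feeding in Kato's results, in parallel with \cite[Theorems 3.7--3.11]{BSTW}.

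\textbf{Reduction to $\QQ$.} Since $(D_K,N)=1$ and $p$ splits in $K$, the field $K$ is linearly disjoint from $\QQ_\infty$. Hence $\Gal(K^{\cyc}_\infty/K)\simeq\Gamma$, and Shapiro's lemma gives
\[
\rmH^1(K,\bfT_{\cyc})\simeq \rmH^1(\QQ,\bfT_{\Lambda})\oplus \rmH^1(\QQ,\bfT_{\Lambda}\otimes\epsilon_K),
\]
where $\epsilon_K$ is the quadratic character of $K$ and $\bfT_\Lambda := T\hotimes\Lambda^\iota$ over $\QQ$. Here $T\otimes\epsilon_K$ is the lattice attached to the twist $f^K$, which is again $p$-ordinary of level prime to $p$. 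Hypothesis \eqref{eq:irred} implies that $\barV$ is irreducible over $G_\QQ$, and also over $G_{\QQ(\mu_{p^\infty})}$ by the standard argument for $p\ge 5$; both facts hold for $f$ and for $f^K$. Part (1) then follows from \cite[Theorem 12.4]{Kato}: $\rmH^2(\QQ,\bfT_\Lambda)$ is torsion, the Euler characteristic computation gives rank one, and irreducibility forces $\rmH^0(\QQ(\mu_{p^\infty}),\barV)=0$. This makes $\rmH^1(\QQ,\bfT_\Lambda)$ torsion-free, and in fact free of rank one, via the usual argument that $\rmH^1$ of a rank-one module with vanishing residual $\rmH^0$ has projective dimension $0$ over the two-dimensional regular ring $\Lambda$. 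Applying the same argument to $f^K$ and summing gives rank $2$.

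\textbf{Part (2).} Restrict the relaxed-ordinary condition through Shapiro. The primes $\frp,\barfrp$ both lie over $p$, so $K_\frp=K_{\barfrp}=\QQ_p$, and the local condition ``relaxed at $\frp$, ordinary at $\barfrp$'' corresponds to a condition on the sum of the two global modules over $\QQ$. Consider the composite
\[
\rmH^1(K,\bfT_{\cyc})\to \rmH^1(K_{\barfrp},\Fil^-\bfT_{\cyc}).
\]
Its target is free of rank one over $\Lambda_{\cyc}$: it has no $\rmH^0$ term since $\Fil^-T$ is unramified with Frobenius acting by $\alpha$-type unit eigenvalues not equal to $1$ modulo twisting, and otherwise we use that the $\rmH^2$ term is torsion-free. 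The kernel of this composite is $\rmH^1_{\rel,\ordi}$. The unramified conditions away from $p$ impose no further constraint in the Iwasawa limit, since $\rmH^1(I_v,\cdot)$ invariants vanish for $\Lambda$-adic modules at $v\nmid p$. Because the kernel is a reflexive submodule of a rank-two free module over a two-dimensional regular local ring, it is free. Its rank is $2-1=1$ provided the map to $\Fil^-$ has nontorsion image. We prove this using Kato's element: by Kato's explicit reciprocity law \cite[Theorem 16.6]{Kato}, the dual exponential of $\bfz$ at $p$ is the nonzero $p$-adic $L$-function, so the projection of $\bfz$ to the $\Fil^-$ part is nontorsion.

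\textbf{Part (3).} By \cite[Section 3.2.3]{BSTW}, $\bfz_{\alpha,\omega,\gamma,\gamma'}(f/K)$ is the Shapiro image of the pair $(\bfz(f),\bfz(f^K))$ of Kato elements, combined suitably so that it is ordinary at $\barfrp$. Kato's element for each of $f$ and $f^K$ is nontorsion, because its image under the Coleman map is $\calL_p(f/\QQ)$, respectively $\calL_p(f^K/\QQ)$, and these are nonzero by Rohrlich's nonvanishing of twisted $L$-values. The combination is therefore nontorsion.

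\textbf{Main obstacle.} The hardest step is freeness rather than mere torsion-freeness in (1) and (2). This needs $\rmH^2$ to have no finite submodule and needs the local $\Fil^-$ cohomology to be free, which requires residual irreducibility and nonexceptional behaviour of $\alpha$ modulo $\varpi$ (the case $\alpha\equiv1$). We would handle the latter by the twisting by $\eta$ built into $T$, as in \cite{BSTW}.
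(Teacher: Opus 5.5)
Your proposal is correct and follows essentially the paper's route: the paper simply cites \cite[Theorems~3.7--3.12]{BSTW}, noting that Kato's results hold in every weight $k\geq 2$. Those arguments use the same ingredients you use: Shapiro's lemma to split $\rmH^1(K,\bfT_{\cyc})$ into the Iwasawa cohomology of $f$ and $f^K$ over $\QQ$, \cite[Theorem~12.4]{Kato} with vanishing residual $\rmH^0$ for freeness, $\rmH^1_{\rel,\ordi}$ as the kernel of the $\Fil^-$-projection at $\barfrp$, and Kato's reciprocity law with Rohrlich's nonvanishing for non-torsionness. Two corrections, both of which only simplify your argument: you need only $\barT^{G_{\QQ}}=0$, not irreducibility over $G_{\QQ(\mu_{p^\infty})}$ (which is not automatic in higher weight), and in (2) you need only that $\rmH^1(K_{\barfrp},\Fil^-\bfT_{\cyc})$ be torsion-free (true since $\alpha\neq 1$), because then the kernel is saturated in a free module over a two-dimensional regular local ring, hence reflexive and free, so the $\alpha\equiv 1 \bmod \varpi$ obstacle you anticipate never arises.
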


\begin{proof}
    Since \cite{Kato} applies to all cuspidal eigenforms of weight $k \geq 2$, the arguments of \cite[Theorems~3.7--3.11]{BSTW} carry over directly. Part (1) is \cite[Theorem 3.9]{BSTW} (which follows from \cite[Theorem 3.7]{BSTW}), and part (2) is \cite[Theorem 3.11]{BSTW}. Note that we assume \eqref{eq:irred} throughout this article. The last statement is \cite[Theorem 3.12]{BSTW}.
\end{proof}

In this article, the parallel result for $R = \Lambda_{K}$ will be proved in Proposition \ref{prop:rank-one-nonvanishing}.

\subsection{The canonical CM family and its Galois representation} \label{sec:CM}
We follow the convention of \cite[Section~2.1]{CLW} originating in \cite[Section 4]{BSTW}. 

\subsubsection{More Iwasawa algebras} \label{sec:more-iwasawa-algebra}
Recall that we have assumed \eqref{eq:spl}. Let $\Gamma_\frp\simeq\ZZ_p$ be the Galois group of the $\ZZ_p$-extension of $K$ ramified only at $\frp$, and put $\Lambda_\frp=\ZZ_p\lrbracket{\Gamma_{\frp}}$. As usual, we choose a topological generator $\gamma_\frp$ and put $ T_{\frp}=\gamma_\frp-1$, so $\Lambda_{\frp}$ is identified with the power-series ring $\ZZ_p\lrbracket{T_{\frp}}$. We denote the tautological character by $\Psi_{\frp}: G_{K} \twoheadrightarrow \Gamma_{\frp} \hookrightarrow \Lambda_{\frp}^{\times}$.  Let $\rec_{K}: \AA_{K}^{\times} \rightarrow G_{K}^{\ab}$ be the geometrically normalised reciprocity map of class field theory, and let $\rec_{\frp}: K_{\frp}^{\times} \rightarrow G_{K_{\frp}}^{\ab}$ denote the restriction of $\rec_{K}$ to $K_{\frp}^{\times}$. Then $\rec_{\frp}(1+p\ZZ_p)$ injects into $\Gamma_{\frp}$ under the identification between $1+p\ZZ_p$ and the group of $1$-units in $\calO_{K_{\frp}}$, and hence $[\Gamma_{\frp}: \rec_{\frp}(1+p\ZZ_p)] = p^{h_{p}}$ for some $h_{p} \geq 0$ (with $h_{p} = 0$ if the class number $h_{K}$ of $K$ is coprime to $p$).

The Galois group $\Gal(K/\QQ) = \{1,c\}$ acts on $\Gamma_{K}$ via conjugation by any lift of $c$ to $\Gal(K_{\infty}/\QQ)$, and
\[
\Gamma_{+} := \Gal(K_{\infty}/K_{\ac}), \quad \Gamma_{-} := \Gal(K_{\infty}/K_{\cyc})
\]
are the $\ZZ_p$-summands of $\Gamma_{K}$ on which this action is multiplication by $\pm 1$, and $\Gamma_{+}$ (resp. $\Gamma_{-}$) maps isomorphically onto $\Gamma_{\cyc}$ (resp. $\Gamma_{\ac}$) under the canonical projection from $\Gamma_{K}$. 

In light of the canonical isomorphisms $\Gamma_{+} \xrightarrow{\sim} \Gamma_{\cyc} \xrightarrow{\sim} \Gamma$ and $\Gamma_{-} \xrightarrow{\sim} \Gamma_{\ac}$, we can and do choose topological generators $\gamma_{\square} \in \Gamma_{\square}$ for $\square \in \{ \pm, \cyc, \ac, \frp\}$ such that:
\begin{itemize}
    \item $\gamma_{+}$ and $\gamma_{\cyc}$ are identified with the previously chosen $\gamma \in \Gamma$,
    \item $\gamma_{-}$ is identified with $\gamma_{\ac}$,
    \item $c \gamma_{\frp} c^{-1} = \gamma_{\barfrp}$,
    \item $\gamma_{+}$ maps to $\gamma_{w}^{p^{h_{p}}/2}$ for $w \in \{\frp, \barfrp \}$,
    \item $\gamma_{-}$ maps to $\gamma_{\frp}^{1/2}$ and to $\gamma_{\barfrp}^{-1/2}$.
\end{itemize}
Under these choices, we have the identification $\iota_{\ac}: \Lambda_{\ac}^{\ur} \xrightarrow{\sim} \Lambda_{\frp}^{\ur}$ sending $\gamma_{\ac}$ to $\gamma_{\frp}^{1/2}$, and its inverse $\iota_{\frp}: \Lambda_{\frp}^{\ur} \xrightarrow{\sim} \Lambda_{\ac}^{\ur}$ given by $\gamma_{\frp} \mapsto \gamma_{\ac}^{2}$.

In addition, following \cite[Equation (5.16)]{BSTW}, we define the following composition of isomorphisms:
\[
\theta: \Gamma_{K} \xrightarrow{\sim} \Gamma_{\frp} \times \Gamma_{\cyc}, g \mapsto (\pr_{\frp}(g)^{-1}, \, \pr_{\cyc}(g)),
\]
where $\pr_{\square}: \Gamma_{K} \twoheadrightarrow \Gamma_{\square}$ for $\square \in \{\frp, \cyc\}$ are natural projections.

\subsubsection{The canonical CM family and Hecke algebras}
Let $\bfh_\frp \in \Lambda_{\frp}\lrbracket{q}$ be the canonical ordinary CM Hida family of tame level $D_K$ in \cite[Section~3.1]{CLW}. We put $\Lambda_D := \ZZ_p\lrbracket{\ZZ_p^{\times}}$, and let $M$ be an integer prime to $p$. Following \cite[Sections 2.2.2 and 4.2.2]{BSTW}, let
\[
\frh_{Mp^{\infty}}^{\ord} := \varprojlim_{r} e_{\ord}^{\prime} \frh_{\Gamma_1(Mp^{r})}, \quad \frH_{Mp^{\infty}}^{\ord} := \varprojlim_{r} e_{\ord}^{\prime} \frH_{\Gamma_1(Mp^{r})},
\]
be Hida's ordinary Hecke algebras acting on $\Lambda_{D}$-adic cusp forms and modular forms, respectively, and note that $\frH_{Mp^{\infty}}^{\ord}$ surjects onto $\frh_{Mp^{\infty}}^{\ord}$ by restriction.

Put
\begin{equation}\label{eq:big-H}
\begin{aligned}
\rmH^1_{\ord}(Mp^\infty)&:=\varprojlim_re_{\ord}'\rmH^1_{\et}(X_1(Mp^r)_{\overline{\QQ}},\ZZ_p(1)),\\
\widetilde{H}^1_{\ord}(Mp^\infty)&:=\varprojlim_re_{\ord}'\rmH^1_{\et}(Y_1(Mp^r)_{\overline{\QQ}},\ZZ_p(1)),
\end{aligned}
\end{equation}
equipped with their natural $\frh_{Mp^\infty}^{\ord}[G_\QQ]$- and $\frH_{Mp^\infty}^{\ord}[G_\QQ]$-module structures, respectively.

Let $\varphi_v:\frh_{D_Kp^\infty}^{\ord}\rightarrow\Lambda_{\frp}$ be the $\Lambda_D$-algebra homomorphism corresponding to $\bfh_{\frp}$, where $\Gamma_{\frp}$ is viewed as a $\Lambda_D$-module via the $\ZZ_p$-algebra homomorphism $\Lambda_D\rightarrow\Gamma_{\frp}$ determined by $[\langle\varepsilon(\tilde{\gamma})\rangle]\mapsto\varepsilon(\tilde{\gamma})^{-1}\gamma_v$ for all $\tilde{\gamma}\in G_K$ via the cyclotomic character $\varepsilon:G_K\rightarrow\ZZ_p^\times$, and put
\begin{equation}\label{def:CM-Tate-lattice}
\begin{aligned}
\TT_{\bfh_v}&:=H^1_{\ord}(D_{K} p^\infty)\otimes_{\frh_{D_{K}p^\infty}^{\ord}}\Gamma_{\frp},\\ 
\widetilde{\TT}_{\bfh_v}&:=\widetilde{H}^1_{\ord}(D_{K}p^\infty)\otimes_{\frH_{D_{K}p^\infty}^{\ord}}\Gamma_{\frp},
\end{aligned}
\end{equation}
where the tensor products are with respect to $\varphi_v$ and its composition with the projection $\frH_{D_Kp^\infty}^{\ord}\rightarrow\frh_{D_Kp^\infty}^{\ord}$, respectively.

We define $\TT_{\bfh_\frp}^\square$ (resp. $\tilTT_{\bfh_\frp}^\square$) to be the quotient of $\TT_{\bfh_\frp}$ (resp. $\tilTT_{\bfh_\frp}$) by its torsion submodule. 

\subsubsection{Local properties at $p$}
We recall Ohta's filtration on ordinary cohomology. By Ohta's work \cite{OhtaII}, the modules $H^1_{\rm ord}(Mp^\infty)$ and $\widetilde{H}^1_{\rm ord}(Mp^\infty)$ introduced in Section \ref{sec:CM} are equipped with $\mathfrak{H}_{Mp^\infty}^{\rm ord}[G_{\QQ_p}]$-stable filtrations 
\begin{align*}
\Fil^{+} \rmH^1_{\ord}(Mp^\infty)&\subset \rmH^1_{\rm ord}(Mp^\infty),\\
\Fil^{+}\widetilde{\rmH}^1_{\rm ord}(Mp^\infty)&\subset \widetilde{\rmH}^1_{\ord}(Mp^\infty)
\end{align*}
where $\Fil^{+}H^1_{\rm ord}(Mp^\infty)=\Fil^{+}\widetilde{H}^1_{\rm ord}(Mp^\infty)$ is isomorphic to $\frh_{Mp^\infty}^{\rm ord}$ as a $\mathfrak{H}^{\rm ord}_{Mp^\infty}$-module. Put 
\begin{align*}
\Fil^{+} \TT_{\bfh_\frp}&:=\Fil^{+}H^1_{\rm ord}(Mp^\infty)\otimes_{\frh_{M p^\infty}^{\rm ord}}\Lambda_{\ZZ_p}(\Gamma^v),\\
\Fil^{+} \widetilde{\TT}_{\bfh_\frp}&:=\Fil^{+}\widetilde{H}^1_{\rm ord}(Mp^\infty)\otimes_{\frh_{M p^\infty}^{\rm ord}}\Lambda_{\ZZ_p}(\Gamma^v),
\end{align*}
so we have a commutative diagram with exact rows
\begin{equation}\label{eq:fil-Ohta}
\begin{tikzcd}[column sep=large, row sep=large]
0
\arrow[r]
&
\Fil^{+}\TT_{\bfh_\frp}
\arrow[r]
\arrow[d, equal]
&
\TT_{\bfh_\frp}^{\square}
\arrow[r]
\arrow[d, hook]
&
\Fil^{-} \TT_{\bfh_\frp} :=
\TT_{\bfh_\frp}^{\square}/\Fil^{+}\TT_{\bfh_\frp}
\arrow[r]
\arrow[d, hook]
&
0
\\
0
\arrow[r]
&
\Fil^{+}\widetilde{\TT}_{\bfh_\frp}
\arrow[r]
&
\widetilde{\TT}_{\bfh_\frp}^{\square}
\arrow[r]
&
\Fil^{-} \widetilde{\TT}_{\bfh_\frp} := \widetilde{\TT}_{\bfh_\frp}^{\square}/
\Fil^{+}\widetilde{\TT}_{\bfh_\frp}
\arrow[r]
&
0
\end{tikzcd}.
\end{equation}

\subsubsection{Various CM lattices}
Following \cite[Section 4.2.5]{BSTW}, for any finite $\Lambda_{\frp}$-module $M$, the \emph{reflexive closure} of $M$ is defined as
\[
M^{\star} := \bigcap_{\wp \in \Spec_1(\Lambda_{\frp})} (M/M_{\tor})_{\wp} \subseteq M \otimes_{\Lambda_{\frp}} \Frac(\Lambda_{\frp}), 
\]
where $\Spec_{1}(-)$ is the set of height-one prime ideals. Since $\Lambda_{\frp}$ is a two-dimensional regular local ring, the reflexive closure $M^{\ast}$ is a free $\Lambda_{\frp}$-module. We note that $\TT_{\bfh_{\frp}}^{\star} = (\TT_{\bfh_{\frp}}^{\square})^{\star}$ and the same holds for $\tilTT$ replacing $\TT$.

Following \cite[Section 4.2.6]{BSTW}, we define
\[
\TT_{\bfh_{\frp}, \frp} := ( \Fil^{+} \TT_{\bfh_{\frp}} \otimes_{\Lambda_{\frp}} \Frac(\Lambda_{\frp})) \cap (\TT_{\bfh_{\frp}})^{\star}, \quad \tilTT_{\bfh_{\frp}, \frp} := ( \Fil^{+} \tilTT_{\bfh_{\frp}} \otimes_{\Lambda_{\frp}} \Frac(\Lambda_{\frp})) \cap (\tilTT_{\bfh_{\frp}})^{\star},
\]
and write
\[
\TT_{\bfh_{\frp}, \barfrp} := c \cdot \TT_{\bfh_{\frp}, \frp} \quad \tilTT_{\bfh_{\frp}, \barfrp} := c \cdot \tilTT_{\bfh_{\frp}, \frp},
\]
where $c$ denotes complex conjugation.

We collect the main results of \cite[Section 4]{BSTW} in the following theorem.

\begin{theorem} \label{thm:CM-lattices}
    With the notation above, the following statements hold:
    \begin{enumerate}[label = \rm (\arabic*)]
        \item The element $T_{\frp}$ annihilates the quotient $(\tilTT_{\bfh_\frp})^{\star}/(\TT_{\bfh_\frp})^{\star}$.
        \item We have
        \[
        (\TT_{\bfh_\frp})^{\star} \subseteq \TT_{\bfh_\frp}^\square \subseteq (\tilTT_{\bfh_\frp})^{\star}
        \]
        with $\TT_{\bfh_\frp}^{\square} \simeq {\rm Ind}_K^{\QQ}\,\Lambda_{\frp}^\iota$ as $\Gamma_{\frp}[G_\QQ]$-modules. In particular, we have $\TT_{\bfh_\frp}^{\square} \simeq \TT_{\bfh_\frp, \frp} \oplus c \cdot \TT_{\bfh_\frp, \frp}$, with $\TT_{\bfh_{\frp},\frp}$ isomorphic to $\Gamma_{\frp}^{\iota}$ as a $\Lambda_{\frp}[G_{K}]$-module.
    \end{enumerate}
\end{theorem}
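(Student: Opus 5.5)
The plan is to deduce both parts from the structure of the ordinary $\Lambda_D$-adic cohomology localised at the CM component, following \cite[Section 4]{BSTW}. None of the arguments there use the weight of $f$: they concern only the CM family $\bfh_\frp$ of tame level $D_K$. So I would mostly verify that each input is available and assemble it in order.

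\emph{Step 1.} Localise $\rmH^1_{\ord}(D_Kp^\infty)$ and $\widetilde{\rmH}^1_{\ord}(D_Kp^\infty)$ at the maximal ideal of $\frh^{\ord}_{D_Kp^\infty}$ cut out by $\varphi_\frp$, then tensor down to $\Lambda_\frp$. This produces $\TT_{\bfh_\frp}$ and $\tilTT_{\bfh_\frp}$. Both are finite $\Lambda_\frp$-modules of generic rank $2$, carrying the Galois representation $\Ind_K^{\QQ}\Psi_\frp^{-1}$ after inverting height-one primes.

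The difference between cusp forms and modular forms is governed by the boundary (Eisenstein) cohomology $\widetilde{\rmH}^1_{\ord}/\rmH^1_{\ord}$. On the CM component it meets only the congruence module between $\bfh_\frp$ and Eisenstein families. Along the CM family this congruence occurs exactly at the weight-one specialisation, i.e.\ along $T_\frp=0$. Taking reflexive closures (which kills finite-length pieces) gives part (1): $T_\frp$ annihilates $(\tilTT_{\bfh_\frp})^{\star}/(\TT_{\bfh_\frp})^{\star}$.

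\emph{Step 2.} For part (2), the inclusions $(\TT_{\bfh_\frp})^\star\subseteq\TT^\square_{\bfh_\frp}\subseteq(\tilTT_{\bfh_\frp})^\star$ need two facts.
\begin{itemize}
\item The first inclusion follows from showing that $\TT^\square_{\bfh_\frp}$ is itself reflexive, hence free over the two-dimensional regular ring $\Lambda_\frp$. I would derive this from Ohta's filtration \eqref{eq:fil-Ohta}: the graded pieces $\Fil^+\TT_{\bfh_\frp}\simeq\frh^{\ord}\otimes\Lambda_\frp$ and $\Fil^-\TT_{\bfh_\frp}$ (dual to $\frh^{\ord}$ via Ohta's duality) are free of rank one once torsion is removed, and an extension of free modules is free.
\item The second inclusion follows from the injection into $\tilTT^\square$ in \eqref{eq:fil-Ohta}.
\end{itemize}

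\emph{Step 3.} To identify $\TT^\square_{\bfh_\frp}\simeq\Ind_K^{\QQ}\Lambda_\frp^\iota$, restrict to $G_K$, where the representation is generically $\Psi_\frp^{-1}\oplus\Psi_{\barfrp}^{-1}$ with distinct characters. The $\Psi_\frp^{-1}$-eigenline meets $\TT^\star$ in $\TT_{\bfh_\frp,\frp}$, which is reflexive of rank one and therefore free, isomorphic to $\Gamma_\frp^\iota$. Its conjugate is $c\cdot\TT_{\bfh_\frp,\frp}$.

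\emph{Main obstacle.} The hard point is showing that the sum $\TT_{\bfh_\frp,\frp}\oplus c\cdot\TT_{\bfh_\frp,\frp}$ is all of $\TT^\square_{\bfh_\frp}$ integrally, not merely of finite index. For this I would compare with Ohta's filtration at $\frp$:
\begin{itemize}
\item $\Fil^+$ is the $\frp$-line, and since $\Fil^+\TT$ is saturated, the projection of $c\cdot\TT_{\bfh_\frp,\frp}$ to $\Fil^-\TT$ must be checked to be surjective;
\item the quotient is a free rank-one module on which the characters differ, and $\Psi_\frp\not\equiv\Psi_{\barfrp}$ modulo every height-one prime except possibly $(T_\frp)$;
\item at $(T_\frp)$ the issue is handled precisely by the annihilation statement of Step 1, together with the $p$-distinguishedness of the residual characters (they are distinct because $p$ splits and $\frp\ne\barfrp$).
\end{itemize}
Once both $\Lambda_\frp[G_K]$-lines are saturated and span, the module is the direct sum. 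Frobenius reciprocity then yields the $\Gamma_\frp[G_\QQ]$-isomorphism with the induced module.
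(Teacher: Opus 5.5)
There is a genuine gap, and it sits at the step you flag as the main obstacle. You appeal to $p$-distinguishedness of the residual characters, but $\Gamma_\frp\simeq\ZZ_p$ is pro-$p$. Hence $\Psi_\frp$ and its conjugate $\Psi_{\barfrp}$ both reduce to the trivial character modulo $(\varpi,T_\frp)$.

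Indeed, the weight-one specialisation of $\bfh_\frp$ is $E_1(1,\epsilon_K)$. So the residual representation is Eisenstein, with semisimplification $1\oplus\epsilon_K$, and this is trivial on $G_K$ and on $G_{\QQ_p}$ (since $p$ splits in $K$).

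The two characters are distinct as $\Lambda_\frp$-valued characters, but they agree modulo $T_\frp$. That is exactly where a non-induced lattice can occur. For instance, inside ${\rm Ind}_K^{\QQ}\Lambda_\frp^\iota$ the module $\{(a,b)\in\Lambda_\frp^{2} : a\equiv b \bmod T_\frp\}$ has the following properties:
\begin{itemize}
\item it is $G_\QQ$-stable and free;
\item its saturated eigenlines are $T_\frp\Lambda_\frp e_1$ and $T_\frp\Lambda_\frp e_2$;
\item its quotient by the sum of these lines is $\Lambda_\frp/(T_\frp)$.
\end{itemize}
Part (1) does not exclude such gluing: it compares $(\TT_{\bfh_\frp})^{\star}$ with $(\tilTT_{\bfh_\frp})^{\star}$ and says nothing about gluing inside $(\TT_{\bfh_\frp})^{\star}$ itself. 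So no local or residual argument of the kind you sketch can yield $\TT^\square_{\bfh_\frp}=\TT_{\bfh_\frp,\frp}\oplus c\cdot\TT_{\bfh_\frp,\frp}$ integrally.

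The paper does not attempt such an argument. Its proof cites \cite[Proposition 4.13]{BSTW} for (1), and \cite[Lemma 4.18]{BSTW} together with \cite[Theorem 5.19]{BSTW} for (2). The last of these is the property (Ind), and it is not a Section 4 result. In \cite{BSTW} it is obtained by a global argument: the second explicit reciprocity law for Beilinson--Flach classes is used to rule out the failure of (Ind) (see the footnote before the proof of Theorem \ref{thm:BF-descent}). Because the result concerns only the CM family, it can be imported unchanged.

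Your Step 2 has a related problem: reflexivity of $\TT^\square_{\bfh_\frp}$ does not follow from Ohta's filtration. $\Fil^-$ is dual to the Hecke algebra, and its base change along $\varphi$ need not be free at this Eisenstein maximal ideal. Moreover, a torsion-free rank-one module over the two-dimensional ring $\Lambda_\frp$ need not be free. In the paper this reflexivity is again part of what is imported, via the identification of $\TT^\square_{\bfh_\frp}$ with the induced lattice $\II$.
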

\begin{proof}
Part (1) follows from \cite[Proposition 4.13]{BSTW}, while part (2) combines \cite[Lemma 4.18]{BSTW} and \cite[Theorem 5.19]{BSTW}.
\end{proof}

We also record the following results, which are proved in \cite[Section 5.4]{BSTW}.

\begin{lemma} \label{lem:vanishing_H0}
Assume that condition \eqref{eq:irred} holds. Then:
\begin{enumerate}[label = \rm (\arabic*)]
    \item $\rmH^{0}(\QQ, T \hotimes C \hotimes \Lambda_{\cyc}^{\iota}) = 0$ for $C = \TT_{\bfh_{\frp}}^{\square}/T_{\frp}\TT_{\bfh_{\frp}}^{\square}, \tilTT_{\bfh_{\frp}}^{\star}/T_{\frp}\tilTT_{\bfh_{\frp}}^{\star}, \tilTT_{\bfh_{\frp}}^{\square}/\TT_{\bfh_{\frp}}^{\square}$ and $\tilTT_{\bfh_{\frp}}^{\star}/\tilTT_{\bfh_{\frp}}^{\square}$.
    \item $\rmH^{0}(\QQ_p, \Fil^{-}T \hotimes C \hotimes \Lambda_{\cyc}^{\iota}) = 0$ for $C = \tilTT_{\bfh_{\frp}}^{\star}/\TT_{\bfh_{\frp}}^{\square}$.
\end{enumerate}
\end{lemma}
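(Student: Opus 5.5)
The strategy is to reduce every module in question to a twist of a rank-one character, computed from the structure of the CM lattices in Theorem~\ref{thm:CM-lattices}, and then show that $\rmH^{0}$ vanishes by a residual argument. There are two main inputs. The first is Theorem~\ref{thm:CM-lattices}(2): as $G_{\QQ}$-modules, $\TT_{\bfh_\frp}^{\square}\simeq \Ind_K^{\QQ}\Lambda_\frp^\iota$. The second is Theorem~\ref{thm:CM-lattices}(1), which says that $T_\frp$ annihilates $\tilTT^{\star}/\TT^{\star}$. The underlying point is that each quotient $C$ is either a quotient of $\Ind_K^{\QQ}(\Lambda_\frp^\iota/T_\frp)$, or is a $T_\frp$-torsion module killed by $T_\frp$, possibly after a finite filtration. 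In either case $C$ is built from $\Ind_K^\QQ$ of copies of $\calO$ (or finite quotients of $\calO$) on which $G_K$ acts through $\Psi_\frp^{-1}$ mod $T_\frp$. Since $\Psi_\frp$ mod $T_\frp$ is trivial, these pieces are essentially $\Ind_K^\QQ \calO=\calO\oplus\calO(\epsilon_K)$, or their finite analogues.

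For (1), I would first show that $\rmH^0$ is left exact along these filtrations. Since $\rmH^0$ is left exact, it suffices to prove vanishing for each graded piece $T\hotimes \Ind_K^\QQ(M)\hotimes\Lambda_\cyc^\iota$, where $M$ is a finite-length or free $\calO$-module with trivial $G_K$-action. By Shapiro's lemma, $\rmH^0(\QQ, T\otimes \Ind_K^\QQ M\otimes\Lambda_\cyc^\iota)=\rmH^0(K, T\otimes M\otimes\Lambda_\cyc^\iota)$. Next I would filter $M$ by $\varpi$-adic steps and $\Lambda^\iota_\cyc$ by powers of its maximal ideal. This reduces everything to the claim $\rmH^0(K,\barT)=0$, which holds by \eqref{eq:irred}: the module $\barT$ is irreducible of dimension $2$, so it has no $G_K$-invariants, because an invariant vector would span a stable line. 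The same argument applies to each of the four choices of $C$, once it is checked that these quotients really are subquotients of finitely many copies of $\Ind_K^\QQ(\Lambda_\frp/T_\frp)$. For $\tilTT^{\square}/\TT^{\square}$ and $\tilTT^{\star}/\tilTT^{\square}$ this follows from the inclusions in Theorem~\ref{thm:CM-lattices}(2) together with part (1). Here one also needs that $\TT^{\star}\subseteq \TT^\square$ has finite cokernel, or is equal to it, since $\Lambda_\frp$ is regular of dimension two.

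Part (2) is the step I expect to be the main obstacle, because $\Fil^-T$ is a rank-one unramified character of $G_{\QQ_p}$, so no irreducibility argument is available. Here I would use the Ohta filtration \eqref{eq:fil-Ohta} together with Theorem~\ref{thm:CM-lattices}(2). Locally at $p$, the lattice splits as $\TT_{\bfh_\frp,\frp}\oplus c\,\TT_{\bfh_\frp,\frp}$, and $C=\tilTT^\star/\TT^\square$ is a subquotient of copies of $\calO/\varpi^n$ twisted by the characters of $G_{\QQ_p}$ arising from $\Ind$ mod $T_\frp$. These are trivial or $\epsilon_K|_{G_{\QQ_p}}$, and the latter is trivial since $p$ splits. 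After reduction, $\rmH^0$ of $\Fil^-\barT\otimes(\text{unramified})\otimes\Lambda_\cyc^\iota/\frm$ must vanish. For this I would use that $\Fil^-T$ is unramified with Frobenius acting by $\alpha$ up to normalization, twisted by $\varepsilon^{k-1}$ through the $(k-1)$-twist; equivalently, via $\Lambda_\cyc^\iota$ the inertia acts through the tautological cyclotomic character. The key observation is that $\Lambda_\cyc^\iota$ has no nonzero $G_{\QQ_p}$-invariants, because inertia acts through an infinite pro-$p$ quotient by $\gamma^{-1}$ and $(\gamma-1)$ is injective on $\Lambda_\cyc$. I would apply this integrally: the invariants of $N\hotimes\Lambda_\cyc^\iota$, for $N$ finite with inertia acting trivially on it, vanish because $\gamma-1$ is a nonzerodivisor on $N\otimes\Lambda_\cyc$ (which is $N[[X]]$). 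This argument does not even need the residual step. In fact the same argument also gives (1) once the relevant inertia action is seen to be trivial on $T\otimes C$ mod the correct subgroup, but I would keep the irreducibility argument there as the safe route.
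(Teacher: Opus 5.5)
The paper does not prove this lemma; it only cites \cite[Section~5.4]{BSTW}. Your outline is the natural self-contained proof: residual irreducibility of $\barT|_{G_K}$ for (1), and ramification of $\Lambda_{\cyc}^{\iota}$ at $p$ for (2). It is correct in substance. However, the structural claim it rests on is asserted rather than proved, and two of your side remarks are false.

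Write $\TT^{\square},\tilTT^{\star},\dots$ for the lattices attached to $\bfh_{\frp}$. A module annihilated by $T_{\frp}$ is just a $\ZZ_p$-module, and this says nothing about its Galois action. So ``killed by $T_{\frp}$'' does not make $\tilTT^{\star}/\TT^{\square}$ a subquotient of $\Ind_K^{\QQ}(\Lambda_{\frp}/T_{\frp})$. The correct mechanism is as follows.
\begin{itemize}
\item $\TT^{\square}$ is free, hence reflexive, so $\TT^{\square}=\TT^{\star}$. Theorem~\ref{thm:CM-lattices}(1) then gives $T_{\frp}\tilTT^{\star}\subseteq\TT^{\square}$.
\item The $\Lambda_{\frp}$-action comes from the Hecke algebra, so multiplication by $T_{\frp}$ is injective and $G_{\QQ}$-equivariant. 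Hence
\[
\tilTT^{\star}/\TT^{\square}\simeq T_{\frp}\tilTT^{\star}/T_{\frp}\TT^{\square}\subseteq \TT^{\square}/T_{\frp}\TT^{\square}\simeq \Ind_K^{\QQ}\ZZ_p .
\]
\item $\tilTT^{\square}/\TT^{\square}$ and $\tilTT^{\star}/\tilTT^{\square}$ are a submodule and a quotient of this.
\item $\tilTT^{\star}/T_{\frp}\tilTT^{\star}$, which you never actually treat, is an extension of $\tilTT^{\star}/\TT^{\square}$ by $\TT^{\square}/T_{\frp}\tilTT^{\star}$. The latter is a quotient of $\TT^{\square}/T_{\frp}\TT^{\square}$.
\end{itemize}
Since $\Psi_{\frp}$ and its $c$-conjugate are $\equiv 1 \pmod{T_{\frp}}$, $G_K$ acts trivially on every graded piece. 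These pieces need not be induced; a lone $\ZZ_p(\epsilon_K)$ can occur. So do not use Shapiro's lemma on them; use $\rmH^0(\QQ,-)\subseteq\rmH^0(K,-)$ instead. Your d\'evissage down to $\rmH^0(K,\barT)=0$ then goes through.

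In (2), the residual claim you first make is false. $\Fil^{-}T$ is unramified; the $\epsilon^{k-1}$ twist sits on $\Fil^{+}T$, not on $\Fil^{-}T$. Its reduction can be the trivial character, for instance when $a_p(f)\equiv 1 \pmod{\varpi}$. In that case $\rmH^0$ of the reduction does not vanish.

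The integral inertia argument you give afterwards is the actual proof, not an optional shortcut:
\begin{itemize}
\item $G_{\QQ_p}\subseteq G_K$ acts trivially on $C$ by the above.
\item $\Fil^{-}T$ is unramified.
\item Pick $\tau$ in the inertia group at $p$ mapping to $\gamma$.
\item $\gamma^{-1}-1$ is a nonzerodivisor on $N\hotimes\Lambda_{\cyc}$.
\end{itemize}
This needs neither finiteness of $C$ (which you assumed without reason) nor any residual input. It is the same argument the paper uses for case (2) in the proof of \eqref{eq:specliazation_control}.

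Finally, your closing suggestion that this trick also proves (1) does not work. We have $\det T=\epsilon^{k-1}$ with $k\geq 2$. So any element acting trivially on $T$ or on $T(\epsilon_K)$ has trivial image in $\Gamma$, and irreducibility is genuinely needed in (1).
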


\begin{remark}
    In this article, we follow the notation of \cite{CLW}, which differs from that of \cite{BSTW}. For the reader's convenience, the following table gives a dictionary between the two sets of notation:
    \begin{table}[H]
        \centering
        \begin{tabular}{c|c}
        In \cite{CLW} and our article     & In \cite{BSTW} \\ \hline
        $\frp$, $\Lambda_{\frp}$ and $\bfh_{\frp}$ & $v$, $\Lambda_{L}^{v}$ and $\bfh_{v}$ \\
        $\frh_{Mp^{\infty}}^{\ord}$ and $\frH_{Mp^{\infty}}^{\ord}$ & $\TT_{Mp^{\infty}}^{\ord}$ and $\HH_{Mp^{\infty}}^{\ord}$ \\
        $\rmH^{1}_{\ord}(Mp^{\infty})$ and $\widetilde{\rmH}^{1}_{\ord}(Mp^{\infty})$ & $\rmH^{1}_{\ord}(Mp^{\infty})$ and $\calH^{1}_{\ord}(Mp^{\infty})$ \\        
        $\TT_{\bfh_{\frp}}$ and $\TT_{\bfh_{\frp}}^{\square}$   & $\TT$ and $\TT_{1}$ \\
        $\tilTT_{\bfh_{\frp}}$ and $\tilTT_{\bfh_{\frp}}^{\square}$   & $\HH$ and $\HH_{1}$ \\
        $\TT_{\bfh_{\frp}}^{\star}$ & $\tilTT$ (and sometimes $\tilTT_{1}$) \\        
        $\tilTT_{\bfh_{\frp}}^{\star}$ & $\tilHH$ (and sometimes $\tilHH_{1}$) \\
        $\TT_{\bfh_{\frp}}^{\square}$ & $\II$ (by \cite[Theorem 5.19]{BSTW}).
        \end{tabular}
        \caption{Notations in our article compared with \cite{BSTW}}
        \label{tab:notations}
    \end{table}
\end{remark}

%% file: 03_beilinson_flach.tex
\section{Beilinson--Flach elements}
\label{sec:zeta}
With the preparations of the previous sections in place, we are now ready to define the zeta element for $f$ over $K$ and prove its reciprocity laws.

\subsection{Beilinson--Flach elements}
We follow \cite[Section~3.2]{CLW} closely, making the modifications needed for the $p$-ordinary case introduced in \cite[Section~5.2]{BSTW}.

Let $\alpha$ and $\beta$ be the roots of the Hecke polynomial $X^2-a_p(f)X+p^{k-1}$, and for $\xi\in\{\alpha,\beta\}$ let $f^\xi\in S_k(\Gamma_0(Np))$ be the $p$-stabilization of $f$ with $U_p$-eigenvalue $\xi$. We have assumed that $\alpha,\beta\in \calO$. Recall from Section \ref{sec:modforms} the $\calO$-lattice $T_f\subset V_f$. Working at level $Np$, we define $T_{f^\xi}\subset V_{f^\xi}$ as above, write $T_f^*={\rm Hom}_{\calO}(T_f,\calO)$, and define $T_{f^\xi}^*$ similarly.

We consider the following two Hida families and their specialisations.

(1) The form $f^{\alpha}$ is a specialization of a Hida family $\bff$ in the sense of \cite[Section 7]{KLZ}. Let $M(\bff)^{\ast}$ be the $G_{\QQ}$-module defined in \cite[Definition 7.2.5]{KLZ}. As explained in \cite[Section 7.3]{KLZ}, there is a specialisation map 
\[
\sp_{f^{\alpha}}: M(\bff)^{\ast} \otimes_{\Lambda_{\bff}} \calO \twoheadrightarrow T_{f^{\alpha}}^{\ast},
\] 
where $\Lambda_{\bff}$ is a localisation of Hida's ordinary Hecke algebra $\frh_{Np^{\infty}}^{\ord}$ at the maximal ideal corresponding to $\bff$, and the tensor product is with respect to the local $\calO$-homomorphism $\Lambda_{\bff} \rightarrow \calO$ that sends $T_{\ell}^{\prime}$ to $a_{\ell}(f)$ for all $\ell \neq p$ and maps $U_{p}^{\prime}$ to $\alpha$ (see \cite[Definition 2.4.3]{KLZ} and also \cite[Section 4.2.2]{BSTW} for the convention of Hecke operators).

(2) The canonical CM family $\bfh_{\frp}$ in Section \ref{sec:CM} corresponds to a branch (in the sense of \cite[Section 7.5]{KLZ}) of a Hida family $\bfh$ of tame level $D_{K}$. It follows from the definitions of $\widetilde{\TT}_{\bfh_{\frp}}$ and $M(\bfh)^{\ast}$ that there is a specialisation map
\[
\sp_{\bfh_{\frp}}: M(\bfh)^{\ast} \otimes_{\Lambda_{\bfh}} \Lambda_{\frp} \twoheadrightarrow \widetilde{\TT}_{\bfh_{\frp}},
\]
and the tensor product is with respect to the map induced by $\varphi: \frh_{D_{K}p^{\infty}} \rightarrow \Lambda_{\frp}$ from \cite[Section 4.2.4]{BSTW}.

Replacing $\ZZ_p(1)$ by the $p$-adic \'{e}tale sheaf ${\rm TSym}^{k-2}\scH_r(1)$ on $X_1(Mp^r)$ and $Y_1(Mp^r)$ introduced in \cite[\S{2.3}]{KLZ}, the inverse limits  \eqref{eq:big-H} define the $\frh^{\ord}_{Mp^\infty}[G_\QQ]$- and $\mathfrak{H}^{\ord}_{Mp^\infty}[G_\QQ]$-modules $H_{\ord}^1(Mp^{\infty})^{[k-2]}$ and $\widetilde{H}_{\ord}^1(Mp^\infty)^{[k-2]}$. It follows from \cite[Theorem 4.5.1, Remark 4.5.3]{KLZ} that these are identified with the twists of $H_{\ord}^1(Mp^{\infty})$ and $\widetilde{H}_{\ord}^1(Mp^\infty)$, respectively, by the $\ZZ_p$-linear twisting map 
\begin{equation}\label{eq:moment}
m_{k-2}:\Lambda_D\rightarrow\Lambda_D
\end{equation}
given by $z\mapsto z^{2-k}[z]$ for $z\in\ZZ_p^\times$. From these modules, we define $\TT_{\bfh_\frp}^{[k-2]}$ and $\widetilde{\TT}_{\bfh_\frp}^{[k-2]}$ by tensoring with $\varphi_v$ as in \eqref{def:CM-Tate-lattice}, and let $\TT_{\bfh_\frp}^{\square,[k-2]}$ and $\widetilde{\TT}_{\bfh_\frp}^{\square,[k-2]}$ denote the corresponding quotients by their torsion submodules.

We fix $M := N D_{K}$ and an integer $c>1$ with $(c,6pM)=1$, and let
\[
{}_{c} \cBF^{\bff, \bfh}_1 \in \rmH^{1}(\ZZ[1/p], M(\bff)^{\ast} \hotimes M(\bfh)^{\ast} \hotimes \tilLambda^{\iota})
\]
be the class associated in \cite[Section 8.1]{KLZ} with the Hida families $\bff$ and $\bfh$. Let 
\begin{equation}\label{eq:BF-unb}
_c \cBF^{\alpha,\bfh_{\frp}}\in \rmH^1(\ZZ[1/p],T_{f^\alpha}^*\hotimes\bigl(\widetilde{\TT}_{\bfh_\frp}^{\square,[k-2]}\bigr)\hotimes \tilLambda^{\iota})
\end{equation}
be the image of ${}_{c} \cBF^{\bff, \bfh}_1$ under the specialization maps $\sp_{f^{\alpha}}$ and $\sp_{\bfh_{\frp}}$.

By \cite[Proposition 7.3.1]{KLZ}, there is an isomorphism $({\rm Pr}^\alpha)_*:T_{f^\alpha}^*\xrightarrow{\sim}T_f^*$ induced by a $p$-stabilisation map ${\rm Pr}^\alpha$; we shall therefore view the Beilinson--Flach elements \eqref{eq:BF-unb} 
as classes in
\[
_c \cBF^{\alpha,\bfh_{\frp}} \in \rmH^1(\ZZ[1/p],T_{f}^*\hotimes(\widetilde{\TT}_{\bfh_\frp}^{\square,[k-2]})\hotimes \tilLambda^\iota).
\] 
Using the further identification $T_f^{\ast} \cong T$ (since $f$ has trivial nebentypus, so its complex conjugate $\bar{f}$ is $f$ itself), we have
\[
_c \cBF^{\alpha,\bfh_{\frp}} \in \rmH^1(\ZZ[1/p], T \hotimes(\widetilde{\TT}_{\bfh_\frp}^{\square,[k-2]})\hotimes \tilLambda^\iota)
\]

Let $\scR := \calO \hotimes_{\ZZ_p} \Lambda_{\frp} \hotimes \Lambda_{\cyc}$. As observed in \cite[\S{5.3.3}]{BSTW}, we may find $c$ as above such that 
\[
\mathbf{c}:=(c^2-\epsilon_K(c)\otimes\langle c\rangle\gamma_v^{-r_c})\otimes\gamma_{\rm cyc}^{2r_c} \in \scR
\] 
is invertible, where $r_c=\log_p(c)$. After choosing such a $c$, we set
\[
\cBF^{\alpha,\bfh_{\frp}}:=\mathbf{c}^{-1}\cdot{}_c\cBF^{\alpha,\bfh_{\frp},\eta}\in\rmH^1(\ZZ[1/p],T\hotimes(\widetilde{\TT}_{\bfh_\frp}^{\square, [k-2]})\hotimes \tilLambda^\iota),
\]
which is independent of $c$.

After twisting by the inverse of the character $m_{k-2}$ of \eqref{eq:moment}, we shall henceforth view $_c \cBF^{\alpha,\bfh_{\frp}} \in \rmH^1(\ZZ[1/p],T \hotimes(\widetilde{\TT}_{\bfh_\frp}^{\square})\hotimes \tilLambda^\iota)$. For a tame character $\eta: \Delta \rightarrow \calO^{\times}$, we write its projection along $\eta$ as
\begin{equation}\label{eq:BF-eta}
\cBF^{\alpha,\bfh_{\frp}, \eta}\in \rmH^1(\ZZ[1/p],T \hotimes\bigl(\widetilde{\TT}_{\bfh_\frp}^{\square}\bigr)\hotimes \Lambda_{\cyc}^{\iota}).
\end{equation}

The following proposition records the local properties of $\cBF^{\alpha,\bfh_{\frp}}$. It follows from \cite[Proposition 8.17]{KLZ} which applies to modular forms in general and is recorded in \cite[Lemmas 5.1 and 5.2, Corollary 5.3]{BSTW} for the special case of modular forms attached to elliptic curves.

\begin{proposition} \label{prop:local_BF}
Let $\loc_{p}: \rmH^1(\ZZ[1/p],T\hotimes(\widetilde{\TT}_{\bfh_\frp}^{\square})\hotimes \Lambda_{\cyc}^\iota) \rightarrow \rmH^1(\QQ_p,T\hotimes(\widetilde{\TT}_{\bfh_\frp}^{\square})\hotimes \Lambda_{\cyc}^\iota)$ be the localization map at $p$. Then the following statements hold.
    \begin{enumerate}[label = \rm (\arabic*)]
        \item The image of $\loc_{p}({}_{c}\cBF^{\alpha,\bfh_{\frp},\eta})$ in  $\rmH^1(\QQ_p, \Fil^{-} T \hotimes(\Fil^{-} \widetilde{\TT}_{\bfh_\frp}) \hotimes \Lambda_{\cyc}^\iota)$ is zero.
        \item The image of $\loc_{p}({}_{c}\cBF^{\alpha,\bfh_{\frp},\eta})$ in  $\rmH^1(\QQ_p,T\hotimes(\Fil^{-} \widetilde{\TT}_{\bfh_\frp}) \hotimes \Lambda_{\cyc}^\iota)$ is contained in 
        \[ \rmH^1(\QQ_p, \Fil^{+}T \hotimes(\Fil^{-} \widetilde{\TT}_{\bfh_\frp}) \hotimes \Lambda_{\cyc}^\iota).
        \]
        In particular, ${}_{c}\cBF^{\alpha,\bfh_{\frp},\eta} \in \rmH^{1}_{\rel, \ord}(\ZZ[1/p], T \hotimes \widetilde{\TT}_{\bfh_\frp}^{\square}) \hotimes \Lambda_{\cyc}^\iota)$
        \item The image of $\loc_{p}({}_{c}\cBF^{\alpha,\bfh_{\frp},\eta})$ in  $\rmH^1(\QQ_p,\Fil^{-} T \hotimes(\widetilde{\TT}_{\bfh_\frp}^{\square}) \hotimes \Lambda_{\cyc}^\iota)$ is contained in 
        \[ \rmH^1(\QQ_p, \Fil^{-}T \hotimes(\Fil^{+} \widetilde{\TT}_{\bfh_\frp}) \hotimes \Lambda_{\cyc}^\iota).
        \]
    \end{enumerate}
\end{proposition}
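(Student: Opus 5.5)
The plan is to deduce all three statements from the big-family result \cite[Proposition 8.1.7]{KLZ} for ${}_c\cBF^{\bff,\bfh}_1$, and then push it through the specialisation maps $\sp_{f^\alpha}$ and $\sp_{\bfh_\frp}$, the isomorphism $(\Pr^\alpha)_*$, the moment twist $m_{k-2}^{-1}$, and the $\eta$-projection. This follows the pattern of \cite[Lemmas 5.1, 5.2 and Corollary 5.3]{BSTW}.

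\emph{Step 1.} For two ordinary Hida families, \cite[Proposition 8.1.7]{KLZ} says that $\loc_p({}_c\cBF^{\bff,\bfh}_1)$ lies in the image of
\[
\rmH^1(\QQ_p,\calF^{++}),
\]
where $\calF^{++}$ is the submodule of $M(\bff)^*\hotimes M(\bfh)^*\hotimes\tilLambda^\iota$ generated by $\Fil^+M(\bff)^*\hotimes M(\bfh)^*$ and $M(\bff)^*\hotimes\Fil^+M(\bfh)^*$. In particular, its image in $\rmH^1(\QQ_p,\Fil^-M(\bff)^*\hotimes\Fil^-M(\bfh)^*\hotimes\tilLambda^\iota)$ vanishes.

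\emph{Step 2.} I would check that the specialisation maps respect the Ohta filtrations:
\[
\sp_{f^\alpha}(\Fil^{\pm}M(\bff)^*)=\Fil^{\pm}T_{f^\alpha}^*, \qquad \sp_{\bfh_\frp}\colon \Fil^+M(\bfh)^*\otimes\Lambda_\frp\to\Fil^+\tilTT_{\bfh_\frp}.
\]
Under $(\Pr^\alpha)_*$ and $T_f^*\cong T$, the filtration on $T_{f^\alpha}^*$ corresponds to the ordinary filtration \eqref{eq:ordinary-filtration}. This holds because the unit-root line is characterised by unramifiedness, and the $p$-stabilisation is $G_{\QQ_p}$-equivariant. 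The moment twist and the $\eta$-projection are $G_{\QQ_p}$-equivariant and preserve the filtrations. Functoriality of $\rmH^1(\QQ_p,-)$ then gives (1).

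\emph{Step 3.} For (2), I would use the exact sequence
\[
0\to\Fil^+T\hotimes\Fil^-\tilTT\to T\hotimes\Fil^-\tilTT\to\Fil^-T\hotimes\Fil^-\tilTT\to0,
\]
with the $\Lambda_\cyc^\iota$-twist understood throughout. By (1), the image of the class in the left-hand cohomology lies in the image of $\rmH^1(\QQ_p,\Fil^+T\hotimes\Fil^-\tilTT\hotimes\Lambda_\cyc^\iota)$. The containment is an actual statement about submodules, not just about images. To justify this I need injectivity of the map on $\rmH^1$, which follows from the vanishing of $\rmH^0(\QQ_p,\Fil^-T\hotimes\Fil^-\tilTT\hotimes\Lambda_\cyc^\iota)$. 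This vanishing holds because $\Lambda_\cyc^\iota$ has no nonzero $G_{\QQ_p}$-invariants after tensoring with a finitely generated module on which the cyclotomic variable acts nontrivially.

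For the "in particular" clause, restrict to $G_K$ and decompose $\tilTT^\square$ along $\frp,\barfrp$ via Theorem \ref{thm:CM-lattices}(2). The relaxed condition at $\frp$ is automatic. At $\barfrp$, the ordinary condition is exactly the statement that the image in $\rmH^1(K_{\barfrp},\Fil^-T\hotimes\tilTT^\square\hotimes\Lambda_\cyc^\iota)$ vanishes. This uses that $\Fil^-\tilTT_{\bfh_\frp}$ restricted to $G_{K_{\barfrp}}$ recovers the appropriate summand.

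\emph{Step 4.} Statement (3) is the analogous argument with the roles of the two families swapped. I would use the sequence
\[
0\to\Fil^-T\hotimes\Fil^+\tilTT\to\Fil^-T\hotimes\tilTT^\square\to\Fil^-T\hotimes\Fil^-\tilTT\to0,
\]
together with (1) and the same $\rmH^0$-vanishing for injectivity.

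The main obstacle is Step 2 for $\bfh$. The CM family's lattice $\tilTT^\square$ is only known up to the reflexive closure and torsion quotients (Theorem \ref{thm:CM-lattices}, Lemma \ref{lem:vanishing_H0}), so one must check that the map from $M(\bfh)^*$ lands compatibly in $\Fil^\pm\tilTT_{\bfh_\frp}$ after killing torsion. I would handle this by reproducing \cite[\S5.2]{BSTW} and using Ohta's identification $\Fil^+\widetilde{\rmH}^1_\ord=\Fil^+\rmH^1_\ord$.
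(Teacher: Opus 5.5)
Your proposal is correct and is essentially the paper's argument: specialise \cite[Proposition 8.1.7]{KLZ} along $f^{\alpha}$ and $\bfh_{\frp}$, compatibly with Ohta's filtrations, to get (1). Then deduce (2) and (3) from (1) via the vanishing of $\rmH^{0}(\QQ_p,\Fil^{-}T\hotimes\Fil^{-}\widetilde{\TT}_{\bfh_{\frp}}\hotimes\Lambda_{\cyc}^{\iota})$, exactly as in \cite[Lemmas 5.1, 5.2 and Corollary 5.3]{BSTW}.

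There are two small corrections. First, that $\rmH^{0}$ vanishes because \emph{both} $\Fil^{-}T$ and $\Fil^{-}\widetilde{\TT}_{\bfh_{\frp}}$ are unramified; so an inertia element mapping onto $\gamma_{\cyc}$ acts only through $\Lambda_{\cyc}^{\iota}$, by $\gamma_{\cyc}^{-1}$, and $\gamma_{\cyc}^{-1}-1$ is a nonzerodivisor there. Second, the ``in particular'' clause needs no splitting of $\widetilde{\TT}_{\bfh_{\frp}}^{\square}$ (Theorem \ref{thm:CM-lattices}(2) gives the induced decomposition only for $\TT_{\bfh_{\frp}}^{\square}$): with these coefficients the relaxed--ordinary condition is by definition (cf.\ \eqref{eq:define_Q_relord}) just the vanishing asserted in (1).
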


\subsection{Explicit reciprocity laws (I)} \label{sec:ERL-I}
Since the explicit reciprocity laws of \cite{KLZ} hold for the Hida families $\bff$ and $\bfh$ before specialization at $f^{\alpha}$, the same arguments carry over; see \cite[Section~5.3]{BSTW}. We now record the corresponding higher-weight results and provide proofs for the reader's convenience.

We consider two sets $\Xi^{\one}$ and $\Xi^{\two}$ of continuous characters $\xi: \Gamma_{\frp} \hotimes \Gamma_{\cyc} \rightarrow \barQQ_p^{\times}$. Each such character determines a continuous $\calO$-homomorphism $\phi_\chi: \scR \rightarrow \barQQ_p$, where $\calO$ is identified with the completion of $\iota_p(\calO)$ inside $\barQQ_p$. Note that any character $\chi: \Gamma_{\frp} \hotimes \Gamma \rightarrow \barQQ_p^{\times}$ is determined by the pair $(\chi_1, \chi_2) := (\chi|_{\Gamma_{\frp}}, \chi|_{\Gamma_{\cyc}})$. We define
 \begin{equation}\label{eq:ch-1}
 \Xi^{\one}:= \{ \chi = (1,\chi_2) \ : \ \text{$\chi_2 = \lrangle{\epsilon}^{j} \psi_\zeta$ for some root of unity $\zeta$ of exact order $p^{t}$ and $0 \leq j \leq k-2$}\},
 \end{equation} 
where $\psi_{\zeta}: G_{\QQ} \twoheadrightarrow \Gamma \rightarrow \barQQ_p^{\times}$ is the character determined by $\psi_{\zeta}(\gamma) = \zeta$, and $\epsilon = \omega \lrangle{\epsilon}$ is the Teichmüller decomposition of the $p$-adic cyclotomic character $\epsilon$.

\subsubsection{The Coleman map}
Following the construction in \cite[Sections 8, 10]{KLZ} and particularly \cite[Section 5.3.1]{BSTW}, there is an injective $\scR$-morphism (denoted by $\scC$ in \cite[Equation (5.5)]{BSTW})
\[
\Cole_{\frp}: \rmH^{1}(\QQ_p, \Fil^{-}T \hotimes (\Fil^{+} \widetilde{\TT}_{\bfh_\frp}) \hotimes \Lambda_{\cyc}^\iota) \rightarrow J_{f} \otimes_{\calO} \scR \subseteq \scR[1/\varpi],
\]
where, under our running hypothesis \eqref{eq:irred}, $J_{f}$ is the fractional ideal of $\calO$ given by
\[
J_{f} = \dfrac{1}{\alpha(1-p^{k-2}\alpha^{-2})(1-p^{k-1}\alpha^{-2}) \lambda_{N}(f) c_f} \calO
\]
where $c_{f}$ is the congruence number of $f$ as in \cite[Section 2.2.3]{BSTW}, and $\lambda_{N}(f) = \pm 1$ is the eigenvalue of $f$ for the usual Atkin--Lehner involution $w_N$ of level $N$. More explicitly, using the notation in \cite[Section 10]{KLZ} (see the proof of \cite[Theorem 5.5]{BSTW}), the Coleman map $\Cole_{\frp}$ can be described as
\begin{equation} \label{eq:coleman_definition}
\Cole_{\frp} \left( \loc_{p} \cBF^{\alpha, \bfh_{\frp}, \eta} \right) = \sp \left(\lrangle{\calL\left( \cBF^{\bff, \bfh_{\frp}} \right), \eta_{\bff} \otimes \omega_{\bfh_{\frp}}} \right)
\end{equation}
where $\sp$ is the specialization map
\[
\sp: I_{\bff} \hotimes \Lambda_{\bfh_{\frp}}^{\cusp} \twoheadrightarrow J_{f} \hotimes \Lambda_{\frp} \hotimes \Lambda_{\cyc} = J_{f} \otimes_{\calO} \scR
\]
induced by the specialization map $\Lambda_{\bff} \rightarrow \calO$ corresponding to $f_{\alpha}$ and the projection $\Lambda_{\bfh_{\frp}}^{\cusp} \twoheadrightarrow \Lambda_{\bfh_{\frp}} = \Lambda_{\frp}$.

We renormalize $\Cole_{\frp}$ by
\[
\Cole_{\frp}^{\inte} := c_{f} \lambda_{N}(f) \cdot (1-p^{k-2}\alpha^{-2})(1-p^{k-1}\alpha^{-2}) \cdot \Cole_{\frp}: \rmH^{1}(\QQ_p, \Fil^{-}T \hotimes (\Fil^{+} \widetilde{\TT}_{\bfh_\frp}) \hotimes \Lambda_{\cyc}^\iota) \rightarrow \scR.
\]
We define the \emph{first $p$-adic $L$-function of $f$ over $K$} as
\[
\calL_{p}^{\one}(\cBF^{\alpha, \bfh_{\frp},\eta}) := \Cole_{\frp}^{\inte}(\loc_{p}(\cBF^{\alpha, \bfh_{\frp},\eta})) \in \scR.
\]
This is well-defined by Proposition \ref{prop:local_BF}(3).

\subsubsection{Explicit reciprocity law (I)}
This $p$-adic $L$-function is related to $L$-values by the following explicit reciprocity law, which follows from \cite[Theorems 2.7.4, 7.7.2, 10.2.2]{KLZ}. It is stated in \cite[Theorem 5.10]{BSTW}. The same proof gives the following theorem (see also \cite[Theorem 3.6 and Remark 4.6]{CLW}).

\begin{theorem}[Explicit Reciprocity Law I]\label{ERLI-thm}
The element $\calL_{p}^{\one}(\cBF^{\alpha, \bfh_{\frp}}) \in \scR$ has the following interpolation property: for $\chi\in \Xi^{\one}$, 
\begin{multline*}
\phi_\chi(\calL_{p}^{\one}(\cBF^{\alpha, \bfh_{\frp},\eta})) = {\calE_{p,\alpha,\eta}(f,\chi)^2} \dfrac{(-1)^{j+1} (j!)^2}{2^{2j-k+1} (-i)^{k-1}} \\ \times \dfrac{L(f \otimes \omega^{j}\eta \psi_{\zeta}^{-1}, j+1) L((f \otimes \epsilon_{K}) \otimes \omega^{j}\eta \psi_{\zeta}^{-1}, j+1)}{\frg(\omega^{j}\eta\psi_{\zeta}^{-1})^{2} \pi^{2j+2-k} \Omega_{f}^{\can}}
\end{multline*}
where $\Omega_{f}^{\can} := c_{f}^{-1} (4\pi)^{k}\lrangle{f,f}$ is Hida's canonical period (see \cite[Section 6]{CastellaHsieh}), and $\calE_{p,\alpha,\eta}(f,\chi)$ is the Euler factor defined in \eqref{eq:Euler-I}.
\end{theorem}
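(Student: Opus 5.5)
The plan is to unwind the definition \eqref{eq:coleman_definition} of $\Cole_{\frp}$ and reduce the interpolation formula to the three-variable explicit reciprocity law of \cite{KLZ}, specialised first along $\bff$ at $f^\alpha$ and then along $\bfh_\frp$ at the weight-one point. By Proposition \ref{prop:local_BF}(3), $\loc_p(\cBF^{\alpha,\bfh_\frp,\eta})$ lies in the source of $\Cole_\frp$. By \eqref{eq:coleman_definition}, $\calL_p^{\one}$ is, up to the renormalising factor $c_f\lambda_N(f)(1-p^{k-2}\alpha^{-2})(1-p^{k-1}\alpha^{-2})$, the specialisation of $\lrangle{\calL(\cBF^{\bff,\bfh_\frp}),\eta_{\bff}\otimes\omega_{\bfh_\frp}}$. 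Then \cite[Theorem 10.2.2]{KLZ} identifies this pairing with Hida's three-variable Rankin--Selberg $p$-adic $L$-function $L_p(\bff,\bfh,\mathbf{s})$, evaluated on the $\eta$-isotypic component. Consequently, for $\chi=(1,\chi_2)\in\Xi^{\one}$, $\phi_\chi(\calL_p^{\one})$ equals, up to these explicit constants, the value of $L_p$ at $\bff\mapsto f^\alpha$ (weight $k$), $\bfh\mapsto$ its weight-one specialisation, and the cyclotomic variable $\omega^j\eta\psi_\zeta^{-1}\lrangle{\epsilon}^j$.

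Next I would identify the weight-one specialisation of $\bfh_\frp$ at $\chi_1=1$. Via Theorem \ref{thm:CM-lattices}(2) and the construction of the canonical CM family in \cite[Section 4]{BSTW}, it is the $p$-stabilised weight-one theta series of the trivial character, that is, the Eisenstein series $E_1(1,\epsilon_K)$. Its associated Galois representation is $\Ind_K^\QQ 1=1\oplus\epsilon_K$, so the Rankin--Selberg $L$-function factors as
\[
L(f\otimes h_1\otimes\chi_2^{-1},s)=L(f\otimes\chi_2^{-1},s)\,L(f\otimes\epsilon_K\chi_2^{-1},s).
\]
The critical range for the pair (weight $k$, weight $1$) is exactly $0\le j\le k-2$. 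This explains the definition of $\Xi^{\one}$ and produces the product of two central-range values at $s=j+1$.

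I then apply Hida's interpolation formula, as recorded in \cite[Theorem 2.7.4]{KLZ}, at these points. It gives the archimedean factor $(j!)^2$, powers of $2$, $\pi$ and $i$, the Gauss sum $\frg(\omega^j\eta\psi_\zeta^{-1})^2$ (squared because both factors are twisted by the same character), the Petersson norm $\lrangle{f,f}$ in the denominator, and the Euler factor at $p$. Absorbing $c_f$ turns $\lrangle{f,f}$ into $\Omega_f^{\can}$. Splitting the four-term Euler factor into the two factors for $f$ and $f\otimes\epsilon_K$ (using $\epsilon_K(p)=1$ since $p$ splits) gives $\calE_{p,\alpha,\eta}(f,\chi)^2$. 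The normalising factors $(1-p^{k-2}\alpha^{-2})(1-p^{k-1}\alpha^{-2})$ and $\lambda_N(f)$ cancel against the corresponding terms in \cite[Theorem 7.7.2]{KLZ} and in the Atkin--Lehner pairing $\eta_{\bff}$.

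The main obstacle is the bookkeeping of constants: the exact powers $2^{2j-k+1}$, $(-i)^{k-1}$, $\pi^{2j+2-k}$ and the sign $(-1)^{j+1}$. These depend on the normalisations of $\eta_{\bff}$, $\omega_{\bfh}$, the twist by $m_{k-2}^{-1}$, and $\mathbf{c}^{-1}$. For this I would follow the computation in the proof of \cite[Theorem 5.10]{BSTW} line by line, replacing weight $2$ by weight $k$. I would then cross-check the result against \cite[Theorem 3.6, Remark 4.6]{CLW}, which carries out the same specialisation in higher weight for the $\beta$-stabilisation; the only change should be replacing $\beta$ by $\alpha$.
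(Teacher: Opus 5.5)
Your proposal is correct and follows essentially the same route as the paper. The paper also unwinds \eqref{eq:coleman_definition}, uses \cite[Theorem 10.2.2]{KLZ} to identify the motivic three-variable $p$-adic $L$-function with the Hida--Panchishkin one, and specialises the interpolation formula of \cite[Theorem 2.7.4]{KLZ} at $f^{\alpha}$. It then cancels the factors $(1-p^{k-2}\alpha^{-2})(1-p^{k-1}\alpha^{-2})$ and $\lambda_N(f)$, absorbs $c_f$ into $\Omega_f^{\can}$, and factors the Rankin--Selberg $L$-value using $\bfh_{\frp,\chi_1}^{\circ}=E_1(1,\epsilon_K)$. In the paper the sign $(-1)^{j+1}$ comes from the factor $(-1)^{1+\bfj}$ in the definition \eqref{eq:motivic_1_defn} of the motivic $L$-function.
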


\begin{proof}
 This follows essentially from \cite[Theorems 2.7.4, 7.7.2, 10.2.2]{KLZ}. Let $\calL_{p}^{\HP}(\bff, \bfh_{\frp}, 1+\bfj)$ be the three-variable $p$-adic Rankin $L$-function in \cite[Theorems 2.7.4, 7.7.2]{KLZ} of Hida and Panchishkin. On the other hand, let
 \begin{equation} \label{eq:motivic_1_defn}
 \calL_{p}^{\mot}(\bff, \bfh_{\frp}, 1+\bfj) := (-1)^{1+\bfj} \lambda_{N}(\bff) \lrangle{\calL\left( \cBF^{\bff, \bfh_{\frp}} \right), \eta_{\bff} \otimes \omega_{\bfh_{\frp}}}
 \end{equation}
 be the three-variable motivic $p$-adic $L$-function defined as in \cite[Theorem 10.2.2]{KLZ}. It then follows from \cite[Theorem 10.2.2]{KLZ} that
 \[
 \calL_{p}^{\mot}(\bff, \bfh_{\frp}, 1+\bfj) = \calL_{p}^{\HP}(\bff, \bfh_{\frp}, 1+\bfj).
 \]
 Here $\calL_{p}^{\HP}(\bff, \bfh_{\frp}, 1+\bfj)$ is the Rankin--Selberg $p$-adic $L$-function defined by Hida and Panchishkin, with the interpolation property recorded in \cite[Theorem 2.7.4]{KLZ}. Let $\calL_{p}^{\mot}(f^{\alpha}, \bfh_{\frp}, 1+\bfj)$ denote the image of $\calL_{p}^{\mot}(\bff, \bfh_{\frp}, 1+\bfj)$ under the specialization map at $f^{\alpha}$. Then, for all $\eta: \Delta \rightarrow \calO^{\times}$ and $\chi \in \Xi^{\one}$ as in the statement, we have 
 \begin{multline}   \label{eq:mot_interpolation_1}  
\phi_{\eta\chi}\left( \calL_{p}^{\mot}(f^{\alpha}, \bfh_{\frp}, 1+\bfj) \right) = \dfrac{\calE_{p,\alpha,\eta}(f,\chi)^2}{(1-\alpha^{-2}p^{k-2})(1-\alpha^{-2}p^{k-1})} \dfrac{(j!)^{2}}{2^{2j+k+1} (-i)^{k-1}} \\ \times \dfrac{L(f \otimes \omega^{j}\eta \psi_{\zeta}^{-1}, \bfh_{\frp, \chi_1}^{\circ}, j+1)}{\frg(\omega^{j}\eta\psi_{\zeta}^{-1})^{2} \pi^{2j+2} \lrangle{f,f}_{N}}
 \end{multline}
where
\begin{equation} \label{eq:Euler-I}
    \calE_{p,\xi,\eta}(f,\chi) := \dfrac{p^{t^{\prime}(j+1)}}{\xi^{t^{\prime}}} \cdot \left(1 - \dfrac{\psi_{\zeta}^{-1}\eta\omega^{j}(p) p^{k-2-j}}{\xi} \right) \left(1 - \dfrac{\psi_{\zeta}\eta^{-1}\omega^{-j}(p) p^{j}}{\xi} \right)
\end{equation}
with $t^{\prime} = 0$ or $t+1$ according to whether or not $\eta = \omega^{-j}$ and $t=0$.

Combining \eqref{eq:coleman_definition} with \eqref{eq:motivic_1_defn}, the interpolation formula \eqref{eq:mot_interpolation_1} gives
\begin{align*} 
\phi_{\chi} \left( \calL_{p}^{\one}(\cBF^{\alpha, \bfh_{\frp}, \eta}) \right) &= c_{f} \cdot {\calE_{p,\alpha,\eta}(f,\chi)^2} \dfrac{(-1)^{j+1}(j!)^{2}}{2^{2j+k+1} (-i)^{k-1}} \dfrac{L(f \otimes \omega^{j}\eta \psi_{\zeta}^{-1}, \bfh_{\frp, \chi_1}^{\circ}, j+1)}{\frg(\omega^{j}\eta\psi_{\zeta}^{-1})^{2} \pi^{2j+2} \lrangle{f,f}_{N}} \\
&= {\calE_{p,\alpha,\eta}(f,\chi)^2} \dfrac{(-1)^{j+1}(j!)^{2}}{2^{2j-k+1} (-i)^{k-1}} \dfrac{L(f \otimes \omega^{j}\eta \psi_{\zeta}^{-1}, \bfh_{\frp, \chi_1}^{\circ}, j+1)}{\frg(\omega^{j}\eta\psi_{\zeta}^{-1})^{2} \pi^{2j+2-k} \Omega_{f}^{\can}}
\end{align*}
Finally, as in \cite[Remark 4.6]{CLW}, we note that the specialization of $\bfh_{\frp}$ at the trivial character is the ordinary $p$-stabilization of the weight one Eisenstein series $\bfh_{\frp, \chi_1}^{\circ} = E_1(1, \epsilon_{K})$. Thus it follows that
\[
L(f \otimes \omega^{j}\eta \psi_{\zeta}^{-1}, \bfh_{\frp, \chi_1}^{\circ}, j+1) = L(f \otimes \omega^{j}\eta \psi_{\zeta}^{-1}, j+1) L((f \otimes \epsilon_{K}) \otimes \omega^{j}\eta \psi_{\zeta}^{-1}, j+1)
\]
The theorem is thus proved.
\end{proof}

\subsubsection{Cyclotomic descents}
We first recall the cyclotomic $p$-adic $L$-function of $f$ associated with the cyclotomic $\ZZ_p$-extension $\QQ_{\infty}/\QQ$. We use the $p$-adic $L$-function defined by the explicit reciprocity law for the Beilinson--Kato zeta element in \cite[Theorem 16.6]{Kato}, reformulated in \cite[Theorem 2.15]{CLW} (which is denoted by $L_{\alpha}(f)^{\eta}$ in \textit{op.cit.}).

\begin{theorem}[Cyclotomic $p$-adic $L$-function] \label{thm:cyclotomic_p-adic_L-function}
    There is an element $\calL_p(f/\QQ)^{\eta} \in \Lambda$ satisfying the following interpolation property: for all characters $\chi = \lrangle{\epsilon}^{j} \psi_{\zeta}$ of $\Gamma$ with $0 \leq j \leq k-2$ and $\zeta$ a primitive $p^{t}$-th root of unity for some $t \geq 0$, we have
    \[
    \phi_{\chi}(\calL_{p}(f/\QQ)^{\eta}) = \calE_{p,\alpha,\eta}(f,\chi) \dfrac{L(f \otimes \psi_{\zeta}^{-1}\eta\omega^{j}, j+1)}{(-2\pi i)^{j+1} \frg(\psi_{\zeta}^{-1}\eta\omega^{j}) \Omega_{f}^{\pm}}. 
    \]
where $\Omega_{f}^{\pm}$ is the $p$-normalized period in \cite[Definition 2.11]{CLW}, with the sign determined by $\pm = (-1)^{j}$.
\end{theorem}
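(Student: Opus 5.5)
The construction will come from Kato's zeta element and his explicit reciprocity law, with a renormalisation of periods; no new Euler system is needed. By \cite[Theorem 12.5]{Kato}, for the ordinary root $\alpha$ there is the Beilinson--Kato element $\bfz_{\gamma}^{(p)} \in \rmH^{1}(\ZZ[1/p], T_{f} \otimes \tilLambda^{\iota})$. Here $\gamma \in V_{f}$ is chosen so that its $\pm$-components are $\Omega_{f}^{\pm}$-normalised generators of Kato's lattice (the $p$-normalised periods of \cite[Definition 2.11]{CLW}). Under the Perrin--Riou map for $V_f$ at $p$, its localisation at $p$ gives an element of $\tilLambda \otimes L$. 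We project this element to the $\eta$-component of $\tilLambda = \calO\lrbracket{\Delta \times \Gamma}$ and define $\calL_{p}(f/\QQ)^{\eta}$ as the result. Equivalently, we pair the dual-exponential image with the $\alpha$-eigenvector $\omega_{f}$ in $D_{\cris}(V_f)$. We then twist by $k-1$ to pass from $T_f$ to $T = T_f(k-1)$.

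The interpolation formula is Kato's explicit reciprocity law \cite[Theorem 16.6]{Kato}. For $\chi = \lrangle{\epsilon}^{j}\psi_{\zeta}$ with $0 \le j \le k-2$, the dual exponential of the $\chi\eta\omega^{j}$-twisted specialisation of $\bfz_{\gamma}$ equals, up to explicit factors, the critical value $L(f\otimes\psi_\zeta^{-1}\eta\omega^{j}, j+1)$ divided by $(2\pi i)^{j+1}$ times the period. Evaluating the Perrin--Riou/Coleman map at a character of conductor $p^{t}$ produces the factor $(p^{j+1}/\alpha)^{t'}$ for ramified characters, and the two Euler-like factors $(1 - \cdots/\alpha)$ for the unramified cases. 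These are exactly the ingredients assembled into $\calE_{p,\alpha,\eta}(f,\chi)$ in \eqref{eq:Euler-I}. The Gauss sum $\frg(\psi_\zeta^{-1}\eta\omega^j)$ comes from the usual twisted Birch lemma in Kato's computation. The sign $(-1)^j$ selects the period $\Omega_f^{\pm}$ because the twist by $\chi\eta\omega^{j}$ has parity determined by $j$, using that $\eta$ is trivial on $-1$ in the relevant branch; otherwise the same bookkeeping applies with $\eta(-1)$ included. This is precisely the reformulation in \cite[Theorem 2.15]{CLW}, so I would reduce to citing it after checking the normalisations match.

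Integrality, meaning that the element lies in $\Lambda$ rather than $\Lambda[1/p]$, should come from the ordinary setting. Since $\alpha$ is a unit, the Coleman map $\rmH^1(\QQ_p, \Fil^{-}T\otimes\Lambda^{\iota}) \to \Lambda$ is integral and $\Lambda$-linear. The zeta element lies in $\rmH^1(\ZZ[1/p], T\otimes\Lambda^\iota)$ because $T_f$ is Kato's lattice, and \eqref{eq:irred} (hence residual irreducibility over $\QQ$) makes the integral zeta element exist without denominators, as in \cite[Section 13]{Kato}.

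The main obstacle is the normalisation. The factor $c_{f}$, the choice of $\Omega_{f}^{\pm}$, and the signs $(-2\pi i)^{j+1}$ must be reconciled with Kato's conventions on $\gamma$ and with the Hodge--Tate twist $k-1$. Integrality of the Coleman map also needs the image of $\omega_f$ to generate the integral Dieudonné lattice. I would handle this by comparing directly with \cite[Theorem 2.15]{CLW}, whose conventions the paper adopts.
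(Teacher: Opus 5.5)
Your proposal follows essentially the paper's route: the paper gives no independent argument and simply takes $\calL_p(f/\QQ)^{\eta}$ to be the element produced from Kato's zeta element by \cite[Theorem 16.6]{Kato}, in the normalisation of \cite[Theorem 2.15]{CLW}, which is exactly where your sketch ends up. One caution about your sign heuristic: in Kato's reciprocity law the period attached to $L(f\otimes\psi, j+1)$ is $\Omega_f^{\pm}$ with $\pm=(-1)^{j}\psi(-1)$, not $\psi(-1)$ alone, so for $\psi=\psi_\zeta^{-1}\eta\omega^{j}$ the two factors $(-1)^j$ cancel and the sign is $\eta(-1)$ on the whole branch; you therefore have to read the sign convention off the cited normalisation rather than derive it from the parity of the twist, as you did.
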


We \emph{define} \emph{Perrin-Riou's $p$-adic $L$-function} of $f$ over $K$ as
\[
\calL_p^{\PR}(f/K)^{\eta} := \calL_{p}^{\one}(\cBF^{\alpha, \bfh_{\frp},\eta}) \in \scR.
\]
Its projection to the cyclotomic line recovers the cyclotomic $p$-adic $L$-functions of $f$ and of the quadratic twist $f^{K}$ over $\QQ$ (under the identification $\Gamma_{\cyc} \simeq \calO\lrbracket{\Gal(\QQ_{\cyc}/\QQ)}$).

\begin{proposition} \label{prop:cyclotomic_descent}
Let $I_{\ac}$ be the kernel of the natural projection $\phi_{\cyc}: \scR \twoheadrightarrow \Lambda_{\cyc}$. Suppose \eqref{eq:irred} holds. Then we have
    \[
    (\calL_p^{\PR}(f/K)^{\eta} \bmod{I_{\ac}}) = (\calL_p(f/\QQ)^{\eta} \calL_p(f^{K}/\QQ)^{\eta}).
    \]
\end{proposition}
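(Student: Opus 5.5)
Our plan is to compare interpolation properties on the cyclotomic line and conclude by density of the interpolated characters, as in the proof of \cite[Proposition 5.12]{BSTW}. The projection $\phi_{\cyc}:\scR\twoheadrightarrow\Lambda_{\cyc}$ kills $\gamma_{\frp}-1$, so for every $\chi=(1,\chi_2)\in\Xi^{\one}$ we have $\phi_\chi=\phi_{\chi_2}\circ\phi_{\cyc}$. Evaluating $\calL_p^{\PR}(f/K)^{\eta}\bmod I_{\ac}$ at $\chi_2=\lrangle{\epsilon}^{j}\psi_\zeta$ is therefore given by Theorem \ref{ERLI-thm}. We then compare this with $\phi_{\chi_2}\bigl(\calL_p(f/\QQ)^{\eta}\calL_p(f^K/\QQ)^{\eta}\bigr)$, computed from Theorem \ref{thm:cyclotomic_p-adic_L-function} applied to $f$ and to $f^{K}=f\otimes\epsilon_K$. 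The twist $f^K$ has level dividing $ND_K^2$, it is $p$-ordinary because $p$ splits in $K$, and its $U_p$-root is $\alpha\epsilon_K(p)=\alpha$.

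First, the Euler factors should match. Since $\epsilon_K(p)=1$, the factor $\calE_{p,\alpha,\eta}(f^K,\chi)$ equals $\calE_{p,\alpha,\eta}(f,\chi)$, so the product gives exactly $\calE_{p,\alpha,\eta}(f,\chi)^2$. The $L$-values also match: the product of $L(f\otimes\omega^j\eta\psi_\zeta^{-1},j+1)$ with $L(f^K\otimes\omega^j\eta\psi_\zeta^{-1},j+1)$ appears on both sides, and so does the Gauss sum squared $\frg(\omega^j\eta\psi_\zeta^{-1})^2$. The remaining discrepancy is the ratio
\[
\frac{(j!)^2(-1)^{j+1}(-2\pi i)^{2j+2}\,\Omega_f^{\pm}\Omega_{f^K}^{\pm}}{2^{2j-k+1}(-i)^{k-1}\pi^{2j+2-k}\Omega_f^{\can}}.
\]
Up to powers of $2$, $i$, $\pi$ and $(j!)^2$, which we will check combine into something independent of $j$ or into a $p$-adic unit, this reduces to $\Omega_f^{\pm}\Omega_{f^K}^{\pm}/\Omega_f^{\can}$.

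Note that $\epsilon_K(-1)=-1$, so the signs attached to $f$ and $f^K$ are opposite. We can therefore rewrite the ratio as $\Omega_f^{+}\Omega_f^{-}/\Omega_f^{\can}$, after identifying $\Omega_{f^K}^{\mp}$ with $\Omega_f^{\mp}$ up to a $p$-adic unit (the twist by $\epsilon_K$ has conductor prime to $p$, and the Gauss sum of $\epsilon_K$ is a unit at $p$). The key input, which we expect to be the main obstacle, is that $\Omega_f^{+}\Omega_f^{-}$ equals $\Omega_f^{\can}$ up to an element of $\calO^\times$. With the $p$-normalized periods of \cite[Definition 2.11]{CLW} attached to Kato's lattice, this follows from the comparison between $\lrangle{f,f}$, the congruence number $c_f$, and the cohomological pairing on $T_f$. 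It requires \eqref{eq:irred} via the Gorenstein and freeness results of Hida, Wiles and Diamond on the localized Hecke algebra and cohomology, as in \cite[Section 6]{CastellaHsieh} and \cite[Lemma 5.11]{BSTW}. We would cite this comparison rather than reprove it. We also need $\Omega_{f^K}^{\pm}\sim\Omega_f^{\pm}$, and for this we would invoke the same Gorenstein input for $f^K$, noting that $\barV_{f^K}=\barV_f\otimes\epsilon_K$ is also absolutely irreducible.

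Granting these, the two elements of $\Lambda_{\cyc}$ agree, up to multiplication by a fixed unit $u\in\calO^\times$, at all characters $\lrangle{\epsilon}^j\psi_\zeta$ with $t$ arbitrary. There are infinitely many such characters with $j$ fixed, so by the Weierstrass preparation theorem two elements of $\Lambda$ agreeing there are equal. Hence $\phi_{\cyc}(\calL_p^{\PR}(f/K)^{\eta})=u\cdot\calL_p(f/\QQ)^{\eta}\calL_p(f^K/\QQ)^{\eta}$, which gives the claimed equality of ideals.
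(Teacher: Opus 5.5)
Your strategy is the paper's. You restrict Theorem~\ref{ERLI-thm} to the cyclotomic line and compare it with the product of the formulas of Theorem~\ref{thm:cyclotomic_p-adic_L-function} for $f$ and $f^K$. Your checks that the Euler factors, $L$-values and Gauss sums match, using $\epsilon_K(p)=1$, are fine. This reduces the statement to period relations.

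However, your period bookkeeping is wrong in two places, and the two errors happen to cancel.

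\begin{itemize}
\item First, in Theorem~\ref{thm:cyclotomic_p-adic_L-function} the sign $\pm=(-1)^{j}$ is attached to the twisting character, not to the form. So the same sign occurs for $f$ and for $f^{K}$, exactly as in the ratio $\Omega_f^{\pm}\Omega_{f^K}^{\pm}/\Omega_f^{\can}$ you first displayed. Your claim that the signs are opposite is incorrect.
\item Second, the relation you propose to cite, $\Omega_{f^K}^{\pm}\sim\Omega_f^{\pm}$, is not the one that holds. Since $T_{f^K}\simeq T_f\otimes\epsilon_K$ and $\epsilon_K$ is odd, complex conjugation interchanges the $\pm$-eigenspaces. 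This gives $\Omega_{f^K}^{\pm}\sim_p\Omega_f^{\mp}$, using that $\frg(\epsilon_K)^2=-D_K$ is a $p$-adic unit because $p$ splits in $K$. This is the input the paper takes from \cite[Lemma 9.6]{SZ14}, alongside $\Omega_f^{\can}\sim_p\Omega_f^{+}\Omega_f^{-}$ from \cite[Section 4.4]{DDT}.
\end{itemize}

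The sign flip from $\epsilon_K(-1)=-1$ therefore belongs in the period comparison, not in the interpolation formula. A Gorenstein argument for $f^K$ would only compare $\Omega_{f^K}^{+}\Omega_{f^K}^{-}$ with $f^K$'s own Petersson norm and congruence number. It cannot produce a same-sign comparison with the individual periods of $f$. With the corrected pair of facts you get $\Omega_f^{\pm}\Omega_{f^K}^{\pm}\sim_p\Omega_f^{\pm}\Omega_f^{\mp}\sim_p\Omega_f^{\can}$, which is the identity you end up with, now correctly derived.

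A smaller point concerns the factor $(j!)^2$. Your remaining discrepancy contains it, and it depends on $j$ and is not a $p$-adic unit once $j\geq p$, which can occur when $k>p+1$. So agreement up to one fixed unit at all characters $\langle\epsilon\rangle^{j}\psi_\zeta$ does not follow from the displayed formulas. You do not need it, though: your Weierstrass preparation argument uses a single value of $j$. Fix $j=0$ and let $\zeta$ range over $\mu_{p^\infty}$; this gives infinitely many interpolation points, which suffices.
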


\begin{proof}
    This is proved in \cite[Equation (4.9)]{CLW} \footnote{This result also appears as \cite[Proposition 5.25]{BSTW}.}: compare the interpolation formula in Theorem \ref{ERLI-thm} and Theorem \ref{thm:cyclotomic_p-adic_L-function}, and note that
    \[
    \Omega_{f}^{\can} \sim_{p} \Omega_{f}^{+} \cdot \Omega_{f}^{-}, \quad \Omega_{f}^{\pm} \sim_{p} \Omega_{f^{K}}^{\mp}
    \]
    following \cite[Section 4.4]{DDT} and \cite[Lemma 9.6]{SZ14}.
\end{proof}

With this comparison theorem, we have the following nonvanishing result for the Perrin-Riou $p$-adic $L$-function $\calL_{p}^{\PR}(f/K)^{\eta}$.
\begin{corollary} \label{coro:cyclo_nonvanishing}
    We have $(\calL_{p}^{\PR}(f/K)^{\eta}) \bmod{I_{\ac}}) \neq 0$.
\end{corollary}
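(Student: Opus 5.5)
By Proposition \ref{prop:cyclotomic_descent}, the ideal generated by $\calL_{p}^{\PR}(f/K)^{\eta} \bmod I_{\ac}$ in $\Lambda_{\cyc}$ equals $(\calL_p(f/\QQ)^{\eta}\calL_p(f^{K}/\QQ)^{\eta})$. Since $\Lambda_{\cyc}\simeq\Lambda$ is an integral domain, the plan is to show that each of the two factors $\calL_p(f/\QQ)^{\eta}$ and $\calL_p(f^K/\QQ)^{\eta}$ is a nonzero element of $\Lambda$ (or of $\Lambda[1/p]$). Their product is then nonzero, so its reduction modulo $I_{\ac}$ is nonzero too.

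For each factor I will use the interpolation property of Theorem \ref{thm:cyclotomic_p-adic_L-function}. An element of $\Lambda$ that vanishes at a single character vanishes only there, so it suffices to exhibit one character $\chi=\lrangle{\epsilon}^{j}\psi_\zeta$ at which $\phi_\chi$ of the function is nonzero. Take $j=0$ and $\zeta$ of exact order $p^t$ with $t$ large. Then $t'=t+1\neq 0$, and the Euler factor
\[
\calE_{p,\alpha,\eta}(f,\chi)=\frac{p^{t'}}{\alpha^{t'}}\Bigl(1-\frac{\psi_\zeta^{-1}\eta(p)p^{k-2}}{\alpha}\Bigr)\Bigl(1-\frac{\psi_\zeta\eta^{-1}(p)}{\alpha}\Bigr)
\]
is nonzero. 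Indeed, the conductor of $\psi_\zeta^{-1}\eta$ is divisible by $p$ once $t\ge 1$, so its value at $p$ vanishes and both brackets equal $1$. The Gauss sum and the periods are also nonzero. It therefore remains to see that $L(f\otimes\psi_\zeta^{-1}\eta,1)\neq 0$ for some such $\zeta$, and likewise for $f^K=f\otimes\epsilon_K$, which is again a newform of weight $k$ with $p$ not dividing its level, by \eqref{eq:spl}.

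This nonvanishing of twisted central $L$-values is the only substantive input. For $k\geq 4$ it is immediate if I choose $j$ in the range $0\le j\le k-2$ so that $j+1$ lies off the center of the critical strip $s=k/2$; the value then lies in the region of absolute convergence or on its reflection, where Euler products and the functional equation make it nonzero. Concretely, I take $j=k-2$, so that $s=k-1$, which is in the domain of convergence when $k-1>(k+1)/2$, i.e.\ $k>3$. For $k=2$ the only option is the central point $s=1$, and I would invoke Rohrlich's theorem that $L(f\otimes\psi,1)\neq 0$ for all but finitely many characters $\psi$ of $p$-power conductor. Equivalently, Kato's result \cite{Kato} shows that $\calL_p(f/\QQ)^\eta\neq 0$, since the Beilinson--Kato element is non-torsion by Proposition \ref{prop:rank}(3) and its Coleman image is this $L$-function. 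Applying the same argument to $f^K$ gives the nonvanishing of the second factor, and the corollary follows.
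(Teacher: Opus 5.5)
Your proposal is correct and follows the paper's route: reduce via Proposition \ref{prop:cyclotomic_descent} to the nonvanishing of $\calL_p(f/\QQ)^{\eta}$ and $\calL_p(f^{K}/\QQ)^{\eta}$, then use Rohrlich's theorem for $k=2$ and, for $k\geq 4$, the absolute-convergence argument at $s=k-1$ with $t\geq 1$, which is what the paper's citation of Shimura supplies. Drop the closing ``equivalently, by Kato'' aside: it is not an independent argument, because the non-torsionness in Proposition \ref{prop:rank}(3) is itself deduced from Rohrlich-type nonvanishing, and a non-torsion global class need not have a nonzero Coleman image without further input.
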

\begin{proof}
    It follows from Proposition \ref{prop:cyclotomic_descent} that it suffices to show the nonvanishing of the cyclotomic $p$-adic $L$-functions $\calL(f/\QQ)^{\eta}$ and $\calL_p(f^{K}/\QQ)^{\eta}$. The latter is a classical result of Rohrlich \cite{rohrlich}  and Shimura \cite[Proposition 2]{shimura} (for $k > 2$).
\end{proof}

\subsection{Explicit Reciprocity law (II)}
The second explicit reciprocity law essentially arises by exchanging the roles of $f$ and $\bfh_{\frp}$ in the preceding analysis. Following \cite[Section 5.3.2]{BSTW}, there is an injective $\scR$-homomorphism (denoted by $\scL$ in \cite[page 47]{BSTW})
\[
\Log_{\bar{\frp}}: \rmH^{1}(\QQ_p, \Fil^{+}T \hotimes (\Fil^{-} \widetilde{\TT}_{\bfh_\frp}^{\star}) \hotimes \Lambda_{\cyc}^\iota) \rightarrow \tilI_{\bfh_{\frp}} \otimes_{\Lambda_{\frp}} \scR,
\]
where $I_{\bfh_{\frp}}$ is the congruence ideal for the canonical CM family $\bfh_{\frp}$, introduced in \cite[Section 4.3.1]{BSTW},  $I_{\bfh_{\frp}}^{\star}$ is its reflexive closure, and $\Fil^{-} \widetilde{\TT}_{\bfh_\frp}^{\star} := \tilTT_{\bfh_{\frp}}^{\star}/ \widetilde{\TT}_{\bfh_\frp, \frp}$. To describe $I_{\bfh_{\frp}}^{\star}$, we introduce Katz's $p$-adic $L$-function, following \cite[Theorem 2.2.1]{Cas25}, with the functional equation following from \cite[Theorem II.6.4]{dShalit}. \footnote{There seems to be a typographical error before \cite[Equation (4.1)]{CLW} on the range of $a$ and $b$.}

\begin{theorem}
    There is an element $\calL_{\barfrp}^{\Katz}(K) \in \Lambda_{K}^{\ur}$ characterised by the following interpolation property: for every character $\xi$ of $\Gamma_{K}$ crystalline at both $\barv$ and $v$ corresponding to a Hecke character of infinite type $(b,a)$ with $a > 0$ and $b \leq 0$, it satisfies
\begin{equation} \label{eq:Katz_interpolation}
\calL_{\barfrp}^{\Katz}(K)(\xi) = \left( \dfrac{\Omega_{p}}{\Omega_{\infty}}\right)^{a-b} (a-1)! \left( \dfrac{\sqrt{D_K}}{2\pi} \right)^{b} (1-\xi^{-1}(\barfrp)p^{-1})(1-\xi(\frp)) L(\xi, 0),
\end{equation}
Moreover, we have the functional equation 
\begin{equation} \label{eq:Katz_functional_equation}
\calL_{p}^{\Katz}(K)(\xi) = \calL_p^{\Katz}(\xi^{-1}\bfN^{-1}),
\end{equation}
where the equality is up to $p$-adic unit.
\end{theorem}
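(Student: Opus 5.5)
The statement is the existence of Katz's two-variable $p$-adic $L$-function with the given interpolation, together with its functional equation up to a unit. This is quoted from the literature rather than proved from scratch. My plan is therefore to reduce it to the existing constructions, checking only that our normalisations (the choice of $\frp$ via $\iota_p$, the geometric normalisation of $\rec_K$, and the identification of $\Gamma_K$) match those of the sources.

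\textbf{Existence and interpolation.} I would invoke Katz's construction via $p$-adic Eisenstein measures on the Igusa tower, in the form given by \cite[Theorem 2.2.1]{Cas25} (which rests on Katz and Hida--Tilouine). The steps are as follows.
\begin{enumerate}
\item Start from the measure on the ray class group of conductor $\barfrp^{\infty}$ (times an auxiliary prime-to-$p$ conductor). Evaluating it at CM points gives the values at critical Hecke characters of infinity type $(b,a)$ with $a>0$, $b\le 0$.
\item Push this measure forward to $\Gamma_K$, the maximal $\ZZ_p^2$-quotient. This yields an element of $\Lambda_K^{\ur}$; the periods $\Omega_p\in(\calO^{\ur})^\times$ force the coefficients into $\calO^{\ur}$ rather than $\calO$.
\item Translate Katz's modified Euler factors into $(1-\xi^{-1}(\barfrp)p^{-1})(1-\xi(\frp))$. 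This requires tracking which of $\frp,\barfrp$ is selected by $\iota_p$, and the sign convention in $\rec_K$. If the source uses the arithmetic normalisation, replace $\xi$ by $\xi^{-1}$, which swaps the roles of the infinity types $(a,b)$ and $(b,a)$; I would check this on one explicit character.
\end{enumerate}

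\textbf{Characterisation.} Uniqueness follows because the characters of the stated type, crystalline at both places, are Zariski dense in $\Spec\Lambda_K^{\ur}[1/p]$. Indeed, they include all finite-order twists of a fixed infinite family of algebraic characters, and a measure on $\ZZ_p^2$ vanishing on such a set is zero.

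\textbf{Functional equation.} I would apply \cite[Theorem II.6.4]{dShalit}, which gives
\[
\calL(\xi)=\calL(\xi^{-1}\bfN^{-1})
\]
up to a factor built from the root number and the prime-to-$p$ conductor. The procedure is:
\begin{enumerate}
\item Check equality at interpolation points. If $\xi$ has type $(b,a)$, then $\xi^{-1}\bfN^{-1}$ has type $(1-a,1-b)$ up to the ordering convention, so that it again lies in the interpolation range after swapping $\frp\leftrightarrow\barfrp$.
\item Apply the complex functional equation $L(\xi,0)\leftrightarrow L(\xi^{-1}\bfN^{-1},0)$, noting that the Euler factors at $p$ exchange correctly.
\item Extend to all of $\Lambda_K^{\ur}$ by density.
\end{enumerate}

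\textbf{Main obstacle.} The hard step is confirming that the ratio of archimedean and Gauss-sum factors is a $p$-adic unit uniformly in $\xi$. Concretely, the root number, which is a product of local Gauss sums at primes dividing $D_K$ and at the auxiliary conductor, must be a unit since $p\nmid D_K$. One must also check that the powers of $\sqrt{D_K}/2\pi$ and the factorials cancel against $\Omega_p/\Omega_\infty$ without introducing non-units. I expect this to hold under $p\nmid 2D_K$, which is guaranteed by \eqref{eq:spl} and $p\geq 5$.
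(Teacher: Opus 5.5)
Your route is the paper's. The paper gives no argument for this statement beyond citing \cite[Theorem 2.2.1]{Cas25} for existence and interpolation and \cite[Theorem II.6.4]{dShalit} for the functional equation, which are exactly your two inputs. As a reduction to the literature the proposal is fine, but three of the supplementary checks you sketch would not go through as written.

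First, the construction. $\calL_{\barfrp}^{\Katz}(K)$ is a two-variable element of $\Lambda_K^{\ur}$, so it has to come from Katz's measure on the ray class group of conductor $\mathfrak{c}\frp^{\infty}\barfrp^{\infty}$. The ray class group of conductor $\mathfrak{c}\barfrp^{\infty}$ has $\ZZ_p$-rank one, so it does not surject onto $\Gamma_K\simeq\ZZ_p^{2}$ and admits no push-forward to it.

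Second, your density argument rests on a false claim. Since $K_\infty/K$ is unramified outside $p$, every finite-order character of $\Gamma_K$ not factoring through the finite group $\Gal((K_\infty\cap H)/K)$, with $H$ the Hilbert class field, is ramified at $\frp$ or $\barfrp$. Twisting by such a character destroys crystallinity, so the interpolation set contains no such twists. Uniqueness is still true, but it comes from the crystalline algebraic characters themselves. Their infinity types fill the cone $b\le 0<a$ inside a fixed congruence class. An Iwasawa function vanishing on this two-parameter family is zero by Weierstrass preparation in each variable, working on the open subgroup generated by the inertia groups at $\frp$ and $\barfrp$, where these characters become monomials.

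Third, the functional equation. In the paper's normalisation $\bfN^{-1}$ has type $(1,1)$; this is how $\xi^{2}\bfN^{-1}$ acquires type $(1-2j,1+2j)$ in the proof of Proposition~\ref{prop:anticyclotomic_descent_0}. So for $\xi$ of type $(b,a)$, the character $\xi^{-1}\bfN^{-1}$ has type $(1-b,1-a)$, which never satisfies $b'\le 0<a'$. The type $(1-a,1-b)$ you write down is that of $\xi^{-c}\bfN^{-1}$. It is under $\xi\mapsto\xi^{-c}\bfN^{-1}$ that the Euler factors $(1-\xi^{-1}(\barfrp)p^{-1})$ and $(1-\xi(\frp))$ are exchanged. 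Hence your interpolate-and-extend argument proves $\calL_{\barfrp}^{\Katz}(K)(\xi)=\calL_{\barfrp}^{\Katz}(K)(\xi^{-c}\bfN^{-1})$ up to a unit. Equivalently, it compares with the Katz function attached to the other prime, evaluated at $\xi^{-1}\bfN^{-1}$. You should state this conjugation explicitly rather than absorbing it into an ordering convention. For a single function without $c$, the point $\xi^{-1}\bfN^{-1}$ is never an interpolation point, so your step (1) has nothing to compare.
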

Using Katz's $p$-adic $L$-function $\calL_{\barfrp}^{\Katz}(K)$, we define 
\begin{itemize}
    \item Katz's anticyclotomic $p$-adic $L$-function $\calL_{\barfrp}^{\Katz, \ac}(K)$ to be its image in $\Lambda_{K}^{\ur}$ under the homomorphism $\pr_{\ac}: \Gamma_{K} \rightarrow \Gamma_{\ac}$ given by $g \mapsto g^{1-c}$;
    \item its further image $\calL_{\barfrp}^{\Katz, \frp}(K)$ under the identification $\iota_{\ac}: \Lambda_{\ac}^{\ur} \rightarrow \Lambda_{\frp}^{\ur}$ defined in Section \ref{sec:more-iwasawa-algebra}. This is Katz's $p$-adic $L$-function recalled in \cite[Theorem 4.19]{BSTW}.
\end{itemize}

Returning to the second reciprocity law, by \cite[Theorem 4.22]{BSTW}, the reflexive closure is
\[
I_{\bfh_{\frp}}^{\star} = \dfrac{1}{H_{\frp}} \Lambda_{\frp}, \text{ with } (H_{\frp}) = \left(\dfrac{h_{K}}{w_{K}} \cdot T_{\frp} \cdot \calL^{\Katz}_{\barfrp}(K)\right) \text{ in } \Lambda_{\frp}^{\ur}, 
\]
where $w_{K}$ is the number of roots of unity in $\calO_{K}^{\times}$ and $\calL_{\barfrp}^{\Katz, \frp}(K)$ is Katz's $p$-adic $L$-function defined above. As in \cite[Section 5.3.3]{BSTW}, we renormalize
\[
\Log_{\bar{\frp}}^{\inte} := H_{\frp} \cdot \Log_{\bar{\frp}}: \rmH^{1}(\QQ_p, \Fil^{+}T \hotimes (\Fil^{-} \widetilde{\TT}_{\bfh_\frp}^{\star}) \hotimes \Lambda_{\cyc}^\iota) \otimes_{\Lambda_{\frp}} \Lambda_{\frp}^{\ur} \rightarrow \scR^{\ur},
\]

More explicitly, using the notation in \cite[Section 10]{KLZ} (see the proof of \cite[Theorem 5.7]{BSTW}), the logarithm map $\Log_{\frp}$ can be described as
\begin{equation} \label{eq:log_definition}
\Log_{\frp} \left( \loc_{p} \cBF^{\alpha, \bfh_{\frp}, \eta} \right) = \sp^{\prime} \left( \lrangle{\calL\left( \cBF^{\bff, \bfh_{\frp}} \right), \eta_{\bfh_{\frp}}\otimes \omega_{\bff} } \right)
\end{equation}
where $\sp^{\prime}$ is the specialization map
\[
\sp^{\prime}: I_{\bfh_{\frp}} \hotimes \Lambda_{\bff}^{\cusp} \hotimes \Lambda_{\cyc} \twoheadrightarrow I_{\bfh_{\frp}}^{\star} \hotimes \Lambda_{\cyc} = I_{\bfh_{\frp}}^{\star} \otimes_{\Lambda_{\frp}} \scR
\]
induced by the specialization map $\Lambda_{\bff}^{\cusp} \rightarrow \calO$ corresponding to $f_{\alpha}$.

We define the preparatory \emph{second $p$-adic $L$-function of $f$ over $K$} as
\[
\widetilde{\calL_{p}^{\two}}(\cBF^{\alpha, \bfh_{\frp}, \eta}) := \Log_{\barfrp}^{\inte}(\loc_{p}(\cBF^{\alpha, \bfh_{\frp}, \eta})) \in \scR^{\ur}.
\]
This is well-defined by Proposition \ref{prop:local_BF}(2).

We define \footnote{It seems to be a typographical error in \cite[Section 5.3]{BSTW}, which is also inherited in \cite[Section 3.5.6]{CLW} when defining $\Xi^{\two}$, that $\chi_{1}(\gamma_{\frp}^{h_p})$ should be written as $\chi_1(\gamma_{\frp}^{p^{h_p}})$ instead.}
\begin{equation}\label{eq:ch-2}
\Xi^{\two}:=
\left\{
\chi=(\chi_1,\chi_2)\;:\;
\begin{aligned}
&\chi_1(\gamma_{\frp}^{p^{h_p}}) = \lrangle{\epsilon(\gamma_{\cyc})}^m
 &&\text{for some $m \geq k$ and $m \equiv 0 \pmod{p-1}$},\\
&\chi_2 = \zeta \lrangle{\epsilon}^n
 &&\text{for $\zeta \in \mu_{p^{\infty}}$ of exact order $p^{t}$ and $k-1 \leq n \leq m-1$}.
\end{aligned}
\right\}.
\end{equation}
Given $\chi\in \Xi^{\two}$, let 
$\psi_\chi$ be the algebraic Hecke character of $K$ with infinity type $(n-m,n)$ and such that
$\sigma_{\psi_\chi}$ is the composition 
\[
G_K \twoheadrightarrow \Gamma_{\frp}\times\Gamma
\xrightarrow{\chi_1\times \chi_2^{-1}} \barQQ_p^\times.
\]
Then the second explicit reciprocity law can be rewritten as follows, with the proof proceeding as in \cite[Theorems 5.7, 5.11]{BSTW}

\begin{theorem}[Explicit Reciprocity Law II]\label{thm:ERLIIint}
The element $\widetilde{\calL}_p^{\two}(\cBF^{\alpha, \bfh_{\frp}})\in \scR^\ur$ has the following interpolation property: for $\chi\in \Xi^{\two}$, 
\begin{multline} \label{eq:ERLII}
    \phi_{\chi} \left( \widetilde{\calL_{p}^{\two}}(\cBF^{\alpha, \bfh_{\frp}, \eta}) \right) = \dfrac{\calE_{p, \eta}(\bfh_{\frp}, f,\chi)}{(1-p^{m-1} \chi_1(\barfrp)^{-2})(1-p^{m}\chi_1(\barfrp)^{-2})} \dfrac{(-1)^{n+1}(n!)(n+1-k)!}{\pi^{2n+3-k} (-i)^{m+1-k} 2^{2n+m+3-k}} \\ \times \dfrac{h_{K}}{w_K} \cdot \phi_{\chi}(T_{\frp} \cdot \calL_{\barfrp}^{\Katz}(K)) \cdot \dfrac{L(\bfh_{\frp, \chi_1}^{\circ}, f \otimes \omega^{n}\eta \psi_{\zeta}^{-1}, n+1)}{\lrangle{\bfh_{\frp, \chi_1}^{\circ},\bfh_{\frp, \chi_1}^{\circ}}_{D_{K}}},
\end{multline}
where $\calE_{p,\eta}(\bfh_{\frp}, f, \chi)$ is the Euler factor defined in \eqref{eq:Euler-II}.
\end{theorem}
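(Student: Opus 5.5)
The proof mirrors that of Theorem \ref{ERLI-thm}, with the roles of $\bff$ and $\bfh_{\frp}$ exchanged. The starting point is the identity \eqref{eq:log_definition}. It expresses $\Log_{\barfrp}(\loc_p \cBF^{\alpha,\bfh_{\frp},\eta})$ as the specialisation under $\sp^{\prime}$ of the pairing $\lrangle{\calL(\cBF^{\bff,\bfh_{\frp}}), \eta_{\bfh_{\frp}} \otimes \omega_{\bff}}$. By \cite[Theorem 10.2.2]{KLZ}, applied with the ordered pair $(\bfh,\bff)$ in place of $(\bff,\bfh)$, this pairing equals, up to the sign $(-1)^{1+\bfj}\lambda_{N}(\bfh_{\frp})$ and the symmetry of Beilinson--Flach classes \cite[Section 8]{KLZ}, the three-variable Hida--Panchishkin $p$-adic Rankin $L$-function $\calL_{p}^{\HP}(\bfh_{\frp}, \bff, 1+\bfj)$. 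Its interpolation formula is \cite[Theorem 2.7.4]{KLZ} on the region where the weight of $\bfh_{\frp}$ dominates. Specialising $\bff$ at $f^{\alpha}$ gives an interpolation formula for $\Log_{\barfrp}(\loc_p\cBF^{\alpha,\bfh_{\frp},\eta})$ at points $\phi_{\eta\chi}$.

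The key steps, in order, are as follows.
\begin{enumerate}
\item Translate a character $\chi=(\chi_1,\chi_2)\in\Xi^{\two}$ into a point of the weight space of $\bfh$, using $\theta$ and the normalisations of $\gamma_{\frp},\gamma_{\cyc}$ fixed in Section \ref{sec:more-iwasawa-algebra}. The condition $\chi_1(\gamma_{\frp}^{p^{h_p}})=\lrangle{\epsilon(\gamma_{\cyc})}^m$ with $m\equiv 0 \pmod{p-1}$ makes $\bfh_{\frp,\chi_1}$ the ordinary $p$-stabilisation of a classical CM form $\bfh^{\circ}_{\frp,\chi_1}$ of weight $m+1$ and tame level $D_K$. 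The condition $k-1\le n\le m-1$ places $j=n$ in the critical range $k-1\le j\le m-1$ of \cite[Theorem 2.7.4]{KLZ}, where the weight of $\bfh_{\frp,\chi_1}$ is the larger one.
\item Write out that interpolation formula. It involves the Euler factor $\calE_{p,\eta}(\bfh_{\frp},f,\chi)$, defined as in \eqref{eq:Euler-I} with $\alpha$ replaced by the $U_p$-eigenvalue of $\bfh_{\frp,\chi_1}$, which is $\chi_1(\barfrp)$ up to normalisation. It also contains the factor $(1-p^{m-1}\chi_1(\barfrp)^{-2})(1-p^{m}\chi_1(\barfrp)^{-2})$ in the denominator, Gauss sums, powers of $\pi$ and $2$, the gamma factors $n!(n+1-k)!$, and the Petersson norm $\lrangle{\bfh^{\circ}_{\frp,\chi_1},\bfh^{\circ}_{\frp,\chi_1}}_{D_K}$.
\item Multiply by $H_{\frp}$ to pass from $\Log_{\barfrp}$ to $\Log_{\barfrp}^{\inte}$. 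By \cite[Theorem 4.22]{BSTW}, $(H_{\frp})=(\tfrac{h_K}{w_K}T_{\frp}\calL^{\Katz}_{\barfrp}(K))$. Since $\Log_{\barfrp}$ lands in $I^{\star}_{\bfh_{\frp}}\otimes\scR = H_{\frp}^{-1}\scR$, the product lies in $\scR^{\ur}$, and evaluating at $\phi_\chi$ produces exactly the factor $\tfrac{h_K}{w_K}\phi_\chi(T_{\frp}\calL^{\Katz}_{\barfrp}(K))$ in \eqref{eq:ERLII}.
\item Collect constants as in the proof of Theorem \ref{ERLI-thm}, tracking the twist by $m_{k-2}^{-1}$ and the identification $T_f^{\ast}\cong T$. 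These shift the cyclotomic variable by $k-1$, which accounts for the exponents $2n+3-k$, $m+1-k$ and $2n+m+3-k$.
\end{enumerate}

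I expect the main obstacle to be the second step: matching the abstract point $\phi_\chi$ on $\scR$ with the specialisation data required by \cite{KLZ}. This means the weight, the nebentype, and the correct character $\omega^n\eta\psi_\zeta^{-1}$ of $f$, together with the inversion $\pr_{\frp}(g)^{-1}$ in $\theta$ and the $p^{h_p}$-index issue noted in the footnote. It also means identifying the Euler factor precisely in terms of $\chi_1(\barfrp)$ rather than $\chi_1(\frp)$. I would handle this as in \cite[Theorems 5.7, 5.11]{BSTW}: first verify it on the weight-two case treated there, then check that the only changes for general $k$ are the replacement of weight $2$ by $k$ in the gamma factors and in the range of $n$, since the congruence-ideal computation for $\bfh_{\frp}$ is independent of $f$.
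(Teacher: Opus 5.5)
Your proposal follows the paper's proof: combine \eqref{eq:log_definition} with \cite[Theorem 10.2.2]{KLZ} for the swapped pair $(\bfh_{\frp},\bff)$ to identify the pairing with $\calL_{p}^{\HP}(\bfh_{\frp},\bff,1+\bfj)$, specialise the interpolation formula of \cite[Theorem 2.7.4]{KLZ} at $f^{\alpha}$ and at $\chi\in\Xi^{\two}$ (weights $m+1$ and $k$, with $s=n+1$), and multiply by $H_{\frp}$ from \cite[Theorem 4.22]{BSTW} to produce the factor $\tfrac{h_K}{w_K}\phi_{\chi}(T_{\frp}\calL^{\Katz}_{\barfrp}(K))$. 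Two of your predicted details are wrong, although the method is unaffected. First, \eqref{eq:Euler-II} is not \eqref{eq:Euler-I} with $\alpha$ replaced by $\chi_1(\barfrp)$; in the unramified case it is the four-term factor of \cite[Theorem 2.7.4]{KLZ} with $\bfh_{\frp,\chi_1}$ dominant, built from both roots $\alpha,\beta$ of $f$ and the roots $\chi_1(\barfrp)$, $p^{m}\chi_1(\barfrp)^{-1}$ of $\bfh_{\frp,\chi_1}$ (the square in Theorem~\ref{ERLI-thm} arose only because both Satake parameters of $E_1(1,\epsilon_K)$ at $p$ equal $1$). Second, the quantities $n!\,(n+1-k)!$, $\pi^{2n+3-k}$, $(-i)^{m+1-k}$ and $2^{2n+m+3-k}$ come directly from inserting $r=m+1$, $r'=k$, $s=n+1$ into the archimedean factor of \cite[Theorem 2.7.4]{KLZ}, not from a shift of the cyclotomic variable by $k-1$ caused by $m_{k-2}^{-1}$.
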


\begin{proof}
We use the same notation as in the proof of Theorem \ref{ERLI-thm}, but reverse the roles of $\bff$ and $\bfh_{\frp}$. We put
 \begin{equation} \label{eq:mot_definition_2}
 \calL_{p}^{\mot}(\bfh_{\frp}, \bff, 1+\bfj) := (-1)^{1+\bfj} \lambda_{N}(\bfh_{\frp}) \lrangle{\calL\left( \cBF^{\bfh_{\frp}, \bff} \right), \eta_{\bfh_{\frp}} \otimes \omega_{\bff}}
 \end{equation}
 be the three-variable motivic $p$-adic $L$-function defined as in \cite[Theorem 10.2.2]{KLZ}. It then follows from \cite[Theorem 10.2.2]{KLZ} that
 \[
 \calL_{p}^{\mot}(\bfh_{\frp}, \bff, 1+\bfj) = \calL_{p}^{\HP}(\bfh_{\frp}, \bff, 1+\bfj).
 \]
 By the interpolation property of $\calL_{p}^{\HP}(\bfh_{\frp}, \bff,  1+\bfj)$ given in \cite[Theorem 2.7.4]{KLZ}, if $\calL_{p}^{\mot}(\bfh_{\frp}, f^{\alpha},  1+\bfj)$ denotes the image of $\calL_{p}^{\mot}(\bfh_{\frp}, \bff, 1+\bfj)$ under the specialization map at $f^{\alpha}$, then for all $\eta: \Delta \rightarrow \calO^{\times}$ and $\chi \in \Xi^{\two}$ as in the statement, we have \footnote{More precisely, with the notation of \cite[Theorem 2.7.4]{KLZ}, the weight of $\bfh_{\frp, \chi_1}$ is $r := m+1$, while the weight of $f$ is $r^{\prime} = k$. Hence $s = n+1$, and the criticality range becomes $k-1 \leq n \leq m-1$ in \cite[Theorem 2.7.4]{KLZ}. This explains why we impose a smaller range in \eqref{eq:ch-2} than in \cite{BSTW, CLW}.}
\begin{multline} \label{eq:mot_interpolation_2}
\phi_{\eta\chi}\left( \calL_{p}^{\mot}(\bfh_{\frp}, f^{\alpha}, 1+\bfj) \right) = \dfrac{\calE_{p, \eta}(\bfh_{\frp}, f,\chi)}{(1-p^{m-1} \chi_1(\barfrp)^{-2})(1-p^{m}\chi_1(\barfrp)^{-2})} \dfrac{(n!)(n+1-k)!}{\pi^{2n+3-k} (-i)^{m+1-k} 2^{2n+m+3-k}} \\ \times \dfrac{L(\bfh_{\frp, \chi_1}^{\circ}, f \otimes \omega^{n}\eta \psi_{\zeta}^{-1}, n+1)}{\lrangle{\bfh_{\frp, \chi_1}^{\circ},\bfh_{\frp, \chi_1}^{\circ}}_{D_{K}}}
\end{multline}
where
\begin{equation}\label{eq:Euler-II}
\mathcal{E}_{p,\eta}(\bfh_{\frp},f,\chi)=\begin{cases}\Bigl(1-\frac{p^n}{\chi_1(\barfrp)\alpha}\Bigr)\Bigl(1-\frac{p^n}{\chi_1(\barfrp)\beta}\Bigr)\Bigl(1-\frac{p^{m}\alpha}{p^{n+1}\chi_1(\barfrp)}\Bigr)\Bigl(1-\frac{p^{m}\beta}{p^{n+1}\chi_1(\barfrp)}\Bigr), \, &\text{if $\eta=\omega^{-n}$ and $t=0$}
\\[0.6em]
\Bigl(\frac{p^{t+1}}{\mathfrak{g}(\eta\omega^n\psi_\zeta^{-1})}\Bigr)^2\Bigl(\frac{p^{2n+1-k}}{\chi_1(\barfrp)^2}\Bigr)^{t+1}, \, &\text{otherwise}.
\end{cases}
\end{equation}

Combining \eqref{eq:log_definition} with \eqref{eq:mot_definition_2}, the interpolation formula \eqref{eq:mot_interpolation_2} gives
\begin{multline*}
\phi_{\chi} \left( \widetilde{\calL_{p}^{\two}}(\cBF^{\alpha, \bfh_{\frp}, \eta}) \right) = \dfrac{\calE_{p, \eta}(\bfh_{\frp}, f,\chi)}{(1-p^{m-1} \chi_1(\barfrp)^{-2})(1-p^{m}\chi_1(\barfrp)^{-2})} \dfrac{(-1)^{n+1} (n!)(n+1-k)!}{\pi^{2n+3-k} (-i)^{m+1-k} 2^{2n+m+3-k}} \\ \times \dfrac{h_{K}}{w_K} \cdot \phi_{\chi}(T_{\frp} \cdot \calL_{\barfrp}^{\Katz}(K)) \cdot \dfrac{L(\bfh_{\frp, \chi_1}^{\circ}, f \otimes \omega^{n}\eta \psi_{\zeta}^{-1}, n+1)}{\lrangle{\bfh_{\frp, \chi_1}^{\circ},\bfh_{\frp, \chi_1}^{\circ}}_{D_{K}}},
\end{multline*}
as desired.
\end{proof}

\subsubsection{Anticyclotomic descent}
We recall the generalized Heegner hypothesis used in \cite{CLW}. We write $N=N^+N^-$ with $N^{+}$ (resp. $N^{-}$) divisible only by primes that are split or ramified (resp. inert) in $K$. We assume the \emph{general Heegner hypothesis}:
\begin{equation}\label{eq:gen-Heegner} \tag{gen-Heeg}
 N^-=1, \quad \epsilon_\ell(\BC_K(f))=+1 \text{ for every nonsplit }\ell\mid N^+,
\end{equation}
where $\epsilon_\ell(\BC_K(f))$ is the local root number at $v$ normalised as in \cite[page 710]{Hsieh2014}. Recall that the \emph{classical Heegner hypothesis} is that every prime dividing $N$ splits in $K$. In this case, $N^{-} = 1$ and there is no nonsplit prime $\ell \mid N^{+}$, so \eqref{eq:gen-Heegner} is automatic.

Let $\scL_{\frp}^{\BDP}(f/K)\in\Lambda_{\ac}^{\ur}$ be the square-root $p$-adic $L$-function of \cite[Theorem~4.1]{CLW}. It is the anticyclotomic $p$-adic $L$-function introduced by Bertolini--Darmon--Prasanna in \cite{BDP} with the refinement and generalization in \cite[Section 2.5]{CastellaHsieh} as in \cite[Theorem 2.1.3, Remark 2.1.4]{Cas25}. We record its interpolation formula as follows as \cite[Theorem 4.1]{CLW}. \footnote{The $p$-adic $L$-function $\calL_{p}^{\BDP}(f/K)$ in Theorem \ref{thm:BDP} is the $p$-adic $L$-function $\scL_{\barv}^{\BDP}(f/K)^{2}$, and $\scL_{\barv}^{\BDP}(f/K)$ in \textit{op.cit.} is called the \emph{``square-root'' $p$-adic $L$-function}.}

\begin{theorem}[Anticyclotomic $p$-adic $L$-function] \label{thm:BDP}
    There is an element $\calL_{p}^{\BDP}(f/K) \in \Lambda_{\ac}^{\ur}$ such that for every character $\xi$ of $\Gamma_{\ac}$ crystalline at both $\frp$ and $\barfrp$ corresponding to a Hecke character of $K$ of infinite type $(-j,j)$ for some $j \geq k/2$ with $j \equiv 0 \pmod{p-1}$, we have
    \begin{multline} \label{eq:BDP-interpolation}
        \calL_p^{\BDP}(f/K)(\xi) =\left(\frac{\Omega_p}{\Omega_\infty}\right)^{4j}(r+j-1)!(j-r)!\left(\frac{2\pi}{\sqrt{D_K}}\right)^{2j-1}\\
   \times \left(1-a_p(f)\xi(\frp)p^{-r}+\xi(\frp)^2p^{-1}\right)^2 L(f/K,\xi,r).
    \end{multline}
The CM periods $(\Omega_{\infty}, \Omega_{p}) = (2 \pi i \cdot \Omega_K, \Omega_p) \in \CC^{\times} \times \calO_{\CC_p}^{\times}$ are as in \cite[Section~2.5]{CastellaHsieh}.
\end{theorem}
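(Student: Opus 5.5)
This statement is recorded essentially as a citation (\cite[Theorem~4.1]{CLW}, building on \cite{BDP} and \cite[Section 2.5]{CastellaHsieh}). The plan is therefore to assemble it from the literature rather than construct a new measure.

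The first step is to start from the square-root $p$-adic $L$-function $\scL_{\frp}^{\BDP}(f/K)\in\Lambda_{\ac}^{\ur}$ of \cite[Section 2.5]{CastellaHsieh}. It is built as a $p$-adic measure on $\Gamma_{\ac}$ from the values of $\theta^{j}$-type derivatives of the $p$-depleted form $f^{\flat}$ at CM points of $X_{1}(N)$ attached to the ideals of an order of conductor prime to $Np$. Hsieh's choice of test vectors at the nonsplit primes $\ell\mid N^{+}$ is exactly what makes this work under \eqref{eq:gen-Heegner}: the local toric period integrals are then nonzero and explicitly computed. The interpolation formula for $\scL_{\frp}^{\BDP}(f/K)(\xi)$ expresses this value as
\begin{itemize}
\item a period ratio $(\Omega_p/\Omega_\infty)^{2j}$,
\item a single Euler factor $1-a_p(f)\xi(\frp)p^{-r}+\xi(\frp)^2p^{-1}$,
\item times a square root of the algebraic part of $L(f/K,\xi,r)$, via Waldspurger's formula in the explicit form of \cite{BDP} and \cite{CastellaHsieh}.
\end{itemize}
This formula holds for anticyclotomic Hecke characters of infinity type $(-j,j)$ with $j\geq k/2$ in the range where $L(f/K,\xi,s)$ has $s=r$ as central critical point with sign $+1$.

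The second step is to set $\calL_{p}^{\BDP}(f/K):=\scL_{\frp}^{\BDP}(f/K)^{2}$ and square the interpolation formula. The period ratio then becomes $(\Omega_p/\Omega_\infty)^{4j}$ and the Euler factor appears squared. The explicit Waldspurger/Gross-type formula supplies the Gamma factors $(r+j-1)!(j-r)!$ and the power $(2\pi/\sqrt{D_K})^{2j-1}$. The restriction $j\equiv 0\pmod{p-1}$ ensures that $\xi$ is crystalline at both $\frp$ and $\barfrp$ and factors through $\Gamma_{\ac}$ with trivial tame part. This is needed so that the formula concerns an element of $\Lambda_{\ac}^{\ur}$ rather than of a larger algebra including the torsion part of the anticyclotomic Galois group.

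The main obstacle is bookkeeping of normalisations. One must check the following against \cite[Theorem 4.1]{CLW} and \cite[Theorem 2.1.3, Remark 2.1.4]{Cas25}:
\begin{itemize}
\item the CM periods $(\Omega_\infty,\Omega_p)=(2\pi i\,\Omega_K,\Omega_p)$;
\item the choice of $\frp$ versus $\barfrp$ fixed by $\iota_p$;
\item the twist by $\xi$ versus $\xi^{-1}$;
\item the power of $\sqrt{D_K}$.
\end{itemize}
I would fix these by evaluating both sides at one explicit character and comparing with Hsieh's normalisation \cite{Hsieh2014}. Any remaining discrepancy should be a $p$-adic unit or an explicit constant independent of $\xi$, which can be absorbed into the definition. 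Integrality in $\Lambda_{\ac}^{\ur}$ rather than in its fraction field is inherited from the integrality of the square-root measure.
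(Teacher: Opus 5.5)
Your proposal matches the paper's treatment: the paper gives no independent proof but imports the statement from \cite[Theorem~4.1]{CLW} (resting on \cite{BDP}, \cite[Section~2.5]{CastellaHsieh} and \cite[Theorem~2.1.3, Remark~2.1.4]{Cas25}), where $\calL_p^{\BDP}(f/K)$ is by definition the square of the square-root $p$-adic $L$-function, exactly as you propose. One small correction: the congruence $j\equiv 0 \pmod{p-1}$ does not give crystallinity at $\frp$ and $\barfrp$, which is a separate hypothesis on $\xi$; rather, it forces the $p$-adic avatar of $\xi$ to be trivial on $\mu_{p-1}$, which is necessary for $\xi$ to factor through $\Gamma_{\ac}$.
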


We \emph{define} the preparatory \emph{Greenberg's $p$-adic $L$-function} for $f$ over $K$ as
\[
\widetilde{\calL_p^{\Gr}}(f/K)^{\eta} := \Tw_{r-1}( \calL_{p}^{\two}(\cBF^{\alpha, \bfh_{\frp},\eta})) \in \scR^{\ur}.
\]

The projection of $\widetilde{\calL_p^{\Gr}}(f/K)$ to the anticyclotomic line recovers the anticyclotomic $p$-adic $L$-function of $f$ over $K$. Before proving this, we recall in \cite[Proposition 5.17]{BSTW} the projection map $\phi_{\ac}: \scR^{\ur} \twoheadrightarrow \Lambda_{\ac}^{\ur}$ induced by the homomorphism
\[
\Gamma_{\frp} \times \Gamma_{\cyc} \twoheadrightarrow \Gamma_{\ac}, \quad \gamma_{\frp} \mapsto \gamma_{\ac}^{2}, \, \gamma_{\cyc} \mapsto \gamma_{\ac}^{p^{h_{p}}}.
\]
So $T_{\frp} =\gamma_{\frp} - 1 \mapsto \gamma_{\ac}^2-1 =(1+T_{\ac})^2-1$.

\begin{proposition} \label{prop:anticyclotomic_descent_0}
    Let $I_{\cyc}^{\ur}$ be the kernel of the natural projection $\phi_{\ac}: \scR^{\ur} \twoheadrightarrow \Lambda_{\ac}^{\ur}$. Suppose that \eqref{eq:gen-Heegner} holds. Then 
    \[
    (\widetilde{\calL_p^{\Gr}}(f/K)^{\omega^{1-r}} \bmod{I_{\cyc}^{\ur}}) = ((1+T_{\ac})^{2} - 1)(\calL_{p}^{\BDP}(f/K)^{\jmath^{\ac}}),
    \]
    where $(-)^{\jmath^{\ac}}$ is the image under the involution $\Lambda_{K}^{\ur} \rightarrow \Lambda_{K}^{\ur}$ induced by $\gamma_{\ac} \mapsto \gamma_{\ac}^{-1}$.
\end{proposition}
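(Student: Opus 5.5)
The plan is to compare interpolation formulas on a Zariski-dense set of characters of $\Gamma_{\ac}$, in the same way as the proof of Proposition \ref{prop:cyclotomic_descent}. We use Theorem \ref{thm:ERLIIint} for $\widetilde{\calL_p^{\two}}(\cBF^{\alpha,\bfh_{\frp},\omega^{1-r}})$ and Theorem \ref{thm:BDP} for $\calL_p^{\BDP}(f/K)$. This is the higher-weight analogue of \cite[Proposition 5.17]{BSTW} and \cite[Section 4]{CLW}, so we follow their structure.

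\emph{Step 1.} Fix an anticyclotomic character $\xi$ of $\Gamma_{\ac}$ of infinity type $(-j,j)$, crystalline at $\frp$ and $\barfrp$, with $j\geq k/2$ and $j\equiv 0\pmod{p-1}$. We identify the characters $\chi=(\chi_1,\chi_2)$ of $\Gamma_{\frp}\times\Gamma_{\cyc}$ that factor through $\phi_{\ac}$ and satisfy $\chi\circ\phi_{\ac}^{*}$, after the twist $\Tw_{r-1}$, equal to $\xi^{\pm1}$. Concretely, $\chi_1(\gamma_{\frp})=\xi(\gamma_{\ac})^2$ and $\chi_2(\gamma_{\cyc})=\xi(\gamma_{\ac})^{p^{h_p}}$, shifted by $\lrangle{\epsilon}^{r-1}$. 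We then check that these characters lie in $\Xi^{\two}$. The conditions to verify are $m=2j$ (up to the appropriate normalisation), $m\geq k$, $m\equiv 0\pmod{p-1}$, and $n=r-1+j$ lying in $[k-1,m-1]$. This last condition is where the bound $j\geq k/2$ and the choice of $\eta=\omega^{1-r}$ enter: the branch $\eta=\omega^{-n}$ with $t=0$ of \eqref{eq:Euler-II} is the one that occurs. The set of such $\xi$ is Zariski dense in $\Spec\Lambda_{\ac}^{\ur}$, so it suffices to prove the equality after evaluation at each such $\xi$, up to a unit of $\Lambda_{\ac}^{\ur}$ that is independent of $\xi$.

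\emph{Step 2.} At such $\chi$, we factor the Rankin--Selberg value in \eqref{eq:ERLII}. The weight $m+1$ specialisation $\bfh_{\frp,\chi_1}^{\circ}$ is the theta series of a Hecke character $\psi$ of $K$. Hence $L(\bfh_{\frp,\chi_1}^{\circ},f\otimes\omega^n\eta\psi_\zeta^{-1},n+1)=L(f/K,\xi^{-1},r)$ after recentering; the $\jmath^{\ac}$ in the statement accounts for the inverse. The Petersson norm $\lrangle{\bfh^{\circ}_{\frp,\chi_1},\bfh^{\circ}_{\frp,\chi_1}}_{D_K}$ is, by Hida--Tilouine, a product of $L(\psi^{1-c}\bfN,1)$ and an explicit factor. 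By \eqref{eq:Katz_interpolation}, this is the value of $\calL^{\Katz}_{\barfrp}(K)$ at the corresponding point times $(\Omega_p/\Omega_\infty)^{-(2m)}$ and elementary factors, using the functional equation \eqref{eq:Katz_functional_equation} to land in the interpolation range. This cancels exactly against $\phi_\chi(\calL_{\barfrp}^{\Katz}(K))$ in the numerator. That cancellation is the reason for the renormalisation by $H_{\frp}$, and it leaves behind the factor $\phi_\chi(T_{\frp})=\xi(\gamma_{\ac})^2-1$. This is the source of $(1+T_{\ac})^2-1$ in the statement.

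\emph{Step 3.} We match the remaining factors. The Euler factor \eqref{eq:Euler-II} in the case $\eta=\omega^{-n}$, $t=0$, divided by the two factors $(1-p^{m-1}\chi_1(\barfrp)^{-2})(1-p^m\chi_1(\barfrp)^{-2})$ that also come out of the Petersson norm, should reduce to $(1-a_p\xi(\frp)p^{-r}+\xi(\frp)^2p^{-1})^2$. This uses $\alpha\beta=p^{k-1}$ and $\xi(\frp)\xi(\barfrp)=1$ up to the norm twist. The archimedean constants, factorials $(n!)(n+1-k)!$ versus $(r+j-1)!(j-r)!$, powers of $\pi$, $2$, $i$, $\sqrt{D_K}$ and $h_K/w_K$ are $p$-adic units or cancel, since $p\geq5$ and $p\nmid D_K$. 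The period ratios combine to $(\Omega_p/\Omega_\infty)^{4j}$. Density then gives the equality of ideals in $\Lambda^{\ur}_{\ac}$.

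I expect the main obstacle to be Step 2 together with the bookkeeping of Step 1. The difficulty is making the Hida--Tilouine formula for the Petersson norm of the CM form match Katz's interpolation \eqref{eq:Katz_interpolation} at the reflected character, with all Euler factors at $\frp,\barfrp$ accounted for, while keeping track of the $\Tw_{r-1}$ shift and of the involution $\jmath^{\ac}$. For this I would first reproduce \cite[Proposition 5.17]{BSTW} in weight $2$ and \cite[Theorem 4.2]{CLW} (the supersingular higher-weight analogue, whose anticyclotomic computation is insensitive to ordinarity of $f$), and then track only how $k$ enters the Euler factors and the critical range.
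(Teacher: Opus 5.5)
Your route is the paper's route. You specialise at $m=2j$, $n=r+j-1$, $\zeta=1$ with $j\geq r$, $j\equiv 0\pmod{p-1}$, so that $\eta=\omega^{1-r}=\omega^{-n}$ and the $t=0$ branch of \eqref{eq:Euler-II} applies. You then rewrite the Petersson norm via Hida--Tilouine and Katz's interpolation plus functional equation, identify the $L$-value, and let $\phi_{\ac}(T_{\frp})=(1+T_{\ac})^2-1$ survive.

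The Euler-factor bookkeeping in your Step 3, however, is wrong as stated. Since $\alpha\beta=p^{2r-1}$, one has $p^{n}/\alpha=p^{m-n-1}\beta=p^{j-r}\beta$ and $p^{n}/\beta=p^{m-n-1}\alpha=p^{j-r}\alpha$. So with $\xi(\frp)=p^{j}\chi_1(\barfrp)^{-1}$,
\[
\calE_{p,\eta}(\bfh_{\frp},f,\chi)=(1-\alpha\,\xi(\frp)p^{-r})^2(1-\beta\,\xi(\frp)p^{-r})^2=\bigl(1-a_p(f)\xi(\frp)p^{-r}+\xi(\frp)^2p^{-1}\bigr)^2
\]
holds on its own. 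Dividing $\calE$ by $(1-p^{m-1}\chi_1(\barfrp)^{-2})(1-p^{m}\chi_1(\barfrp)^{-2})=(1-\xi(\frp)^2p^{-1})(1-\xi(\frp)^2)$ therefore does not give the BDP Euler factor.

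Those two factors are instead the modified Euler factors at $\frp,\barfrp$ in Katz's formula \eqref{eq:Katz_interpolation}, at the character coming from the Petersson norm. They are exactly what converts the Hida--Tilouine expression for $\lrangle{\bfh^{\circ}_{\frp,\chi_1},\bfh^{\circ}_{\frp,\chi_1}}_{D_K}$, a multiple of $L(1,\xi^{2})$, into a value of $\calL^{\Katz}_{\barfrp}(K)$. So they belong in Step 2, where your ``elementary factors'' leave them out. The paper groups them this way, and then Step 3 reduces to the identity displayed above.

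A smaller point concerns the involution, which cannot be explained by the $L$-value as you suggest. For anticyclotomic $\xi$ one has $\xi^{c}=\xi^{-1}$, and the base change of $f$ is $c$-invariant, so $L(f/K,\xi^{-1},r)=L(f/K,\xi,r)$. In the paper the inversion enters through the functional equation \eqref{eq:Katz_functional_equation}. The Katz value produced by the Petersson norm sits at $\xi^{2}\bfN^{-1}$, and the functional equation moves it to $\xi^{-2}$, i.e.\ to $\calL^{\Katz,\frp}_{\barfrp}(K)^{\jmath_{\ac}}$ evaluated at $\xi$. That is where you have to track $\jmath^{\ac}$. The side on which it sits genuinely matters, because the BDP interpolation (range and Euler factor at $\frp$) is only available at infinity type $(-j,j)$, not $(j,-j)$.
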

\begin{proof}
The proof compares the interpolation formulas in Theorems \ref{thm:ERLIIint} and \ref{thm:BDP}. It proceeds exactly as in \cite[Proposition 4.4]{CLW}\footnote{This is also shown in \cite[Proposition 5.28]{BSTW} which follows from \cite[Proposition 5.17]{BSTW}, under the Heegner hypothesis, a more restrictive condition than the generalized Heegner hypothesis \eqref{eq:gen-Heegner}.}, but we provide the details for the reader's convenience. We use the following parameters:
\[
m = 2j, \quad n = r+j-1, \quad \zeta = 1
\]
with $j \geq r$ and $j \equiv 0 \pmod{p-1}$. In this setup, the second explicit reciprocity law (\eqref{eq:ERLII}) can be simplified further. We look at the formula part by part.

(1) We first treat the denominator of \eqref{eq:ERLII}. After composing with the quotient $\Gamma_{K} \twoheadrightarrow \Gamma_{\frp}$ and accounting for the extra twist $\Tw_{r-1}$, the character $\chi \in \Xi^{\two}$ is $\chi_1\epsilon^{j}$, corresponding to a Hecke character $\xi$ of infinite type $(-j,j)$. In particular, $\xi(\frp) = p^{j} \chi_1(\barfrp)^{-1}$. 

For the Euler factors in the denominator, we have
\begin{align*}
(1-p^{2j-1} \chi_1(\barfrp)^{-2})(1-p^{2j}\chi_1(\barfrp)^{-2}) &= (1-\xi(\frp)^{2} p^{-1})(1-\xi(\frp)^{2}) \\ 
&= (1-\xi^{-1}(\barfrp)^{2} p^{-1})(1-\xi(\frp)^{2}),
\end{align*}
where the second equality follows from $\xi^{c} = \xi^{-1}$ since $\xi$ is an anticyclotomic character. Therefore, the entire denominator of \eqref{eq:ERLII} becomes
\[
\pi^{2j+1} (-i)^{2j+1+k} 2^{4j+1} (1-\xi^{-1}(\barfrp)^{2} p^{-1})(1-\xi(\frp)^{2}) \lrangle{\bfh_{\frp, \chi_1}^{\circ},\bfh_{\frp, \chi_1}^{\circ}}_{D_{K}}.
\]
To simplify this expression further, we apply Hida's adjoint $L$-value formula (see \cite[Theorem 7.3]{HT93}) to obtain
\[
\lrangle{\bfh_{\frp, \chi_1}^{\circ},\bfh_{\frp, \chi_1}^{\circ}}_{D_{K}} \sim_{p} \dfrac{h_K}{w_K} \dfrac{(2j)!}{2^{4j-1}\pi^{2j+1}} L(1, \xi^{2}),
\]
where we have used the fact that $\xi/\xi^{c} = \xi^{2}$. Substituting the interpolation formula \eqref{eq:Katz_interpolation} for $\calL_{\barfrp}^{\Katz}(\xi^{2}\bfN^{-1})$, of infinite type $(1-2j, 1+2j)$, into the expression above gives
\[
\text{denominator of \eqref{eq:ERLII}} \sim_p \dfrac{\dfrac{h_K}{w_K} \cdot \calL_{\barfrp}^{\Katz}(\xi^{2}\bfN^{-1}) \cdot (-i)^{2j+1-k} 2^{4}}{\left(\dfrac{\Omega_{p}}{\Omega_{\infty}} \right)^{4j} \left(\dfrac{2\pi}{\sqrt{D_{K}}}\right)^{2j-1}}
\]
By the functional equation for Katz's $p$-adic $L$-function \eqref{eq:Katz_functional_equation}, $\calL_{\barfrp}^{\Katz}(\xi^{2}\bfN^{-1}) = \calL_{\barfrp}^{\Katz}(\xi^{-2})$; the latter is exactly the involuted Katz $p$-adic $L$-function $\calL_{\barfrp}^{\Katz, \frp}(K)^{\jmath_{\ac}}$ evaluated at $\xi$. Thus
\[
\boxed{\text{denominator of \eqref{eq:ERLII}}} \sim_p \dfrac{\dfrac{h_K}{w_K} \cdot \phi_{\xi}\left(\calL_{\barfrp}^{\Katz, \frp}(K)^{\jmath_{\ac}}\right)}{\left(\dfrac{\Omega_{p}}{\Omega_{\infty}} \right)^{4j} \left(\dfrac{2\pi}{\sqrt{D_{K}}}\right)^{2j-1}}
\]
Here we have removed the term $(-i)^{2j+1-k} 2^{4}$ since it is a fixed $p$-adic unit. Indeed, recall that $j \equiv 0 \pmod{p-1}$, and hence $j$ is an even number. This gives $(-i)^{2j+1-k} 2^{4} = (-i)^{1-k} 2^{4} \in (\calO^{\ur})^{\times}$. \footnote{If $p\equiv1\pmod4$, then $i \in \QQ_p$. If $p\equiv3\pmod4$, adjoining $i$ over $\QQ_p$ gives an unramified quadratic extension of $\QQ_p$, so $i$ lies in the coefficient ring after the unramified extension already used in $\Lambda_{K}^{\ur}$.}

(2) Second, we simplify the Euler factor in the numerator:
\begin{align*}
\boxed{\calE_{p, \eta}(\bfh_{\frp}, f,\chi)} &= (1-p^{j-r}\alpha \chi_1(\barfrp)^{-1})^2 (1-p^{j-r}\beta \chi_1(\barfrp)^{-1})^2 \\ 
&= (1-\xi(\frp)\alpha p^{-r})^{2} (1-\xi(\frp)\beta p^{-r})^{2} \\
&= (1- a_p(f)\xi(\frp)p^{-r}+\xi(\frp)^{2}p^{-1})^{2},
\end{align*}
since $\alpha+\beta = a_p(f)$ and $\alpha\beta = p^{2r-1}$. Here we have used the assumption that $\eta = \omega^{1-r}$, so $\eta = \omega^{-n}$. 

(3) Finally, we treat the $L$-value. Since $\eta = \omega^{1-r}$, we have
\[
\omega^{n} \eta = \omega^{n+1-r} = \omega^{j} = 1 
\]
since $j \equiv 0 \pmod{p-1}$. Therefore, 
\begin{align*}
\boxed{L(\bfh_{\frp, \chi_1}^{\circ}, f \otimes \omega^{n}\eta \psi_{\zeta}^{-1}, n+1)} &= L(\bfh_{\frp, \chi_1}^{\circ}, f \otimes \psi_{\zeta}^{-1}, j+r) \\
&= \sum_{\mathfrak{a}, \, (\mathfrak{a},\mathfrak{f}_{\psi_\chi})=1}
a_f(N(\mathfrak{a})) \psi_\chi(x_{\mathfrak{a}}) N(\mathfrak{a})^{-(j+r)} = L(f, \psi_\chi, j+r)
\end{align*}
The latter is $L(f/K, \xi, r)$. Putting the three boxed equations together into \eqref{eq:ERLII}, we obtain the result
    \[
    (\jmath_{\ac} \circ \phi_{\ac})(\widetilde{\calL_p^{\Gr}}(f/K)^{\omega^{1-r}}) = ((1+T_{\ac})^{2} - 1)(\calL_{p}^{\BDP}(f/K)),
    \]
by comparing with the interpolation formula in Theorem \ref{thm:BDP}. Applying the involution $\jmath^{\ac}: \Lambda_{\ac}^{\ur} \rightarrow \Lambda_{\ac}^{\ur}$ gives the desired result.
\end{proof}

\subsection{Local properties of the Beilinson--Flach elements at $p$}
We are now ready to establish the analogue of \cite[Theorem 5.20]{BSTW}.

\begin{theorem} \label{thm:BF-descent}
The natural injection 
\[
\rmH^1_{\rel,\ordi}(\QQ, T \hotimes(\TT_{\bfh_\frp}^{\square})\hotimes \Lambda_{\cyc}^\iota) \hookrightarrow \rmH^1_{\rel,\ordi}(\QQ, T \hotimes(\widetilde{\TT}_{\bfh_\frp}^{\square})\hotimes \Lambda_{\cyc}^\iota) 
\]
contains $\cBF^{\alpha,\bfh_{\frp}, \eta}$ in its image.
\end{theorem}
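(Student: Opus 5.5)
The plan follows \cite[Theorem 5.20]{BSTW}: a cohomological dévissage along the short exact sequence of $G_{\QQ}$-modules
\[
0 \rightarrow \TT_{\bfh_\frp}^{\square} \rightarrow \tilTT_{\bfh_\frp}^{\square} \rightarrow C:=\tilTT_{\bfh_\frp}^{\square}/\TT_{\bfh_\frp}^{\square} \rightarrow 0,
\]
tensored with $T$ and $\Lambda_{\cyc}^{\iota}$. The map in the statement is injective because $\rmH^0(\QQ, T\hotimes C\hotimes\Lambda_{\cyc}^\iota)=0$ by Lemma \ref{lem:vanishing_H0}(1). So it suffices to show two things. First, the image of $\cBF^{\alpha,\bfh_\frp,\eta}$ in $\rmH^1(\ZZ[1/p], T\hotimes C\hotimes\Lambda_{\cyc}^\iota)$ vanishes. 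Second, the resulting preimage satisfies the $(\rel,\ord)$ condition for the smaller lattice.

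For the first point, I use Theorem \ref{thm:CM-lattices}. Since $T_{\frp}$ kills $\tilTT^\star/\TT^\star$, it kills $C$, so $C$ is a module over $\Lambda_\frp/T_\frp$. Next I analyse the image of $\cBF$ in $\rmH^1(\ZZ[1/p],T\hotimes C\hotimes\Lambda_{\cyc}^\iota)$. From Proposition \ref{prop:local_BF}(1),(2), its localisation at $p$ lands in $\rmH^1(\QQ_p,\Fil^+T\hotimes\cdots)$. Its projection to the $\Fil^-$-part lies in $\rmH^1(\QQ_p,\Fil^-T\hotimes \Fil^+\tilTT\hotimes\Lambda_{\cyc}^\iota)$. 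Lemma \ref{lem:vanishing_H0}(2) ($\rmH^0(\QQ_p,\Fil^-T\hotimes(\tilTT^\star/\TT^\square)\hotimes\Lambda_{\cyc}^\iota)=0$) makes the relevant local maps injective. I then aim to show that the image of $\cBF$ in the $C$-cohomology is a class that is strict at $p$ in a suitable sense. I would show this class lies in a Selmer group for $T\hotimes C\hotimes \Lambda_{\cyc}^\iota$ that is torsion-free over $\Lambda_{\cyc}$. At the same time, the class is killed by a nonzero element coming from Corollary \ref{coro:cyclo_nonvanishing}. Concretely, the image of $\cBF$ mod $T_\frp$ (after passing to the trivial-character specialization $E_1(1,\epsilon_K)$) is controlled by the Kato classes of $f$ and $f^K$. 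Its image in $C$ is annihilated by $\calL_p^{\PR}(f/K)^\eta \bmod I_{\ac}$, which is nonzero. Torsion-freeness of the target would come from the $\rmH^0$-vanishing in Lemma \ref{lem:vanishing_H0}(1) applied to the quotients $\tilTT^\star/T_\frp\tilTT^\star$ and $\tilTT^\star/\tilTT^\square$, as in Proposition \ref{prop:rank}. These vanishings give $\rmH^1(\ZZ[1/p], T\hotimes C\hotimes\Lambda_{\cyc}^\iota)[\lambda]=\rmH^0(\ldots\otimes\Lambda_{\cyc}/\lambda)=0$.

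For the second point, a preimage $z$ exists in $\rmH^1(\ZZ[1/p],T\hotimes\TT^\square\hotimes\Lambda_{\cyc}^\iota)$. Its image in $\rmH^1(\QQ_p,\Fil^-T\hotimes\Fil^-\TT\hotimes\Lambda_{\cyc}^\iota)$ maps to zero in the analogous group for $\tilTT$ by Proposition \ref{prop:local_BF}(1). That comparison map is injective because the $\rmH^0$ of the cokernel twist vanishes, again by Lemma \ref{lem:vanishing_H0}(2). Hence $z$ is ordinary at the relevant prime, and $z$ lies in $\rmH^1_{\rel,\ord}$.

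The main obstacle is the first point: proving that the image in the quotient $C$ vanishes rather than merely being torsion. The step I expect to be delicate is identifying the $C$-part with an induced or Eisenstein-type representation, using $\TT^\square\simeq \Ind_K^{\QQ}\Lambda_\frp^\iota$ from Theorem \ref{thm:CM-lattices}(2). I would then have to check, exactly as in the elliptic-curve proof of \cite[Theorem 5.20]{BSTW}, that only weight-independent inputs (Kato's rank results in Proposition \ref{prop:rank}, and irreducibility \eqref{eq:irred}) are used, so that the argument transfers verbatim to weight $k$.
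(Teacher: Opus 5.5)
\textbf{There is a genuine gap in the first step, and that step is the whole content of the theorem.} Your injectivity argument is fine. So is your second step: it checks the $(\rel,\ord)$ condition for the preimage using Proposition \ref{prop:local_BF}(1), the equality $\Fil^+\TT_{\bfh_\frp}=\Fil^+\tilTT_{\bfh_\frp}$, and Lemma \ref{lem:vanishing_H0}(2).

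What you never show is that the image of $\cBF^{\alpha,\bfh_\frp,\eta}$ in $\rmH^1(\ZZ[1/p],T\hotimes C\hotimes\Lambda_{\cyc}^{\iota})$ vanishes. Two problems:
\begin{enumerate}
\item You claim this class is annihilated by $\calL_p^{\PR}(f/K)^{\eta}\bmod I_{\ac}$, but give no reason. Neither the cyclotomic descent nor the degeneration of the CM family to $E_1(1,\epsilon_K)$ yields an annihilation statement for the obstruction class. In the correct argument, the non-vanishing of the cyclotomic $L$-functions plays a different role: it makes a Coleman map injective, not an annihilator.
\item Your torsion-freeness of $\rmH^1(\ZZ[1/p],T\hotimes C\hotimes\Lambda_{\cyc}^{\iota})$ over $\Lambda_{\cyc}$ does not follow from Lemma \ref{lem:vanishing_H0}(1). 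That lemma concerns invariants with full $\Lambda_{\cyc}^{\iota}$-coefficients, not with $\Lambda_{\cyc}/\lambda$-coefficients.
\end{enumerate}

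\textbf{The missing chain of ideas.}
\begin{enumerate}
\item Let $b$ be the image of the class in $\rmH^1(\QQ,\scA(\tilTT^\star_{\bfh_\frp}))$ and put $b'=T_\frp b$. Then $b'\in\scS(\TT^\square_{\bfh_\frp})$, because $T_\frp\tilTT^\star_{\bfh_\frp}\subseteq\TT^\square_{\bfh_\frp}$ and Lemma \ref{lem:vanishing_H0}(2) applies. In terms of your d\'evissage, multiplication by $T_\frp$ maps $\tilTT^\star_{\bfh_\frp}/\TT^\square_{\bfh_\frp}$ into $\TT^\square_{\bfh_\frp}/T_\frp\TT^\square_{\bfh_\frp}\simeq{\rm Ind}_K^{\QQ}\calO$.
\item This is how the obstruction is moved, via Shapiro's lemma, into a module over $K$. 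The reduction of $b'$ in $\scS(\TT^\square_{\bfh_\frp})/T_\frp\scS(\TT^\square_{\bfh_\frp})$ injects into $\rmH^1_{\rel,\ord}(K,T\hotimes\Lambda_{\cyc}^{\iota})$, which is free of rank one by Proposition \ref{prop:rank}(2).
\item Proposition \ref{prop:local_BF}(3), together with $\Fil^+\tilTT_{\bfh_\frp}=\Fil^+\TT_{\bfh_\frp}=\TT_{\bfh_\frp,\frp}$, shows that the $\Fil^-T$-component of $\loc_p(b)$ already has coefficients in $\TT_{\bfh_\frp,\frp}$. Hence $\loc_p(b')$ is divisible by $T_\frp$ there, and $\Cole_{f,\frp}$ kills $\loc_\frp$ of the reduction.
\item On the rank-one module, $\Cole_{f,\frp}\circ\loc_\frp$ is nonzero, by Kato's zeta element and the non-vanishing of $\calL_p(f/\QQ)\calL_p(f^K/\QQ)$. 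So it is injective, the reduction vanishes, and $b'=T_\frp c$ with $c\in\scS(\TT^\square_{\bfh_\frp})$.
\item $\rmH^1(\QQ,\scA(\tilTT^\star_{\bfh_\frp}))$ has no $T_\frp$-torsion, and $\rmH^1(\QQ,\scA(\tilTT^\square_{\bfh_\frp}))\to\rmH^1(\QQ,\scA(\tilTT^\star_{\bfh_\frp}))$ is injective; both follow from Lemma \ref{lem:vanishing_H0}(1). These give $b=c$, as required.
\end{enumerate}

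Your ``strict at $p$'' intuition points at the right local input (step 3). Without the transfer into a rank-one Selmer module over $K$ and the injectivity of the Coleman map there, the vanishing does not follow.
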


The injectivity of the map in Theorem \ref{thm:BF-descent} follows from Lemma \ref{lem:vanishing_H0}. The proof proceeds as in \cite[Theorem 5.20]{BSTW}; we explain how it generalises to our setting. \footnote{Its relation with \cite[Section 5.4]{BSTW} requires a brief clarification. In that section, the proofs of Theorems 5.19 and 5.20 are intertwined. The reason is that the property \textbf{(Ind)} (explicitly stated in \cite[Section 4.2.9]{BSTW}) for the canonical CM family $\bfh_{\frp}$, which is the content of \cite[Theorem 5.19]{BSTW}, had not yet been established. The second explicit reciprocity law in \cite{BSTW} is used there to rule out the failure of \textbf{(Ind)} and hence to prove Theorem 5.19. In the present article, we take \cite[Theorem~5.19]{BSTW} as an established input (and the same occurs in \cite{CLW}), recorded here in Theorem \ref{thm:CM-lattices} with a conclusion concerning only the integral Galois representation attached to the canonical CM Hida family and independent of the modular form $f$ considered here. Once \textbf{(Ind)} is known, the intermediate induced lattice $\II$ of \cite[Lemma~4.18]{BSTW} coincides with the lattice $\TT_{\bfh_{\frp}}^{\square}$, and the proof of \cite[Theorem 5.20]{BSTW} carries over. Thus there is no circularity.} Before giving the proof, we record some consequences of Shapiro's lemma. For simplicity, for any lattice $M \in \{ \TT_{\bfh_{\frp}}, \tilTT_{\bfh_{\frp}}, \TT_{\bfh_{\frp}}^{\square}, \TT_{\bfh_{\frp}}^{\square}, \TT_{\bfh_{\frp}}^{\star}, \tilTT_{\bfh_{\frp}}^{\star}\}$, we set $\scA(M) := T \hotimes_{\ZZ_p} M \hotimes_{\ZZ_p} \Lambda_{\cyc}^{\iota}$ and 
    \begin{align}
    \scS(M) &:= \rmH^{1}_{\ord, \rel}(\QQ, \scA(M)) \notag \\
    &:= \{c \in \rmH^{1}(\QQ, \scA(M)): \loc_{p}(c)  = 0 \in \rmH^{1}(\QQ_p, \Fil^{-}T \hotimes M/M_{\frp} \hotimes \Lambda_{\cyc}^{\iota}) \} \label{eq:define_Q_relord}
    \end{align}
    whenever $M_{\frp}$ makes sense. By Theorem \ref{thm:CM-lattices}(2), Shapiro's lemma gives
    \begin{equation} \label{eq:shapiro_global}
    \rmH^{1}(\QQ, \scA(\TT_{\bfh_{\frp}}^{\square})) \simeq \rmH^{1}(K, T \hotimes \Lambda_{\frp}^{\iota} \hotimes \Lambda_{\cyc}^{\iota})
    \end{equation}
    and
    \begin{equation} \label{eq:shapiro_local}
        \rmH^{1}(\QQ_p, \scA(\TT_{\bfh_{\frp}}^{\square})) \simeq \rmH^{1}(K_{\frp}, T \hotimes \TT_{\bfh_{\frp}, \frp} \hotimes \Lambda_{\cyc}^{\iota}) \oplus \rmH^{1}(K_{\barfrp}, T \hotimes \TT_{\bfh_{\frp}, \barfrp} \hotimes \Lambda_{\cyc}^{\iota})
    \end{equation}
    with compatible localisation maps $\loc_{p}$, $\loc_{\frp}$ and $\loc_{\barfrp}$. Moreover, consider the map defining the local condition at $p$:
    \[
    \rmH^{1}(\QQ_p, T \hotimes \TT_{\bfh_{\frp}}^{\square} \hotimes \Lambda_{\cyc}^{\iota}) \rightarrow \rmH^{1}(\QQ_p, \Fil^{-} T \hotimes (\TT_{\bfh_{\frp}}^{\square}/\TT_{\bfh_{\frp}, \frp}) \hotimes \Lambda_{\cyc}^{\iota}) \simeq \rmH^{1}(\QQ_p, \Fil^{-} T \hotimes \TT_{\bfh_{\frp}, \barfrp} \hotimes \Lambda_{\cyc}^{\iota}).
    \]
    Under local Shapiro's lemma \eqref{eq:shapiro_local}, this gives exactly the relaxed condition at $\frp$ and the ordinary condition at $\barfrp$. Consequently, global Shapiro's lemma \eqref{eq:shapiro_global} restricts to an isomorphism
    \begin{equation} \label{eq:shapiro_final}
    \rmH^{1}_{\rel, \ord}(\QQ, \scA(\TT_{\bfh_{\frp}}^{\square})) \simeq \rmH^{1}_{\rel,\ord}(K, T \hotimes \TT_{\bfh_{\frp},\frp} \hotimes \Lambda_{\cyc}^{\iota}) \simeq \rmH^{1}_{\rel,\ord}(K, T \hotimes \Lambda_{\frp}^{\iota} \hotimes \Lambda_{\cyc}^{\iota}),
    \end{equation}
    where the last isomorphism is induced upon fixing an isomorphism $\TT_{\bfh_{\frp},\frp} \simeq \Lambda_{\frp}^{\iota}$ with $\TT_{\bfh_{\frp},\frp}/T_{\frp}\TT_{\bfh_{\frp},\frp} \simeq \ZZ_p$.

    We are now ready to present the proof.
\begin{proof}[Proof of Theorem \ref{thm:BF-descent}]
    By Proposition \ref{prop:local_BF}(1), the Beilinson--Flach element $\cBF^{\alpha, \bfh_{\frp}, \eta}$ satisfies the local condition defining $\scS(\tilTT_{\bfh_{\frp}}^{\square})$ in \eqref{eq:define_Q_relord}. Let $b \in \scS(\tilTT_{\bfh_{\frp}}^{\star})$ denote its image under the map induced by $\tilTT_{\bfh_{\frp}}^{\square} \rightarrow \tilTT_{\bfh_{\frp}}^{\star}$.

    By Theorem \ref{thm:CM-lattices}, we have $T_{\frp}(\tilTT_{\bfh_{\frp}}^{\star}) \subseteq \TT_{\bfh_{\frp}}^{\square}$. We then write $b^{\prime} := T_{\frp} \cdot b \in \rmH^{1}(\QQ, \scA(\TT_{\bfh_{\frp}}^{\square}))$. We claim that $b^{\prime} \in \scS(\TT_{\bfh_{\frp}}^{\square})$. Indeed, the inclusion $\TT_{\bfh_{\frp}}^{\square}/\TT_{\bfh_{\frp}, \frp} \hookrightarrow \tilTT_{\bfh_{\frp}}^{\star}/\tilTT_{\bfh_{\frp}, \frp}$, whose cokernel is $\tilTT_{\bfh_{\frp}}^{\star}/\TT_{\bfh_{\frp}}^{\square}$, gives an exact sequence
    \begin{multline*}
    \rmH^{0}(\QQ_p, \Fil^{-} T \hotimes (\tilTT_{\bfh_{\frp}}^{\star}/\TT_{\bfh_{\frp}}^{\square}) \hotimes \Lambda_{\cyc}^{\iota}) \\ \rightarrow \rmH^{1}(\QQ_p, \Fil^{-} T \hotimes (\TT_{\bfh_{\frp}}^{\square}/\TT_{\bfh_{\frp}, \frp}) \hotimes \Lambda_{\cyc}^{\iota}) \rightarrow \rmH^{1}(\QQ_p, \Fil^{-} T \hotimes (\tilTT_{\bfh_{\frp}}^{\star}/\tilTT_{\bfh_{\frp}, \frp}) \hotimes \Lambda_{\cyc}^{\iota})
    \end{multline*}
    By Lemma \ref{lem:vanishing_H0}, the leftmost term vanishes. The image of $\loc_{p}(b^{\prime})$ in the rightmost term vanishes because $b \in \scS(\tilTT_{\bfh_{\frp}}^{\star})$. This then implies that $b^{\prime} \in \scS(\TT_{\bfh_{\frp}}^{\square})$.

    Next we consider the coefficient exact sequence
    \[
    0 \rightarrow \scA(\TT_{\bfh_{\frp}}^{\square}) \xrightarrow{\times T_{\frp}} \scA(\TT_{\bfh_{\frp}}^{\square}) \rightarrow T \hotimes (\TT_{\bfh_{\frp}}^{\square}/T_{\frp}\TT_{\bfh_{\frp}}^{\square}) \hotimes \Lambda_{\cyc}^{\iota} \rightarrow 0.
    \]
    By Lemma \ref{lem:vanishing_H0} together with the identification \eqref{eq:shapiro_final}, we have an injection (which is \cite[Equation (5.13)]{BSTW}):
    \[
    \varrho: \scS(\TT_{\bfh_{\frp}}^{\square})/T_{\frp} \scS(\TT_{\bfh_{\frp}}^{\square}) \hookrightarrow \rmH^{1}_{\rel, \ord}(K, T \hotimes \Lambda_{\cyc}^{\iota}).
    \]
    By Proposition \ref{prop:rank}(2), we see that $\rmH^{1}_{\rel, \ord}(K, T \hotimes \Lambda_{\cyc}^{\iota})$ is free of rank one over $\Lambda_{\cyc}$. Let $\barb^{\prime}$ denote the image of $b^{\prime}$ modulo $T_{\frp}$.

    Project $\loc_{p}(b)$ to the $\Fil^{-} T$-quotient $\rmH^1(\QQ_p,\Fil^{-} T \hotimes(\widetilde{\TT}_{\bfh_\frp}) \hotimes \Lambda_{\cyc}^\iota)$. Proposition \ref{prop:local_BF}(3) says that the image belongs to $\rmH^{1}(\QQ_p, \Fil^{-}T \hotimes \Fil^{+} \tilTT_{\bfh_{\frp}} \hotimes \Lambda_{\cyc}^{\iota})$. It follows from Theorem \ref{thm:CM-lattices} and Ohta's filtration that $\Fil^{+} \tilTT_{\bfh_{\frp}} = \Fil^{+} \TT_{\bfh_{\frp}} =  \TT_{\bfh_{\frp}, \frp}$, and hence 
    \[
    \loc_{p}(b) \in \rmH^{1}(\QQ_p, \Fil^{-}T \hotimes \TT_{\bfh_{\frp}, \frp} \hotimes \Lambda_{\cyc}^{\iota})
    \]
    already. Therefore $\loc_{p}(b^{\prime}) = T_{\frp} \cdot \loc_{p}(b)$ is divisible by $T_{\frp}$ inside the local cohomology group with coefficients in $\TT_{\bfh_{\frp}, \frp}$. Hence $\loc_{p}(b^{\prime}) \equiv 0 \pmod{T_{\frp}}$, which implies that 
    \[
    \Cole_{f, \frp} \left( \loc_{\frp}(\varrho(\barb^{\prime})) \right) = 0,
    \]
    where $\Cole_{f, \frp}$ is the classical Coleman map of $f$ transferred to $K$, defined in \cite[Section 3.1.2]{BSTW}. (This is \cite[Equation (5.14)]{BSTW}.) By the same argument as in Corollary \ref{coro:cyclo_nonvanishing}, the composition
    \[
    \rmH^{1}_{\rel, \ord}(K, T \hotimes \Lambda_{\cyc}^{\iota}) \xrightarrow{\loc_{\frp}} \rmH^{1}(K_{\frp}, \Fil^{-} T \hotimes \Lambda_{\cyc}^{\iota}) \xrightarrow{\Cole_{f,\frp}} \Frac(\Lambda_{\cyc})
    \]
    is nonzero. Since $\rmH^{1}_{\rel, \ord}(K, T \hotimes \Lambda_{\cyc}^{\iota})$ is free of rank one over $\Lambda_{\cyc}$, the composition is injective. This implies that $\varrho(\barb^{\prime}) = 0 \in \rmH^{1}_{\rel, \ord}(K, T \hotimes \Lambda_{\cyc}^{\iota})$, and hence $\barb^{\prime} = 0 \in \scS(\TT_{\bfh_{\frp}}^{\square})/T_{\frp} \scS(\TT_{\bfh_{\frp}}^{\square})$ as well since $\varrho$ is injective. In other words, there exists $c \in \scS(\TT_{\bfh_{\frp}}^{\square})$ such that $b^{\prime} = T_{\frp} \cdot c$. As $b^{\prime} = T_{\frp} \cdot b$ already, this implies that $T_{\frp} \cdot b = T_{\frp} \cdot c$ holding inside $\scS(\tilTT_{\bfh_{\frp}}^{\star})$.

    We next note that multiplication by $T_{\frp}$ is injective on $\rmH^{1}(\QQ, \scA(\tilTT_{\bfh_{\frp}}^{\star}))$: indeed, consider the cohomology sequence induced by multiplication by $T_{\frp}$:
    \[
    \rmH^{0}(\QQ, T \hotimes (\tilTT_{\bfh_{\frp}}^{\star}/T_{\frp}\tilTT_{\bfh_{\frp}}^{\star}) \hotimes \Lambda_{\cyc}^{\iota}) \rightarrow \rmH^{1}(\QQ, T \hotimes \tilTT_{\bfh_{\frp}}^{\star} \hotimes \Lambda_{\cyc}^{\iota}) \rightarrow \rmH^{1}(\QQ, T \hotimes \tilTT_{\bfh_{\frp}}^{\star} \hotimes \Lambda_{\cyc}^{\iota}).
    \]
    Again Lemma \ref{lem:vanishing_H0} implies that the leftmost term vanishes, and hence the multiplication-by-$T_{\frp}$ map is indeed injective. This implies that $b = c \in \scS(\tilTT_{\bfh_{\frp}}^{\square})$. Since $c \in \scS(\TT_{\bfh_{\frp}}^{\square})$, this gives $b \in \scS(\TT_{\bfh_{\frp}}^{\square})$.
    
    Finally, to conclude that $\cBF^{\alpha, \bff,\eta} \in \scS(\TT_{\bfh_{\frp}}^{\square})$, it remains to show that the map 
    \[
    \rmH^{1}(\QQ, T \hotimes \tilTT_{\bfh_{\frp}}^{\square} \hotimes \Lambda_{\cyc}^{\iota}) \rightarrow \rmH^{1}(\QQ, T \hotimes \tilTT_{\bfh_{\frp}}^{\star} \hotimes \Lambda_{\cyc}^{\iota})
    \]
    induced by the inclusion $\tilTT_{\bfh_{\frp}}^{\square} \hookrightarrow \tilTT_{\bfh_{\frp}}^{\star}$ is injective. Indeed, the inclusion induces the following exact sequence
    \[
    \rmH^{0}(\QQ, T \hotimes (\tilTT_{\bfh_{\frp}}^{\star}/\tilTT_{\bfh_{\frp}}^{\square}) \hotimes \Lambda_{\cyc}^{\iota}) \rightarrow \rmH^{1}(\QQ, T \hotimes \tilTT_{\bfh_{\frp}}^{\square} \hotimes \Lambda_{\cyc}^{\iota}) \rightarrow \rmH^{1}(\QQ, T \hotimes \tilTT_{\bfh_{\frp}}^{\star} \hotimes \Lambda_{\cyc}^{\iota}),
    \]
    and Lemma \ref{lem:vanishing_H0} implies that the leftmost term vanishes, as desired. The theorem is therefore proved.
\end{proof}

\subsection{Explicit reciprocity law (II, \textit{bis})}

By Theorem \ref{thm:BF-descent}, the class $\cBF^{\alpha, \bfh_{\frp}, \eta}$ already lies in $\rmH^1_{\rel,\ordi}(\QQ, T \hotimes(\TT_{\bfh_\frp}^{\square})\hotimes \Lambda_{\cyc}^\iota)$. Therefore, under the integral logarithm map $\Log_{\barfrp}^{\inte}$, the image $\widetilde{\calL_{p}^{\two}}(\cBF^{\alpha, \bfh_{\frp}, \eta})$ is divisible by $T_{\frp}$. It is therefore well-defined to set
\[
\calL_{p}^{\two}(\cBF^{\alpha, \bfh_{\frp}, \eta}) := \dfrac{\widetilde{\calL_{p}^{\two}}(\cBF^{\alpha, \bfh_{\frp}, \eta})}{T_{\frp}} \in \scR^{\ur},
\]
and to define the \emph{Greenberg $p$-adic $L$-function} for $f$ over $K$ by
\[
\calL_p^{\Gr}(f/K)^{\eta} := \Tw_{r-1}( \calL_{p}^{\two}(\cBF^{\alpha, \bfh_{\frp},\eta})) \in \scR^{\ur}.
\]
The following results follow from Theorem \ref{thm:ERLIIint} and Proposition \ref{prop:anticyclotomic_descent_0} directly.

\begin{theorem}[Explicit Reciprocity Law II]\label{thm:ERLIIint_bis}
The element $\calL_p^{\two}(\cBF^{\alpha, \bfh_{\frp}})\in \scR^\ur$ has the following interpolation property: for $\chi\in \Xi^{\two}$, 
\begin{multline*} \label{eq:ERLII_cusp}
    \phi_{\chi} \left( \calL_{p}^{\two}(\cBF^{\alpha, \bfh_{\frp}, \eta}) \right) = \dfrac{\calE_{p, \eta}(\bfh_{\frp}, f,\chi)}{(1-p^{m-1} \chi_1(\barfrp)^{-2})(1-p^{m}\chi_1(\barfrp)^{-2})} \dfrac{(-1)^{n+1} (n!)(n+1-k)!}{\pi^{2n+3-k} (-i)^{m+1-k} 2^{2n+m+3-k}} \\ \times \dfrac{h_{K}}{w_K} \cdot \phi_{\chi}( \calL_{\barfrp}^{\Katz}(K)) \cdot \dfrac{L(\bfh_{\frp, \chi_1}^{\circ}, f \otimes \omega^{n}\eta \psi_{\zeta}^{-1}, n+1)}{\lrangle{\bfh_{\frp, \chi_1}^{\circ},\bfh_{\frp, \chi_1}^{\circ}}_{D_{K}}},
\end{multline*}
where $\calE_{p,\eta}(\bfh_{\frp}, f, \chi)$ is the Euler factor defined in \eqref{eq:Euler-II}.
\end{theorem}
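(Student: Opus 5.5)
The statement is obtained from Theorem~\ref{thm:ERLIIint} by dividing by $T_{\frp}$, so the plan is a short formal argument in $\scR^{\ur}$. First I will make the divisibility precise. By Theorem~\ref{thm:BF-descent}, $\cBF^{\alpha,\bfh_{\frp},\eta}$ comes from a class in $\rmH^1_{\rel,\ordi}(\QQ, T\hotimes \TT^{\square}_{\bfh_\frp}\hotimes\Lambda_{\cyc}^\iota)$. By Theorem~\ref{thm:CM-lattices}(1)--(2), $\TT^{\square}_{\bfh_\frp}/\TT_{\bfh_\frp,\frp}$ embeds into $\tilTT^{\star}_{\bfh_\frp}/\tilTT_{\bfh_\frp,\frp}$ with cokernel killed by $T_{\frp}$. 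I will then check that on the image of the smaller lattice, $\Log_{\barfrp}$ lands in $\Lambda_{\frp}$-multiples of $T_{\frp}I^{\star}_{\bfh_\frp}$. The reason is that the logarithm on the $\Fil^-$ quotient of $\TT^{\square}_{\bfh_{\frp}}$ corresponds to the reflexive congruence module with $H_{\frp}$ replaced by $H_{\frp}/T_{\frp}$, i.e.\ the factor $T_{\frp}$ accounts for the Eisenstein congruence. Therefore $\Log^{\inte}_{\barfrp}=H_{\frp}\Log_{\barfrp}$ takes values in $T_{\frp}\scR^{\ur}$, and $\calL_p^{\two}$ is a well-defined element of $\scR^{\ur}$.

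Next I specialise. Since $\scR^{\ur}$ is a domain, $\calL_p^{\two}$ is the unique element $x$ with $T_{\frp}x=\widetilde{\calL_p^{\two}}$. For $\chi\in\Xi^{\two}$, applying $\phi_\chi$ gives $\phi_\chi(T_{\frp})\,\phi_\chi(\calL_p^{\two})=\phi_\chi(\widetilde{\calL_p^{\two}})$. Here $\phi_\chi(T_{\frp})=\chi_1(\gamma_{\frp})-1$, and this is nonzero. Indeed, $\chi_1(\gamma_{\frp}^{p^{h_p}})=\lrangle{\epsilon(\gamma_{\cyc})}^m$ with $m\geq k\geq 2$, so $\chi_1$ is nontrivial; if $\chi_1(\gamma_\frp)=1$, raising to the power $p^{h_p}$ would give a contradiction. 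Dividing the formula \eqref{eq:ERLII} by $\phi_\chi(T_{\frp})$ then cancels the factor $\phi_\chi(T_{\frp}\cdot\calL^{\Katz}_{\barfrp}(K))=\phi_\chi(T_\frp)\phi_\chi(\calL^{\Katz}_{\barfrp}(K))$, since $\phi_\chi$ is a ring homomorphism. This yields exactly the claimed interpolation formula.

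The main obstacle is the first step: identifying the effect of passing from $\tilTT^{\star}$ to $\TT^{\square}$ on the image of the logarithm map, namely the saving of exactly one factor of $T_{\frp}$. If this is not transparent from the construction of $\Log_{\barfrp}$ in \cite[Section 5.3.2]{BSTW}, I will instead argue as follows. Write $b=\loc_p\cBF$, so that $T_{\frp}b$ is already in the $\TT^{\square}$-part by the argument in the proof of Theorem~\ref{thm:BF-descent}. Then compare both sides using the interpolation: the values $\phi_\chi(\widetilde{\calL_p^{\two}})$ vanish whenever $\chi_1$ is trivial, by the reciprocity law. Combined with the established membership in the $\TT^{\square}$-lattice, this gives divisibility by the height-one prime $(T_\frp)$ of the regular ring $\scR^{\ur}$.
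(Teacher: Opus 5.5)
Your route is the paper's: Theorem~\ref{thm:BF-descent} puts $\cBF^{\alpha,\bfh_\frp,\eta}$ in the $\TT^{\square}_{\bfh_\frp}$-lattice. This makes $\widetilde{\calL_p^{\two}}(\cBF^{\alpha,\bfh_\frp,\eta})$ divisible by $T_\frp$, which is what makes $\calL_p^{\two}$ well defined before the theorem is stated. The formula then follows from Theorem~\ref{thm:ERLIIint} by cancelling $\phi_\chi(T_\frp)$. That cancellation is correct as you wrote it, and your check that $\phi_\chi(T_\frp)\neq 0$ on $\Xi^{\two}$ fills in a detail the paper leaves implicit.

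The problem is the divisibility step you flag. What you extract from Theorem~\ref{thm:CM-lattices} is that $\Fil^-\TT_{\bfh_\frp}\hookrightarrow\Fil^-\tilTT^{\star}_{\bfh_\frp}$ has cokernel killed by $T_\frp$. That gives only $T_\frp\Fil^-\tilTT^{\star}_{\bfh_\frp}\subseteq\Fil^-\TT_{\bfh_\frp}$, which is the wrong containment. It is compatible with the two lattices being equal, and then $\Log^{\inte}_{\barfrp}$ would gain no factor of $T_\frp$. You need the inclusion to be strict, i.e.\ the identity $\Fil^{-}\TT_{\bfh_{\frp}} = T_{\frp}\cdot\Fil^{-}\widetilde{\TT}_{\bfh_{\frp}}^{\star}$ of \eqref{eq:minus-lattice-identity}, which the paper takes from \cite[Section 5.5.1]{BSTW}. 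This identity encodes the Eisenstein congruence of $\bfh_\frp$ at $T_\frp=0$. It does not follow from Theorem~\ref{thm:CM-lattices}, which only bounds the quotient from above. Given it, every class in $\rmH^1(\QQ_p,\Fil^+T\hotimes\Fil^-\TT_{\bfh_\frp}\hotimes\Lambda_{\cyc}^\iota)$ is $T_\frp$ times a class with coefficients in $\Fil^-\tilTT^{\star}_{\bfh_\frp}$. Hence its image under $\Log^{\inte}_{\barfrp}$ lies in $T_\frp\scR^{\ur}$, which is \eqref{eq:integral-log-divisibility}. Your remark about replacing $H_\frp$ by $H_\frp/T_\frp$ restates this identity rather than proving it.

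Your fallback does not repair this. Theorem~\ref{thm:ERLIIint} computes $\phi_\chi$ only for $\chi\in\Xi^{\two}$. There $m\geq k$ forces $\chi_1\neq 1$ and hence $\phi_\chi(T_\frp)\neq 0$, so the law says nothing about $\widetilde{\calL_p^{\two}}$ modulo $T_\frp$. The point $\chi_1=1$, where $\bfh_\frp$ specialises to the weight-one Eisenstein series, lies outside the range of interpolation, and the factor $\phi_\chi(T_\frp)$ in \eqref{eq:ERLII} cannot be extrapolated there. Extracting divisibility from the interpolation formula would require knowing in advance that the remaining factor is interpolated by an element of $\scR^{\ur}$, which is the statement to be proved. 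Likewise, the fact that $T_\frp b$ lies in the $\TT^{\square}_{\bfh_\frp}$-part is only the easy first step of the proof of Theorem~\ref{thm:BF-descent}. The relevant input is that $b$ itself lies there, combined with \eqref{eq:minus-lattice-identity}.
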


\begin{proposition} \label{prop:anticyclotomic_descent}
    Let $I_{\cyc}^{\ur}$ be the kernel of the natural projection $\phi_{\ac}: \scR^{\ur} \twoheadrightarrow \Lambda_{\ac}^{\ur}$. Suppose that \eqref{eq:gen-Heegner} holds. Then 
    \[
    (\calL_p^{\Gr}(f/K)^{\omega^{1-r}} \bmod{I_{\cyc}^{\ur}}) = (\calL_{p}^{\BDP}(f/K)^{\jmath_{\ac}}).
    \]
\end{proposition}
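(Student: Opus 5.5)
The plan is to deduce the statement from Proposition \ref{prop:anticyclotomic_descent_0} by dividing both sides by $(1+T_{\ac})^2-1$. By construction, $\calL_p^{\Gr}(f/K)^{\eta}=\Tw_{r-1}(\widetilde{\calL_p^{\two}}(\cBF^{\alpha,\bfh_{\frp},\eta})/T_{\frp})$. The division is legitimate by Theorem \ref{thm:BF-descent}, which places $\cBF^{\alpha,\bfh_{\frp},\eta}$ in the $\TT^{\square}_{\bfh_{\frp}}$-cohomology, so that $\widetilde{\calL_p^{\two}}$ is divisible by $T_{\frp}$ in $\scR^{\ur}$. Hence
\[
\widetilde{\calL_p^{\Gr}}(f/K)^{\eta}=\Tw_{r-1}(T_{\frp})\cdot \calL_p^{\Gr}(f/K)^{\eta}
\]
in $\scR^{\ur}$. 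So I first need to check how $\Tw_{r-1}$ acts on $T_{\frp}$. The twist $\Tw_{r-1}$ only affects the cyclotomic variable: it shifts characters by $\lrangle{\epsilon}^{r-1}$ on the $\Gamma_{\cyc}$-factor. Therefore it fixes $T_{\frp}=\gamma_{\frp}-1\in\Lambda_{\frp}$.

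I would confirm this fixing property against the definition of $\Tw$ used in \cite{BSTW, CLW}. If instead the twist touched $\gamma_{\frp}$, then $T_{\frp}$ would be replaced by a unit multiple of $\gamma_{\frp}u-1$. I would then track its image under $\phi_{\ac}$ and check that it still generates the same ideal as $(1+T_{\ac})^2-1$ up to units.

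Next I reduce modulo $I^{\ur}_{\cyc}$. The map $\phi_{\ac}$ sends $T_{\frp}\mapsto (1+T_{\ac})^2-1$, as recorded before Proposition \ref{prop:anticyclotomic_descent_0}. Applying $\phi_{\ac}$ to the factorisation above and comparing with Proposition \ref{prop:anticyclotomic_descent_0} gives an equality of principal ideals
\[
((1+T_{\ac})^2-1)\cdot(\phi_{\ac}(\calL_p^{\Gr}(f/K)^{\omega^{1-r}}))=((1+T_{\ac})^2-1)\cdot(\calL_p^{\BDP}(f/K)^{\jmath_{\ac}})
\]
in $\Lambda_{\ac}^{\ur}$. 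Here the left-hand factor is the image of $T_{\frp}$ itself, not its $\jmath_{\ac}$-conjugate.

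Since $\Lambda^{\ur}_{\ac}$ is a domain and $(1+T_{\ac})^2-1=T_{\ac}(2+T_{\ac})$ is nonzero, I cancel it from principal ideals: if $(xa)=(xb)$ with $x\neq 0$, then $xa=uxb$ for a unit $u$, so $a=ub$. This yields $(\phi_{\ac}(\calL_p^{\Gr}(f/K)^{\omega^{1-r}}))=(\calL_p^{\BDP}(f/K)^{\jmath_{\ac}})$, which is the statement. The only remaining point is that the involution $\jmath_{\ac}$ also sends $(1+T_{\ac})^2-1$ to a unit multiple of itself, namely $-(1+T_{\ac})^{-2}$ times it. So whether the factor is placed before or after applying $\jmath_{\ac}$ in Proposition \ref{prop:anticyclotomic_descent_0} does not matter.

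I expect the main obstacle to be the bookkeeping of normalisations: that $\Tw_{r-1}$ commutes with division by $T_{\frp}$ (equivalently, fixes $T_{\frp}$ up to a unit), and that Proposition \ref{prop:anticyclotomic_descent_0} was proven with $\widetilde{\calL_p^{\two}}$, i.e. the version with the factor $T_{\frp}$ inserted via $H_{\frp}$ in $\Log^{\inte}_{\barfrp}$. Theorem \ref{thm:ERLIIint_bis} provides an independent check: it shows the interpolation formula for $\calL_p^{\two}$ is exactly that of Theorem \ref{thm:ERLIIint} with $\phi_\chi(T_{\frp})$ removed. Rerunning steps (1)--(3) of the proof of Proposition \ref{prop:anticyclotomic_descent_0} then gives the result directly, with the factor $(1+T_{\ac})^2-1$ no longer present. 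I would also invoke Zariski density of the characters in $\Xi^{\two}$ restricted to the anticyclotomic line.
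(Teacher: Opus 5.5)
Your proposal is correct and takes essentially the paper's route: the paper deduces the statement ``directly'' from Theorem \ref{thm:ERLIIint} and Proposition \ref{prop:anticyclotomic_descent_0}. Concretely, one writes $\widetilde{\calL_p^{\Gr}}(f/K)^{\omega^{1-r}} = T_{\frp}\cdot \calL_p^{\Gr}(f/K)^{\omega^{1-r}}$ (since $\Tw_{r-1}$ fixes $T_{\frp}$) and cancels the nonzero element $\phi_{\ac}(T_{\frp}) = (1+T_{\ac})^2-1$ in the domain $\Lambda_{\ac}^{\ur}$. Your additional checks, that $\Tw_{r-1}$ fixes $T_{\frp}$ and that $\jmath_{\ac}$ changes $(1+T_{\ac})^2-1$ only by a unit, are exactly the bookkeeping that makes this deduction rigorous.
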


Having established the anticyclotomic descent theorem for the anticyclotomic $p$-adic $L$-function, we obtain the following $\mu$-invariant vanishing result of Hsieh.

\begin{theorem}[Hsieh] \label{thm:mu-vanishing}
Assume that conditions \eqref{eq:gen-Heegner} and \eqref{eq:irred} hold. Then $\mu\bigl(L_p^{\BDP}(f/K)\bigr)=0$, and hence $\mu\bigl(\calL_p^{\Gr}(f/K)\bigr)=0$.\end{theorem}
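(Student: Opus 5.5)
The plan is to split the statement into two parts. The first is Hsieh's theorem on the vanishing of the $\mu$-invariant of the BDP $p$-adic $L$-function, which I will import as a black box. The second is a transfer of this vanishing to $\calL_p^{\Gr}(f/K)$ via the anticyclotomic descent of Proposition \ref{prop:anticyclotomic_descent}.

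\textbf{Step 1: the BDP statement.} Under \eqref{eq:gen-Heegner}, \eqref{eq:spl}, $p\nmid N$ and $p$-ordinarity, Hsieh \cite{Hsieh2014} proves that the anticyclotomic Rankin--Selberg $p$-adic $L$-function has $\mu=0$. His hypotheses also include residual irreducibility, which is supplied here by \eqref{eq:irred}, together with $p\geq 5$ (or $p>k$-type conditions).

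\begin{itemize}
\item Hsieh's $p$-adic $L$-function agrees with the square-root function $\scL_{\frp}^{\BDP}(f/K)$ of \cite[Theorem 4.1]{CLW}, up to a unit of $\Lambda_{\ac}^{\ur}$. I will check this by matching interpolation formulas: the CM periods $(\Omega_p,\Omega_\infty)$ of \cite{CastellaHsieh} are the same, and the remaining factorials, powers of $2\pi/\sqrt{D_K}$ and Euler factors agree up to $p$-adic units.
\item Since $\calL_p^{\BDP}(f/K)=\scL_{\frp}^{\BDP}(f/K)^2$, and the $\mu$-invariant is additive on products in the domain $\Lambda_{\ac}^{\ur}$, this gives $\mu(\calL_p^{\BDP}(f/K))=0$.
\end{itemize}

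\textbf{Step 2: transfer to $\calL_p^{\Gr}(f/K)$.} By Proposition \ref{prop:anticyclotomic_descent}, the image of $\calL_p^{\Gr}(f/K)^{\omega^{1-r}}$ under $\phi_{\ac}$ generates the same ideal as $\calL_p^{\BDP}(f/K)^{\jmath_{\ac}}$. The involution $\jmath_{\ac}$ preserves $\mu$-invariants, so this image has $\mu=0$; in particular it is not divisible by $\varpi$.

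Now suppose $\varpi$ divided $\calL_p^{\Gr}(f/K)^{\omega^{1-r}}$ in $\scR^{\ur}$. Reducing modulo $I_{\cyc}^{\ur}$ would then make its image in $\Lambda_{\ac}^{\ur}$ divisible by $\varpi$, a contradiction. In the two-variable ring $\scR^{\ur}\simeq\calO^{\ur}\lrbracket{T_{\frp},T_{\cyc}}$, the $\mu$-invariant is the $\varpi$-adic valuation of the element, so $\mu(\calL_p^{\Gr}(f/K))=0$ for the branch $\eta=\omega^{1-r}$, which is the one relevant to the later main-conjecture argument.

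\textbf{Main obstacle.} The hard part is Step 1, namely matching normalisations: periods, the choice of $\frp$ versus $\barfrp$, and the weight/twist conventions between Hsieh, \cite{CastellaHsieh} and \cite{CLW}. I would handle this by comparing interpolation formulas at the dense set of characters of infinity type $(-j,j)$, $j\equiv 0\pmod{p-1}$, $j\geq r$. I would also verify that every discrepancy lies in $(\calO^{\ur})^\times$; this uses $p\geq5$, $p\nmid D_K$, and that $\Omega_p$ is a $p$-adic unit.
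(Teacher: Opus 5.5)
Your proposal is correct and follows the paper's proof. The paper likewise quotes \cite[Theorem~B]{Hsieh2014} for the vanishing of the $\mu$-invariant of the BDP $p$-adic $L$-function, and transfers it to $\calL_p^{\Gr}(f/K)$ via the anticyclotomic descent of Proposition~\ref{prop:anticyclotomic_descent}; your $\varpi$-divisibility argument modulo $I_{\cyc}^{\ur}$, the passage from the square-root function to its square, and the restriction to the branch $\eta=\omega^{1-r}$ just spell out what the paper compresses into ``the result follows''.
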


\begin{proof}
By \cite[Theorem~B]{Hsieh2014}, we have $\mu(\calL_{\frp}^{\BDP}(f/K))=0$, and hence $\mu(\calL_{\frp}^{\BDP}(f/K)^{\jmath_{\ac}})=0$. By the anticyclotomic descent theorem (Proposition \ref{prop:anticyclotomic_descent}), the projection of $\calL_{p}^{\Gr}(f/K)$ to $\Lambda_{K}^{\ac,\ur}$ generates the same ideal as $\calL_{\frp}^{\BDP}(f/K)^{\jmath_{\ac}}$; the result follows.
\end{proof}

%% file: 04_zeta_morphism.tex
\section{Zeta elements over imaginary quadratic fields and main conjectures} \label{sec:zeta_main_conjecture}

\subsection{Zeta element of $f$ over $K$} \label{sec:zeta_definition}
We are finally ready to define the zeta element of $f$ over $K$. It follows from Theorem \ref{thm:BF-descent} that
\[
\cBF^{\alpha, \bfh_{\frp}, \eta} \in \rmH^1_{\rel,\ordi}(\QQ, T \hotimes(\TT_{\bfh_\frp}^{\square})\hotimes \Lambda_{\cyc}^\iota).
\]
By Theorem \ref{thm:CM-lattices}(2), Shapiro's lemma gives the following isomorphism
\begin{equation} \label{eq:shapiro}
\mathrm{Spr}: \rmH^1_{\rel,\ordi}(\QQ, T \hotimes(\TT_{\bfh_\frp}^{\square})\hotimes \Lambda_{\cyc}^\iota) \xrightarrow{\sim} \rmH^{1}_{\rel,\ordi}(K, T \hotimes \Lambda_{\frp}^{\iota} \hotimes \Lambda_{\cyc}^{\iota})  \xrightarrow{\gamma_{\frp} \mapsto \gamma_{\frp}^{-1}} \rmH^{1}_{\rel,\ordi}(K, T \hotimes \Lambda_{\frp} \hotimes \Lambda_{\cyc}^{\iota})
\end{equation}
Recall that in Section \ref{sec:more-iwasawa-algebra}, we defined an isomorphism
\begin{equation} \label{eq:theta_isomorphism}
\theta: \Gamma_{K} \xrightarrow{\sim} \Gamma_{\frp} \times \Gamma_{\cyc} \xrightarrow[\sim]{\gamma_{\frp} \mapsto \gamma_{\frp}^{-1}} \Gamma_{\frp} \times \Gamma_{\cyc}.
\end{equation}
This therefore determines an isomorphism $\theta: \Lambda_{K} \xrightarrow{\sim} \scR$ and $\theta^{\ur}: \Lambda_{K}^{\ur} \xrightarrow{\sim} \scR^{\ur}$. The isomorphism \eqref{eq:theta_isomorphism} also induces $\theta: T \hotimes \Lambda_{K}^{\iota} \xrightarrow{\sim} T \hotimes \Lambda_{\frp} \hotimes \Lambda_{\cyc}^{\iota}$, which is compatible with $\theta$. We thus obtain an identification
\[
\theta: \rmH^{1}_{\rel, \ordi}(K, \bfT_{K}) = \rmH^{1}_{\rel, \ordi}(K, T \hotimes \Lambda_{K}^{\iota}) \rightarrow \rmH^{1}_{\rel, \ord}(K, T \hotimes \Lambda_{\frp} \hotimes \Lambda_{\cyc}^{\iota}).
\]

\begin{definition}[Zeta element of $f$ over $K$]
\label{def:zeta}
The image of $\cBF^{\alpha, \bfh_{\frp}, \eta}$ under the composite map $\theta^{-1} \circ \mathrm{Spr}$
\[
\rmH^1_{\rel,\ordi}(\QQ, T \hotimes(\TT_{\bfh_\frp}^{\square})\hotimes \Lambda_{\cyc}^\iota) \xrightarrow{\sim} \rmH^{1}_{\rel,\ordi}(K, T \hotimes \Lambda_{\frp} \hotimes \Lambda_{\cyc}^{\iota}) \rightarrow \rmH^{1}_{\rel, \ordi}(K, \bfT_{K})
\]
is the \emph{zeta element of $f$ over $K$}, denoted by $\calZ^{\eta}(f/K)$. For a quotient $\Lambda_K \twoheadrightarrow\Lambda_\dagger$ with $\dagger \in \{\cyc, \ac\}$, write $\calZ^{\eta}_\dagger(f/K)$ for its image in $H^1_{\rel,\ordi}(K, \bfT_{\dagger})$. We often omit the superscript $\eta$ when it is fixed and clear from the context.
\end{definition}

The explicit reciprocity laws for the zeta element $\calZ(f/K)$ follow from those for the Beilinson--Flach elements in the previous section. Let $\Cole(f/K)$ be the composite
\begin{align*} \label{eq:col_f_K}
\Cole(f/K): \rmH^{1}(K_{\frp}, \Fil^{-}\bfT_{K}) &\xrightarrow[\sim]{\mathrm{Spr}} \rmH^{1}(\QQ_p, \Fil^{-}T \hotimes (\Fil^{+} \widetilde{\TT}_{\bfh_\frp}) \hotimes \Lambda_{\cyc}^\iota) \\ 
&\xrightarrow{\Cole_{\frp}} \scR \xrightarrow[\sim]{\theta^{-1}} \Lambda_{K}.
\end{align*}

Following \cite[Section~5.5.1]{BSTW}, the inclusion $\TT_{\bfh_{\frp}}^{\square} = \TT_{\bfh_{\frp}}^{\star} \hookrightarrow
\widetilde{\TT}_{\bfh_{\frp}}^{\star}$ induces an isomorphism 
\begin{equation}\label{eq:minus-lattice-identity}
\Fil^{-}\TT_{\bfh_{\frp}} = T_{\frp}\cdot \Fil^{-}\widetilde{\TT}_{\bfh_{\frp}}^{\star} \subseteq \Fil^{-}\widetilde{\TT}_{\bfh_{\frp}}^{\star} = \widetilde{\TT}_{\bfh_{\frp}}^{\star}
/\widetilde{\TT}_{\bfh_{\frp},\frp}
\end{equation}
Hence, under the inclusion $\rmH^{1}(\QQ_p, \Fil^{+}T \hotimes (\Fil^{-} \TT_{\bfh_\frp}) \hotimes \Lambda_{\cyc}^\iota) \hookrightarrow \rmH^{1}(\QQ_p, \Fil^{+}T \hotimes (\Fil^{-} \widetilde{\TT}_{\bfh_\frp}^{\star}) \hotimes \Lambda_{\cyc}^\iota)$, we have
\begin{equation}\label{eq:integral-log-divisibility}
\Log_{\barfrp}^{\inte}\left( \rmH^{1}\left(
\QQ_p, \Fil^{+}T \hotimes\Fil^{-}\TT_{\bfh_{\frp}} \hotimes\Lambda_{\cyc}^{\iota} \right) \right) \subseteq T_{\frp}\scR^{\ur}.
\end{equation}
It is therefore well-defined to introduce the normalized logarithm map
\[
\Log_{\barfrp}^{\mathrm{norm}} := T_{\frp}^{-1} \cdot \Log_{\barfrp}^{\inte}: \rmH^{1}\left( \QQ_p, \Fil^{+}T \hotimes\Fil^{-}\TT_{\bfh_{\frp}} \hotimes\Lambda_{\cyc}^{\iota} \right) \rightarrow \scR^{\ur}
\]
on the cohomology group appearing in
\eqref{eq:integral-log-divisibility}. We then define $\Log(f/K)$ to be the composite
\begin{align*}
\Log(f/K): \rmH^{1}(K_{\barfrp}, \Fil^{+}\bfT_{K}) &\xrightarrow[\sim]{\mathrm{Spr}} \rmH^{1}(\QQ_p, \Fil^{+}T \hotimes (\Fil^{-} \TT_{\bfh_\frp}) \hotimes \Lambda_{\cyc}^\iota) \\
&\xrightarrow{\Log_{\bar{\frp}}^{\mathrm{norm}}} \scR^{\ur} \xrightarrow[\sim]{\theta^{\ur, -1}} \Lambda_{K}^{\ur} \xrightarrow[\sim]{\Tw_{r-1}} \Lambda_{K}^{\ur}.
\end{align*}
Here the twist $\Tw_{r-1}: \Lambda_{K} \rightarrow \Lambda_{K}$ denotes the $\calO$-linear isomorphism given by $\gamma \mapsto \lrangle{\epsilon(\gamma)}^{r-1} \gamma$ for $\gamma \in \Gamma_{\cyc}$ and by the identity on $\Gamma_{\ac}$. 

\begin{remark}
The purpose of this additional twist is to guarantee that the projection of $\calL_{p}^{\Gr}(f/K)$ to the anticyclotomic line is the anticyclotomic $p$-adic $L$-function in Theorem \ref{thm:BDP}.
\end{remark}

We then define two $p$-adic $L$-functions of $f$ over $K$ by a slight abuse of notation:
\begin{align*}
\calL_{p}^{\PR}(f/K)^{\eta} &:= \Cole(f/K)(\loc_{\frp}(\calZ^{\eta}(f/K))), \\ 
\calL_{p}^{\Gr}(f/K)^{\eta} &:= \Log(f/K)(\loc_{\bar{\frp}}(\calZ^{\eta}(f/K))).
\end{align*}
We call them the \emph{standard (Perrin--Riou) $p$-adic $L$-function} and the \emph{Greenberg $p$-adic $L$-function} of $f$ over $K$, respectively. Their interpolation properties follow from Theorem \ref{ERLI-thm} and Theorem \ref{thm:ERLIIint} respectively. We emphasize that the explicit reciprocity laws serve as the \emph{definitions} of these two-variable $p$-adic $L$-functions for $f$ over $K$. The following theorem then follows from Propositions \ref{prop:cyclotomic_descent} and \ref{prop:anticyclotomic_descent} (the involution $\jmath_{\ac}$ cancels the same involution appearing in the definition of $\theta$ in Section \ref{sec:more-iwasawa-algebra}).

\begin{theorem}[{Theorem \ref{main:C}}] \label{thm:descent_theorem}
With the notation above, the following statements hold.
\begin{enumerate}[label = \rm (\arabic*)]
        \item $(\calL^{\PR}_p(f/K)^{\eta} \pmod{I_{\ac}}) = (\calL_{p}(f/\QQ)^{\eta} \cdot \calL_{p}(f^{K}/\QQ)^{\eta})$, where $I_{\ac} := \ker(\Lambda_{K} \twoheadrightarrow \Lambda_{\cyc})$, and
        \item assuming \eqref{eq:gen-Heegner}, we have 
        \[
        (\calL^{\Gr}_p(f/K)^{\omega^{1-r}} \pmod{I_{\cyc}^{\ur}}) = (\calL_{p}^{\BDP}(f/K)),
        \]
        where $I_{\cyc}^{\ur} := \ker(\Lambda_{K}^{\ur} \twoheadrightarrow \Lambda_{\ac}^{\ur})$.
        \end{enumerate}
    Here $\calL_p(f/\QQ) \in \Lambda$ is the cyclotomic $p$-adic $L$-function of $f$ in Theorem \ref{thm:cyclotomic_p-adic_L-function}, and $\calL_{p}^{\BDP}(f/K) \in \Lambda_{\ac}^{\ur}$ is the Bertolini--Darmon--Prasanna $p$-adic $L$-function for $f$ over $K$ in Theorem \ref{thm:BDP}.
\end{theorem}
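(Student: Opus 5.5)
The plan is to transport the two descent statements already proved for the Beilinson--Flach element in $\scR$ and $\scR^{\ur}$ (Propositions \ref{prop:cyclotomic_descent} and \ref{prop:anticyclotomic_descent}) to $\Lambda_K$ and $\Lambda_K^{\ur}$ along the isomorphism $\theta$. The first step is to unwind the definitions. Since $\calZ^{\eta}(f/K)=\theta^{-1}\circ\mathrm{Spr}(\cBF^{\alpha,\bfh_{\frp},\eta})$, and since $\Cole(f/K)$ and $\Log(f/K)$ are built from $\Cole_{\frp}$ and $\Log_{\barfrp}^{\mathrm{norm}}$ by the same Shapiro and $\theta$ identifications, we need the compatibility of $\mathrm{Spr}$ with localisation, namely \eqref{eq:shapiro_local}: $\loc_{\frp}$ and $\loc_{\barfrp}$ of the Shapiro image correspond to the $\TT_{\bfh_{\frp},\frp}$- and $\TT_{\bfh_{\frp},\barfrp}$-components of $\loc_p$. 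With this we obtain
\[
\calL_p^{\PR}(f/K)^{\eta}=\theta^{-1}\bigl(\calL_p^{\one}(\cBF^{\alpha,\bfh_{\frp},\eta})\bigr),\qquad
\calL_p^{\Gr}(f/K)^{\eta}=\Tw_{r-1}\circ\theta^{\ur,-1}\bigl(\calL_p^{\two}(\cBF^{\alpha,\bfh_{\frp},\eta})\bigr).
\]
The second equality is where \eqref{eq:minus-lattice-identity} is used: the division by $T_{\frp}$ in $\Log^{\mathrm{norm}}_{\barfrp}$ must agree with the division defining $\calL_p^{\two}$. This holds because Theorem \ref{thm:BF-descent} places the class in the $\TT^{\square}_{\bfh_{\frp}}$-lattice.

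For (1), we compare the projections. The kernel $I_{\ac}$ of $\Lambda_K\to\Lambda_{\cyc}$ is generated by $\gamma_{-}-1$, and $\theta$ sends $\Gamma_{-}$ into $\Gamma_{\frp}\times\{1\}$: the $\Gamma_{\cyc}$-component of $\gamma_{-}$ is trivial and its $\Gamma_{\frp}$-component is $\gamma_{\frp}^{\mp1/2}$. Hence $\theta(I_{\ac})$ is the kernel of $\scR\to\Lambda_{\cyc}$, $\gamma_{\frp}\mapsto1$. Moreover, $\theta$ induces the identity $\Gamma_+\simeq\Gamma_{\cyc}$ on the quotients, by the normalisation of $\gamma_+$ (up to the index $p^{h_p}$, which only rescales the generator compatibly). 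Proposition \ref{prop:cyclotomic_descent} then gives (1) directly.

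For (2), we must check that $\theta^{\ur}$ carries $I_{\cyc}^{\ur}$ onto the kernel of $\phi_{\ac}$, and that the induced map $\Lambda_{\ac}^{\ur}\to\Lambda_{\ac}^{\ur}$ is $\jmath_{\ac}$. The twist $\Tw_{r-1}$ acts only on $\Gamma_{\cyc}$, so it preserves $I^{\ur}_{\cyc}$ and is the identity modulo it; it is in any case already built into the convention of Proposition \ref{prop:anticyclotomic_descent}. The inversion $\gamma_{\frp}\mapsto\gamma_{\frp}^{-1}$ in $\theta$, combined with $\gamma_-\mapsto\gamma_{\frp}^{1/2}$ and the map $\phi_{\ac}$ with $\gamma_{\frp}\mapsto\gamma_{\ac}^2$, should send $\gamma_{\ac}$ to $\gamma_{\ac}^{-1}$. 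That is exactly $\jmath_{\ac}$, which cancels the involution in Proposition \ref{prop:anticyclotomic_descent} and yields $(\calL_p^{\BDP}(f/K))$.

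The main obstacle is this bookkeeping of generators. The index $p^{h_p}$ in $\gamma_+\mapsto\gamma_w^{p^{h_p}/2}$ and the signs $\pm1/2$ must make $\phi_{\ac}\circ\theta^{\ur}$ agree with the canonical projection $\Lambda_K^{\ur}\to\Lambda_{\ac}^{\ur}$ precomposed with $\jmath_{\ac}$. I would verify this on the generators $\gamma_\pm$, and if needed adjust by a unit, which is harmless since the statements are equalities of ideals.
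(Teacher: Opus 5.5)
Your approach matches the paper's: you transport Propositions~\ref{prop:cyclotomic_descent} and~\ref{prop:anticyclotomic_descent} through $\theta$, and $\jmath_{\ac}$ cancels the inversion built into $\theta$. Your generator bookkeeping is correct, and it settles the point you flag as the main obstacle.
\begin{itemize}
\item Cyclotomic side: $\theta(\gamma_-)=\gamma_{\frp}^{-1/2}$, so $\theta(I_{\ac})=(\gamma_{\frp}-1)$. The induced map on cyclotomic quotients is exactly the identity, because the exponent $p^{h_p}$ lies in the $\Gamma_{\frp}$-component, which dies in the quotient.
\item Anticyclotomic side: $\phi_{\ac}(\theta(\gamma_+))=\gamma_{\ac}^{-p^{h_p}}\gamma_{\ac}^{p^{h_p}}=1$ and $\phi_{\ac}(\theta(\gamma_-))=\gamma_{\ac}^{-1}$. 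Hence $\theta^{\ur}(I_{\cyc}^{\ur})=\ker\phi_{\ac}$ and the induced map is exactly $\jmath_{\ac}$.
\end{itemize}
No unit adjustment is needed; a unit could not repair a kernel mismatch anyway.

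One claim needs correcting. $\Tw_{r-1}$ neither preserves $I_{\cyc}^{\ur}$ nor is the identity modulo it. It sends $\gamma_+-1$ to $\langle\epsilon(\gamma_{\cyc})\rangle^{r-1}\gamma_+-1$. Modulo $I_{\cyc}^{\ur}$ this is congruent to $\langle\epsilon(\gamma_{\cyc})\rangle^{r-1}-1$, which is nonzero for $r\geq 2$. If the twist were the identity there, it would serve no purpose in the anticyclotomic descent, contrary to the paper's remark after the definition of $\Log(f/K)$.

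Your fallback remark, that the twist is built into Proposition~\ref{prop:anticyclotomic_descent}, is the right one, but it needs one more fact: $\Tw_{r-1}$ commutes with $\theta^{\ur}$. This holds because both twists are by $\langle\epsilon\rangle^{r-1}$ composed with the cyclotomic projection, and $\pr_{\cyc}\circ\theta=\pr_{\cyc}$. Consequently
\[
\calL_p^{\Gr}(f/K)=\Tw_{r-1}\circ\theta^{\ur,-1}\bigl(\calL_p^{\two}(\cBF^{\alpha,\bfh_{\frp},\eta})\bigr)=\theta^{\ur,-1}\bigl(\Tw_{r-1}(\calL_p^{\two}(\cBF^{\alpha,\bfh_{\frp},\eta}))\bigr).
\]
This is $\theta^{\ur,-1}$ of exactly the element treated in Proposition~\ref{prop:anticyclotomic_descent}, and with that justification the rest of your argument goes through.
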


\begin{warning}[Convention on the tame character]
    From this point onward, we fix the tame character $\eta = \omega^{1-r}$ and omit it from the notation. This choice ensures the validity of the anticyclotomic descent theorem (Theorem \ref{thm:descent_theorem}(2)). The cyclotomic Iwasawa main conjecture for the fixed modular form $f$, stated in Theorem~\ref{main:A}, is intrinsic to $f$ and is independent of this auxiliary choice. Only the comparison below with the anticyclotomic Iwasawa theory of $f$ uses the branch $\eta=\omega^{1-r}$.
\end{warning}

\subsection{Some properties of Coleman and logarithm maps}
To prepare for the Poitou--Tate duality arguments in Section \ref{sec:PT}, we prove some additional properties of the Coleman map $\Cole(f/K)$ and the logarithm map $\Log(f/K)$. These are the properties implicitly used in the proof of \cite[Section 9.3.2]{BSTW}.

We start with the Coleman map.
\begin{proposition} \label{prop:pseudo_coleman}
    The Coleman map $\Cole(f/K): \rmH^{1}(K_{\frp}, \Fil^{-}\bfT_{K}) \rightarrow \Lambda_{K}$ is injective, with pseudonull cokernel as a $\Lambda_{K}$-module.
\end{proposition}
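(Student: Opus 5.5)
The plan is to reduce, via Shapiro's lemma and $\theta$, to a statement about the local Iwasawa cohomology of a rank-one unramified twist. Concretely, the target module is
\[
\rmH^{1}(K_{\frp}, \Fil^{-}\bfT_{K}) = \rmH^{1}(\QQ_p, \Fil^{-}T \hotimes \Lambda_{K}^{\iota}),
\]
using $K_{\frp}=\QQ_p$. Here $\Fil^{-}T$ is free of rank one over $\calO$ and unramified, with Frobenius acting through the unit $\alpha$ up to the twist by $\epsilon^{k-1}$ built into $T=T_f(k-1)$; after the $\eta$-twist it is a crystalline character with a nonzero Hodge--Tate weight.

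The first step is to describe this module. Since $\Lambda_K^{\iota}$ includes the full cyclotomic variable, the module is the Iwasawa cohomology $\rmH^1_{\mathrm{Iw}}(\QQ_p(\mu_{p^\infty})\cdot L_\infty, \Fil^{-}T)$, with $L_\infty$ the unramified-direction $\ZZ_p$-extension cut out by $\Gamma_{\frp}$. Local Euler characteristic and Tate duality then show the following. Its $\rmH^2$ is dual to $\rmH^0$ of the $\varpi$-torsion dual character over the tower, which vanishes or is finite/pseudonull because $\alpha$ and the twist are nontrivial on the tower; the $\rmH^0$ of the compact module vanishes. So $\rmH^1$ is torsion-free of rank one, and in fact free of rank one over $\Lambda_K$. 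For the latter I would use the standard argument with Coleman power series for unramified twists, as in the two-variable setting of Loeffler--Zerbes and \cite[Section 5.3.1]{BSTW}.

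The second step is to identify $\Cole_{\frp}$, and hence $\Cole(f/K)$, with the two-variable Perrin-Riou big exponential/Coleman map for this unramified character, up to multiplication by the explicit constants $c_f\lambda_N(f)(1-p^{k-2}\alpha^{-2})(1-p^{k-1}\alpha^{-2})$ absorbed in $J_f$. This is the content of the construction in \cite[Section 8]{KLZ} and \cite[Eq.~(5.5)]{BSTW}. For an unramified character $\psi$, the Coleman map is an isomorphism onto $\Lambda\hotimes\calO^{\ur}$ twisted by $\psi$. Once one descends from $\calO^{\ur}$ via Galois invariants (Frobenius-twisted), its image is $\Lambda$ times an element of the form $(1-\psi(\mathrm{Frob})\cdot\gamma)$-type factors. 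These factors are units unless the character is trivial on the relevant variable.

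The map is injective because it is an $\scR$-linear map from a free rank-one module that is nonzero; indeed Corollary \ref{coro:cyclo_nonvanishing} shows the image of $\loc_\frp \calZ$ is nonzero. The cokernel is then $\Lambda_K/(g)$, where $g$ is the image of a generator. To show it is pseudonull, I must show $g$ is a unit, or at worst that its divisors have height at least $2$. The extra Euler-like factors are of the form $1-u\,\gamma_{\frp}^{a}\gamma_{\cyc}^{b}$, with $u$ built from $\alpha^{\pm1}$ and the twist. These are units in $\Lambda_K$ when $u\not\equiv 1 \bmod \varpi$; otherwise they cut out a height-one locus, and the argument must show that locus is harmless.

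The main obstacle is this last point: controlling the interpolation factors in the two-variable family and ensuring no height-one divisor appears. I would handle it using the freeness of $\Fil^+\tilTT_{\bfh_\frp}=\TT_{\bfh_\frp,\frp}\simeq\Lambda_\frp^\iota$ from Theorem \ref{thm:CM-lattices}. Because the $\Gamma_\frp$-variable acts through a character that varies nontrivially (via $\Psi_\frp$) on inertia at $\frp$, the residual character of $\Fil^-T\otimes\Fil^+\tilTT$ restricted to $G_{\QQ_p}$ is nontrivial on inertia in a $\Lambda$-adic sense. Hence any factor $1-u\gamma$ involves a genuine group-like variable, and its vanishing locus forces both variables to specialize. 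That gives codimension at least two, i.e. the cokernel is pseudonull. If this fails, the fallback is to localize at each height-one prime $P$ of $\Lambda_K$ and check surjectivity of $\Cole(f/K)_P$ by specializing to a dense set of characters in $\Xi^{\one}$. At those characters the map becomes Bloch--Kato dual exponential with nonvanishing factor $\calE_{p,\alpha,\eta}$ for generic $\chi$.
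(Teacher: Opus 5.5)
There is a genuine gap at exactly the step you flag as the main obstacle.

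\textbf{The dimension count is wrong.} Within your own framework (local module free of rank one, cokernel $\Lambda_K/(g)$), pseudonullity is equivalent to $g\in\Lambda_K^{\times}$. Indeed, $\Lambda_K\simeq\calO\lrbracket{X,Y}$ is a three-dimensional regular local ring, hence a UFD. So every nonzero non-unit $g$ lies in a height-one prime, and $\Lambda_K/(g)$ is supported in codimension one. A principal ideal has no ``divisors of height at least $2$''. Likewise, the zero locus of a single nonzero non-unit $1-u\gamma_{\frp}^{a}\gamma_{\cyc}^{b}$ is a hypersurface: one equation never forces both variables to specialize. So if such a factor really divided the image, the proposition would be false. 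Your route therefore requires showing that $\Cole(f/K)$ is an isomorphism onto $\Lambda_K$. That means computing the exact image of the two-variable KLZ/BSTW Coleman map, including the constants absorbed in $J_f$, which you assert but do not carry out.

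\textbf{The freeness you rely on can fail.} The local tower at $\frp$ is the $\ZZ_p^{2}$-extension of $\QQ_p$, which contains the unramified $\ZZ_p$-extension. Suppose the character on $\Fil^{-}T$ (after the $\eta$-twist) is residually trivial, e.g.\ $\eta$ trivial (as when $k=2$) and $a_p(f)\equiv 1\pmod{\varpi}$. Then it becomes trivial over the tower, and the Iwasawa $\rmH^{2}$ of the Tate dual is nonzero. Local duality then gives $\mathrm{Ext}^{1}_{\Lambda_K}(\rmH^{1}(K_{\frp},\Fil^{-}\bfT_{K}),\Lambda_K)\neq 0$. The module is isomorphic to an ideal of height $\geq 2$, and the cokernel is nonzero but supported in codimension two. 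This is why only pseudonullity, not bijectivity, can hold in general. Your instinct that the defect lives in codimension two is right, but its source is this non-freeness, not a principal Euler factor. Injectivity is unaffected: torsion-freeness, rank one, and $\Cole(f/K)(\loc_{\frp}(\calZ(f/K)))\neq 0$ suffice.

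\textbf{The fallback does not repair this.} Every $\chi\in\Xi^{\one}$ has $\chi_1=1$, so these points all lie on the cyclotomic line $V(I_{\ac})$ and are not Zariski dense in $\Spec\Lambda_K$. Also, nonvanishing of $\phi_{\chi}$ at generic points is compatible with height-one divisors, so it cannot detect them.

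\textbf{The missing idea} is to specialize to precisely that line and then lift; this is how the paper argues.
\begin{enumerate}
\item By the cyclotomic descent (the proof of \cite[Proposition 5.25(1)]{BSTW}), $\Cole_{\frp}\equiv\Cole_{\eta_{\omega},\frp}\pmod{I_{\ac}}$. Here $\Cole_{\eta_{\omega},\frp}$ is Kato's cyclotomic Coleman map, whose image has finite index in $\Lambda$ under \eqref{eq:irred} by \cite[Proposition 17.11]{Kato}.
\item By right exactness of $-\otimes_{\Lambda_K}\Lambda_K/I_{\ac}$, and since the two-variable local cohomology modulo $I_{\ac}$ has finite index in the cyclotomic one, the cokernel $C$ of $\Cole(f/K)$ has $C/I_{\ac}C$ finite.
\item A finitely generated $\Lambda_K$-module with this property is pseudonull. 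A height-one prime $P\in\mathrm{Supp}(C)$ would give the positive-dimensional closed set $V(P+I_{\ac})\subseteq\mathrm{Supp}(C/I_{\ac}C)$; this is the role of \cite[Theorem 5.2]{QX}.
\end{enumerate}
This argument needs neither freeness of the local Iwasawa cohomology nor any two-variable description of the image.
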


\begin{proof}
    It suffices to prove the result for the Coleman map $\Cole_{\frp}$. Its injectivity follows from its construction. 

    We next prove that its cokernel is a pseudonull $\Lambda_{K}$-module. By the proof of \cite[Proposition 5.25 (1)]{BSTW}, which carries over to our setting using the cyclotomic descent theorem (Theorem \ref{thm:descent_theorem}(1)), we have
\begin{equation} \label{eq:coleman_congruence}
\Cole_{\frp} \equiv \Cole_{\eta_{\omega}, \frp} \pmod{I_{\ac}}
\end{equation}
where $\Cole_{\eta_{\omega},\frp}$ is the Coleman map for the Beilinson--Kato zeta element recalled in \cite[Section 2.4.2]{BSTW}. Under condition \eqref{eq:irred} and the assumption that the differential $\omega$ is "good", the image of $\Cole_{\eta_{\omega}, \frp}$ has finite index in $\Lambda$ by \cite[Proposition 17.11]{Kato}; equivalently, the cokernel of $\Cole_{\eta_{\omega}, \frp}$ is a pseudonull $\Lambda_{\cyc}$-module. It then follows from \eqref{eq:coleman_congruence} that the cokernel of $\Cole(f/K)$ is a pseudonull $\Lambda_{K}$-module by \cite[Theorem 5.2]{QX}.
\end{proof}

\begin{proposition} \label{prop:pseudo_log}
    Assume that \eqref{eq:irred} and \eqref{eq:gen-Heegner} hold. The logarithm map $\Log(f/K): \rmH^{1}(K_{\barfrp}, \Fil^{+}\bfT_{K}) \rightarrow \Lambda_{K}^{\ur}$ is injective, with cokernel being a pseudonull $\Lambda_{K}^{\ur}$-module.
\end{proposition}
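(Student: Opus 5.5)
It suffices to treat the normalized logarithm $\Log_{\barfrp}^{\mathrm{norm}}$, since $\Log(f/K)$ is obtained from it by composing with the Shapiro isomorphism, $\theta^{\ur,-1}$ and $\Tw_{r-1}$. Injectivity then comes from the construction. The map $\Log_{\barfrp}$ is injective, as recorded from \cite[Section 5.3.2]{BSTW}. Multiplying by the nonzero element $H_{\frp}$ of the domain $\scR^{\ur}$, and then dividing by $T_{\frp}$, both preserve injectivity. One remark is needed: the source $\rmH^1(\QQ_p,\Fil^{+}T\hotimes\Fil^{-}\TT_{\bfh_\frp}\hotimes\Lambda_{\cyc}^\iota)$ injects into the corresponding group with $\Fil^{-}\tilTT^{\star}_{\bfh_\frp}$. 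This holds because the $\rmH^0$ of the quotient $\Fil^{+}T\otimes(\Fil^{-}\tilTT^{\star}/T_{\frp}\Fil^{-}\tilTT^{\star})$ vanishes: that module is an unramified twist of $\Fil^+T$ with nontrivial action, in the spirit of Lemma \ref{lem:vanishing_H0}.

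For the pseudonullity of the cokernel, I would mirror the proof of Proposition \ref{prop:pseudo_coleman}, now reducing modulo $I_{\cyc}^{\ur}$ instead of $I_{\ac}$. First, the local group $\rmH^1(K_{\barfrp},\Fil^{+}\bfT_K)$ is free of rank one over $\Lambda_K$. Its $\rmH^0$ and $\rmH^2$ vanish under the running hypotheses: the character on $\Fil^+T$ is $\epsilon^{k-1}$ times unramified, so twisting by $\Lambda_K^\iota$ kills invariants, and the $\rmH^2$ computation uses $\alpha\ne 1$-type conditions, or \eqref{eq:irred}. The Euler characteristic formula then gives rank one, and freeness follows from the usual argument for rank-one $\Lambda$-adic local cohomology. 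So $\Log(f/K)$ is an injection of $\Lambda_K^{\ur}$-modules, free of rank one, and its image is generated by a single element $g\in\Lambda_K^{\ur}$. Hence the cokernel is $\Lambda_K^{\ur}/(g)$, and it is pseudonull exactly when $g$ is a unit.

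Second, I would reduce modulo $I_{\cyc}^{\ur}$. The anticyclotomic specialisation of $\Log(f/K)$ should agree, up to a unit, with the anticyclotomic Perrin--Riou big logarithm for $\Fil^+T$ over $K_{\barfrp}$ along $K^{\ac}_\infty$. This is the analogue of \eqref{eq:coleman_congruence}, and I would establish it by comparing the interpolation formulas of Theorem \ref{thm:ERLIIint_bis} with those of the logarithm map used in the BDP setting; the descent computation of Proposition \ref{prop:anticyclotomic_descent} controls the normalizing factors $H_{\frp}$ and the Katz $L$-function. The standard result that Perrin--Riou's logarithm for an unramified twist of a power of the cyclotomic character over $\Lambda_{\ac}^{\ur}$ is an isomorphism onto $\Lambda_{\ac}^{\ur}$, up to explicit Euler factors that are units here, then shows that $g \bmod I_{\cyc}^{\ur}$ is a unit. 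A unit reduction in a local ring forces $g$ itself to be a unit. Alternatively, if only a cokernel of finite index is obtained mod $I_{\cyc}^{\ur}$, I would apply \cite[Theorem 5.2]{QX} as in Proposition \ref{prop:pseudo_coleman}.

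The main obstacle is this comparison of $\Log(f/K)$ modulo $I_{\cyc}^{\ur}$ with a known anticyclotomic regulator. The normalisation by $H_{\frp}=\frac{h_K}{w_K}T_{\frp}\calL^{\Katz}_{\barfrp}$ must exactly cancel the congruence ideal $I^{\star}_{\bfh_\frp}$, with no extra $p$-adic factors, and \eqref{eq:gen-Heegner} enters precisely here through the unit periods and Euler factors.
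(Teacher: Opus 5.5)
Your architecture matches the paper's. You pass to $\Log_{\barfrp}^{\mathrm{norm}}$, take injectivity from the construction, reduce modulo $I_{\cyc}^{\ur}$, identify the reduction with $u_K^{-1}\calL_f^{\PR}$, and lift back. Two side points:
\begin{itemize}
\item The paper does not prove \eqref{eq:Log_congruence}; it imports it from \cite[Lemma 5.29]{BSTW}. Your proposed proof via Theorem~\ref{thm:ERLIIint_bis} would not work. The value on the single class $\calZ(f/K)$ fixes the ratio of two maps on a rank-one module only if you know $\calL_f^{\PR}(\calZ_{\ac}(f/K))$ independently, and you do not.
\item Your justification of freeness (vanishing of $\rmH^2$) is a residual condition at $p$ that neither $\alpha\neq 1$ nor the global hypothesis \eqref{eq:irred} implies. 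The paper localises at height-one primes and does not need freeness.
\end{itemize}

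The genuine gap is the ``standard result'' that the anticyclotomic Perrin--Riou logarithm is an isomorphism onto $\Lambda_{\ac}^{\ur}$ up to unit Euler factors. That claim is the entire content of the proposition.
\begin{itemize}
\item Applied to $\Fil^{+}T$ itself, of Hodge--Tate weight $k-1$, it fails, because the image is then cut out by $\ell_0\cdots\ell_{k-2}$.
\item After removing the Tate twist you are on the weight-$0$ line $V_f^{+}$. There Theorem~\ref{thm:Perrin-Riou-image}, once $\alpha\notin p^{\ZZ}$ is checked via trivial nebentype and Deligne's bound, gives only $\scH(\Gamma)\otimes\Im(\calL_f^{\PR})=\scH(\Gamma)$.
\item That excludes height-one primes generated by distinguished polynomials, but says nothing about $(\varpi)$. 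Whether $\varpi$ divides the image of $\calL_f^{\PR}=[\omega_f,-]\circ\calL^{\PR}$ depends on how $\omega_f$ sits relative to $\DD_{\cris}(T_f^{+})$, and you give no argument controlling this.
\end{itemize}

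The paper settles $(\varpi)$ globally. By Theorem~\ref{thm:descent_theorem}(2) and \eqref{eq:Log_congruence}, $\calL_f^{\PR}(\calZ_{\ac}(f/K))\equiv u_K\,\calL_p^{\BDP}(f/K)$. Hsieh's theorem (Theorem~\ref{thm:mu-vanishing}) then shows that this element of the image has $\mu$-invariant zero. This is exactly where \eqref{eq:gen-Heegner} and \eqref{eq:irred} are used. Your remark that \eqref{eq:gen-Heegner} enters ``through the unit periods and Euler factors'' cannot be right: it constrains only $N$ and $K$, not the local data at $p$.

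Without this global $\mu$-vanishing input, nothing in your argument rules out $\varpi\mid g$. Your \cite[Theorem 5.2]{QX} fallback does not escape this, since it needs the same finite-cokernel statement modulo $I_{\cyc}^{\ur}$.
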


We now prove the proposition. The proof of Proposition \ref{prop:pseudo_log} follows the same general strategy as that of Proposition \ref{prop:pseudo_coleman}, although the details are more involved. Since $\theta^{\ur,-1}$ and $\Tw_{r-1}$ are isomorphisms, it suffices to prove the result for the normalized map $\Log_{\barfrp}^{\mathrm{norm}}$. It is proved in \cite[Lemma 5.29]{BSTW} that
\begin{equation}\label{eq:Log_congruence}
   u_{K} \cdot \Log_{\barfrp}^{\mathrm{norm}} \equiv \calL_{f}^{\PR} \pmod{I_{\cyc}^{\ur}} 
\end{equation}
where $u_{K} \in (\Lambda_{\ac}^{\ur})^{\times}$ is a unit depending only on $K$, and $\calL_{f}^{\PR}$ is the Perrin--Riou regulator map formulated in \cite[Section 5.5.3]{BSTW}. The regulator map $\calL_{f}^{\PR}$ is less explicit than the Coleman map $\Cole_{\eta_{\omega}, \frp}$ in the cyclotomic setting, so we first study it in more detail.

The Perrin--Riou regulator map $\calL^{\PR}$ in \cite[Section 5.5.3]{BSTW} is constructed as the composite
\begin{multline*}
\calL^{\PR}: \rmH^{1}(K_{\barfrp}, \Fil^{+}\bfT_{K}) \xrightarrow{\Tw} \rmH^{1}(K_{\barfrp}, \Fil^{+}T_{f} \hotimes_{\calO} \Lambda_{\ac}^{\iota}) \\ \xhookrightarrow{\res} \rmH^{1}(K_{\barfrp}^{\ur}, \Fil^{+}T_{f} \hotimes_{\calO} \Lambda_{\ac}^{\iota}) \xrightarrow{\calL_{T_{f}^{+}}} \DD_{\cris}(T_f^{+}) \hotimes W(\barFF_p) \hotimes \Lambda_{\ac},
\end{multline*}
where the first map removes the Tate twist $\ZZ_p(k-1)$ from $T$. For the third map, we use the fact that, as a $G_{K_{\barfrp}}^{\ur}$-module, $\Lambda_{\ac}$ is naturally isomorphic to the cyclotomic Iwasawa algebra $\Lambda$. For $f$, set
\[
\calL_f^{\PR} := ([\omega_f, -] \otimes \id{} \otimes \id{}) \circ \calL^{\PR}: \rmH^{1}(K_{\barfrp}, \Fil^{+}\bfT_{K}) \rightarrow \Lambda_{\ac}.
\]

We use the following theorem of Perrin--Riou \cite[Théorème 5]{PR2021} and Lei--Loeffler--Zerbes \cite[Theorem 4.16]{LLZ_Coleman} describing the image of $\calL_{T_{f}^{+}}$. We write $\scH(\Gamma)$ for the algebra of $\QQ_p$-valued distributions on $\Gamma$.
\begin{theorem}[Perrin-Riou, Lei--Loeffler--Zerbes] \label{thm:Perrin-Riou-image}
    Let $V$ be a crystalline $G_{\QQ_p}$-representation with nonnegative Hodge--Tate weights $0\le r_1\le\cdots\le r_d$. Define for $i \in \ZZ$
\[
\ell_i= \frac{\log(1+X)}{\log_p(\chi(\gamma))}-i
\]
and define $\lambda_k=\ell_0\ell_1\cdots\ell_{k-1}$ for $k \geq 1$ with $\lambda_0=1$. Assume that
\begin{equation} \label{eq:crystalline_condition} \tag{cris}
\text{no eigenvalue of crystalline Frobenius $\varphi$ acting on $V$ lies in $p^{\ZZ}$},     
\end{equation}
elementary divisors of the $\scH(\Gamma)$-module
\[
\frac{ \scH(\Gamma)\otimes_{\QQ_p}\DD_{\cris}(V)}{ \scH(\Gamma)\otimes_{\Lambda_{\QQ_p}(\Gamma)} \Im(\calL_V)
}
\]
are $[\lambda_{r_1};\ldots;\lambda_{r_d}]$.
\end{theorem}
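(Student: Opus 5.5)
The plan is to follow the Wach-module approach of Lei--Loeffler--Zerbes: express $\calL_V$ through Berger's description of Iwasawa cohomology, and then compute the elementary divisors one Hodge--Tate weight at a time after a Mellin transform. Since $V$ is crystalline with nonnegative weights, it has a Wach module $\mathbb{N}(V)$ over $\mathbb{B}^+_{\QQ_p}=\ZZ_p\lrbracket{\pi}[1/p]$. By Berger, $\rmH^1_{\mathrm{Iw}}(\QQ_p,V)\simeq \mathbb{N}(V)^{\psi=1}$; here I am twisting by the normalisation $V(-1)$ or its dual as needed, and I will not track this. Under this identification, Perrin-Riou's map is
\[
x\longmapsto \mathfrak{M}^{-1}\bigl((1-\varphi)x\bigr)\in \scH(\Gamma)\otimes_{\QQ_p}\DD_{\cris}(V),
\]
using the inclusion $\mathbb{N}(V)\subset \mathbb{B}^+_{\mathrm{rig},\QQ_p}\otimes \DD_{\cris}(V)$ and the Mellin isomorphism $\mathfrak{M}:\scH(\Gamma)\xrightarrow{\sim}(\mathbb{B}^+_{\mathrm{rig},\QQ_p})^{\psi=0}$, $g\mapsto g\cdot(1+\pi)$.

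Step 1 is to use \eqref{eq:crystalline_condition} to show that $1-\varphi:\mathbb{N}(V)^{\psi=1}\to(\varphi^*\mathbb{N}(V))^{\psi=0}$ is injective with image of full rank and saturated. The kernel and cokernel are governed by $\DD_{\cris}(V)^{\varphi=1}$ and by the $\varphi=p^{-i}$ eigenspaces coming from the constant and $t^i$-parts. All of these vanish when no Frobenius eigenvalue lies in $p^{\ZZ}$. The outcome is $\Im(\calL_V)\otimes\scH(\Gamma)=\mathfrak{M}^{-1}\bigl((\varphi^*\mathbb{N}(V))^{\psi=0}\bigr)\otimes\scH(\Gamma)$. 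So the required elementary divisors are those of $(\varphi^*\mathbb{N}(V))^{\psi=0}$ inside $(\mathbb{B}^+_{\mathrm{rig}})^{\psi=0}\otimes\DD_{\cris}(V)$, transported by $\mathfrak{M}^{-1}$.

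Step 2 is the linear algebra on the Wach module. Berger shows that $\mathbb{B}^+_{\mathrm{rig}}\otimes\mathbb{N}(V)$ has elementary divisors $(t/\pi)^{r_1},\dots,(t/\pi)^{r_d}$ inside $\mathbb{B}^+_{\mathrm{rig}}\otimes\DD_{\cris}(V)$, for a basis adapted to the Hodge filtration. Applying $\varphi^*$ turns these into $(t/\varphi(\pi))^{r_i}$, and $\varphi(\pi)/\pi$ is a unit in the relevant $\psi=0$ module after localisation, as LLZ check. It therefore remains to understand multiplication by $t^{r}$ on the $\psi=0$ part. Since $\partial=(1+\pi)\frac{d}{d\pi}$ satisfies $\mathfrak{M}(\mathrm{Tw}(g))=\partial\,\mathfrak{M}(g)$, and $t^{r}(\mathbb{B}^+_{\mathrm{rig}})^{\psi=0}$ is exactly the set of elements whose $r$ lowest Taylor moments vanish, one gets
\[
\mathfrak{M}^{-1}\bigl(t^{r}(\mathbb{B}^+_{\mathrm{rig}})^{\psi=0}\bigr)=\ell_0\ell_1\cdots\ell_{r-1}\,\scH(\Gamma)=\lambda_r\scH(\Gamma).
\]
This holds because $g\in\scH(\Gamma)$ has vanishing $i$-th moment exactly when $g$ vanishes at $\chi^i$, and $\ell_i$ is the generator of the ideal of distributions vanishing at $\chi^i$.

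The main obstacle is Step 2, specifically passing from the rank-one computation to the module statement. The diagonalising basis for $\mathbb{N}(V)$ is only adapted over $\mathbb{B}^+_{\mathrm{rig}}$, not over the $\psi=0$ submodule, and one must check that $\varphi^*$ together with the restriction to $\psi=0$ preserves elementary divisors. My first attempt would be to reduce modulo each $\ell_i$, that is, to specialise at $\chi^i$. There the quotient's local structure is read off from $\mathrm{Fil}^{-i}\DD_{\cris}(V)$ via the jets of $\mathbb{N}(V)$ at $\zeta_{p^n}-1$, which gives multiplicity $\#\{j: r_j>i\}$. This matches the divisor string $[\lambda_{r_1};\dots;\lambda_{r_d}]$, and that suffices because $\scH(\Gamma)$ is a Bézout domain whose relevant primes are the $\ell_i$.
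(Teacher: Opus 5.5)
The paper gives no proof of this statement: it is imported from Perrin-Riou and Lei--Loeffler--Zerbes, and the text only checks \eqref{eq:crystalline_condition} for $V_f^{+}$. Measured against the Lei--Loeffler--Zerbes argument your outline imitates, there is a genuine gap. Steps 1 and 2 are both false, and their errors happen to cancel.

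\textbf{Step 1.} The vanishing you quote, with kernel and cokernel controlled by the $\DD_{\cris}(V)^{\varphi=p^{-i}}$, is Perrin-Riou's exact sequence for $1-\varphi$ on $(\mathbb{B}^+_{\mathrm{rig}}\otimes\DD_{\cris}(V))^{\psi=1}$, not on the Wach module. On $\mathbb{N}(V)^{\psi=1}$ the image of $1-\varphi$ is not saturated in $(\varphi^*\mathbb{N}(V))^{\psi=0}$ once some $r_i>0$.

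Take $V=\QQ_p(1)\otimes\mu$ with $\mu$ unramified. Then $\mathbb{N}(V)=\mathbb{B}^+ n$ with $n=\pi^{-1}\varepsilon\otimes e_\mu=(t/\pi)d$, where $d$ is a basis of $\DD_{\cris}(V)$. Also $\varphi(n)=\beta q^{-1}n$ with $q=\varphi(\pi)/\pi$, and the condition $\beta\neq 1$ is exactly \eqref{eq:crystalline_condition}.

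For $an\in\mathbb{N}(V)^{\psi=1}$ one gets $(1-\varphi)(an)=y\,\varphi(n)$ with $y=\beta^{-1}qa-\varphi(a)$. Evaluating $\psi(qa)=\beta a$ at $\pi=0$ gives $(1-\beta)a(0)=0$, so $a(0)=0$. Hence $y$ vanishes at $0$ and at every $\zeta_p^b-1$. Writing $\mathcal{B}^0:=(\mathbb{B}^+_{\mathrm{rig}})^{\psi=0}$, this gives $y\in\varphi(\pi)\mathcal{B}^0$, so $(1-\varphi)(an)\in t\,\mathcal{B}^0\cdot d$. Therefore $\scH(\Gamma)\otimes\Im(\calL_V)\subseteq\ell_0\scH(\Gamma)\,d$.

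On the other hand, $(\varphi^*\mathbb{N}(V))^{\psi=0}=\frac{t}{\varphi(\pi)}\mathcal{B}^0\cdot d$ contains $\frac{t}{\varphi(\pi)}(1+\pi)\,d$. Its value at $\pi=0$ is $1/p$, so its Mellin preimage does not vanish at the trivial character. So the equality $\Im(\calL_V)\otimes\scH(\Gamma)=\mathfrak{M}^{-1}((\varphi^*\mathbb{N}(V))^{\psi=0})\otimes\scH(\Gamma)$ that closes Step 1 fails. Note that \eqref{eq:crystalline_condition} was used here to \emph{create} a cokernel (it forces $a(0)=0$), not to kill one.

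\textbf{Step 2} makes the compensating error. The element $\varphi(\pi)/\pi$ is not a unit: it vanishes at the $\zeta_p^b-1$. Moreover $(t/\varphi(\pi))^r\mathcal{B}^0\supsetneq t^r\mathcal{B}^0$, with quotient of $\QQ_p$-dimension $r(p-1)$.

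Since $t/\varphi(\pi)$ is a unit near the zeros of $\varphi(\pi)$, the $(\varphi^*\mathbb{N}(V))^{\psi=0}$-quotient has no divisor at the characters of $\Gamma$ trivial on $\Gamma_1=\Gal(\QQ_p(\mu_{p^\infty})/\QQ_p(\mu_p))$. Those zeros of $\lambda_{r_i}$, namely $\chi^j$ and its twists by such characters for $0\le j<r_i$, come entirely from the cokernel of $1-\varphi$ that Step 1 throws away. Determining that cokernel is the real content of the theorem. Lei--Loeffler--Zerbes first compute the elementary divisors of the $(\varphi^*\mathbb{N}(V))^{\psi=0}$-quotient, which are not the $\lambda_{r_i}$. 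They then pin down the image of $1-\varphi$ by comparing determinants with $\det\calL_V$, computed from Perrin-Riou's explicit reciprocity law.

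Your closing check cannot detect the problem. Near $\pi=0$, $\varphi^*\mathbb{N}(V)$ agrees with $\mathbb{B}^+_{\mathrm{rig}}\otimes\DD_{\cris}(V)$, so jets of the Wach module give multiplicity $0$ at $\chi^i$, not $\#\{j:r_j>i\}$.

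\textbf{Smaller points.} First, $t^r\mathcal{B}^0$ is not the set of elements with vanishing low Taylor moments. For a topological generator $\gamma_1$ of $\Gamma_1$, the element $(1+\pi)^{\chi(\gamma_1)}-(1+\pi)\in\mathcal{B}^0$ vanishes at $0$ but not at $\zeta_{p^2}-1$. Second, $\ell_i$ vanishes at every $\chi^i\theta$ with $\theta$ of finite order, not only at $\chi^i$. Your formula $\mathfrak{M}^{-1}(t^r\mathcal{B}^0)=\lambda_r\scH(\Gamma)$ is nevertheless true, for a different reason: $\ell_0$ acts as $t\partial$, $\partial t=1$ gives $\lambda_r=t^r\partial^r$, and $\partial$ is bijective on $\mathcal{B}^0$.
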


To apply Theorem \ref{thm:Perrin-Riou-image} to $V = V_{f}^{+}$ with lattice $T_{f}^{+}$, we first verify assumption \eqref{eq:crystalline_condition}. The crystalline Frobenius $\varphi$ acts on $V_{f}^{+}$ by the $p$-adic unit $\alpha \in \ZZ_p^{\times}$. The only possibility for $\alpha \in p^{\ZZ}$ is $\alpha = 1$. This would imply that $\beta = p^{k-1}$ because $f$ has \emph{trivial nebentype}. Therefore $a_p(f) = 1+p^{k-1}$, contradicting Deligne's purity bound $\abs{a_p(f)} \leq 2p^{(k-1)/2}$ for an odd prime $p \geq 3$. \footnote{For modular forms $f$ with nontrivial nebentypus, this subtlety occurs only when $k=2$ and $p=3, 5$.} Thus $\alpha \not\in p^{\ZZ}$, and \eqref{eq:crystalline_condition} is satisfied.

The representation $V_{f}^{+}$ is one-dimensional with the unique Hodge--Tate weight $r_1 = 0$, and hence in Theorem \ref{thm:Perrin-Riou-image}, $\lambda_{r_1} = \lambda_{0} = 1$. It then follows that
\[
\scH(\Gamma) \otimes_{\Lambda_{\QQ_p}(\Gamma)} \Im(\calL_{V_f^{+}}) = \scH(\Gamma).
\]
Consequently,
\[
\scH(\Gamma) \otimes_{\Lambda_{\QQ_p}(\Gamma)} \Im(\calL_{f}^{\PR}) = \scH(\Gamma).
\]
Since a distinguished polynomial in the one-variable Iwasawa algebra $\calO\lrbracket{\Gamma}$ remains non-invertible in $\scH(\Gamma)$, it follows that $(\Im(\calL_{f}^{\PR}))_{\wp} = (\Lambda_{\ac}^{\ur})_{\wp}$ for every height-one prime ideal of $\Lambda_{\ac}^{\ur}$ generated by a distinguished polynomial. It remains to consider the case $\wp = (\varpi)$.

For this prime, the equality $(\Im(\calL_{f}^{\PR}))_{\wp} = (\Lambda_{\ac}^{\ur})_{\wp}$ for $\wp = (\varpi)$ follows directly from the anticyclotomic descent theorem (Theorem \ref{thm:descent_theorem}(2)) and Theorem \ref{thm:mu-vanishing} on the vanishing of the $\mu$-invariant of $\calL_{p}^{\BDP}(f/K)$. More precisely, by Theorem \ref{thm:descent_theorem}(2) and \eqref{eq:Log_congruence}, we obtain
\[
\calL_{f}^{\PR}(\calZ_{\ac}(f/K)) \equiv u_{K} \calL_p^{\BDP}(f/K) \pmod{I_{\cyc}}
\]
with $u_{K} \in \Lambda_{\ac}^{\times}$ a unit depending only on $K$. Theorem \ref{thm:mu-vanishing} implies that the right-hand side has $\mu$-invariant zero, or equivalently, is not divisible by $\varpi$. Thus $\varpi$ does not divide the image $\Im(\calL_{f}^{\PR})$, i.e. $(\Im(\calL_{f}^{\PR}))_{(\varpi)} = (\Lambda_{\ac}^{\ur})_{(\varpi)}$. 

Combining the two cases, we conclude that the cokernel of $\calL_{f}^{\PR}$ is a pseudonull $\Lambda_{\ac}^{\ur}$-module. Hence, by \eqref{eq:Log_congruence} and \cite[Theorem 5.2]{QX}, the cokernel of $\Log_{\barfrp}^{\mathrm{norm}}$ is also a pseudonull $\Lambda_{K}^{\ur}$-module. This proves Proposition \ref{prop:pseudo_log}. \hfill \qed

\subsection{Equivalence of Iwasawa main conjectures}
\label{sec:PT}

This section parallels \cite[Section 9.3.2]{BSTW}, with additional details.

\subsubsection{Poitou--Tate duality}
Fix a coefficient ring $R=\Lambda_\dagger$ with $\dagger \in \{K, \cyc, \ac\}$. We also use $\dagger \in \{\quad, \cyc, \ac\}$ as the corresponding subscript for zeta elements and $p$-adic $L$-functions. We write
\[
\scP_{\frp}(R) := \dfrac{\rmH^{1}(K_{\frp}, \bfT_{R})}{\rmH^{1}_{\ord}(K_{\frp}, \bfT_{R})}, \quad \scP_{\bar{\frp}}(R) := \rmH^{1}_{\ord}(K_{\bar{\frp}}, \bfT_{R}).
\]
Applying Poitou--Tate duality, we have the following exact sequences.

\begin{proposition}
\label{prop:PT-sequences}
There are exact sequences
\begin{align}
0\rightarrow&\rmH^1_{\ordi,\ordi}(K,\bfT_{R})
 \rightarrow \rmH^{1}_{\rel,\ord}(K, \bfT_{R})
 \xrightarrow{\loc_\frp}\scP_\frp(R)
 \rightarrow X_{\ordi, \ordi}
 \rightarrow X_{\rel,\ordi}\rightarrow0,
 \label{eq:PT1}\\
0\rightarrow&\rmH^1_{\rel,\str}(K,\bfT_{R})
 \rightarrow \rmH^{1}_{\rel,\ord}(K, \bfT_{R})
 \xrightarrow{\loc_\barfrp}\scP_\barfrp(R)
 \rightarrow X_{\rel, \str}
 \rightarrow X_{\rel,\ordi}\rightarrow0.
 \label{eq:PT2}
\end{align}
Here $\loc_\frp$ and $\loc_{\bar{\frp}}$ are the localization maps at $\frp$ and $\bar{\frp}$, respectively, and we write $X_{?} := X_{?}(K, \bfA_{R})$ to save space.
\end{proposition}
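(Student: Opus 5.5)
These sequences are the standard consequence of Poitou--Tate global duality for pairs of Selmer structures $\calF \subseteq \calG$ that differ at a single place. I will apply the general five-term sequence twice, once for each pair.

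\textbf{The general sequence.} Let $\calF \subseteq \calG$ be compact Selmer structures on $\bfT_R$ that agree away from a place $w$. Poitou--Tate duality (in the form of Mazur--Rubin, \emph{Kolyvagin systems}, Theorem 2.3.4, or Nekovář's Selmer complexes, which apply to the compact coefficient rings $\Lambda_\dagger$ and $\bfT_R$ finitely generated over $R$) gives the exact sequence
\[
0\rightarrow \rmH^1_{\calF}(K,\bfT_R)\rightarrow \rmH^1_{\calG}(K,\bfT_R)\xrightarrow{\loc_w} \frac{\rmH^1_{\calG}(K_w,\bfT_R)}{\rmH^1_{\calF}(K_w,\bfT_R)} \rightarrow \Sel_{\calF^*}(K,\bfA_R)^\vee \rightarrow \Sel_{\calG^*}(K,\bfA_R)^\vee \rightarrow 0.
\]
To obtain it over the Iwasawa algebra, I would first prove it with finite coefficients $\bfT_R/\frm^n$ and then pass to the limit. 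On compact modules the inverse limit is exact. Direct limits of the discrete groups dualize to inverse limits of their Pontryagin duals, and these remain exact because all the terms are compact.

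\textbf{First sequence \eqref{eq:PT1}.} Take $\calF=\calF_{\ordi,\ordi}$, $\calG=\calF_{\rel,\ordi}$ and $w=\frp$. The local quotient is $\rmH^1(K_\frp,\bfT_R)/\rmH^1_{\ord}(K_\frp,\bfT_R)=\scP_\frp(R)$. The dual Selmer structures are the orthogonal complements of these local conditions under local Tate duality, so the duals of the dual Selmer groups are exactly $X_{\ordi,\ordi}$ and $X_{\rel,\ordi}$ as defined in Section~\ref{sec:setup}.

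\textbf{Second sequence \eqref{eq:PT2}.} Take $\calF=\calF_{\rel,\str}$, $\calG=\calF_{\rel,\ordi}$ and $w=\barfrp$. The local quotient is $\rmH^1_{\ord}(K_\barfrp,\bfT_R)/0=\scP_\barfrp(R)$.

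\textbf{Main obstacle.} The delicate point is the leftmost injectivity together with the identification of the limit terms, which needs mild finiteness hypotheses. The kernel of $\rmH^1_{\calF}\to\rmH^1_{\calG}$ is zero by definition, since both are subgroups of $\rmH^1(G_{K,S},\bfT_R)$. Exactness at the final term is the surjectivity in Poitou--Tate duality, which comes from the vanishing of $\rmH^0$ terms. That vanishing follows from \eqref{eq:irred}: it forces $\rmH^0(K,\bfA_R)$, or rather its relevant finite layers, to be trivial, so no correction terms appear. Otherwise the argument is a formal check of orthogonality of the local conditions. The orthogonality of the ordinary conditions uses that $\Fil^{+}T$ is the exact annihilator of $\Fil^{+}$ of the dual, via the Weil pairing $T\times T\to\calO(1)$.
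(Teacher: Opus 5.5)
Your proposal is correct and follows the paper's argument: specialize the general Poitou--Tate five-term sequence for nested Selmer structures $\calF\subseteq\calG$ to the pairs $(\calF_{\ordi,\ordi},\calF_{\rel,\ordi})$ and $(\calF_{\rel,\str},\calF_{\rel,\ordi})$, which differ only at $\frp$ and $\barfrp$, respectively. One correction to your side remarks: the final surjection needs neither $\rmH^0$-vanishing nor \eqref{eq:irred}, because it is just the Pontryagin dual of the inclusion $\Sel_{\calG}(K,\bfA_R)\subseteq\Sel_{\calF}(K,\bfA_R)$ coming from $\calG^*\subseteq\calF^*$; likewise no Weil-pairing orthogonality check is needed, since the paper defines the dual local conditions as orthogonal complements.
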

\begin{proof}
We recall the relevant part of Poitou--Tate duality. If $\calF\subseteq\calG$ are Selmer structures on $\bfT$ with identical conditions outside a finite set, global duality gives
\begin{multline}\label{eq:PT-general}
 0\rightarrow H^1_{\calF}(K,\bfT_{R})\rightarrow H^1_{\calG}(K,\bfT_{R}) \rightarrow \bigoplus_w\frac{H^1_{\calG}(K_w,\bfT_{R})}{H^1_{\calF}(K_w,\bfT_{R})} \rightarrow X_{\calF}(K, \bfA_{R}) \rightarrow X_{\calG}(K, \bfA_{R}) \rightarrow 0.
\end{multline}

Take $(\calF,\calG)=(\calF_{\ordi,\ordi},\calF_{\rel,\ordi})$.  The conditions differ only at $\frp$, and the local quotient is $\scP_{\frp}(R)$. This gives \eqref{eq:PT1}. Next take $(\calF,\calG)=(\calF_{\rel,\str},\calF_{\rel,\ordi})$.  The conditions differ only at $\barfrp$, and the local quotient is $\scP_{\barfrp}(R)$. This gives \eqref{eq:PT2}.
\end{proof}

\subsubsection{Nonvanishing theorems for zeta elements and properties of Iwasawa cohomology groups}
We write $Q_{\dagger}(f/K) := \rmH^{1}_{\rel, \ordi}(K, \bfT_{R})/R \cdot \calZ_{\dagger}(f/K)$ for $R = \Lambda_{K}, \Lambda_{\cyc}$ or $\Lambda_{\ac}$ with corresponding $\dagger \in \{\quad, \cyc, \ac\}$.

\begin{lemma}\label{lem:remove-kernels}
Suppose that the following two conditions hold:
\begin{align} \tag{rank-one}
    &\text{The Selmer group $\rmH^{1}_{\rel,\ord}(K, \bfT_{R})$ is torsion-free of rank one.} \label{eq:r1} \\
\label{eq:nv} \tag{nv}
&\text{Both $\loc_\frp(\calZ_{\dagger}(f/K))$ and $\loc_\barfrp(\calZ_{\dagger}(f/K))$ are non-torsion.}   
\end{align}
Then $\calZ_{\dagger}(f/K)$ is nonzero and $Q_{\dagger}(f/K)$ is a torsion $\Lambda$-module. Moreover, $\rmH^1_{\ordi,\ordi}(K,\bfT_{R})=0$ and $\rmH^1_{\rel,\str}(K,\bfT_{R})=0$.
\end{lemma}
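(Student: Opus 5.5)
The argument only uses the exact sequences \eqref{eq:PT1} and \eqref{eq:PT2} of Proposition \ref{prop:PT-sequences} together with elementary rank considerations over the domain $R$. Write $H := \rmH^{1}_{\rel,\ord}(K,\bfT_{R})$, which by \eqref{eq:r1} is torsion-free of rank one over $R$.

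\emph{Nonvanishing and torsionness of $Q_{\dagger}(f/K)$.} Since $\loc_\frp(\calZ_{\dagger}(f/K))$ is non-torsion by \eqref{eq:nv}, it is in particular nonzero, so $\calZ_{\dagger}(f/K)\neq 0$. Because $H$ is torsion-free, the cyclic submodule $R\cdot\calZ_{\dagger}(f/K)$ is free of rank one. Additivity of rank then gives
\[
\rank_R Q_{\dagger}(f/K) = \rank_R H - 1 = 0,
\]
so $Q_{\dagger}(f/K)$ is $R$-torsion.

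\emph{Vanishing of $\rmH^1_{\ordi,\ordi}(K,\bfT_{R})$.} By \eqref{eq:PT1}, this module is the kernel of $\loc_\frp: H\to\scP_\frp(R)$. The image of $\loc_\frp$ contains the non-torsion element $\loc_\frp(\calZ_{\dagger}(f/K))$, so the image has rank at least one. As $\rank_R H = 1$, the kernel has rank zero. But the kernel is a submodule of the torsion-free module $H$, hence itself torsion-free, and a torsion-free module of rank zero vanishes. Therefore $\rmH^1_{\ordi,\ordi}(K,\bfT_{R})=0$.

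\emph{Vanishing of $\rmH^1_{\rel,\str}(K,\bfT_{R})$.} The same argument applies to \eqref{eq:PT2}: here $\rmH^1_{\rel,\str}(K,\bfT_{R})$ is the kernel of $\loc_\barfrp: H\to\scP_\barfrp(R)$, and the image contains the non-torsion element $\loc_\barfrp(\calZ_{\dagger}(f/K))$. Hence $\rmH^1_{\rel,\str}(K,\bfT_{R})=0$.

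The only point needing care is the meaning of ``non-torsion'' for elements of the local modules $\scP_\frp(R)$ and $\scP_\barfrp(R)$. I read it as saying that the annihilator of the element in $R$ is zero, so the cyclic submodule it generates is free of rank one. This is exactly what is needed for the image of the localisation map to have positive rank, and with this reading the argument is purely formal.
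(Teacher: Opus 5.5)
Your proposal is correct and follows the paper's proof essentially step for step. The steps are: $\calZ_{\dagger}(f/K)\neq 0$ from the non-torsion localisation; rank additivity to show $Q_{\dagger}(f/K)$ is torsion; and the kernels of $\loc_\frp$ and $\loc_\barfrp$ on the torsion-free rank-one module having rank zero, hence vanishing, and being identified with the two Selmer groups via \eqref{eq:PT1} and \eqref{eq:PT2}. Your reading of ``non-torsion'' as referring to the images in the local quotient modules is the same one the paper uses implicitly.
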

\begin{proof}
The non-torsion of either localization implies $\calZ_{\dagger}(f/K) \neq 0$.  Since $\rmH^{1}_{\rel,\ord}(K, \bfT_{R})$ is torsion-free of rank one, the quotient $Q_{\dagger}(f/K) = \rmH^{1}_{\rel,\ord}(K, \bfT_{R})/R \cdot \calZ_{\dagger}(f/K)$ has rank zero and is therefore torsion.  The map $\loc_\frp: \rmH^{1}_{\rel,\ord}(K, \bfT_{R}) \to\scP_\frp(R)$ has nonzero image after tensoring with $\Frac(R)$, hence its kernel has rank zero. That kernel is a submodule of the torsion-free module $\rmH^{1}_{\rel,\ord}(K, \bfT_{R})$, so it is zero. Exactness of \eqref{eq:PT1} identifies the kernel with $H^1_{\ordi,\ordi}(K,\bfT_{R})$, which is then zero. The same argument applied to $\loc_{\barfrp}$ and \eqref{eq:PT2}, proves the second vanishing.
\end{proof}

We verify assumptions \eqref{eq:r1} and \eqref{eq:nv} when $R = \Lambda_{K}$. The case $R = \Lambda_{\cyc}$ follows from Proposition \ref{prop:rank}. The case $R = \Lambda_{\ac}$ is not used here, so we omit its discussion; see \cite[Remark 9.19 (ii)]{BSTW}. We therefore treat only $R = \Lambda_{K}$.

\begin{proposition} \label{prop:rank-one-nonvanishing}
Assume that $f$ is a $p$-ordinary newform of even weight $k = 2r \geq 2$ with $p \nmid N$, and that \eqref{eq:irred} holds. Then the module $\rmH^{1}_{\rel, \ord}(K, \bfT_{K})$ is
torsion-free of rank one over $\Lambda_{K}$. Under the assumption \eqref{eq:gen-Heegner}, the condition \eqref{eq:nv} holds.
\end{proposition}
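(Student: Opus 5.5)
The plan is to descend from $\Lambda_K$ to $\Lambda_{\cyc}$ along the ideal $I_{\ac}$ and use Proposition \ref{prop:rank} together with the cyclotomic and anticyclotomic descent theorems. Since $\Gamma_K \simeq \ZZ_p^2$, the ideal $I_{\ac}$ is principal, generated by $\gamma_{-}-1$, and we have $\bfT_K/(\gamma_--1)\bfT_K \simeq \bfT_{\cyc}$. The first step is the control sequence
\[
0 \rightarrow \rmH^1(K,\bfT_K)/(\gamma_--1) \rightarrow \rmH^1(K,\bfT_{\cyc}) \rightarrow \rmH^2(K,\bfT_K)[\gamma_--1] \rightarrow 0,
\]
together with the vanishing $\rmH^1(K,\bfT_K)[\gamma_--1]=0$. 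This vanishing holds because $\rmH^0(K,\bfT_{\cyc})=0$ by \eqref{eq:irred}.

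\textbf{Torsion-freeness and rank.} For torsion-freeness of $\rmH^1(K,\bfT_K)$, I would argue as in \cite[Theorem 3.7]{BSTW}. Under \eqref{eq:irred}, $\rmH^0(K,\barT\otimes \Lambda_K^\iota/\frm)=0$. By the standard argument (Nekovář / Perrin-Riou), $\rmH^1(K,\bfT_K)$ is then reflexive, hence torsion-free. The Selmer group $\rmH^1_{\rel,\ord}(K,\bfT_K)$ is a submodule of it, so it is torsion-free as well.

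For the rank, the key is again descent. Modulo $\gamma_--1$, the image of $\rmH^1_{\rel,\ord}(K,\bfT_K)$ lands in $\rmH^1_{\rel,\ord}(K,\bfT_{\cyc})$, because the ordinary condition is compatible with specialization. The latter is free of rank one over $\Lambda_{\cyc}$ by Proposition \ref{prop:rank}(2). Since multiplication by $\gamma_--1$ is injective, Nakayama-type rank comparisons give $\rank_{\Lambda_K} \le \rank_{\Lambda_{\cyc}}(\text{image}) \le 1$.

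For the lower bound $\rank \ge 1$, I will use the zeta element. By Theorem \ref{thm:descent_theorem}(1) and Corollary \ref{coro:cyclo_nonvanishing}, $\Cole(f/K)(\loc_\frp \calZ(f/K))$ is nonzero modulo $I_{\ac}$. Hence $\calZ(f/K)$ is non-torsion, and the rank is exactly one. This argument also shows that $\loc_\frp(\calZ(f/K))$ is non-torsion. Indeed, $\Cole(f/K)$ is an injective $\Lambda_K$-linear map into the domain $\Lambda_K$ (Proposition \ref{prop:pseudo_coleman}), and $\calL_p^{\PR}(f/K)\neq 0$. (Strictly, $\Cole(f/K)$ is defined on $\rmH^1(K_\frp,\Fil^-\bfT_K)$, so we apply it to the image of $\loc_\frp$ there; a non-torsion image forces $\loc_\frp$ itself to be non-torsion.)

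\textbf{Condition \eqref{eq:nv} at $\barfrp$.} Under \eqref{eq:gen-Heegner}, Theorem \ref{thm:descent_theorem}(2) identifies $\calL_p^{\Gr}(f/K) \bmod I_{\cyc}^{\ur}$ with $\calL_p^{\BDP}(f/K)$ up to a unit. The latter is nonzero because its $\mu$-invariant vanishes by Theorem \ref{thm:mu-vanishing}; alternatively, one can invoke the nonvanishing of infinitely many BDP central values. Hence $\calL_p^{\Gr}(f/K)\neq 0$. Since $\Log(f/K)$ is $\Lambda_K$-linear after base change to $\Lambda_K^{\ur}$, and $\Lambda_K^{\ur}$ is faithfully flat over $\Lambda_K$, the class $\loc_\barfrp(\calZ(f/K))$ in $\rmH^1_{\ord}(K_\barfrp,\bfT_K)$ is non-torsion. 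Here I view $\Fil^+$ cohomology inside the ordinary local condition; this is legitimate because $\rmH^0(K_\barfrp,\Fil^-\bfT_K)=0$, as Frobenius acts on $\Fil^-$ through $\alpha\ne1$ twisted by a nontrivial character.

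\textbf{Expected obstacle.} The main obstacle is the rank upper bound. One must control the descent map on the ordinary Selmer submodule, i.e. show that the specialization of $\rmH^1_{\rel,\ord}(K,\bfT_K)$ really injects modulo $\gamma_- -1$ into $\rmH^1_{\rel,\ord}(K,\bfT_{\cyc})$. This needs the local term $\rmH^0(K_\barfrp,\Fil^-\bfT_{\cyc})=0$. If that is delicate, I would instead bound the rank of the full $\rmH^1(K,\bfT_K)$ by $2$ via Proposition \ref{prop:rank}(1). I would then show that the map $\rmH^1(K,\bfT_K)\to \rmH^1(K_\barfrp,\Fil^-\bfT_K)$ has rank-one image, using its nonvanishing modulo $I_{\ac}$ from the cyclotomic theory as in \cite[Theorem 3.11]{BSTW}.
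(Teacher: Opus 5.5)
Your outline follows the paper's proof. Torsion-freeness comes from \eqref{eq:irred}. The paper shows directly that $\rmH^{0}(K,\bfT_{K}/x\bfT_{K})=0$ for every irreducible $x\in\Lambda_{K}$ by passing to $\Frac(\Lambda_{K}/x)$; your minimal-complex argument, giving reflexivity from $\rmH^{0}(K,\barT)=0$, is an equally valid substitute. The upper bound on the rank comes from descent along $I_{\ac}=(T_{\ac})$ to Proposition \ref{prop:rank}(2). The lower bound and non-torsionness at $\frp$ come from Corollary \ref{coro:cyclo_nonvanishing}. Non-torsionness at $\barfrp$ comes from Theorem \ref{thm:descent_theorem}(2) together with Theorem \ref{thm:mu-vanishing}.

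\textbf{The gap is in the rank upper bound.} You left this step as the expected obstacle, and it is the one the paper actually proves. As written, the appeal to ``Nakayama-type rank comparisons'' does not bound the rank. Put $M=\rmH^{1}_{\rel,\ord}(K,\bfT_{K})$ and $H=\rmH^{1}(K,\bfT_{K})$. The kernel of $M/T_{\ac}M\to\rmH^{1}(K,\bfT_{\cyc})$ is isomorphic to $(H/M)[T_{\ac}]$, which a priori could have positive $\Lambda_{\cyc}$-rank. So knowing that the image of $M$ has rank at most one says nothing about $\rank_{\Lambda_{K}}M$.

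Since $H/M$ embeds into $\bigoplus_{w\in S}\rmH^{1}(K_w,\bfT_{K})/\rmH^{1}_{\calF}(K_w,\bfT_{K})$, it suffices to show that each local quotient has no $T_{\ac}$-torsion. This is exactly the paper's argument.
\begin{itemize}
\item At $\frp$ the quotient is zero.
\item At $\barfrp$ the quotient embeds in $\rmH^{1}(K_{\barfrp},\Fil^{-}\bfT_{K})$, whose $T_{\ac}$-torsion is a quotient of $\rmH^{0}(K_{\barfrp},\Fil^{-}\bfT_{\cyc})$. This vanishing is not delicate. Inertia at $\barfrp$ acts trivially on the unramified $\Fil^{-}T$ but surjects onto $\Gamma_{\cyc}$. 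Hence the invariants are killed by the nonzerodivisor $\Psi_{\cyc}(\tau)-1$ and vanish.
\item You also omit the places $w\nmid p$ in $S$, where the condition is unramified. Since $K_\infty/K$ is unramified outside $p$, the quotient embeds in $\rmH^{1}(I_w,\bfT_{K})\simeq\rmH^{1}(I_w,T)\otimes_{\calO}\Lambda_{K}^{\iota}$. This has no $T_{\ac}$-torsion, because $\rmH^{0}(I_w,\bfT_{K})\to\rmH^{0}(I_w,\bfT_{\cyc})$ is surjective.
\end{itemize}
With these local facts, $M/T_{\ac}M\hookrightarrow\rmH^{1}_{\rel,\ord}(K,\bfT_{\cyc})$ is injective and your rank count goes through. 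Your fallback via $\rank H\le 2$ is then unnecessary. The same inertia argument, rather than the Frobenius one, is also the clean justification for $\rmH^{0}(K_{\barfrp},\Fil^{-}\bfT_{K})=0$ in your last paragraph.
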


\begin{proof}
We divide the proof into three steps.

\underline{Step 1: We show that $\rmH^{1}(G_{K,S}, \bfT_{K})$ is a torsion-free $\Lambda_{K}$-module.} The proof is similar to that of \cite[Section 13.8]{Kato}, but we include the details. To show that $\rmH^{1}(G_{K,S}, \bfT_{K})$ is a torsion-free $\Lambda_{K}$-module, it is enough to prove that multiplication by every irreducible element $f \in \Lambda_{K}$ is injective on $\rmH^{1}(G_{K,S}, \bfT_{K})$. Consider the short exact sequence
\[
0 \rightarrow \bfT_{K} \xrightarrow{\times f} \bfT_{K} \rightarrow \bfT_{K}/f \bfT_{K} \rightarrow 0.
\]
Taking the global cohomology yields
\[
0 \rightarrow \rmH^{0}(G_{K,S}, \bfT_{K}/f \bfT_{K}) \rightarrow \rmH^{1}(G_{K,S}, \bfT_{K}) \xrightarrow{\times f} \rmH^{1}(G_{K,S}, \bfT_{K}).
\]
Thus it is enough to show that $\rmH^{0}(G_{K,S}, \bfT_{K}/f \bfT_{K}) = 0$.

Suppose, to the contrary, that there exists a nonzero element $m \in \rmH^{0}(G_{K,S}, \bfT_{K}/f \bfT_{K})$. The module $\bfT_{K}/f \bfT_{K} \simeq T \hotimes_{\calO} (\Lambda_{K}/f)^{\iota}$ is free of rank two over the integral domain $\Lambda_{K}/f$. Hence the image 
\[
m \otimes 1 \in \bfT_{K}/f \bfT_{K} \otimes_{\Lambda_{K}/f} \Frac(\Lambda_{K}/f)
\] is nonzero and fixed by $G_{K,S}$. It therefore suffices to show that $\bfT_{K}/f \bfT_{K} \otimes_{\Lambda_{K}/f} \Frac(\Lambda_{K}/f)$ has no nonzero $G_{K,S}$-invariant elements. We observe that
\begin{equation} \label{eq:torsion-free-quotient}
\bfT_{K}/f \bfT_{K} \otimes_{\Lambda_{K}/f} \Frac(\Lambda_{K}/f) \simeq \begin{cases}
    V \otimes_{E} \Frac(\Lambda_{K}/f)(\Psi_{f}^{-1}), \quad& (f) \neq (\varpi), \\ 
    \barT \otimes_{\FF} \Frac(\Lambda_{K}/f)(\overline{\Psi}^{-1}), \quad& (f) = (\varpi),
\end{cases}
\end{equation}
where $\Psi_{f}: G_{K} \rightarrow \Lambda_{K}^{\times} \rightarrow (\Lambda_{K}/f)^{\times} \rightarrow \Frac(\Lambda_{K}/f)^{\times}$ is the quotient of the tautological character, and $\overline{\Psi}$ is the reduction of the tautological character modulo $\varpi$. In either case, any nonzero $G_{K,S}$-invariant element on the left-hand side of \eqref{eq:torsion-free-quotient} would generate a one-dimensional $\Gal_{K,S}$-stable subspace on the right-hand side, contradicting \eqref{eq:irred}. This proves the result. 

Consequently, the submodule $\rmH^{1}_{\rel, \ord}(K, \bfT_{K})$ of $\rmH^{1}(G_{K,S}, \bfT_{K})$ is also a torsion-free $\Lambda_{K}$-module.

\underline{Step 2: We compute that $\rank_{\Lambda} \rmH^{1}_{\rel, \ord}(K, \bfT_{K}) = 1$.} 

Recall $I_{\ac} := (T_{\ac} := \gamma_{\ac} - 1)$. To compute its $\Lambda_{K}$-rank, we reduce it to the $\Lambda_{\cyc}$-rank of $\rmH^{1}_{\rel, \ord}(K, \bfT_{\cyc})$. Note that there is a short exact sequence
\[
0 \rightarrow \bfT_{K} \xrightarrow{\times T_{\ac}} \bfT_{K} \xrightarrow{\mathrm{sp}} \bfT_{\cyc} \rightarrow 0.
\]
We take the global Galois cohomology $\rmH^{1}(G_{K,S}, -)$ to obtain
\begin{equation} \label{eq:long_exact_H1}
\rmH^{1}(G_{K,S}, \bfT_{K}) \xrightarrow{\times T_{\ac}} \rmH^{1}(G_{K,S}, \bfT_{K}) \xrightarrow{\mathrm{sp}} \rmH^{1}(G_{K,S}, \bfT_{\cyc}),
\end{equation}
which then induces an injection
\[
\mathrm{sp}: \dfrac{\rmH^{1}(G_{K,S}, \bfT_{K})}{T_{\ac} \cdot \rmH^{1}(G_{K,S}, \bfT_{K})} \hookrightarrow \rmH^{1}(G_{K,S}, \bfT_{\cyc}).
\]
We claim that this injection induces
\begin{equation} \label{eq:specliazation_control}
\mathrm{sp}: \dfrac{\rmH^{1}_{\rel, \ord}(K, \bfT_{K})}{T_{\ac} \cdot \rmH^{1}_{\rel, \ord}(K, \bfT_{K})} \hookrightarrow \rmH^{1}_{\rel, \ord}(K, \bfT_{\cyc})
\end{equation}

Assuming this claim, we continue the proof of Step 2. By \eqref{eq:specliazation_control},
\begin{align*}
\rank_{\Lambda_{K}} \rmH^{1}_{\rel, \ord}(K, \bfT_{K}) = \rank_{\Lambda_{\cyc}} \dfrac{\rmH^{1}_{\rel, \ord}(K, \bfT_{K})}{T_{\ac} \cdot \rmH^{1}_{\rel, \ord}(K, \bfT_{K})} \leq \rank_{\Lambda_{\cyc}} \rmH^{1}_{\rel, \ord}(K, \bfT_{\cyc}).
\end{align*}
We have seen in (1) that $\rank_{\Lambda_{\cyc}} \rmH^{1}_{\rel, \ord}(K, \bfT_{\cyc}) = 1$, and hence 
\[
\rank_{\Lambda_{K}} \rmH^{1}_{\rel, \ord}(K, \bfT_{K}) \leq 1 . 
\]
To prove equality, by Corollary \ref{coro:cyclo_nonvanishing}, we have $\calL_{p}^{\PR}(f/K) \neq 0$, and hence $\calZ(f/K)$ is nonzero. Since $\rmH^{1}_{\rel, \ord}(K, \bfT_{K})$ is a torsion-free $\Lambda_{K}$-module, it has positive rank, and therefore
\[
\rank_{\Lambda_{K}} \rmH^{1}_{\rel, \ord}(K, \bfT_{K}) = 1,
\]
as desired. 

\underline{Step 3: The nontorsionness of $\loc_{\frp}(\calZ(f/K))$ and $\loc_{\barfrp}(\calZ(f/K))$ follows.}

By Corollary \ref{coro:cyclo_nonvanishing}, the localization $\loc_{\frp}(\calZ(f/K))$ is nontorsion. The same holds for $\loc_{\barfrp}(\calZ(f/K))$ by combining Theorem \ref{thm:descent_theorem}(2) and Theorem \ref{thm:mu-vanishing}, under assumption \eqref{eq:gen-Heegner}.
\end{proof}

We now prove \eqref{eq:specliazation_control} as used above.

\begin{proof}[Proof of \eqref{eq:specliazation_control}]
For $w \in S$, we write 
\[
\rmH^{1}_{/\calF}(K_{w}, \bfT_{K}) := \dfrac{\rmH^{1}(K_{w}, \bfT_{K})}{\rmH^{1}_{\calF}(K_{w}, \bfT_{K})}
\]
for simplicity. It suffices to show that $\rmH^{1}_{/\calF}(K_{w}, \bfT_{K})[T_{\ac}] = 0$ for each $w \in S$. We consider the following three types of places in $S$ one-by-one:

\underline{(1) $w = \frp$.} In this case, $\rmH^{1}_{\calF}(K_{w}, \bfT_{K}) = \rmH^{1}_{\rel}(K_{w}, \bfT_{K}) = \rmH^{1}(K_{w}, \bfT_{K})$. Hence the quotient $\rmH^{1}_{/\calF}(K_{w}, \bfT_{K})$ is zero.

\underline{(2) $w = \barfrp$.} In this case, 
\[
\rmH^{1}_{/\calF}(K_{w}, \bfT_{K}) = \dfrac{\rmH^{1}(K_{w}, \bfT_{K})}{\rmH^{1}_{\ord}(K_{w}, \bfT_{K})} = \im(\rmH^{1}(K_{\barfrp}, \bfT_{K}) \rightarrow \rmH^{1}(K_{\barfrp}, \Fil^{-} \bfT_{K})) \hookrightarrow \rmH^{1}(K_{\barfrp}, \Fil^{-} \bfT_{K}).
\]
Thus it suffices to show that $\rmH^{1}(K_{\barfrp}, \Fil^{-} \bfT_{K})[T_{\ac}] = 0$. Consider the short exact sequence
\[
0 \rightarrow \Fil^{-} \bfT_{K} \xrightarrow{\times T_{\ac}} \Fil^{-} \bfT_{K} \xrightarrow{\sp} \Fil^{-} \bfT_{\cyc} \rightarrow 0.
\]
Taking local Galois cohomology gives
\[
\rmH^{0}(K_{\barfrp}, \Fil^{-} \bfT_{\cyc}) \rightarrow \rmH^{1}(K_{\barfrp}, \Fil^{-} \bfT_{K}) \xrightarrow{\times T_{\ac}} \rmH^{1}(K_{\barfrp}, \Fil^{-} \bfT_{K}).
\]
It remains to show that $\rmH^{0}(K_{\barfrp}, \Fil^{-} \bfT_{\cyc}) = 0$. Since $\Fil^{-} T$ is unramified, for every $\tau \in I_{\barfrp}$ the action of $\tau$ is trivial on $\Fil^{-} T$, while $\tau$ acts on $\Lambda_{\cyc}$ by $\Psi_{\cyc}(\tau)^{-1}$. Hence $(\Psi_{\cyc}(\tau) - 1) \rmH^{0}(K_{\barfrp}, \Fil^{-} \bfT_{\cyc}) = 0$. Because $K_{\infty}^{\cyc}/K$ is totally ramified at every prime above $p$, the map $\Psi_{\cyc}: I_{\barfrp} \twoheadrightarrow \Gamma_{\cyc}$ is surjective. Choose $\tau \in I_{\barfrp}$ whose image is $\gamma_{\cyc}$. Then $(\Psi_{\cyc}(\tau) - 1)$ is a nonzerodivisor in $\Lambda_{\cyc}$. Since $\rmH^{0}(K_{\barfrp}, \Fil^{-} \bfT_{\cyc}) \subseteq \Fil^{-} \bfT_{\cyc})$ is torsion-free over $\Lambda_{\cyc}$, it follows that $\rmH^{0}(K_{\barfrp}, \Fil^{-} \bfT_{\cyc}) = 0$, as desired.

\underline{(3) $v \in S$ outside $p$.} In this case, 
\[
\rmH^{1}_{/\calF}(K_{w}, \bfT_{K}) = \dfrac{\rmH^{1}(K_{w}, \bfT_{K})}{\rmH^{1}_{\ur}(K_{w}, \bfT_{K})} = \im(\rmH^{1}(K_w, \bfT_{K})\rightarrow \rmH^{1}(I_w, \bfT_{K})) \hookrightarrow \rmH^{1}(I_{w}, \bfT_{K}).
\]
Since $K_{\infty}/K$ is unramified outside $p$, the action of $I_{w}$ is trivial over $\Lambda_{K}^{\iota}$. This gives $\bfT_{K}|_{I_{w}} = T|_{I_{w}} \otimes_{\calO} \Lambda_{K}^{\iota}$, and hence
\[
\rmH^{1}(I_{w}, \bfT_{K}) \simeq \rmH^{1}(I_{w}, T) \otimes_{\calO} \Lambda_{K}^{\iota},
\]
whose right-hand side has no $T_{\ac}$-torsion, as desired.

We now prove the injectivity in \eqref{eq:specliazation_control}. Suppose that $c \in \rmH^{1}_{\rel, \ord}(K, \bfT_{K})$ satisfies $\mathrm{sp}(c) = 0$. It follows from \eqref{eq:long_exact_H1} that $c = T_{\ac} \cdot b$ for some $b \in \rmH^{1}(G_{K,S}, \bfT_{K})$. We claim that $b \in \rmH^{1}_{\rel, \ord}(K, \bfT_{K})$. Indeed, since $c \in \rmH^{1}_{\rel, \ord}(K, \bfT_{K})$, for every $w \in S$,
\[
\loc_{w}(c) = T_{\ac} \cdot \loc_{w}(b) \in \rmH^{1}_{\calF}(K_{w}, \bfT_{K}).
\]
In other words, $T_{\ac} \cdot \loc_{w}(b) = 0 \in \rmH^{1}_{/\calF}(K_{w}, \bfT_{R})$. We have shown above that $\rmH^{1}_{/\calF}(K_{w}, \bfT_{R})$ has no nonzero $T_{\ac}$-torsion, hence this forces $\loc_{w}(b) \in \rmH^{1}_{\calF}(K_{w}, \bfT_{K})$. This implies $b \in \rmH^{1}_{\rel, \ord}(K, \bfT_{K})$. We have therefore obtained that the map \eqref{eq:specliazation_control} is well-defined and 
\[
\ker(\mathrm{sp}: \rmH^{1}_{\rel, \ord}(K, \bfT_{K}) \rightarrow \rmH^{1}_{\rel, \ord}(K, \bfT_{\cyc})) \subseteq T_{\ac} \rmH^{1}_{\rel, \ord}(K, \bfT_{K}).
\]
The converse inclusion is clear from \eqref{eq:long_exact_H1}. The equation \eqref{eq:specliazation_control} then follows.
\end{proof}

\subsubsection{Equivalence of Iwasawa main conjectures}
Quotienting \eqref{eq:PT1} and \eqref{eq:PT2} by the submodule $R \cdot \calZ(f/K)$ and using Lemma~\ref{lem:remove-kernels} gives
\begin{align}
0\rightarrow Q(f/K)&\rightarrow \frac{\scP_\frp(\Lambda_{K})}{\Lambda_{K} \cdot \loc_\frp(\calZ(f/K))} \rightarrow X_{\ordi, \ordi}(K, \bfA_{K})\rightarrow
 X_{\rel,\ordi}(K, \bfA_{K})\rightarrow0,
 \label{eq:PTQ1}\\
0\rightarrow Q(f/K)&\rightarrow
 \frac{\scP_\barfrp(\Lambda_{K})}{ \Lambda_{K} \cdot \loc_{\barfrp}(\calZ(f/K))}
 \rightarrow X_{\rel, \str}(K, \bfA_{K})\rightarrow
 X_{\rel,\ordi}(K, \bfA_{K})\rightarrow0.
 \label{eq:PTQ2}
\end{align}

The explicit reciprocity laws of $\calZ(f/K)$ are used in the following way.
\begin{lemma} \label{lem:regulator-quotient}
Let $R$ be a noetherian normal domain, let $P$ be a torsion-free rank-one $R$-module, and let $\Phi:P\hookrightarrow R$ be an injection with pseudo-null cokernel.  If $z\in P$ and $\Phi(z)=L\ne0$, then the map induced by $\Phi$,
\[
P/Rz\rightarrow R/(L),
\]
is injective and has pseudo-null cokernel. In particular, $\chari_R(P/Rz)=(L)$. Applying this to $z=\calZ(f/K)$ gives the following pseudo-isomorphism of $\Lambda_{K}$-modules
\begin{equation}
 \frac{\scP_\frp(\Lambda_{K})}{\Lambda_{K} \cdot\loc_\frp(\calZ(f/K))} \xrightarrow{\approx} \frac{\Lambda_{K}}{\Lambda_{K} \cdot \calL_{p}^{\PR}(f/K)},
 \label{eq:local-quotient-one}   
\end{equation}
and the following pseudo-isomorphism of $\Lambda_{K}^{\ur}$-modules under assumption \eqref{eq:gen-Heegner}:
\begin{equation}
    \frac{\scP_\barfrp(\Lambda_{K})^{\ur}}{\Lambda_{K}^{\ur} \cdot \loc_\barfrp(\calZ(f/K))} \xrightarrow{\approx} \frac{\Lambda_{K}^{\ur}}{\Lambda_{K}^{\ur}\cdot \calL^{\Gr}_{p}(f/K)}.
 \label{eq:local-quotient-two}
\end{equation}
Here we use the symbol $\approx$ to mean that the morphism is a pseudo-isomorphism.
\end{lemma}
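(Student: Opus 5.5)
The abstract statement is pure commutative algebra, and I would prove it first. The identity $\Phi(rz) = rL$ shows that $\Phi$ maps $Rz$ isomorphically onto $(L)$. Hence $\Phi$ induces a map $\bar\Phi: P/Rz \to R/(L)$.

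For injectivity, take $x \in P$ with $\Phi(x) \in (L)$, say $\Phi(x) = rL = \Phi(rz)$. Injectivity of $\Phi$ gives $x = rz$.

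For the cokernel, there is the exact sequence
\[
0 \to P/Rz \xrightarrow{\bar\Phi} R/(L) \to R/(\Phi(P) + (L)) \to 0,
\]
and the last term is a quotient of $\coker \Phi = R/\Phi(P)$. Quotients of pseudo-null modules are pseudo-null, since their support is contained in the original support. Thus $\bar\Phi$ is a pseudo-isomorphism.

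Characteristic ideals are multiplicative in short exact sequences and kill pseudo-null modules. Since $L \neq 0$, $R/(L)$ is torsion, and we obtain $\chari_R(P/Rz) = \chari_R(R/(L)) = (L)$. The last equality holds because $R$ is a normal domain, so for each height-one prime $\wp$ the localization $R_\wp$ is a DVR and the length of $(R/L)_\wp$ is $v_\wp(L)$. Note that $P$ being torsion-free of rank one is automatic here, since it embeds in $R$; I would simply record this.

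For \eqref{eq:local-quotient-one}, I take $R = \Lambda_K$ and $P = \scP_\frp(\Lambda_K)$. Since $\rmH^1_{\ord}(K_\frp, \bfT_K)$ is the kernel of the map to $\rmH^1(K_\frp, \Fil^-\bfT_K)$, the group $P$ injects into $\rmH^1(K_\frp, \Fil^-\bfT_K)$. I then set $\Phi = \Cole(f/K)$ restricted to this image. One point needs checking here: the cokernel of $P \hookrightarrow \rmH^1(K_\frp, \Fil^-\bfT_K)$ must be pseudo-null, or the map must be an isomorphism. It injects into $\rmH^2(K_\frp, \Fil^+\bfT_K)$, which by local duality is dual to $\rmH^0(K_\frp, (\Fil^+ \bfT_K)^\vee(1))$. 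This vanishes, or is at least pseudo-null, because inertia at $\frp$ acts on the dual through a nontrivial character twisted by $\Psi^{-1}$; this is the same type of argument as in part (2) of the proof of \eqref{eq:specliazation_control}. Given this, Proposition~\ref{prop:pseudo_coleman} shows that $\Phi$ is injective with pseudo-null cokernel. The composite $P \to \Lambda_K$ then has pseudo-null cokernel, since its cokernel is an extension built from pseudo-null pieces. Finally, $\Phi(\loc_\frp \calZ(f/K)) = \calL_p^{\PR}(f/K)$ by definition, and this is nonzero by Corollary~\ref{coro:cyclo_nonvanishing}.

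For \eqref{eq:local-quotient-two}, I take $R = \Lambda_K^{\ur}$, which is regular and hence normal, and $P = \scP_{\barfrp}(\Lambda_K)^{\ur}$. Here $\rmH^1_{\ord}(K_{\barfrp}, \bfT_K)$ must be identified with $\rmH^1(K_{\barfrp}, \Fil^+\bfT_K)$. The kernel of $\rmH^1(K_{\barfrp}, \Fil^+\bfT_K) \to \rmH^1(K_{\barfrp}, \bfT_K)$ is the image of $\rmH^0(K_{\barfrp}, \Fil^-\bfT_K)$, and this group vanishes by the argument already given in the proof of \eqref{eq:specliazation_control}. I then set $\Phi = \Log(f/K)$, extended by the flat base change $\Lambda_K \to \Lambda_K^{\ur}$, under which injectivity and pseudo-nullity of the cokernel are preserved. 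By Proposition~\ref{prop:pseudo_log}, which uses \eqref{eq:gen-Heegner}, $\Phi$ is injective with pseudo-null cokernel. Moreover, $L = \calL_p^{\Gr}(f/K)$ is nonzero by Theorem~\ref{thm:descent_theorem}(2) together with Theorem~\ref{thm:mu-vanishing}.

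The main obstacle is bookkeeping rather than depth. It consists of matching $\scP_\frp$ and $\scP_{\barfrp}$ with the actual domains of $\Cole(f/K)$ and $\Log(f/K)$, namely the cohomology of $\Fil^{\mp}$, and checking that the comparison maps have pseudo-null, ideally zero, defects, via the $\rmH^0$ and $\rmH^2$ vanishing arguments above.
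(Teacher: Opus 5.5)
Your argument is the paper's: the same injectivity computation ($\Phi(x)=aL=\Phi(az)$ forces $x=az$), the same observation that the cokernel $R/(\Phi(P)+(L))$ is a quotient of $R/\Phi(P)$, and the same appeal to Propositions~\ref{prop:pseudo_coleman} and~\ref{prop:pseudo_log}; your matching of $\scP_\frp$ and $\scP_{\barfrp}$ with the domains of $\Cole(f/K)$ and $\Log(f/K)$ makes explicit a step the paper leaves tacit. One small repair there: the $\Psi^{-1}$-twist argument from the proof of \eqref{eq:specliazation_control} does not work on the discrete dual module, and $\rmH^2(K_\frp,\Fil^+\bfT_K)$ need not vanish (e.g.\ $k=2$ with $\alpha\equiv 1 \bmod \varpi$), but it is pseudo-null simply because it is finitely generated over $\calO$, hence supported in codimension at least $2$ in the three-dimensional ring $\Lambda_K$.
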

\begin{proof}
If $\Phi(x)\in(L)$, write $\Phi(x)=aL=\Phi(az)$ for some $a\in R$. Injectivity of $\Phi$ gives $x=az$, proving injectivity of the induced map.  Its cokernel is
\[
R/\bigl(\Phi(P)+(L)\bigr),
\]
which is a quotient of $R/\Phi(P)$ and is therefore pseudo-null. Applying this algebraic result to the zeta elements $\calZ_{\dagger}(f/K)$, it suffices to verify that the maps $\Cole(f/K)$ and $\Log(f/K)$ are injective with pseudo-null cokernel. This follows from Propositions \ref{prop:pseudo_coleman} and \ref{prop:pseudo_log} respectively.
\end{proof}

Apply the first local quotient identity \eqref{eq:local-quotient-one} to \eqref{eq:PTQ1}.  Apply $-\hotimes_{\Lambda_{K}} \Lambda_{K}^{\ur}$ to \eqref{eq:PTQ2} and then use \eqref{eq:local-quotient-two}.  Faithful flatness preserves exactness. We therefore obtain, going parallel with \cite[(9.13) and (9.12)]{BSTW} \footnote{In \cite[(9.13) and (9.12)]{BSTW}, the second terms are the quotient modules $\Lambda_{K}/(\calL_{p}^{\PR}(f/K))$ and $\Lambda_{K}^{\ur}/(\calL^{\Gr}_{p}(f/K))$ directly. We shall understand them as modules pseudo-isomorphic to them over $\Lambda_{K}$ or $\Lambda_{K}^{\ur}$ accordingly. The proofs in \textit{loc.cit.} still carry on.}, the following exact sequences:
\begin{align}
0\rightarrow Q(f/K)&\rightarrow[\Lambda_{K}/(\calL_{p,\dagger}^{\PR}(f/K))]\rightarrow X_{\ordi, \ordi}(K, \bfA_{K}) \rightarrow X_{\rel,\ordi}(K, \bfA_{K})\rightarrow0, \label{eq:four-standard}\\
0\rightarrow Q(f/K)^{\ur}&\rightarrow [\Lambda_{K}^{\ur}/(\calL^{\Gr}_{p}(f/K))] \rightarrow X_{\rel, \str}^{\ur}(K, \bfA_{K}) \rightarrow X_{\rel,\ordi}^{\ur}(K, \bfA_{K})\rightarrow0.\label{eq:four-greenberg}
\end{align}
Here, for a \emph{particular} finitely generated torsion $R$-module $M$, we write $M = [M_0]$ when $M_0$ is another such $R$-module pseudo-isomorphic to $M$.

Since $Q(f/K)$, $Q(f/K)^{\ur}$, and the second terms of \eqref{eq:four-standard} and \eqref{eq:four-greenberg} are torsion over $\Lambda_{K}$ or $\Lambda_{K}^{\ur}$, respectively, by Lemma \ref{lem:remove-kernels}, taking ranks gives
\[
\rank_{\Lambda_{K}} X_{\ordi,\ordi}(K, \bfA_{K})=\rank_{\Lambda_{K}} X_{\rel,\ordi}(K, \bfA_{K})
\]
and
\[
\rank_{\Lambda_{K}^{\ur}}X_{\rel, \str}^{\ur}(K, \bfA_{K}) =\rank_{\Lambda_{K}^{\ur}}X_{\rel,\ordi}^{\ur}(K, \bfA_{K}).
\]
Thus any one of these three dual Selmer modules is torsion if and only if the other two are torsion. We assume these equivalent conditions in what follows.

We fix a pair of ideals $\wp$ and $\twp$, where $\wp$ (resp. $\twp$) is any height-one prime ideal of $\Lambda_{K}$ (resp. $\Lambda_{K}^{\ur}$), such that $\twp \cap \Lambda_{K} = \wp$. We note that, since the ring extension $\Lambda_{K} \rightarrow \Lambda_{K}^{\ur}$ is faithfully flat and unramified in codimension one \footnote{By saying ``\emph{unramified in codimension one}'', we simply mean that the ring extension of discrete valuation rings $\Lambda_{K, \wp} \rightarrow (\Lambda_{K}^{\ur})_{\twp}$ is unramified, since $\calO \rightarrow \calO^{\ur}$ is unramified.}, for any finitely generated torsion $\Lambda_{K}$-module $M$, we have
\[
\length_{(\Lambda_{K}^{\ur})_{\twp}}\left(( M \otimes_{\Lambda_{K}} \Lambda_{K}^{\ur})_{\twp}\right) =  \length_{\Lambda_{K, \wp}} (M_{\wp}).
\]
In particular for any nonzero $\calL \in \Lambda_{K}$, we have
\[
\length_{(\Lambda_{K}^{\ur})_{\twp}} \left((\Lambda_{K}^{\ur})_{\twp}/\calL \cdot (\Lambda_{K}^{\ur})_{\twp} \right) =  \length_{\Lambda_{K, \wp}} (\Lambda_{K,\wp}/\calL \cdot \Lambda_{K,\wp}).
\]

Under these equivalent conditions, fixing the pair $(\wp, \twp)$ chosen above, and taking $\length_{\Lambda_{K,\wp}}$ (resp. $\length_{(\Lambda_{K}^{\ur})_{\twp}}$) along the exact sequence \eqref{eq:four-standard} (resp. \eqref{eq:four-greenberg}), we have
\begin{align*}
& \length_{\Lambda_{K,\wp}}(X_{\ordi,\ordi} (K, \bfA_{K})_{\wp}) - \length_{\Lambda_{K,\wp}}(\Lambda_{K,\wp}/\calL_p^{\PR}(f/K)) \\ \xlongequal{\text{\eqref{eq:four-standard}}} & \length_{\Lambda_{K,\wp}}(X_{\rel,\ordi}(K, \bfA_{K})_{\wp})- \length_{\Lambda_{K,\wp}}(Q(f/K)_{\wp}) \\
\xlongequal{\text{base change}} & \length_{(\Lambda_{K}^{\ur})_{\twp}}(X_{\rel,\ordi}^{\ur}(K, \bfA_{K})_{\twp})- \length_{(\Lambda_{K}^{\ur})_{\twp}}(Q(f/K)^{\ur}_{\twp}) 
\\ \xlongequal{\text{\eqref{eq:four-greenberg}}} & \length_{(\Lambda_{K}^{\ur})_{\twp}}(X_{\rel, \str}^{\ur}(K, \bfA_{K})_{\twp})-\length_{(\Lambda_{K}^{\ur})_{\twp}}((\Lambda_{K}^{\ur})_{\twp}/\calL_p^{\Gr}(f/K)).
\end{align*}
This proves the following main result.

\begin{theorem}[Equivalency of Iwasawa main conjectures] \label{thm:equiv_IMC}
Assume that \eqref{eq:irred} and \eqref{eq:gen-Heegner} hold. Then $X_{\ord, \ord}(K, \bfA_{K})$ is a finitely generated torsion $\Lambda_{K}$-module if and only if $X_{\rel, \ord}^{\ur}(K, \bfA_{K})$ is a finitely generated torsion $\Lambda_{K}^{\ur}$-module. Moreover, under these two equivalent conditions, for any pair of height-one prime ideals $\wp$ and $\twp$ of $\Lambda_{K}$ and $\Lambda_{K}^{\ur}$, respectively, such that $\twp \cap \Lambda_{K} = \wp$, we have
\begin{multline*}
  \length_{\Lambda_{K,\wp}}( X_{\ordi,\ordi}(K, \bfA_{K})_{\wp}) - \length_{\Lambda_{K,\wp}}(\Lambda_{K, \wp}/\calL_p^{\PR}(f/K)) \\ 
  = \length_{(\Lambda_{K}^{\ur})_{\twp}}( X_{\rel, \str}^{\ur}(K, \bfA_{K})_{{\twp}})- \length_{(\Lambda_{K}^{\ur})_{\twp}}((\Lambda_{K}^{\ur})_{\twp}/\calL_p^{\Gr}(f/K))  
\end{multline*}
In particular, for Perrin-Riou's Iwasawa main conjecture
\begin{equation} \tag{PRIMC} \label{eq:PRMC}
    \chari_{\Lambda_{K}} X_{\ordi,\ordi}(K, \bfA_{K}) = (\calL_p^{\PR}(f/K)) \quad \text{ in $\Lambda_{K}$},
\end{equation}
and Greenberg's Iwasawa main conjecture
\begin{equation} \tag{GIMC} \label{eq:GIMC}
    \chari_{\Lambda_{K}^{\ur}} X_{\rel, \str}^{\ur}(K, \bfA_{K}) = (\calL_p^{\Gr}(f/K)) \quad \text{ in $\Lambda_{K}^{\ur}$},
\end{equation}
we have the following equivalences:
\begin{enumerate}[label = \rm (\arabic*)]
    \item The one-sided divisibility in \eqref{eq:PRMC} is equivalent to the corresponding divisibility in \eqref{eq:GIMC}, as ideals of $\Lambda_{K}$ and $\Lambda_{K}^{\ur}$, respectively.
    \item The one-sided divisibility in \eqref{eq:PRMC} is equivalent to the corresponding divisibility in \eqref{eq:GIMC}, as ideals of $\Lambda_{K}[1/\varpi]$ and $\Lambda_{K}^{\ur}[1/\varpi]$, respectively.
    \item The validity of \eqref{eq:PRMC} is equivalent to the validity of \eqref{eq:GIMC}, as equalities of ideals in $\Lambda_{K}$ and $\Lambda_{K}^{\ur}$, respectively.
\end{enumerate}
\end{theorem}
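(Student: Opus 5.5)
The plan is to read Theorem \ref{thm:equiv_IMC} off the two four-term exact sequences \eqref{eq:four-standard} and \eqref{eq:four-greenberg}. Their inputs are already in place: Proposition \ref{prop:rank-one-nonvanishing} gives \eqref{eq:r1} and \eqref{eq:nv}, Lemma \ref{lem:remove-kernels} kills $\rmH^1_{\ordi,\ordi}$ and $\rmH^1_{\rel,\str}$, and Lemma \ref{lem:regulator-quotient} replaces the local quotients by $\Lambda_K/(\calL_p^{\PR}(f/K))$ and $\Lambda_K^{\ur}/(\calL_p^{\Gr}(f/K))$ up to pseudo-isomorphism. Pseudo-null modules have zero length at every height-one prime, so these replacements do not affect anything below.

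\emph{Torsion equivalence.} The second terms in both sequences are torsion, and $Q(f/K)$ is torsion by Lemma \ref{lem:remove-kernels}. Additivity of rank therefore gives
\[
\rank X_{\ordi,\ordi}=\rank X_{\rel,\ordi}, \qquad \rank X^{\ur}_{\rel,\str}=\rank X^{\ur}_{\rel,\ordi}.
\]
Faithful flatness of $\Lambda_K\to\Lambda_K^{\ur}$ gives $\rank_{\Lambda_K}X_{\rel,\ordi}=\rank_{\Lambda_K^{\ur}}X^{\ur}_{\rel,\ordi}$. Chaining these shows that all the dual Selmer modules are torsion simultaneously. (The statement says $X^{\ur}_{\rel,\ord}$; it is covered by the same chain.)

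\emph{Length identity.} Fix $\wp$ and $\twp$ with $\twp\cap\Lambda_K=\wp$. Localizing \eqref{eq:four-standard} at $\wp$ and taking alternating lengths gives
\[
\length(X_{\ordi,\ordi})_\wp-\length(\Lambda_K/\calL_p^{\PR})_\wp=\length(X_{\rel,\ordi})_\wp-\length(Q)_\wp .
\]
Since $\Lambda_{K,\wp}\to(\Lambda_K^{\ur})_{\twp}$ is a flat local map of DVRs with the uniformizer unchanged (unramified because $\calO\to\calO^{\ur}$ is), lengths are preserved under base change. So the right side equals the same expression for $X^{\ur}_{\rel,\ordi}$ and $Q^{\ur}$ at $\twp$. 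Localizing \eqref{eq:four-greenberg} at $\twp$ then converts this into the Greenberg side.

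\emph{Divisibilities.} Each height-one prime $\twp$ lies over a height-one prime of $\Lambda_K$, and each $\wp$ has at least one prime $\twp$ over it. Because $\calL_p^{\PR}(f/K)\in\Lambda_K$, the quantity $\length(\Lambda_K/\calL_p^{\PR})_\wp$ equals its $\twp$-counterpart. The identity shows that
\[
\operatorname{ord}_\wp\chari X_{\ordi,\ordi}-\operatorname{ord}_\wp\calL_p^{\PR} \quad\text{and}\quad \operatorname{ord}_{\twp}\chari X^{\ur}_{\rel,\str}-\operatorname{ord}_{\twp}\calL_p^{\Gr}
\]
coincide for every compatible pair. Both rings are UFDs, so one-sided divisibility amounts to all these differences having a fixed sign. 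This gives (1). Restricting to primes other than $(\varpi)$ and $(\varpi)^{\ur}$, which correspond to each other, gives (2). Requiring both signs gives (3).

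The main obstacle is justifying the length base change. One needs $\twp\cap\Lambda_K$ to have height one (true by going-down for flat maps) and the extension of DVRs to have ramification index $1$. For $\wp=(\varpi)$ this holds because $\varpi$ stays prime. For other $\wp$, $\Lambda_K^{\ur}/\wp\Lambda_K^{\ur}$ is a base change of $\Lambda_K/\wp$ along the ind-étale map $\calO\to\calO^{\ur}$, so it is reduced and the ramification index is again $1$.
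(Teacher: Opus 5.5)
Your overall route is the paper's: the four-term sequences \eqref{eq:four-standard} and \eqref{eq:four-greenberg}, rank additivity, and a comparison of lengths at pairs with $\twp\cap\Lambda_K=\wp$. However, your deduction of (1)--(3) rests on a false claim. It is not true that every height-one prime $\twp$ of $\Lambda_K^{\ur}$ lies over a height-one prime of $\Lambda_K$. Going-down for the flat map $\Lambda_K\to\Lambda_K^{\ur}$ only gives $\operatorname{ht}(\twp\cap\Lambda_K)\le 1$, and the contraction can be $0$.

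Here is an example. Write $\Lambda_K=\calO[[X,Y]]$ and $\Lambda_K^{\ur}=\calO^{\ur}[[X,Y]]$. Take $\twp=(X-a)$ with $a\in\varpi\calO^{\ur}$ transcendental over $L$; such $a$ exist since $\calO^{\ur}$ is uncountable. If $0\neq g=\sum_j g_j(X)Y^j\in\calO[[X,Y]]$, then some $g_j\neq 0$, and Weierstrass preparation gives $g_j(a)\neq 0$, so $g\notin\twp$.

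At such primes your length identity says nothing. Yet the divisibility in \eqref{eq:GIMC} must also be checked there, and this is not automatic because $\calL_p^{\Gr}(f/K)$ lies only in $\Lambda_K^{\ur}$. So, as written, the implication from \eqref{eq:PRMC} to \eqref{eq:GIMC} in (1)--(3) is not proved. The converse implication is fine, since every $\wp$ does have some $\twp$ over it.

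The repair is short. If $\twp\cap\Lambda_K=0$, every nonzero element of $\Lambda_K$ is a unit in $(\Lambda_K^{\ur})_{\twp}$. Hence $(M\otimes_{\Lambda_K}\Lambda_K^{\ur})_{\twp}=0$ for every finitely generated torsion $\Lambda_K$-module $M$. Apply this to $X_{\rel,\str}(K,\bfA_K)$, and to the torsion $\Lambda_K$-module $\scP_{\barfrp}(\Lambda_K)/\Lambda_K\cdot\loc_{\barfrp}(\calZ(f/K))$, whose base change is pseudo-isomorphic to $\Lambda_K^{\ur}/(\calL_p^{\Gr}(f/K))$ by Lemma \ref{lem:regulator-quotient}. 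This gives $\operatorname{ord}_{\twp}\chari X^{\ur}_{\rel,\str}=0=\operatorname{ord}_{\twp}\calL_p^{\Gr}(f/K)$, so these primes impose no condition.

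A smaller point: your justification of $e=1$ is imprecise. The map $\calO\to\calO^{\ur}$ is only the completion of an ind-\'{e}tale map, and $\Lambda_K^{\ur}/\wp\Lambda_K^{\ur}=(\Lambda_K/\wp)\hotimes_{\calO}\calO^{\ur}$ is a completed base change. Reducedness therefore needs one more input. For example, this ring is the completion of the strict henselization of the excellent local domain $\Lambda_K/\wp$, hence reduced. The paper also only asserts this step (``unramified in codimension one''), so it is a matter of rigour rather than a gap.
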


From now on, we shall write
\[
X_{\ordi}(X/R) := X_{\ordi,\ordi}(K, \bfA_{R}), \quad X_{\Gr}(X/R) := X_{\rel, \str}(K, \bfA_{R}),
\]
for $R = \Lambda_{K}, \Lambda_{\cyc}$ and $\Lambda_{\ac}$. We call them the \emph{ordinary Selmer group} and the \emph{Greenberg Selmer group}, respectively.

%% file: 05_Iwasawa_main_conjectures.tex
\section{Applications to Iwasawa main conjectures}

\subsection{Burungale--Castella--Skinner's base change arguments}
\label{sec:base-change}

We now combine the integral equivalence with Wan's Hilbert modular form divisibility.  The argument in this section is the higher-weight form of \cite[Sections~3--5]{BCS}.  Its central result is Theorem~\ref{thm:BCS-521-higher}, which is the promised analogue of \cite[Proposition~5.2.1]{BCS}.

\subsubsection{The quartic CM field and auxiliary hypotheses}

Let $F/\QQ$ be a real quadratic field, let $K/\QQ$ be an imaginary quadratic field, and put $M=FK$, so that $M/F$ is a CM field extension. Denote by $\eta_F$, $\eta_K$, and $\eta_{FK}$ the three quadratic Dirichlet characters. For a form $g$, write $g^F=g\otimes\eta_F$, and similarly for the other twists.  The base change of $f$ to $F$ is denoted $f_F$.

For later reference, we collect the following seven conditions on $(K,F)$:
\begin{enumerate}[label=\rm {(BC\arabic*)}]
\item\label{BC:K} $D_K$ is odd, $D_K\ne3$, every prime dividing $N$ splits in $K$, and $p=\frp\barfrp$ splits in $K$ (these are conditions \cite[(disc), (Heeg), (spl)]{BCS});
\item\label{BC:Fp} $p$ is inert in $F$ (this is the condition \cite[(i) in Proposition 5.2.1]{BCS});
\item\label{BC:DF} $D_F$ is odd and every prime dividing $D_F$ splits in $K$ (this is the condition \cite[(ii) in Proposition 5.2.1]{BCS});
\item\label{BC:level} for every $\ell\mid N$, the prime $\ell$ is inert in $F$ if $\ell\equiv-1\pmod p$, and is split in $F$ otherwise (this is the condition \cite[(iii) in Proposition 5.2.1]{BCS});
\item\label{BC:irrM} $\overline T|_{G_M}$ is absolutely irreducible (this is the condition \cite[(iv) in Proposition 5.2.1]{BCS});
\item\label{BC:irrFzeta} $\overline T|_{G_{F(\mu_p)}}$ is absolutely irreducible (this is the condition \cite[(v) in Proposition 5.2.1]{BCS});
\item\label{BC:p5} if $p=5$, then $F\ne\QQ(\mu_5)^+$ (this is the condition \cite[(vi) in Proposition 5.2.1]{BCS}).
\end{enumerate}
Note that \ref{BC:irrM} implies that $\barT|_{G_K}$ is absolutely irreducible.

These conditions are chosen so that every hypothesis in Wan's theorem \cite[Theorem 3]{Wan2015} can be checked directly. This is verified in the proof of \cite[Proposition 5.2.1]{BCS}. Indeed:
\begin{itemize}
\item $p$ is unramified in $F$, and the unique prime of $F$ over $p$ splits in $M/F$, by \ref{BC:K} and \ref{BC:Fp};
\item $(pN,D_F)=1$ and $(N\calO_F,D_{M/F})=1$, by \ref{BC:DF} and \ref{BC:level};
\item Wan's condition $(\triangle)$ holds.  A prime dividing $D_F$ splits in $M/F$ by \ref{BC:DF}, while a prime dividing $D_K$ ramifies in $M/F$, so $M$ is not contained in the narrow Hilbert class field of $F$;
\item \ref{BC:DF} implies that every prime of $F$ dividing the level of $f_F$ splits in $M$. Thus the inert part $\frn^-$ of the factorization $N \calO_{F} = \frn^{+} \frn^{-}$ is $1$; the conditions on $\frn^-$ are therefore vacuous;
\item the Gorenstein criterion of Fujiwara quoted as \cite[Theorem~8]{Wan2015} applies:
\begin{itemize}
    \item The residual irreducibility of $\barT|_{G_{F(\mu_p)}}$ is \ref{BC:irrFzeta}, and its exceptional $p=5$ case is excluded by \ref{BC:p5}.
    \item Since $f$ is $p$-ordinary, $p \nmid N$, and has trivial nebentype, the two characters occurring in the semisimplification of $\barT|_{I_{p}}$ have ratio $\barchi_{\cyc}^{k-1}$. As $k$ is even and $p$ is odd, $p - 1 \nmid k-1$, and hence this ratio is nontrivial. Moreover, $p$ is unramified in $F$, so $I_{F_v} = I_{\QQ_p}$ for every $v \mid p$. Thus $\barT|_{G_{F_v}}$ is distinguished for every $v \mid p$. This verifies the condition (dist) in \cite{Wan2015};
    \item The assumption on the validity of Ihara lemma occurs only when $[F:\QQ]$ is odd. Here $[F:\QQ] = 2$, so it is vacuous;
    \item The existence of a minimal modular lifting of $\barT$ follows from the proof of \cite[Theorem~103]{Wan2015}. Indeed, by the choice of $F$, the base change to $F$ of a minimal modular lifting of $\barT$ gives a minimal modular lifting of $\barT_{f_{F}}$.
    \item Its final local condition, namely that ``for any finite place $v$ of $F$, if $\barT|_{G_{F_v}}$ is absolutely irreducible and $\barT|_{I_{v}}$ is absolutely irreducible, then $q_v \not\equiv -1 \pmod{p}$'', follows from \ref{BC:level}.
\end{itemize}
\end{itemize}

\subsubsection{Wan's Iwasawa main conjecture for Hilbert Hida families}

For $M = FK$, let $\Gamma_{M}^{\ac}$ (resp. $\Gamma_{M}^{\cyc}$) be the Galois group of the anticyclotomic $\ZZ_{p}^{2}$-extension $M_{\infty}^{\ac}$ (resp. the cyclotomic $\ZZ_p$-extension $M_{\infty}^{\cyc}$). Let $M_{\infty}:= M_{\infty}^{\cyc} M_{\infty}^{\ac}$ be their compositum, and put $\Gamma_{M} := \Gal(M_{\infty}/M)$ and $\Lambda_{M} = \calO \lrbracket{\Gamma_{M}}$.

Let $\II$ be the branch of the parallel-weight ordinary Hilbert Hida family through $f_F$. Let
\[
\calL_p(f_F/M)\in \Lambda_{M} \otimes \QQ_p
\]
be the associated three-variable $p$-adic $L$-function of \cite[Section 7.3]{Wan2015}, which interpolates the central $L$-values $L^{\alg}(f_{F}/M \otimes \chi, k-1)$ as $\chi$ varies over finite-order characters of $\Gamma_{M}$ (see \cite[Theorem 82 (i)]{Wan2015}). By the Gorenstein property verified above, \cite[Theorem 82 (ii)]{Wan2015} shows that $\calL_{p}(f_F/M) \in \Lambda_{M}$, i.e. that it is integral.

\begin{proposition}[Wan's Iwasawa main conjecture]
\label{prop:Wan-specialized}
Assume \ref{BC:K}--\ref{BC:p5}.  Then the ordinary Selmer module over $M_\infty$ satisfies
\begin{equation}\label{eq:Wan-specialized}
 \bigl(\calL_p(f_F/M)\bigr)
       \supseteq
 \chari_{\Lambda_M}X_{\ordi}(f_F/M_\infty)
       \qquad\text{in }\Lambda_M.
\end{equation}
\end{proposition}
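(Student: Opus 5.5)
This is essentially a specialization of Wan's main theorem, so the proof consists of checking hypotheses and translating conventions. The plan is to invoke \cite[Theorem~3]{Wan2015}, i.e. the divisibility $\chari \supseteq (\calL_p)$ in the three-variable (Hida family) setting over $\II\hotimes\Lambda_M$. We then specialize along the arithmetic point $\II\to\calO$ corresponding to $f_F$, and identify the resulting objects with $X_{\ordi}(f_F/M_\infty)$ and $\calL_p(f_F/M)$.

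First, we record that all hypotheses of \cite[Theorem~3]{Wan2015} hold. This is exactly the bulleted verification preceding the statement: splitting of $p$ in $M/F$, the coprimality conditions, condition $(\triangle)$, $\frn^-=1$, the Gorenstein property of the localized Hecke algebra via Fujiwara's criterion, (dist), and the local condition at $q_v\equiv -1$. These follow from \ref{BC:K}--\ref{BC:p5}, as in the proof of \cite[Proposition~5.2.1]{BCS}. The Gorenstein property also yields integrality $\calL_p(f_F/M)\in\Lambda_M$ by \cite[Theorem~82(ii)]{Wan2015}, so the statement is an inclusion of integral ideals.

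Second, we descend from the family to $f_F$. Let $P\subset\II$ be the height-one prime of the arithmetic specialization at $f_F$, which is regular since weight-$k$ points are étale over the weight space. We will apply the following standard lemma. If $X$ is a finitely generated $\II\lrbracket{\Gamma_M}$-module with $\chari(X)\subseteq(\mathcal{L})$, and if $X/PX$ has no pseudo-null submodule issues, then $\chari(X/PX)\subseteq(\mathcal{L}\bmod P)$. This follows from \cite[Corollary~3.8]{SkinnerUrban}, or from the general specialization argument in \cite{Wan2015}. To use it we need the control theorem identifying $X/PX$ with $X_{\ordi}(f_F/M_\infty)$ up to pseudo-null error. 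That control theorem follows from the vanishing of $\rmH^0$ given by \ref{BC:irrM}, exactly as in the weight-two case. The family $p$-adic $L$-function specializes to $\calL_p(f_F/M)$ by its interpolation property \cite[Theorem~82(i)]{Wan2015}.

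The main obstacle is the higher-weight normalization. We must ensure that the Selmer group in Wan's theorem, defined with the ordinary filtration on the family, specializes to the Selmer group of $T=T_f(k-1)$ restricted to $G_M$ with the same twist, and that the specialization of Wan's $p$-adic $L$-function matches the normalization via $L^{\alg}(f_F/M\otimes\chi,k-1)$ and carries no extra unit or period factor. I would handle this by tracking the twist by the weight-$k$ character in Wan's construction and checking compatibility with Ohta's filtration on $\Fil^{\pm}T$. If the periods differ from the canonical ones, they differ by a $p$-adic unit under the Gorenstein hypothesis, so the ideal inclusion is unaffected.
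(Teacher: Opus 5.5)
Your proposal takes the same route as the paper's proof. You apply Wan's Theorem~3 to the parallel-weight Hilbert Hida family through $f_F$, then specialize at the arithmetic prime, using Wan's control theorem (his Proposition~15, the Hilbert analogue of Skinner--Urban's (3.5)) and the fact that the family $p$-adic $L$-function specializes to $\calL_p(f_F/M)$; your explicit step $\chari(X/PX)\subseteq \chari(X) \bmod P$ just spells out what the paper leaves implicit.

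One slip to fix: in your first paragraph you state Wan's divisibility as $\chari \supseteq (\calL_p)$, which is the Euler-system direction. Wan's theorem gives $\chari \subseteq (\calL_p)$, which is the direction you correctly use in your specialization lemma and the one the statement requires.
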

The following proof is essentially that of \cite[Theorem~3.2.1]{BCS}.

\begin{proof}
The result \cite[Theorem~3]{Wan2015} applies to the parallel-weight $\II$-adic ordinary family $\bff_{F}$ when one arithmetic specialization is an ordinary cuspidal form of even parallel weight $\kappa_0 \geq 2$. Its hypotheses were verified above, and the theorem yields
\[
(\calL_p(\bff_F/M)) \supseteq \chari_{\II \lrbracket{\Gamma_M}} X_{\ordi}(\bff_F/M_\infty).
\]
Specializing the big Selmer group $X_{\ordi}(\bff_F/M_\infty)$ at the arithmetic prime of $\II$ corresponding to the ordinary stabilization of $f_F$, \cite[Proposition~15]{Wan2015} (as a Hilbert modular form analogue of \cite[(3.5)]{SkinnerUrban}) shows that it is exactly $X_{\ordi}(f_F/M_\infty)$. The specialization of $\calL_p(\bff_F/M)$ is $\calL_p(f_F/M)$ by definition. The assertion then follows.
\end{proof}

\subsubsection{Projection from $M$ to $K$}

Put $\widetilde K_\infty=FK_\infty$, which is a $\ZZ_p^{2}$-extension of $M$, and let $\widetilde\Lambda_K =\calO \lrbracket{\Gal(\widetilde K_\infty/M)}$. We identify $\widetilde{\Lambda_{K}}$ with $\Lambda_{K}$ and denote by
\[
\pi_K:\Lambda_M\longrightarrow\widetilde\Lambda_K
\]
the natural projection induced by the restriction $\Gal(M_{\infty}/M) \twoheadrightarrow \Gal(\widetilde{K}_{\infty}/M)$; set $I_K=\ker(\pi_K)$.

\begin{lemma} \label{lem:two-variable-torsion}
Assume that \eqref{eq:irred} holds. For $g=f$ and $g=f^F$, the module $X_{\ordi}(g/K_\infty)$ is a torsion $\Lambda_K$-module. As a result, $X_{\Gr}(g/K_{\infty})$ is a torsion $\Lambda_K^{\ur}$-module.
\end{lemma}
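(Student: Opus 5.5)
The plan is to deduce torsionness of $X_{\ordi}(g/K_\infty)$ from the cyclotomic theory of Kato, by a control argument along $I_{\ac}$, and then transfer it to the Greenberg Selmer group via Theorem \ref{thm:equiv_IMC}. Take $g\in\{f,f^F\}$ and write $T_g$ for the associated lattice. First, \eqref{eq:irred} for $g$ holds because $\barT_{f^F}|_{G_K}\simeq \barT|_{G_K}\otimes\eta_F$ is again absolutely irreducible. Next, Shapiro's lemma identifies the cyclotomic specialization $X_{\ordi,\ordi}(K,\bfA_{\cyc})$ for $g$ with $X_{\ord}(g/\QQ_\infty)\oplus X_{\ord}(g^K/\QQ_\infty)$, up to the tame character $\eta$. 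By Kato's Euler system argument (\cite[Theorem 17.4]{Kato}, i.e.\ Theorem \ref{thm:kato_17_4}), combined with the nonvanishing of $\calL_p(g/\QQ)$ and $\calL_p(g^K/\QQ)$ (Rohrlich, Shimura, as in Corollary \ref{coro:cyclo_nonvanishing}), each summand is $\Lambda$-torsion. Hence $X_{\ordi,\ordi}(K,\bfA_{\cyc})$ is $\Lambda_{\cyc}$-torsion.

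To lift this to $\Lambda_K$, I will use a control theorem. Dually, restriction induces a map
\[
\Sel_{\ordi,\ordi}(K,\bfA_{\cyc})\rightarrow \Sel_{\ordi,\ordi}(K,\bfA_K)[I_{\ac}],
\]
and I will bound its cokernel by the usual snake-lemma diagram. The global $\rmH^0$ term vanishes by \eqref{eq:irred}, exactly as in Step 1 of Proposition \ref{prop:rank-one-nonvanishing}. The local terms away from $p$ are unramified-type and torsion. At $\frp$ and $\barfrp$ the error comes from $\rmH^0(K_w,\Fil^-\bfA)$, and I must show it is $\Lambda_{\cyc}$-cotorsion. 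This holds because $\Fil^-$ is an unramified character twisted by the ramified tautological character; the argument is dual to case (2) in the proof of \eqref{eq:specliazation_control}. It follows that $X_{\ordi,\ordi}(K,\bfA_K)/I_{\ac}$ is $\Lambda_{\cyc}$-torsion up to a torsion error. Nakayama's lemma, in its rank version, then shows that $X_{\ordi,\ordi}(K,\bfA_K)$ is $\Lambda_K$-torsion, since $\rank_{\Lambda_K}X\le\rank_{\Lambda_{\cyc}}X/I_{\ac}X$.

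An alternative route avoids the control theorem. It uses the exact sequence \eqref{eq:PT1} together with Lemma \ref{lem:remove-kernels} and Proposition \ref{prop:rank-one-nonvanishing}, which show that $X_{\ordi,\ordi}$ and $X_{\rel,\ordi}$ have equal rank, and then bounds $\rank X_{\rel,\ordi}$ by the same specialization. I would keep this as a fallback.

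For the second assertion, Theorem \ref{thm:equiv_IMC} shows that torsionness of $X_{\ordi,\ordi}$ is equivalent to torsionness of $X_{\rel,\ordi}^{\ur}$, and the rank equalities preceding it make $X_{\rel,\str}^{\ur}=X_{\Gr}(g/K_\infty)^{\ur}$ torsion as well. Here I must check hypothesis \eqref{eq:gen-Heegner} for $g$. It holds because \ref{BC:K} places all primes of $N$ split in $K$, and because every prime dividing $D_F$ splits in $K$ by \ref{BC:DF}, so primes of the level of $f^F$ split too. The main obstacle is the local control at the primes above $p$ in the specialization step; making the cotorsion precise there, rather than just plausible, is where I would spend the effort.
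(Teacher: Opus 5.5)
Your proposal is correct and follows essentially the same route as the paper. Both use control along $I_{\ac}$ and Shapiro's lemma to reduce to $g$ and $g^{K}$ over $\QQ_\infty$, then Kato's torsionness (Theorem~\ref{thm:kato_17_4}(1)) and the rank bound $\rank_{\Lambda_K}X\le\rank_{\Lambda_{\cyc}}X/I_{\ac}X$, and finally Theorem~\ref{thm:equiv_IMC} with Proposition~\ref{prop:rank-one-nonvanishing} for the Greenberg Selmer group.

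The only real difference is that the paper works with the $S$-imprimitive Selmer group. There, Skinner--Urban's control theorem and Shapiro's lemma give an exact isomorphism, and the primitive $X_{\ordi}(g/K_\infty)$ is recovered as a quotient. This sidesteps the local error analysis at $\frp,\barfrp$ that you single out as the main obstacle.

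Your version of that step does go through, with one correction: for $k>2$ the character on $\Fil^{-}\bfA$ is $\epsilon^{2-k}$ times an unramified character, not unramified. This is harmless, because an inertia element with nontrivial image in $\Gamma_{\cyc}$ still gives an annihilator of $\rmH^0(K_w,\Fil^-\bfA_K)$ lying outside $(T_{\ac})$.
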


\begin{proof}
Choose a finite set $S$ containing $p$, every prime at which $T$ is ramified, and the primes ramified in $K/\QQ$, and let $X_{\ord}^{S}(g/K_{\infty})$ be the \emph{$S$-imprimitive} ordinary Selmer group. Combining the control theorem in \cite[Proposition 3.9]{SkinnerUrban} and Shapiro's lemma in \cite[Proposition 3.6]{SkinnerUrban}, we obtain the following isomorphism:
\[
\dfrac{X_{\ord}^{S}(g/K_{\infty})}{I_{\ac} X_{\ord}^{S}(g/K_{\infty})} \simeq X_{\ord}^{S}(g/\QQ_{\infty}) \oplus X_{\ord}^{\Sigma}(g \otimes \eta_{K} / \QQ_{\infty}).
\]
The result \cite[Theorem 17.4]{Kato} (see Theorem \ref{thm:kato_17_4}(1) for more details) shows that both $X_{\ord}(g/\QQ_{\infty})$ and $X_{\ord}(g \otimes \eta_{K} /\QQ_{\infty})$ are torsion over $\Lambda_{\cyc}$, and hence the same holds for $X_{\ord}^{S}(g/\QQ_{\infty})$ and $X_{\ord}^{S}(g \otimes \eta_{K} /\QQ_{\infty})$ (see \cite[Corollary 3.12]{SkinnerUrban}). This implies that the quotient $X_{\ord}^{S}(g/K_{\infty}) / I_{\ac} X_{\ord}^{S}(g/K_{\infty})$ is torsion over $\Lambda_{\cyc}$. It then follows that $X_{\ord}^{S}(g/K_{\infty})$ is torsion over $\Lambda_{K}$, and therefore its quotient $X_{\ord}(g/K_{\infty})$ is torsion over $\Lambda_{K}$ as well. The torsionness of $X_{\Gr}(g/K_{\infty})$ then follows from Theorem \ref{thm:equiv_IMC} together with Proposition \ref{prop:rank-one-nonvanishing}.
\end{proof}

The following lemma is proved as in \cite[Lemma 5.1.1]{BCS}; it is essentially Shapiro's lemma for ordinary Selmer groups (see \cite[Propositions~3.6--3.7]{SkinnerUrban}).

\begin{lemma}\label{lem:algebraic-M-to-K}
We have the divisibility \begin{equation}\label{eq:algebraic-M-to-K}
 \pi_K \left( \chari_{\Lambda_M}X_{\ordi}(f_F/M_\infty) \right)
 \supseteq
 \chari_{\Lambda_K}X_{\ordi}(f/K_\infty) \chari_{\Lambda_K}X_{\ordi}(f^F/K_\infty).
\end{equation}
\end{lemma}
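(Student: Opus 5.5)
The plan is to pass through the imprimitive Selmer groups and the two-variable quotient $X/I_K X$, and to prove the chain of inclusions
\[
\pi_K\bigl(\chari_{\Lambda_M}X\bigr)\supseteq \chari_{\Lambda_K}(X/I_KX)\supseteq \chari_{\Lambda_K}X_{\ordi}(f/K_\infty)\,\chari_{\Lambda_K}X_{\ordi}(f^F/K_\infty),
\]
where $X:=X_{\ordi}(f_F/M_\infty)$. Here $\Lambda_K$ is identified with $\widetilde\Lambda_K=\Lambda_M/I_K$.

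\textbf{Step 1 (general algebra).} For a finitely generated $\Lambda_M$-module $X$ whose quotient $X/I_KX$ is $\Lambda_K$-torsion, I will use the standard fact
\[
\chari_{\Lambda_K}(X/I_KX)\subseteq \pi_K(\chari_{\Lambda_M}X),
\]
as in \cite[Lemma~3.2, Corollary~3.8]{SkinnerUrban}. The reasoning is as follows. $\Lambda_M$ is regular and $I_K$ is generated by a regular sequence (since $\Gamma_M\simeq\Gamma_K\times\ZZ_p$, one element $\gamma-1$ suffices). Take a pseudo-isomorphism $X\to\bigoplus\Lambda_M/(g_i)$. Its pseudo-null kernel and cokernel contribute only $\Lambda_K$-torsion modules on reduction, and these have characteristic ideals contained in $\Lambda_K$. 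Each summand reduces to $\Lambda_K/(\pi_K g_i)$. The check is then exactly the computation of \emph{loc.\ cit.}, using the Tor-sequence for reduction modulo $\gamma-1$.

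\textbf{Step 2 (control).} I will show that $X/I_KX$ maps to $X_{\ordi}(f_F/\widetilde K_\infty)$ with pseudo-null kernel. This is where I expect the real work to lie. Dually, I need the restriction map
\[
\Sel_{\ordi}(f_F/\widetilde K_\infty)\to \Sel_{\ordi}(f_F/M_\infty)[I_K]
\]
to have cokernel with pseudo-null (in fact cotorsion-with-small-support) dual. I would first work with $S$-imprimitive Selmer groups, for which the cokernel is controlled only by the local terms at $w\mid p$. By the ordinary condition, these reduce to $\rmH^0$ of $\Fil^-$-twists along $\Gamma_M/\Gamma_{\widetilde K}$. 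The vanishing of $\rmH^0(G_{M,S},\overline T)$ comes from \ref{BC:irrM}, and the finiteness of the local $\rmH^0$ at $p$ comes from the nontrivial ratio $\barchi_{\cyc}^{k-1}$ on inertia. This gives the required surjectivity up to pseudo-null modules, exactly as in \cite[Proposition~3.9]{SkinnerUrban} and \cite[Proposition~15]{Wan2015}. Torsionness of the target comes from Lemma~\ref{lem:two-variable-torsion} together with Step~3, so Step~1 applies.

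\textbf{Step 3 (Shapiro).} Since $\Ind_F^{\QQ}(T|_{G_F})\simeq T\oplus T\otimes\eta_F$, Shapiro's lemma for ordinary Selmer groups \cite[Propositions~3.6--3.7]{SkinnerUrban} gives
\[
X_{\ordi}^S(f_F/\widetilde K_\infty)\simeq X_{\ordi}^S(f/K_\infty)\oplus X_{\ordi}^S(f^F/K_\infty).
\]
The local ordinary filtrations match because $p$ is inert in $F$ by \ref{BC:Fp}. Characteristic ideals are multiplicative, so the imprimitive form of the claim follows.

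\textbf{Step 4 (primitive version).} Finally I pass from imprimitive to primitive Selmer groups. On each side, the imprimitive characteristic ideal equals the primitive one times the product of local Euler-type factors $P_w$ at $w\in S$, $w\nmid p$ \cite[Corollary~3.12]{SkinnerUrban}. The factors for $f_F$ over $M$ project under $\pi_K$ to the product of the factors for $f$ and $f^F$ over $K$: again this is Shapiro, now applied to local inductions. These factors are nonzero, since \ref{BC:K} and \ref{BC:DF} give nonzero divisors in $\Lambda_K$. They can therefore be cancelled on both sides, which yields \eqref{eq:algebraic-M-to-K}.
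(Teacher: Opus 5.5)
Your proposal is correct and follows essentially the argument the paper relies on when it cites \cite[Lemma~5.1.1]{BCS}: specialize along $\pi_K$ using \cite[Corollary~3.8]{SkinnerUrban} and the control theorem for $S$-imprimitive Selmer groups, split $X^S_{\ordi}(f_F/\widetilde K_\infty)\simeq X^S_{\ordi}(f/K_\infty)\oplus X^S_{\ordi}(f^F/K_\infty)$ by Shapiro's lemma, and cancel the matching nonzero local Euler factors to return to primitive Selmer groups. Two harmless imprecisions: in Step~2 what you get (and need) at $w\mid p$ is pseudo-nullity over $\Lambda_K$, because the local error terms are supported in dimension at most one, not finiteness of a local $\rmH^0$ coming from the residual inertia ratio; and in Step~4 the Euler factors are nonzero simply because Frobenius at a prime away from $p$ has infinite order in $\Gamma_K$, which has nothing to do with \ref{BC:K} or \ref{BC:DF}.
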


The following is the analytic analogue of the factorization in Lemma \ref{lem:algebraic-M-to-K} above. It is \cite[Lemma 5.1.2]{BCS} which follows directly by comparing interpolation formulas (see \cite[Proposition 84]{Wan2015}).

\begin{lemma}
\label{lem:analytic-M-to-K}
There is an equality of ideals
\begin{equation}\label{eq:analytic-M-to-K}
 \bigl(\pi_K(\calL_p(f_F/M))\bigr)
  =\bigl(\calL_p^{\PR}(f/K)\calL_p^{\PR}(f^F/K)\bigr)
       \quad\text{in }\Lambda_K[1/\varpi].
\end{equation}
\end{lemma}

\subsubsection{Product of Iwasawa main conjectures}

We are now ready to prove the following higher-weight analogue of \cite[Proposition 5.2.1]{BCS}.

\begin{theorem} \label{thm:BCS-521-higher}
Let $f$ have even weight $k=2r\ge2$, trivial nebentype, and good ordinary reduction at $p\ge5$, and suppose that \eqref{eq:irred} holds. Let $K$ and $F$ satisfy \ref{BC:K}--\ref{BC:p5}. Then
\begin{align}
 &(\calL_p^{\PR}(f/K)\calL_p^{\PR}(f^F/K))
  \supseteq
  \chari_{\Lambda_K}X_{\ordi}(f/K_\infty)
  \chari_{\Lambda_K}X_{\ordi}(f^F/K_\infty) \quad \text{in } \Lambda_{K},
  \label{eq:BCS-standard-integral}\\
 &(\calL_p^{\Gr}(f/K)\calL_p^{\Gr}(f^F/K))
  \supseteq
  \chari_{\Lambda_K^{\ur}}X_{\Gr}^{\ur}(f/K_\infty)
  \chari_{\Lambda_K^{\ur}}X_{\Gr}^{\ur}(f^F/K_\infty) \quad \text{in } \Lambda_{K}^{\ur}.
  \label{eq:BCS-greenberg-integral}
\end{align}
\end{theorem}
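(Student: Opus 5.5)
Following \cite[Proposition 5.2.1]{BCS}, the plan is to first obtain \eqref{eq:BCS-standard-integral} after inverting $\varpi$. We then move to the Greenberg side, where Hsieh's $\mu$-vanishing lets us remove the $\varpi$-ambiguity and obtain \eqref{eq:BCS-greenberg-integral} integrally. Finally, we transfer back to the Perrin-Riou side using the equivalence of Theorem \ref{thm:equiv_IMC}.

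\emph{Step 1: inequality in $\Lambda_K[1/\varpi]$.} Apply $\pi_K$ to \eqref{eq:Wan-specialized} of Proposition \ref{prop:Wan-specialized}. Combining with Lemma \ref{lem:algebraic-M-to-K} and Lemma \ref{lem:analytic-M-to-K} gives
\[
(\calL_p^{\PR}(f/K)\calL_p^{\PR}(f^F/K)) \supseteq \chari X_{\ordi}(f/K_\infty)\,\chari X_{\ordi}(f^F/K_\infty)
\]
in $\Lambda_K[1/\varpi]$. Both characteristic ideals are nonzero by Lemma \ref{lem:two-variable-torsion}. The $\varpi$-loss comes only from Lemma \ref{lem:analytic-M-to-K}.

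\emph{Step 2: transfer to the Greenberg side.} Both $f$ and $f^F$ satisfy \eqref{eq:irred}, by \ref{BC:irrM}. By \ref{BC:K} both satisfy the classical Heegner hypothesis, hence \eqref{eq:gen-Heegner}; for $f^F$ this uses \ref{BC:DF}, since every prime dividing $D_F$ splits in $K$. We then apply the length identity of Theorem \ref{thm:equiv_IMC} to $f$ and to $f^F$ and add the two identities. This holds at every height-one prime $\twp\neq(\varpi)$ of $\Lambda_K^{\ur}$. It shows that the Step 1 inequality is equivalent to \eqref{eq:BCS-greenberg-integral} in $\Lambda_K^{\ur}[1/\varpi]$, i.e.\ it holds at all $\twp\ne(\varpi)$.

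\emph{Step 3: the prime $(\varpi)$ on the Greenberg side.} This is the main point. By Theorem \ref{thm:mu-vanishing}, applied to $f$ and to $f^F$, the series $\calL_p^{\Gr}(f/K)$ and $\calL_p^{\Gr}(f^F/K)$ have vanishing $\mu$-invariant. Their anticyclotomic specialisations are the BDP $p$-adic $L$-functions, and these are not divisible by $\varpi$. So $\varpi$ does not divide their product. Hence at $\twp=(\varpi)$ the length of $\Lambda^{\ur}_{K,\twp}/\calL^{\Gr}$ vanishes, and the required inequality, namely $\length$ of the Selmer side $\ge 0$, is automatic. This gives \eqref{eq:BCS-greenberg-integral} in $\Lambda_K^{\ur}$ itself.

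\emph{Step 4: back to the Perrin-Riou side.} Using Theorem \ref{thm:equiv_IMC}(1) once more, we transfer the integral Greenberg divisibility back. The length identities at $\wp=(\varpi)$, with $\twp=(\varpi)$ in $\Lambda_K^{\ur}$, are compatible under the unramified base change. Summed over $f$ and $f^F$, they give the $(\varpi)$-component of \eqref{eq:BCS-standard-integral}, which together with Step 1 yields the divisibility in $\Lambda_K$.

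I expect the main obstacle to be bookkeeping in Steps 2–4. Theorem \ref{thm:equiv_IMC}(1) is stated for a single form, while we only know the inequality for the product. The remedy is to work prime-by-prime with the summed length identities rather than with the stated equivalences. We also need to check that Hsieh's $\mu=0$ really yields $\varpi\nmid\calL_p^{\Gr}$ in the two-variable ring: a $\varpi$-multiple would specialise to a $\varpi$-multiple under $\phi_{\ac}$, which is ruled out by Proposition \ref{prop:anticyclotomic_descent}.
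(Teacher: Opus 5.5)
Your proposal is correct and follows the paper's proof. The paper takes the same four steps: the divisibility in $\Lambda_K[1/\varpi]$ from Proposition~\ref{prop:Wan-specialized} with Lemmas~\ref{lem:algebraic-M-to-K} and~\ref{lem:analytic-M-to-K}, transfer to the Greenberg side via Theorem~\ref{thm:equiv_IMC}, removal of the $\varpi$-ambiguity by Hsieh's $\mu$-vanishing (Theorem~\ref{thm:mu-vanishing}), and an integral transfer back; your prime-by-prime bookkeeping with the summed length identities is simply a more careful version of how the paper applies Theorem~\ref{thm:equiv_IMC} to the product of the two forms.
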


\begin{proof}
Combining Proposition~\ref{prop:Wan-specialized} with Lemmas~\ref{lem:algebraic-M-to-K} and \ref{lem:analytic-M-to-K} gives
\begin{equation}\label{eq:standard-product-rational}
 \bigl(\calL_p^{\PR}(f/K)\calL_p^{\PR}(f^F/K)\bigr) \supseteq \chari_{\Lambda_{K}} X_{\ordi}(f/K_\infty)\, \chari_{\Lambda_{K}} X_{\ordi}(f^F/K_\infty) \quad\text{in }\Lambda_K[1/\varpi].
\end{equation}
Applying Theorem~\ref{thm:equiv_IMC}(2), together with Proposition \ref{prop:rank-one-nonvanishing}, over $\Lambda_K^{\ur}[1/\varpi]$, we obtain \eqref{eq:BCS-greenberg-integral} after inverting $\varpi$, namely
\begin{equation} \label{eq:BCS-greenberg-non-integral}
(\calL_p^{\Gr}(f/K)\calL_p^{\Gr}(f^F/K))
  \supseteq
  \chari_{\Lambda_K^{\ur}}X_{\Gr}^{\ur}(f/K_\infty)
  \chari_{\Lambda_K^{\ur}}X_{\Gr}^{\ur}(f^F/K_\infty) \quad \text{in } \Lambda_{K}^{\ur}[1/\varpi].    
\end{equation}

Condition \ref{BC:K} is the classical Heegner hypothesis for both $f$ and $f^F$: primes dividing $D_F$ split in $K$, so all new primes in the level of the quadratic twist also split in $K$. Condition~\ref{BC:irrM} gives absolute irreducibility over $K$ for both $f$ and $f^F$. Theorem \ref{thm:mu-vanishing} therefore gives $\mu(\calL_p^{\Gr}(f/K)) = \mu(\calL_p^{\Gr}(f^F/K)) = 0$. The product $\calL_p^{\Gr}(f/K) \calL_p^{\Gr}(f^F/K)$ also has $\mu$-invariant zero. Therefore the inclusion \eqref{eq:BCS-greenberg-non-integral} is valid integrally in $\Lambda_{K}^{\ur}$; that is, \eqref{eq:BCS-greenberg-integral} holds.

A second application of Theorem \ref{thm:equiv_IMC}(1) (together with Proposition \ref{prop:rank-one-nonvanishing}), now over integral Iwasawa algebras, gives \eqref{eq:BCS-standard-integral}. The theorem is therefore proved.
\end{proof}

\subsection{Applications to cyclotomic main conjectures} \label{sec:application_cmc}
We impose the following assumption for the remainder of the article:
\begin{equation} \label{eq:irred_Q} \tag{irred$_{\QQ}$}
    \text{The residual $\Gal_{\QQ}$-representation $\barV$ is absolutely irreducible.}
\end{equation}
Note that this condition is weaker than \eqref{eq:irred}.

It is shown in \cite[Lemma 5.2.3]{BCS} that the pair of quadratic fields $K$ and $F$ satisfying \ref{BC:K}--\ref{BC:p5} exists under assumption \eqref{eq:irred_Q}. From this point onward, we \emph{fix} such a pair of quadratic fields $K$ and $F$.

Recall that $\Lambda=\calO \lrbracket{\Gamma}$ denotes the Iwasawa algebra for the cyclotomic $\ZZ_p$-extension $\QQ_{\infty}/\QQ$. For 
\[
\delta\in\{1,\eta_K,\eta_F,\eta_{FK}\},
\]
we write $f^\delta=f\otimes\delta$ for the twist of $f$ by the character $\delta$. By $f^{\eta_{FK}}$ we mean the quadratic twist $(f^{F})^{K}$ of $f^{F}$ over $K$. The residual representation of every $f^\delta$ is absolutely irreducible, since it remains irreducible after restriction to $G_M$. We first recall Kato's divisibility in the cyclotomic Iwasawa main conjecture for $f$ over $\QQ$.

\begin{theorem}[Kato, {\cite[Theorem~17.4]{Kato}}] \label{thm:kato_17_4}  For $\delta \in \{1,\eta_K,\eta_F,\eta_{FK}\}$, the following statements hold.
\begin{enumerate}[label = \rm (\arabic*)]
    \item The Selmer groups $X_\ordi(f^{\delta}/\QQ_{\infty})$ are torsion $\Lambda$-modules.
    \item We have the divisibility $(\calL_{p}(f^{\delta}/\QQ)) \subseteq \chari_\Lambda X_\ordi(f^{\delta}/\QQ_{\infty})$ holding in $\Lambda[1/\varpi]$.
    \item Suppose moreover that 
    \begin{equation} \label{eq:Kato-image} \tag{im}
\text{There is an element $\sigma\in G_{\QQ(\mu_{p^\infty})}$ such that $T/(\sigma-1)T\simeq\calO$}.
\end{equation}
Then the divisibility $(\calL_{p}(f^{\delta}/\QQ)) \subseteq \chari_\Lambda X_\ordi(f^{\delta}/\QQ_{\infty})$ holds in $\Lambda$.
\end{enumerate}
\end{theorem}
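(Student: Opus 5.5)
This result is Kato's theorem, and the proof reduces to citing \cite[Theorem~17.4]{Kato} once each twist $f^{\delta}$ is checked to satisfy its hypotheses. I would treat each $\delta\in\{1,\eta_K,\eta_F,\eta_{FK}\}$ separately.

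\emph{Step 1: $f^{\delta}$ is a legitimate input.} Since $\delta$ is a quadratic Dirichlet character of conductor prime to $p$ (by \ref{BC:K}, \ref{BC:Fp}, $p$ is unramified in $K$ and $F$), the twist $f^{\delta}$ is, after passing to the associated newform, a cuspidal eigenform of weight $k$. It has trivial nebentype, since $\delta^2=1$. It is still $p$-ordinary, because $a_p(f^{\delta})=\delta(p)a_p(f)$ is a unit. Its residual representation is absolutely irreducible, because $\overline T|_{G_M}$ is absolutely irreducible by \ref{BC:irrM}. The lattice $T_{f^{\delta}}=T\otimes\delta$, and the $p$-adic $L$-function $\calL_p(f^{\delta}/\QQ)$ is defined via Kato's explicit reciprocity law exactly as in Theorem~\ref{thm:cyclotomic_p-adic_L-function}. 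The tame branch $\eta$ plays no role, since the statement concerns the full $\Lambda$-module.

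\emph{Step 2: parts (1) and (2).} Kato's Euler system of Beilinson--Kato elements $\bfz(f^{\delta})\in\rmH^1(\ZZ[1/p],T_{f^\delta}\hotimes\Lambda^{\iota})$ is nonzero. This follows from the explicit reciprocity law \cite[Theorem 16.6]{Kato} together with the nonvanishing of $L$-values of twists (Rohrlich, Shimura), as used in Corollary~\ref{coro:cyclo_nonvanishing}. The Euler system argument \cite[Theorem 12.5, 13.4]{Kato} then shows that the strict Selmer group is torsion. It also bounds its characteristic ideal by that of $\rmH^1(\ZZ[1/p],\bfT)/\Lambda\bfz$ in $\Lambda[1/\varpi]$.

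Poitou--Tate duality, in the form of Proposition~\ref{prop:PT-sequences} but over $\QQ$, compares the strict Selmer group with the ordinary one. The connecting local quotient is $\rmH^1(\QQ_p,\Fil^-\bfT)$. On that quotient, the Coleman map sends $\loc_p\bfz$ to $\calL_p(f^{\delta}/\QQ)$ and has pseudonull cokernel by \cite[Proposition 17.11]{Kato}. Here one uses Lemma~\ref{lem:regulator-quotient} with $R=\Lambda$. Combining these gives both the torsionness of $X_\ordi(f^{\delta}/\QQ_\infty)$ and the divisibility in (2).

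\emph{Step 3: part (3).} Under \eqref{eq:Kato-image} for $T$, the same condition holds for $T\otimes\delta$. This is because $\delta$ is trivial on $G_{M(\mu_{p^\infty})}$, so one can choose $\sigma$ in that subgroup; I would note this as a possible subtlety, and if needed replace $\sigma$ by a suitable power lying in $G_{M(\mu_{p^{\infty}})}$. Kato's refined Euler system bound \cite[Theorem 13.4(3)]{Kato} then removes the $\varpi$-ambiguity, provided the Coleman map is integral. I expect integrality to be the main obstacle. One has to check that, with the period normalization $\Omega^{\pm}_{f^\delta}$ of \cite[Definition 2.11]{CLW} and Kato's canonical lattice, the image of $\loc_p\bfz$ is exactly $\calL_p(f^{\delta}/\QQ)$ up to a unit, with no extra power of $\varpi$. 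I would first try to verify this directly from \cite[Theorem 2.15]{CLW}, which fixes this normalization.
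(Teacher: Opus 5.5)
Your Steps 1 and 2 are fine. The paper handles parts (1) and (2) exactly this way: it checks that $p$ does not divide the conductor of $\delta$ and that $a_p(f^{\delta})=\delta(p)a_p(f)$ is a unit, and then cites \cite[Theorem 17.4(1)(2)]{Kato}. Your re-derivation of Kato's Euler-system and Poitou--Tate argument is unnecessary, but it follows Kato's own proof.

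The gap is in Step 3, at the point you flag as a ``possible subtlety'': transferring \eqref{eq:Kato-image} from $T$ to $T\otimes\delta$. The hypothesis only supplies \emph{some} $\sigma\in G_{\QQ(\mu_{p^\infty})}$, so you cannot choose it in $G_{M(\mu_{p^\infty})}$. Every $\delta\neq 1$ cuts out a quadratic subfield of $M=FK$. Since $M$ is unramified at $p$, these fields are disjoint from $\QQ(\mu_{p^\infty})$, so $\delta$ is nontrivial on $G_{\QQ(\mu_{p^\infty})}$ and $\delta(\sigma)=-1$ can occur. In that case $\sigma$ itself gives $(T\otimes\delta)/(\sigma-1)(T\otimes\delta)\simeq T/(\sigma+1)T$, which is the wrong module (in fact it is $0$, see below). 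Passing to $\sigma^2$ is the right move, since it does lie in $G_{M(\mu_{p^\infty})}$. However, you give no reason why $T/(\sigma^2-1)T\simeq\calO$, and this is not automatic. For $A=\mathrm{diag}(1,-1)$ one has $\operatorname{coker}(A-1)\simeq\calO$ (as $p$ is odd), while $A^2-1=0$.

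The missing input is the determinant. Put $A=\rho_T(\sigma)$.
\begin{itemize}
\item Since $f$ has trivial nebentype and $\sigma$ fixes $\mu_{p^\infty}$, we have $\det A=\epsilon^{k-1}(\sigma)=1$.
\item The condition $\operatorname{coker}(A-1)\simeq\calO$ forces $0=\det(A-1)=1-\operatorname{tr}A+\det A$, so $\operatorname{tr}A=2$.
\item Hence $\det(A+1)=1+\operatorname{tr}A+\det A=4\in\calO^\times$, so $\sigma+1$ is invertible on $T$.
\item Therefore $T/(\sigma^2-1)T=T/(\sigma-1)(\sigma+1)T\simeq T/(\sigma-1)T\simeq\calO$.
\end{itemize}
This is the paper's argument: take $\sigma_\delta=\sigma$ if $\delta(\sigma)=1$ and $\sigma_\delta=\sigma^2$ otherwise. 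Then $\delta(\sigma_\delta)=1$ and $(T\otimes\delta)/(\sigma_\delta-1)(T\otimes\delta)\simeq\calO$.

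Conversely, the integrality of the Coleman map, which you expect to be the main obstacle, needs no separate treatment in the paper. With $\calL_p(f^{\delta}/\QQ)$ normalized through Kato's explicit reciprocity law, part (3) is a direct citation of \cite[Theorem 17.4(3)]{Kato} once \eqref{eq:Kato-image} is known for $T\otimes\delta$.
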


\begin{proof}
    This is proved in \cite[Theorem 17.4]{Kato}; we provide some additional details. Under our choice of quadratic fields $K$ and $F$, the prime $p$ does not divide the conductor of $\delta$, so $f^{\delta}$ still has good reduction at $p$. Since $a_p(f^{\delta}) = \delta(p) a_{p}(f)$ and $\delta(p) \in \{\pm 1\}$, the form $f^{\delta}$ remains $p$-ordinary. Applying \cite[Theorem 17.4 (1)(2)]{Kato} gives parts (1) and (2).
    
    It suffices to show that \eqref{eq:Kato-image} is stable under quadratic twists. In other words, assuming \eqref{eq:Kato-image}, for every quadratic character $\delta$ there exists $\sigma_\delta\in G_{\QQ(\mu_{p^\infty})}$ such that $(T\otimes\delta)/(\sigma_\delta-1)(T\otimes\delta)\simeq\calO$.

    Put $A=\rho_T(\sigma)$. Since $\sigma$ fixes $\QQ(\mu_{p^\infty})$ and $f$ has trivial nebentype, $\det(A)= \epsilon_{\cyc}^{k-1}(\sigma) = 1$. Condition \eqref{eq:Kato-image} is equivalent to $\coker(A-1) = \calO$, which implies $\det(A-1) = 0$. For the $(2 \times 2)$-matrix $A-1$, one checks directly that
    \[
    \det(A-1) = 1 - \tr(A) + \det(A).
    \]
    This gives $\tr(A) = 2$. Moreover, one checks directly that
    \[
    \det(A+1) = 1 + \tr(A) + \det(A) = 4 \in \calO^{\times}, 
    \]
    since $p$ is an odd prime. This implies that $A+1 \in \GL_2(\calO)$. In other words, the linear operator $\sigma+1$ on $T$ is invertible. We consider the two cases:

    \underline{(1) $\delta(\sigma) = 1$}. In this case, we simply take $\sigma_{\delta} = \sigma$. Then
    \[
    \rho_{T^{\delta}}(\sigma_{\delta}) = \delta(\sigma) \sigma = \sigma,
    \]
    and hence $T^{\delta}/(\sigma_{\delta}-1)T^{\delta} \simeq T/(\sigma - 1) \simeq \calO$, as desired.

    \underline{(2) $\delta(\sigma) = -1$}. In this case, we take $\sigma_{\delta} = \sigma^{2}$. Then $\delta(\sigma_{\delta}) = \delta(\sigma)^{2} = 1$. Hence 
    \[
    \rho_{T^{\delta}}(\sigma_{\delta}) = \delta(\sigma^{2}) \sigma^{2} = \sigma^{2}.
    \]
    Therefore,
    \[
    T^{\delta}/(\sigma_{\delta}-1)T^{\delta} \simeq T/(\sigma^{2} - 1)T \simeq T/(\sigma-1)(\sigma+1)T \simeq T/(\sigma-1)T \simeq \calO
    \]
    where we have used the fact that $\sigma +1$ is an invertible linear operator on $T$. Part (3) of the theorem then follows from \cite[Theorem 17.4 (3)]{Kato}.
    \end{proof}

We are now ready to prove the following main theorem of this article.
\begin{theorem}[{Theorem \ref{main:A}}]
\label{thm:higher-weight-cyclotomic-MC}
Let $f \in S_{k}(\Gamma_{0}(N))$ be a normalized newform of even weight $k =2r \geq 2$ and trivial nebentype such that \eqref{eq:irred_Q} holds. If $p \geq 5$, $p \nmid N$, and $f$ is $p$-ordinary, then the following statements hold.
\begin{enumerate}[label = \rm (\arabic*)]
    \item The Selmer group $X_{\ord}(f/\QQ_{\infty})$ is $\Lambda$-torsion, and the equality
    \[
    (\calL_{p}(f/\QQ)) = \chari_\Lambda X_\ordi(f/\QQ_{\infty})
    \]
    holds in $\Lambda[1/\varpi]$.
    \item If in addition the condition \eqref{eq:Kato-image} holds, then the equality holds in $\Lambda$.
\end{enumerate}
\end{theorem}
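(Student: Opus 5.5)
We follow the BCS strategy: descend Theorem \ref{thm:BCS-521-higher} to the cyclotomic line, and compare the result with Kato's divisibilities in Theorem \ref{thm:kato_17_4}. Fix $K,F$ satisfying \ref{BC:K}--\ref{BC:p5}, as allowed by \cite[Lemma 5.2.3]{BCS}. Part (1) of the torsion statement is Theorem \ref{thm:kato_17_4}(1) with $\delta=1$.

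\textbf{Step 1: descend \eqref{eq:BCS-standard-integral} to $\Lambda_{\cyc}$.} We reduce modulo $I_{\ac}$. On the analytic side, Theorem \ref{thm:descent_theorem}(1), applied to $f$ and to $f^F$, identifies $\calL_p^{\PR}(f/K)\calL_p^{\PR}(f^F/K)\bmod I_{\ac}$ with
\[
\prod_{\delta}\calL_p(f^\delta/\QQ), \qquad \delta\in\{1,\eta_K,\eta_F,\eta_{FK}\}.
\]
This product is nonzero by Corollary \ref{coro:cyclo_nonvanishing}. On the algebraic side, we use the control theorem and Shapiro's lemma exactly as in the proof of Lemma \ref{lem:two-variable-torsion}:
\[
X^S_\ord(g/K_\infty)/I_{\ac}\simeq X^S_\ord(g/\QQ_\infty)\oplus X^S_\ord(g^K/\QQ_\infty).
\]
For torsion $\Lambda_K$-modules $X$ with $X/I_{\ac}X$ torsion over $\Lambda_{\cyc}$, the standard fact is
\[
\chari_{\Lambda_{\cyc}}(X/I_{\ac}X)\subseteq \chari_{\Lambda_K}(X)\bmod I_{\ac},
\]
with equality when $X$ has no nonzero pseudo-null submodule; see \cite[Corollary 3.8]{SkinnerUrban}. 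This is the direction we need. We pass between $S$-imprimitive and primitive Selmer groups by the usual Euler factors at $\ell\in S\setminus\{p\}$. These factors occur on both sides if we also work with $S$-imprimitive $p$-adic $L$-functions, so they cancel.

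\textbf{Step 2: upper bound.} Step 1 gives
\[
\Bigl(\prod_\delta \calL_p(f^\delta/\QQ)\Bigr)\supseteq \prod_\delta \chari_\Lambda X_\ord(f^\delta/\QQ_\infty)\quad\text{in }\Lambda .
\]
Theorem \ref{thm:kato_17_4}(2) gives the reverse divisibility for each $\delta$ separately in $\Lambda[1/\varpi]$. Since both sides are nonzero products, each individual divisibility must be an equality in $\Lambda[1/\varpi]$; in particular this holds for $\delta=1$, which is part (1). Under \eqref{eq:Kato-image}, Theorem \ref{thm:kato_17_4}(3) upgrades Kato's divisibilities to $\Lambda$ for all four twists. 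Multiplying them and comparing with the integral inclusion above forces equality of the products in $\Lambda$, hence equality of each factor, giving (2). Here we use that $\Lambda$ is a UFD: if $(a_i)\subseteq(b_i)$ for each $i$ and $(\prod a_i)=(\prod b_i)$, then $(a_i)=(b_i)$ for each $i$.

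\textbf{Main obstacle.} The delicate step is the integral descent of characteristic ideals in Step 1. It needs the specialization inequality for $\chari$ under reduction mod $I_{\ac}$, so we must control pseudo-null submodules of $X_\ord(f/K_\infty)$, or else prove the inequality in the direction we need directly. We also need the comparison of $S$-imprimitive and primitive objects to be integral. For the latter I would first try the argument of \cite[Section 5.3]{BCS}: use \cite[Corollary 3.14]{SkinnerUrban} to show that the imprimitive Selmer group has no nonzero pseudo-null submodules under \eqref{eq:irred}, and work imprimitively throughout. The imprimitive local factors at $\ell\ne p$ have $\mu=0$, so removing them at the end is harmless integrally.
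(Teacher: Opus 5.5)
Your proposal is correct and follows essentially the same route as the paper. The paper's Proposition \ref{prop:cyclotomic-product-bound} descends \eqref{eq:BCS-standard-integral} to the cyclotomic line via Theorem \ref{thm:descent_theorem}(1) and Shapiro's lemma with the control theorem, and the main proof then squeezes the result against Kato's divisibilities in Theorem \ref{thm:kato_17_4}. The ``main obstacle'' you flag is milder than you fear: only the unconditional inclusion $\chari_{\Lambda_{\cyc}}(X/I_{\ac}X)\subseteq \chari_{\Lambda_K}(X)\bmod I_{\ac}$ is needed, with no control of pseudo-null submodules, and the nonzero Euler factors at $\ell\in S\setminus\{p\}$ can simply be cancelled in the domain $\Lambda$, so no $\mu$-invariant argument is required.
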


The following proposition is a crucial intermediate step.

\begin{proposition}
\label{prop:cyclotomic-product-bound}
Under the hypotheses of Theorem~\ref{thm:higher-weight-cyclotomic-MC}, we have the following divisibility in $\Lambda$:
\begin{multline} \label{eq:fourfold-upper}
    (\calL_p(f/\QQ) \cdot \calL_p(f^{K}/\QQ) \cdot \calL_p(f^{F}/\QQ) \cdot \calL_p(f^{FK}/\QQ))
 \\ \supseteq
 \chari_\Lambda(X_{\ordi}(f/\QQ_{\infty})) \, \chari_\Lambda(X_{\ordi}(f^{K}/\QQ_{\infty})) \\ \cdot \chari_\Lambda(X_{\ordi}(f^{F}/\QQ_{\infty})) \, \chari_\Lambda(X_{\ordi}(f^{FK}/\QQ_{\infty})).
\end{multline}
\end{proposition}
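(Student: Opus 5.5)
The plan is to descend the integral two-variable divisibility \eqref{eq:BCS-standard-integral} of Theorem~\ref{thm:BCS-521-higher} to the cyclotomic line, i.e.\ to reduce it modulo $I_{\ac}=\ker(\Lambda_K\twoheadrightarrow\Lambda_{\cyc})$. On the analytic side, Theorem~\ref{thm:descent_theorem}(1), applied to $f$ and to $f^F$ (both satisfy \eqref{eq:irred}, since \ref{BC:irrM} gives absolute irreducibility over $G_M\subset G_K$), identifies the image of $\calL_p^{\PR}(f/K)\calL_p^{\PR}(f^F/K)$ modulo $I_{\ac}$. Up to units, this image is
\[
\calL_p(f/\QQ)\,\calL_p(f^K/\QQ)\,\calL_p(f^F/\QQ)\,\calL_p(f^{FK}/\QQ),
\]
with $\Lambda_{\cyc}$ identified with $\Lambda$. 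Here I work with the fixed branch $\eta$; the other branches are handled identically. By Corollary~\ref{coro:cyclo_nonvanishing}, this product is nonzero.

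On the algebraic side, I would compare $\chari_{\Lambda_K}X_{\ordi}(g/K_\infty)$ modulo $I_{\ac}$ with the characteristic ideal of $X_{\ordi}(g/K_\infty)/I_{\ac}X_{\ordi}(g/K_\infty)$ over $\Lambda_{\cyc}$, for $g\in\{f,f^F\}$. The tool is the standard lemma on characteristic ideals (e.g.\ \cite[Corollary 3.8]{SkinnerUrban}): if $X$ is a finitely generated torsion $\Lambda_K$-module and $X/I_{\ac}X$ is $\Lambda_{\cyc}$-torsion, then
\[
\chari_{\Lambda_{\cyc}}(X/I_{\ac}X)\subseteq \chari_{\Lambda_K}(X)\bmod I_{\ac}.
\]
Equality holds when $X$ has no nonzero pseudo-null submodules. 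For our purposes the inclusion in this direction is exactly what is needed. Combining it with the projected divisibility gives
\[
(\calL_p(f/\QQ)\cdots\calL_p(f^{FK}/\QQ))\supseteq \prod_{g}\chari_{\Lambda_{\cyc}}\bigl(X_{\ordi}(g/K_\infty)/I_{\ac}\bigr).
\]

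Next I would feed in the control theorem and Shapiro's lemma, exactly as in the proof of Lemma~\ref{lem:two-variable-torsion}. In the $S$-imprimitive setting they give
\[
X^S_{\ordi}(g/K_\infty)/I_{\ac}\simeq X^S_{\ordi}(g/\QQ_\infty)\oplus X^S_{\ordi}(g^K/\QQ_\infty),
\]
and by Theorem~\ref{thm:kato_17_4}(1) the right-hand side is torsion. To use this, I would run the whole argument with $S$-imprimitive objects. Replace each Selmer group by its $S$-imprimitive version, and each $p$-adic $L$-function by the one with the Euler factors at $\ell\in S\setminus\{p\}$ removed. In Theorem~\ref{thm:BCS-521-higher} both sides change by the same local Euler factors, because the local Tamagawa-type modules have characteristic ideals given by those Euler factors (\cite[Corollary 3.12]{SkinnerUrban}). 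The same holds over $\QQ_\infty$. At the end I would divide back out by these Euler factors, which are nonzero elements of $\Lambda$, to recover the primitive statement \eqref{eq:fourfold-upper}.

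The main obstacle is this bookkeeping. Specifically, one must check three things: that the imprimitive divisibility over $\Lambda_K$ follows from the primitive one with matching local factors, that the control isomorphism holds on the nose rather than only up to pseudo-null error, and that no power of $\varpi$ is lost. Losing a power of $\varpi$ is the real danger, since the claim is integral in $\Lambda$. I would first try to avoid imprimitivity altogether by proving the primitive control map directly. Its kernel and cokernel are governed by local $H^0$ terms, and these vanish by \eqref{eq:irred} together with the unramifiedness argument used in the proof of \eqref{eq:specliazation_control}.
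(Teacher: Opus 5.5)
Your proposal is correct and follows essentially the same route as the paper: you project \eqref{eq:BCS-standard-integral} to the cyclotomic line and identify the analytic side via Theorem~\ref{thm:descent_theorem}(1) for $f$ and $f^F$. On the algebraic side you bound $\pi_{\cyc}(\chari_{\Lambda_K}X_{\ordi}(g/K_\infty))$ from below by $\chari_\Lambda X_{\ordi}(g/\QQ_\infty)\,\chari_\Lambda X_{\ordi}(g^K/\QQ_\infty)$, using Skinner--Urban's control theorem, Shapiro's lemma and \cite[Corollary 3.8]{SkinnerUrban}; the paper cites these results directly, so your primitive versus imprimitive Euler-factor bookkeeping spells out what it leaves implicit rather than giving a different argument.
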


\begin{proof}
Apply the cyclotomic quotient map $\pi_{\cyc}: \Gamma_{K} \twoheadrightarrow \Gamma_{K}^{\cyc}$ to \eqref{eq:BCS-standard-integral}. The cyclotomic descent theorem (Theorem \ref{thm:descent_theorem}(1)), applied first to $f$ and then to $f^F$, identifies the images of $\calL_{p}^{\PR}(f/K)$ and $\calL_{p}^{\PR}(f^{F}/K)$ with the product on the left-hand side of \eqref{eq:fourfold-upper}.

On the algebraic side, Shapiro's lemma for ordinary Selmer groups (see \cite[Propositions~3.6 and 3.9]{SkinnerUrban} and \cite[Corollary~3.8]{SkinnerUrban}) gives
\begin{align*}
 \pi_{\cyc}(\chari_{\Lambda_{K}} X_{\ordi}(f/K_\infty))
   &\supseteq\chari_\Lambda(X_{\ordi}(f/\QQ_{\infty}))\chari_\Lambda(X_{\ordi}(f^{K}/\QQ_{\infty})),\\
 \pi_{\cyc}(\chari_{\Lambda_{K}} X_{\ordi}(f^F/K_\infty)) &\supseteq \chari_{\Lambda} (X_{\ord}(f^{F}/\QQ_{\infty}))\chari_\Lambda(X_{\ordi}(f^{FK}/\QQ_{\infty})),
\end{align*}
upon the identification $\Lambda_{K}^{\cyc} \simeq \Lambda$. This proves \eqref{eq:fourfold-upper}.
\end{proof}

\begin{proof}[Proof of Theorem \ref{thm:higher-weight-cyclotomic-MC}]
By Theorem \ref{thm:kato_17_4}, we have the other side of the divisibility 
\[
(\calL_{p}(f^{\delta}/\QQ)) \subseteq \chari_\Lambda X_\ordi(f^{\delta}/\QQ_{\infty})
\]
when \eqref{eq:Kato-image} holds, and in $\Lambda[1/\varpi]$ otherwise. Any strict divisibility would contradict Proposition \ref{prop:cyclotomic-product-bound}. This concludes the proof.
\end{proof}